\documentclass{amsart}
\usepackage{amssymb,amsmath,stmaryrd,mathrsfs,amsthm}
\usepackage[all]{xy}
\usepackage[neveradjust]{paralist}
\usepackage{hyperref}
\usepackage{mathtools}
\usepackage{wasysym}
\usepackage{braket}
\let\setof\Set

\makeatletter

% Defining commands that are always in math mode.
\def\mdef#1#2{\expandafter\expandafter\expandafter\gdef\expandafter\expandafter\noexpand#1\expandafter{\expandafter\ensuremath\expandafter{#2}}}

% Script letters
\newcommand{\sU}{\ensuremath{\mathscr{U}}}
\newcommand{\sV}{\ensuremath{\mathscr{V}}}

% Calligraphic letters
\newcommand{\cE}{\ensuremath{\mathcal{E}}}
\newcommand{\cM}{\ensuremath{\mathcal{M}}}
\newcommand{\cQ}{\ensuremath{\mathcal{Q}}}
\newcommand{\cT}{\ensuremath{\mathcal{T}}}

% blackboard bold letters
\newcommand{\bbV}{\ensuremath{\mathbb{V}}}

% some operators
\newcommand{\colim}{\ensuremath{\operatorname{colim}}}

%% the package `mathbbol' gives us blackboard bold greek and numbers,
%% but it does it by redefining \mathbb to use a different font, so
%% that all the other \mathbb letters look different too.  Here we
%% import the font but assign it a different name, to
%% avoid this problem.
\DeclareSymbolFont{bbold}{U}{bbold}{m}{n}
\DeclareSymbolFontAlphabet{\mathbbb}{bbold}
\mdef\bbSet{\mathbbb{Set}}

% bold letters
\newcommand{\bA}{\ensuremath{\mathbf{A}}}
\newcommand{\bB}{\ensuremath{\mathbf{B}}}
\newcommand{\bC}{\ensuremath{\mathbf{C}}}
\newcommand{\bM}{\ensuremath{\mathbf{M}}}
\newcommand{\bS}{\ensuremath{\mathbf{S}}}
\newcommand{\bV}{\ensuremath{\mathbf{V}}}

% fraktur letters
\newcommand{\fa}{\ensuremath{\mathfrak{a}}}

% bars
\newcommand{\Sbar}{\ensuremath{\overline{S}}}
\newcommand{\Ubar}{\ensuremath{\overline{U}}}
\newcommand{\Xbar}{\ensuremath{\overline{X}}}
\newcommand{\Ybar}{\ensuremath{\overline{Y}}}
\newcommand{\rbar}{\ensuremath{\overline{r}}}
\newcommand{\xbar}{\ensuremath{\overline{x}}}
\newcommand{\ybar}{\ensuremath{\overline{y}}}

% miscellaneous symbols
\newcommand{\inv}{^{-1}}
\newcommand{\meet}{\ensuremath{\wedge}}
\newcommand{\join}{\ensuremath{\vee}}

% arrows and categorical symbols

\newcommand{\op}{\ensuremath{^{\mathit{op}}}}

\SelectTips{cm}{}
\newdir{ >}{{}*!/-20pt/@{>}}

\newcommand{\pullbackcorner}[1][dr]{\save*!/#1-1.2pc/#1:(-1,1)@^{|-}\restore}
\newcommand{\iso}{\cong}

\mdef\eqv{\simeq}

% \to already exists
\newcommand{\too}[1][]{\ensuremath{\overset{#1}{\longrightarrow}}}

\newcommand{\imp}{\ensuremath{\Rightarrow}}
\mdef\impp{\,\Longrightarrow\,}
\newcommand{\toto}{\ensuremath{\rightrightarrows}}
\newcommand{\into}{\ensuremath{\hookrightarrow}}
\newcommand{\mono}{\rightarrowtail}
\newcommand{\epi}[1][]{\ensuremath{\overset{#1}{\twoheadrightarrow}}}
\newcommand{\maps}{\colon}

%% EXTENSIBLE ARROWS
\let\xto\xrightarrow

% See Voss-Mathmode.pdf for instructions on how to create new
% extensible arrows.
\def\rightarrowtailfill@{\arrowfill@{\Yright\joinrel\relbar}\relbar\rightarrow}
\newcommand\xrightarrowtail[2][]{\ext@arrow 0055{\rightarrowtailfill@}{#1}{#2}}
\let\xmono\xrightarrowtail

\def\twoheadrightarrowfill@{\arrowfill@{\relbar\joinrel\relbar}\relbar\twoheadrightarrow}
\newcommand\xtwoheadrightarrow[2][]{\ext@arrow 0055{\twoheadrightarrowfill@}{#1}{#2}}
\let\xepi\xtwoheadrightarrow

% Let's leave the left-going ones until I need them.
\def\twoheadleftarrowfill@{\arrowfill@\twoheadleftarrow\relbar{\relbar\joinrel\relbar}}
\newcommand\xtwoheadleftarrow[2][]{\ext@arrow 0055{\twoheadleftarrowfill@}{#1}{#2}}

% theorem-type environments
\newtheorem{thm}{Theorem}[section]
  
\newtheorem{cor}{Corollary}
  \let\c@cor\c@thm
  \numberwithin{cor}{section}
  
\newtheorem{prop}{Proposition}
  \let\c@prop\c@thm
  \numberwithin{prop}{section}
  
\newtheorem{lem}{Lemma}
  \let\c@lem\c@thm
  \numberwithin{lem}{section}
  
\theoremstyle{definition}
\newtheorem{defn}{Definition}
  \let\c@defn\c@thm
  \numberwithin{defn}{section}
  
\newtheorem{defns}{Definitions}
  \let\c@defns\c@thm
  \numberwithin{defns}{section}

  \let\c@notn\c@thm
  \numberwithin{notn}{section}

  \let\c@notns\c@thm
  \numberwithin{notns}{section}
  
\newtheorem{cnv}{Convention}
  \let\c@cnv\c@thm
  \numberwithin{cnv}{section}
  
\theoremstyle{remark}
\newtheorem{rmk}{Remark}
  \let\c@rmk\c@thm
  \numberwithin{rmk}{section}

  \let\c@rmks\c@thm
  \numberwithin{rmks}{section}
  
\newtheorem{eg}{Example}
  \let\c@eg\c@thm
  \numberwithin{eg}{section}

  \let\c@egs\c@thm
  \numberwithin{egs}{section}

\def\thmqedhere{\expandafter\csname\csname @currenvir\endcsname @qed\endcsname}

% Number numbered lists as (i), (ii), ...

% Labeling that keeps track of theorem-type names (superseded by hyperref)
% \ifx\SK@label\undefined\let\SK@label\label\fi
% \let\your@thm\@thm
% \def\@thm#1#2#3{\gdef\currthmtype{#3}\your@thm{#1}{#2}{#3}}
% \def\xlabel#1{{\let\your@currentlabel\@currentlabel\def\@currentlabel
% {\currthmtype~\your@currentlabel}
% \SK@label{#1@}}\label{#1}}
% \def\xref#1{\ref{#1@}}

% Number formulas with the theorem counter
\let\c@equation\c@thm
\numberwithin{equation}{section}

%% Fix enumerate spacing with paralist.  This has two parts:
%%   1. enable mixing of "old spacing" lists with those adjusted by paralist
%%   2. allow us to specify a number based on which to adjust the spacing
%% For the first, use \killspacingtrue when you want the spacing
%% adjusted, then \killspacingfalse to turn adjustment off.  For the
%% second, use \maxenum=14 to set the widest number you want the
%% spacing to be calculated with.
\newlength\oldleftmargini       % save old spacing
\newlength\oldleftmarginii
\newlength\oldleftmarginiii
\newlength\oldleftmarginiv
\newlength\oldleftmarginv
\newlength\oldleftmarginvi
\newcount\maxenum
\maxenum=7
\newif\ifkillspacing
\def\@adjust@enum@labelwidth{%
  \advance\@listdepth by 1\relax
  \ifkillspacing                % do the paralist thing
    \csname c@\@enumctr\endcsname\maxenum
    \settowidth{\@tempdima}{%
      \csname label\@enumctr\endcsname\hspace{\labelsep}}%
    \csname leftmargin\romannumeral\@listdepth\endcsname
      \@tempdima
  \else                         % otherwise, reset it
    \csname fixspacing\romannumeral\@listdepth\endcsname
  \fi
  \advance\@listdepth by -1\relax}
% these commands, one for each enum level (I couldn't get a generic
% one to work), test whether oldleftmargin has been set yet, and if
% not, set it from leftmargin; otherwise, they reset leftmargin to
% it.  Just setting oldleftmargin to leftmargin in the preamble
% doesn't seem to work.
\def\fixspacingi{\ifnum\oldleftmargini=0\setlength\oldleftmargini\leftmargini\else\setlength\leftmargini\oldleftmargini\fi}
\def\fixspacingii{\ifnum\oldleftmarginii=0\setlength\oldleftmarginii\leftmarginii\else\setlength\leftmarginii\oldleftmarginii\fi}
\def\fixspacingiii{\ifnum\oldleftmarginiii=0\setlength\oldleftmarginiii\leftmarginiii\else\setlength\leftmarginiii\oldleftmarginiii\fi}
\def\fixspacingiv{\ifnum\oldleftmarginiv=0\setlength\oldleftmarginiv\leftmarginiv\else\setlength\leftmarginiv\oldleftmarginiv\fi}
\def\fixspacingv{\ifnum\oldleftmarginv=0\setlength\oldleftmarginv\leftmarginv\else\setlength\leftmarginv\oldleftmarginv\fi}
\def\fixspacingvi{\ifnum\oldleftmarginvi=0\setlength\oldleftmarginvi\leftmarginvi\else\setlength\leftmarginvi\oldleftmarginvi\fi}

%% Fix paralist references, so that we can refer to (1) instead of
%% just 1.
\def\pl@label#1#2{%
  \edef\pl@the{\noexpand#1{\@enumctr}}%
  \pl@lab\expandafter{\the\pl@lab\csname yourthe\@enumctr\endcsname}%
  \advance\@tempcnta1
  \pl@loop}
\def\@enumlabel@#1[#2]{%
  \@plmylabeltrue
  \@tempcnta0
  \pl@lab{}%
  \let\pl@the\pl@qmark
  \expandafter\pl@loop\@gobble#2\@@@
  \ifnum\@tempcnta=1\else
    \PackageWarning{paralist}{Incorrect label; no or multiple
      counters.\MessageBreak The label is: \@gobble#2}%
  \fi
  \expandafter\edef\csname label\@enumctr\endcsname{\the\pl@lab}%
  \expandafter\edef\csname the\@enumctr\endcsname{\the\pl@lab}%
  \expandafter\let\csname yourthe\@enumctr\endcsname\pl@the
  #1}

% Greek letters outside $$
\def\alwaysmath#1{\expandafter\expandafter\expandafter\global\expandafter\expandafter\expandafter\let\expandafter\expandafter\csname your@#1\endcsname\csname #1\endcsname
  \expandafter\def\csname #1\endcsname{\ensuremath{\csname your@#1\endcsname}}}
\alwaysmath{alpha}
\alwaysmath{beta}
\alwaysmath{gamma}
\alwaysmath{Gamma}
\alwaysmath{delta}
\alwaysmath{Delta}
\alwaysmath{epsilon}

\alwaysmath{zeta}
\alwaysmath{eta}
\alwaysmath{vartheta}
\alwaysmath{theta}
\alwaysmath{Theta}
\alwaysmath{iota}
\alwaysmath{kappa}
\alwaysmath{lambda}
\alwaysmath{Lambda}
\alwaysmath{mu}
\alwaysmath{nu}
\alwaysmath{xi}
\alwaysmath{pi}
\alwaysmath{rho}
\alwaysmath{sigma}
\alwaysmath{Sigma}
\alwaysmath{tau}
\alwaysmath{upsilon}
\alwaysmath{Upsilon}
\alwaysmath{phi}
\alwaysmath{Pi}
\alwaysmath{Phi}
\newcommand{\ph}{\ensuremath{\varphi}}
\alwaysmath{chi}
\alwaysmath{psi}
\alwaysmath{Psi}
\alwaysmath{omega}
\alwaysmath{Omega}
\alwaysmath{ell}
\alwaysmath{infty}
\alwaysmath{odot}
\alwaysmath{top}
\alwaysmath{bot}
\alwaysmath{neg}
\mdef\na{\nabla}

\let\al\alpha
\let\be\beta
\let\gm\gamma
\let\Gm\Gamma

\let\De\Delta

\let\th\theta

\let\vth\vartheta

\mdef\ff{\Vdash}
\mdef\fff{\Vvdash}
\mdef\ss{\vDash}
\mdef\pp{\mathrel{\,\vdash\,}}
\mdef\vthtil{\widetilde{\vth}}
\mdef\Set{\mathbf{Set}}
\mdef\ffs{\ff_\bS}
\mdef\ffc{\ff_\bC}
\def\nno{\textsc{nno}}
\def\apg{\textsc{apg}}
\def\apgs{\textsc{apg}s}
\let\Iff\Leftrightarrow
\def\qq#1{$\ulcorner${\sf#1}$\urcorner$}

\def\tqq#1{\text{\qq{#1}}}
\alwaysmath{prec}
\alwaysmath{preceq}
\alwaysmath{star}
\mdef\St{\mathcal{S}\mathit{t}}
\mdef\Cat{\mathcal{C}\mathit{at}}
\mdef\iin{\mathrel{\epsilon}}
\mdef\ordiin{{\mathord{\epsilon}}}

\let\yourexists\exists
\def\exists#1.{(\yourexists#1)}
\let\yourforall\forall
\def\forall#1.{(\yourforall#1)}
\let\im\yourexists
\let\coim\yourforall
\alwaysmath{im}
\alwaysmath{coim}

\newcommand{\mm}[1]{\ensuremath{\llbracket#1\rrbracket}}

\DeclareMathOperator\Sub{Sub}

\mdef\bSh{\mathbf{Sh}}

\mdef\phhat{\widehat{\ph}}
\mdef\psihat{\widehat{\psi}}
\mdef\phtil{\widetilde{\ph}}
\mdef\psitil{\widetilde{\psi}}

\def\toiso{\xto{\smash{\raisebox{-.5mm}{$\scriptstyle\sim$}}}}

\mdef\ordin{\mathord{\in}}

\newenvironment{blist}{\begin{list}{\labelitemi}{\leftmargin=1.3em\labelwidth=1em}}{\end{list}}
\def\setcurlabel#1{\def\@currentlabel{#1}}

\newenvironment{labellist}[2]{\begin{list}{#1}{#2}%
    \let\youritem\item%
    \def\item##1{\youritem[##1]\setcurlabel{##1}}}{\end{list}}

\def\csl{\!\sslash\!}
\mdef\ddo{\Delta_0}

\makeatother

\title[Stack semantics]{Stack semantics and the comparison of material
  and structural set theories}
\thanks{The author was supported by a National Science Foundation
  postdoctoral fellowship during the writing of this paper.}
\author{Michael A.\ Shulman}
\email{shulman@math.uchicago.edu}

\begin{document}
\maketitle

\begin{abstract}
  We extend the usual internal logic of a (pre)topos to a more general
  interpretation, called the \emph{stack semantics}, which allows for
  ``unbounded'' quantifiers ranging over the class of objects of the
  topos.  Using well-founded relations inside the stack semantics, we
  can then recover a membership-based (or ``material'') set theory
  from an arbitrary topos, including even set-theoretic axiom schemas
  such as collection and separation which involve unbounded
  quantifiers.  This construction reproduces the models of
  Fourman-Hayashi and of algebraic set theory, when the latter apply.
  It turns out that the axioms of collection and replacement are
  always valid in the stack semantics of any topos, while the axiom of
  separation expressed in the stack semantics gives a new
  topos-theoretic axiom schema with the full strength of ZF.  We call
  a topos satisfying this schema \emph{autological}.
\end{abstract}

\tableofcontents

\section{Introduction}
\label{sec:introduction}

It is well-known that elementary topos theory is roughly equivalent to
a weak form of set theory.  The most common statement is that the
theory of well-pointed topoi with natural numbers objects and
satisfying the axiom of choice (a.k.a.\ ETCS; see~\cite{lawvere:etcs})
is equiconsistent with ``bounded Zermelo'' set theory (BZC).  This is
originally due to Cole~\cite{cole:cat-sets} and
Mitchell~\cite{mitchell:topoi-sets}; see also
Osius~\cite{osius:cat-setth}.  Moreover, the proof is very direct: the
category of sets in any model of BZC is a model of ETCS, while from
any model of ETCS one can construct a model of BZC out of well-founded
relations.  (The same ideas apply to stronger and weaker theories, but
for clarity, in the introduction we will speak mostly about topoi.)

In fact, Lawvere and others have observed that ETCS can itself serve
as a foundation for much mathematics, replacing traditional
membership-based set theoretic foundations such as BZC or ZFC.
Although the term ``set theory'' often refers specifically to
membership-based theories, it is clear that ETCS is also a \emph{set
  theory}, in the inclusive sense of ``a theory about sets.''
Pursuant to this philosophy, we will therefore use the term
\emph{material set theory} for theories such as BZC and ZFC which are
based on a global membership predicate, and \emph{structural set
  theory} for theories such as ETCS which take functions as more
fundamental.  (Some explanation of this terminology can be found in
the appendix.)

Whatever names we use for them, the relationship between
these two types of theory is like other bridges between fields of
mathematics, in that it augments the toolkit of each side by the ideas
and techniques of the other.  Of central importance to such a
programme is that under this equivalence, certain constructions in
material set theory, such as forcing, can be identified with certain
constructions in topos theory, such as categories of sheaves.
However, there are surprising difficulties in making this precise,
regarding the issue of \emph{well-pointedness}.\footnote{Recall that a
  topos is \emph{well-pointed} if $1$ is a generator and there is no
  map $1\to 0$.  As we will see later, this notion requires some
  revision in an intuitionistic context.}  It is only a
\emph{well-pointed} topos which can be regarded as a structural set
theory and to which the Cole-Mitchell-Osius theory can be applied
directly, since only in a well-pointed topos is an object, like a set,
``determined by its elements'' $1\to X$.  But the categorical
constructions in question almost never preserve well-pointedness, so
some trick is necessary to make the correspondence work.

Of course, the trick is well-known in essence: any topos has an
``internal logic'' (often called the \emph{Mitchell-Benabou language})
which appears much more set-theoretic.  In the internal logic, we can
speak about ``elements'' of objects, and the semantics automatically
interprets statements about these elements into more appropriate
categorical terms.  For instance, a morphism $f\colon X\to Y$ in an
arbitrary topos is an epimorphism if and only if the statement ``for
all $y\in Y$, there exists an $x\in X$ with $f(x)=y$'' is true in the
internal logic.

This trick is quite powerful, and in fact it is fundamental to the
whole field of categorical logic.  However, the internal logic of a
topos is not a set theory of any sort (material or structural), but
rather a ``higher-order type theory.''  In particular, it does not
include \emph{unbounded quantifiers}, i.e.\ quantifiers ranging over
all sets.  This is problematic for the programme of bridging between
material and structural set theories, since many of the most
interesting axioms of set theory (especially separation, collection,
and replacement) involve such unbounded quantifiers.

There do exist secondary tricks which have been used to get around
this difficulty.  The first was proposed by
Fourman~\cite{fourman:sheaf-setthy}: if \bS\ is a \emph{complete}
topos (such as a Grothendieck topos), then we can mimic the
construction of the von Neumann hierarchy inside \bS.  We define
inductively an object $V_\alpha$ for each (external) ordinal $\alpha$,
setting $V_0 = 0$, $V_{\alpha+1}= P V_\alpha$, and taking colimits at
limit stages (hence the need for \bS\ to be complete).  Now the
internal logic can speak about ``elements'' of the objects $V_\alpha$,
which inherit a ``membership'' structure from their construction and
can be shown to model material set theories.
Hayashi~\cite{hayashi:setthy-topos} independently proposed a more
general approach in which the $V_\al$ can be replaced by any class of
objects satisfying suitable closure conditions.

While this construction works quite well, we find it somewhat
unsatisfying for several reasons.  Firstly, it appears to depend on
the non-elementary hypo\-thesis of completeness, which is not
preserved by some constructions on toposes (such as realizability and
filterquotients).  Secondly, it appears to depend on power objects,
which would make it inapplicable to compare pretoposes with
``predicative'' material set theories.  Finally, the treatment of
unbounded quantifiers is asymmetrical---they appear in the material
set theory as if by magic, but there is no natural,
category-theoretic, treatment of unbounded quantifiers on the topos
side.

A more recent solution, begun by Joyal and Moerdijk in~\cite{jm:ast},
is to consider not just a topos, but a \emph{category of classes} in
which it sits as the ``small objects.''  This is a category-theoretic
version of material class-set theories such as Bernays-G\"odel or
Morse-Kelley.  A category of classes contains a single object $V$,
``the class of all sets,'' which admits a membership relation which
satisfies the axioms of material set theory in the internal logic of
the category of classes.  This approach, called \emph{algebraic set
  theory}, avoids some of the problems with the Fourman-Hayashi
approach.  It is purely elementary at the level of the category of
classes, and applies as well in the absence of powersets.  And the
internal logic of the category of classes provides a compelling
category-theoretic treatment of ``unbounded'' quantifiers---they are
simply bounded quantifiers over the class of all sets.

% One source of confusion to the newcomer, at least, is the wide
% variety of specific axiomatizations for the category of
% classes---there is no definition of \emph{a category of classes}
% with nearly the consensus of correctness that attaches to the
% definition of \emph{elementary topos}.  This variety is actually
% more illusory than real, but there is a more serious problem:

However, algebraic set theory also has its own disadvantages, foremost
among which is the fact that instead of studying a topos by itself,
one must now always somehow construct, and always carry along, a
category of classes containing it.  One would ideally wish for a
\emph{canonical} embedding of any topos into a category of classes,
under which all desirable topos-theoretic structure corresponds
equivalently to suitable structure on the category of classes.  It is
shown in~\cite{af:ast-classes,afw:ast-ideals} that every small topos
can be embedded in \emph{some} category of classes, via a construction
which is in some ways canonical.  However, the resulting category of
classes will not, in general, satisfy axioms such as separation, even
for toposes which might be expected to validate them (for instance,
those which validate their material counter\-parts under the
Fourman-Hayashi interpretation).  A different construction is given
by~\cite{abss:foss-et,abss:rfosttcoc}, which does in some cases
produce categories of classes satisfying separation, but it is very
non-category-theoretic in flavor (in the sense that it distinguishes
between isomorphic objects) and certainly less canonical.

A second problem with algebraic set theory is that one has to worry
about whether any given categorical construction can be extended from
elementary toposes (where the results are usually known and easy) to
any categories of classes that may contain them.  In general, this
turns out to be true, but the proofs often require a great deal of
work (see, for
instance,~\cite{aglw:shvs-ast,vdbm:pred-i-exact,vdbm:pred-ii-realiz,vdbm:pred-iii-shvs}).

% , especially such insights as the universal properties that
% characterize the material-set-theoretic cumulative hierarchy and the
% class of von Neumann ordinals.

% in clarifying the relationship between topos-theoretic constructions
% and models of material set theory; see for
% instance~\cite{bs:freyd-ac,bs:cplt-rep-setth}.

I do not mean to denigrate the great advances in understanding
produced by algebraic set theory, and the Fourman-Hayashi
interpretation has also been very fruitful.  The relationship between
the two is also known: the ``ideal semantics''
of~\cite{abss:foss-et,abss:rfosttcoc} generalizes the forcing
sem\-antics of Hayashi.  However, it would undoubtedly also be useful
to have \emph{a unified, canonical, elementary, and category-theoretic
  treatment of unbounded quantifiers over an arbitrary (pre)topos}.
The interpretations of Fourman-Hayashi and algebraic set theory would
then, ideally, turn out to be special cases or generalizations of this
unified framework.

The thesis of this paper is that such a thing is indeed possible; it
is what we call the \emph{stack semantics}.  We can motivate this
semantics in two ways.  The first is to observe that higher-order type
theory (that is, the internal logic of a topos) looks very much like a
\emph{fragment} of structural set theory (i.e.\ the theory of
well-pointed topoi).  Types correspond to objects of the category, and
each term $x$ of type $A$ corresponds to a morphism $1\to A$.  In this
way any assertion in type theory can be translated into an assertion
in the first-order theory of well-pointed categories.  However, the
latter theory also includes quantifiers over objects of the category,
so it is natural to seek a generalization of the internal logic which
models the first-order theory of well-pointed categories, rather than
higher-order type theory.\footnote{Alternately, one can use a type
  theory allowing propositions which quantify over the universe
  $\mathbf{Type}$.  We will not do this, but it has certain
  advantages.\label{fn:2}} The stack semantics is exactly such a
generalization.

(When the logic is intuitionistic, as it is in the stack semantics,
the notion of ``well-pointed'' has to be modified a bit; see
\S\ref{sec:struct-set}.  For clarity, we usually refer to this
modified property \emph{constructive well-pointedness}; thus what the
stack semantics really models is the first-order theory of
constructively well-pointed topoi.)

From this point of view, the stack semantics can be defined directly
by generalizing the ``Kripke-Joyal'' description of the internal logic
via a forcing relation.  This description is thus closely related to
Hayashi's forcing interpretation of material set theory, and also to
the forcing semantics defined in~\cite{abss:rfosttcoc,abss:foss-et}.

A second motivation for the stack semantics comes from trying to make
the logic of a category of classes more canonical and
category-theoretic.  Specifically, if \bS\ is a small topos, an
obvious ``canonical'' candidate for a category of classes containing
it is the category $\bSh(\bS)$ of sheaves for the coherent topology on
\bS.  (The category of ideals defined
in~\cite{af:ast-classes,afw:ast-ideals} is a certain well-behaved
subcategory of $\bSh(\bS)$.)  If we then want to interpret
``unbounded'' quantifiers over objects of \bS\ as ``bounded''
quantifiers in the internal logic of $\bSh(\bS)$, we need a suitable
object to serve as ``the class of sets.''  Or, more
category-theoretically, we want an internal category in $\bSh(\bS)$ to
serve as ``the category of sets,'' and ideally we would like this
internal category to also be canonically determined by \bS.

Now \bS\ of course has a canonical representation as a \emph{stack}
over itself, namely the ``self-indexing'' or ``fundamental fibration''
$X\mapsto \bS/X$.  This is not an internal category in $\bSh(\bS)$,
since it is not a strict functor, only a pseudofunctor.  But we can
choose some strictification of it, which is therefore an internal
category in $\bSh(\bS)$, and let our ``unbounded'' quantifiers run
over this.  This is precisely the approach taken
in~\cite[Ch.~V]{awodey:thesis}, where it was additionally observed
that in the internal logic of $\bSh(\bS)$, the self-indexing is (in
our terminology) constructively well-pointed.  However, now we take the additional step of
isolating the particular fragment of the internal logic of $\bSh(\bS)$
whose only ``unbounded'' quantifiers range over the self-indexing.
Modulo one small caveat, this fragment is canonically determined by
the topos \bS, and when expressed in the sheaf semantics of
$\bSh(\bS)$ via a forcing relation over the site \bS, it agrees with
the stack semantics defined using the first approach.

The caveat is that there is one other thing this logic depends on: the
chosen strictification of the self-indexing.  However, this choice
only affects the truth values of ``non-category-theoretic'' formulas,
such as those which assert equalities between objects.  In the stack semantics
proper, we exclude such formulas by restricting to a dependently typed
logic in which there simply is no equality predicate between objects.
(This is also precisely the necessary restriction so that the truth
of such formulas be invariant under equivalence of categories.)
This restricted logic can then be naturally identified with a fragment
of the internal logic of the \emph{2-category} $\St(\bS)$ of
\emph{stacks} on \bS.  (The internal logic of a 2-category is not
well-known, but it can be defined by close analogy with that of a
1-category; see~\cite{shulman:2logic}.)

The existence of these multiple approaches to the stack semantics
contributes to the feeling that it is a natural part of topos theory,
though heretofore seemingly unappreciated.  In fact, however, it is
implicitly present throughout the literature.  Topos theory abounds
with definitions, arguments, and theorems that are said to be
``internal'' statements, but in many cases this cannot be
\emph{directly} understood as an assertion in the usual internal logic
due to the presence of unbounded quantifiers.  As a very simple example,
the external topos-theoretic ``axiom of choice'' (AC) asserts that
every epimorphism splits.  This involves an unbounded quantification
over all objects of the topos, so it is not directly internalizable.
Instead, the ``internal axiom of choice'' (IAC) asserts (in one
formulation) that for any object $A$, the functor $\Pi_A\colon
\bS/A\to \bS$ preserves epimorphisms.  To a beginner, it may not be
obvious why this is the ``internalization'' of AC.  But it turns out
that IAC is equivalent to the validity in the stack semantics of the
statement ``every epimorphism splits.''

The proper way of internalization of such statements does soon become
second nature to experts, and it has
even started to appear more openly in the literature, such as in the
``informal arguments'' given in~\cite{mp:ttcst}.  However, to my
knowledge it has never before been given a \emph{formal, precise}
expression.  This is what the stack semantics provides: a formal
definition of the language whose statements we can expect to
internalize, along with a procedure for performing that
internalization and a theorem that the resulting semantics is sound.
Thus, it has the potential to simplify and clarify many arguments in
topos theory, by reducing them to internalizations of easier proofs in
structural set theory.

(This effect is even more pronounced when the stack semantics is
generalized to the full internal logic of $\St(\bS)$.
In~\cite{shulman:etlc} we will show that almost any
argument in the ``naive'' theory of large categories, such as Freyd's
adjoint functor theorem, or Giraud's theorem characterizing
Grothendieck topoi, can be interpreted in the generalized stack
semantics to immediately yield the corresponding result for indexed
categories over any base topos.)

%\medskip

However, in the present paper we are only concerned with the basic
theory of the stack semantics, and its application to comparison of
material and structural set theories.  In particular, by generalizing
the higher-order type theory of the internal logic to a semantics for
structural set theory, the stack semantics enables us to reconstruct
an interpretation of material set theory from any topos, by simply
performing a version of the Cole-Mitchell-Osius construction inside
the stack semantics.  As suggested previously, this unifies, rather
than replaces, the existing approaches of Fourman-Hayashi and of
algebraic set theory.  On the one hand, if the topos is complete, then
the Fourman-Hayashi model can be identified with that derived from the
stack semantics (in fact, one version of Hayashi's approach
essentially \emph{is} the stack semantics model).  On the other hand,
if a topos \bS\ is embedded inside a category of classes \bC\ (and
\bS\ ``generates'' \bC\ in a suitable sense), then the stack semantics
of \bS\ agrees with a fragment of the internal logic of \bC\ (as
summarized above in the case $\bC = \bSh(\bS)$), and in good cases the
resulting models of material set theory can also be identified.

% It seems that each approach has its strengths and weaknesses:
% unbounded set-induction axioms seem to fit more naturally in the
% framework of W-types in categories of classes, whereas the stack
% semantics provides a useful viewpoint on the collection and separation
% axioms.

In a sense, these equivalences are really only an
expected ``consistency check.''  We believe that where the stack
semantics really shows its worth is in suggesting topos-theoretic
counter\-parts of additional set-theoretic axioms such as separation,
collection, and replacement, which until now have been largely absent
from the literature.  With a little thought, it is not hard to write
down axioms for \emph{structural set theory} which correspond under
the Cole-Mitchell-Osius equivalence to the material-set-theoretic
axioms of separation, collection, and replacement.  By then
interpreting these axioms in the stack semantics, we can obtain true
topos-theoretic versions of such axioms which are applicable to an
arbitrary topos.

% \footnote{Of course, these are are all actually axiom
%   \emph{schemas}, but we ignore the distinction for now.}

It turns out that under interpretation in the stack semantics, the
axioms of collection and of separation behave quite differently.  The
axiom of collection (and hence that of replacement) is \emph{always}
valid in the stack semantics of any topos.  (The fact that forcing
semantics always validate collection has been observed elsewhere,
e.g.~\cite{abss:foss-et}.)  It follows that the models of material set
theory constructed from the stack semantics always validate the
material version of collection, providing a neat proof of the
well-known fact that intuitionistically, collection without separation
adds no proof-theoretic strength.  In fact, collection emerges as
precisely the dividing line separating the ``internal'' stack
semantics from the ``external'' logic: a constructively well-pointed topos satisfies
collection (in the ``external'' sense, not in its stack semantics) if
and only if validity in its stack semantics is equivalent to
``external'' truth for \emph{all} sentences.
% Thus, passage to the stack semantics can be
% regarded as an idempotent ``completion'' operation which makes the
% topos constructively well-pointed and adjoins the axiom of collection.

On the other hand, the stack-semantics version of separation seems to
be a new topos-theoretic property.  We call a topos with this property
\emph{autological}.  Autology\footnote{I am indebted to Peter Freyd
  for this noun form, which is definitely easier on the tongue than
  ``autological-ness''.} is also closely connected to the stack
semantics: a topos is autological if and only if its stack semantics
is \emph{representable}, in the same sense that the usual Kripke-Joyal
semantics is always representable.  (This motivates the name: a topos
is autological if the logic of its stack semantics can be completely
``internalized'' by representing subobjects.)  Aut\-ology is also
quite common and well-behaved; for instance, we will show in
\S\ref{sec:fourman} that all complete topoi are autological, and in
in~\cite{shulman:uqsa} we will study its preservation by other
topos-theoretic constructions (including realizability).

Moreover, since autology is simply the axiom of separation in the
stack semantics, the Cole-Mitchell-Osius model of material set theory
constructed from an auto\-logical topos must validate both collection
and separation.  Thus, autology provides a natural and \emph{fully
  topos-theoretic} strengthening of the notion of elementary topos,
which is equiconsistent with full ZF set theory, and which
``explains'' why complete and realizability topoi are known to model
(intuitionistic) ZF.  These two cases have been addressed by algebraic
set theory in~\cite{abss:foss-et,abss:rfosttcoc}, but as remarked
above, the construction there is non-category-theoretic and
non-elementary.

In fact, the relationship of autology to the axiom of separation for
categories of classes is unidirectional: if a category of classes
satisfies separation, then its topos of small objects is autological,
but not conversely.  The issue is that since autology is a first-order
axiom schema, it can only speak about proper classes which are
\emph{definable} in some sense over the topos, but the objects of a
category of classes need not be definable in any such way.

One might, however, ask whether every autological topos could be
embedded in \emph{some} category of classes satisfying separation,
analogously to to how every model of ZFC can be embedded in some model
of Bernays-G\"odel class-set theory (namely, the model consisting of
its definable proper classes).  We will show in~\cite{shulman:2cofc}
that the answer is almost yes: any autological topos can be embedded
in a \emph{2-category} of ``definable stacks'' which satisfies
separation, and which satisfies 2-categorical versions of the axioms
of algebraic set theory.  However, it seems unlikely that this can be
improved to a 1-category of classes, since this would require some
sort of \emph{definable} rigidification of the self-indexing.  This
suggests that perhaps we should study \emph{2-categories of large
  categories} instead of categories of classes.  Such a modification
is also category-theoretically natural, since in practice we generally
only care about elements of a proper class insofar as they form some
large category.  See~\cite{shulman:2cofc,shulman:etlc} for further
exploration of this idea.

The plan of the paper is as follows.  We begin by defining carefully
the material and structural set theories we plan to compare.  Since
material set theories are quite familiar, we list their axioms briefly
in \S\ref{sec:sets}, while in
\S\S\ref{sec:struct-set}--\ref{sec:strong-ax} we spend rather more
time on the axioms of structural set theory.  In both cases we include
``predicative'' theories as well as ``impredicative'' ones.  Then in
\S\ref{sec:constr-mat} we describe a version of the
Cole-Mitchell-Osius comparison that is valid even predicatively, since
the construction does not appear to be in the literature in this
generality.

The core of the paper is \S\ref{sec:internal-logic}, in which we
introduce the stack semantics and prove its basic properties,
including the interpretations of the axioms of separation and
collection mentioned above.  It is then
almost trivial to derive the desired interpretations of material set
theory; we say a few words about this in \S\ref{sec:mater-set-theor}.
Then in \S\S\ref{sec:fourman} and \ref{sec:cofc} we show that this
interpretation agrees with both the Fourman-Hayashi model and with the
models of algebraic set theory.  Appendix \ref{sec:mat-vs-struc} is
devoted to some philosophical remarks comparing material and
structural set theory, particularly as foundations for mathematics.

Our basic reference for topos-theoretic notions
is~\cite{ptj:elephant}.  One important thing to note is that we will
use intuitionistic logic in \emph{all} contexts.  This is in contrast
to the common practice in topos theory of reasoning about the topos
itself using classical logic, though the internal logic of the topos
is intuitionistic.  Our approach is necessary because we want all of
our arguments to remain valid in the stack semantics, which, like the
internal logic of a topos, is generally intuitionistic.  We treat
classical logic as an axiom schema (namely, $\ph\vee\neg\ph$ for all
formulas $\ph$) which may or may not be satisfied by any given model.

All of our theories, material and structural, are of course properly
expressed in a formal first-order language.  However, to make the
paper easier to read, we will usually write out logical formulas in
English, trusting the reader to translate them into formal symbols as
necessary.  To make it clear when our mathematical English is to be
interpreted as code for a logical formula, we will often use sans
serif font and corner quotes.  Thus, for example, in a structural set
theory, \qq{every surjection has a section} represents the axiom of
choice.

If \bM\ models some theory, by a \textbf{formula in \bM} we mean the
result of substituting elements of \bM\ for some of the free variables
of a formula \ph\ in a well-typed way.  The substituted elements of
\bM\ are called \textbf{parameters}.  If parameters are substituted
for \emph{all} the free variables of \ph\ (including the case when
\ph\ had no free variables), we say \ph\ is a \textbf{sentence in
  \bM}.  There is an obvious notion of when a sentence \ph\ in \bM\ is
\textbf{true}, which we denote $\bM\ss\ph$ (read \textbf{\bM\
  satisfies \ph}).  Recall that our metatheory, in which we formulate
this notion of truth, is intuitionistic.

Many of the axioms we state will be axiom schemas depending on some
formula \ph.  In each such case, \ph\ may have arbitrary additional
unmentioned parameters.  Equivalently, such axiom schemas can be
regarded as implicitly universally quantified over all the unmentioned
variables of \ph.

I would like to thank Steve Awodey, Toby Bartels, Peter Lumsdaine,
Colin McLarty, Thomas Streicher, Paul Taylor, and the patrons of the
$n$-Category Caf\'e for many helpful conversations and useful comments
on early drafts of this paper.

% \newpage

\section{Material set theories}
\label{sec:sets}

Our \emph{material set theories} will be theories in one-sorted first
order logic, with a single binary relation symbol \ordin, whose
variables are called \emph{sets}.  (We will briefly mention the
possibility of ``atoms'' which are not sets at the end of this
section.)
% (It is easy to amend the axioms to
% allow ``atoms'' which are not sets, but for our purposes this is
% unnecessary.)
In such a theory, a \emph{\ddo-quantifier} is one of the form $\exists
x\in a.$ or $\forall x\in a.$, and a \emph{\ddo-formula} (also called
\emph{restricted} or \emph{bounded}) is one with only
\ddo-quantifiers.  The prefix ``\ddo-'' on any axiom schema indicates
that the formulas appearing in it are restricted to be \ddo; we
sometimes use the adjective ``full'' to indicate the lack of any such
restriction.

% (Note that intuitionistically, the rest of the
% $\Sigma$-$\Pi$ hierarchy is more difficult to make sense of, since we
% can no longer pull all the unbounded quantifiers out in front.)

\begin{defns}\label{defn:rels}\ 
  \begin{blist}
  \item We write $\setof{ x | \ph(x)}$ for a set $a$ such that $x\in a
    \Iff \ph(x)$, if such a set exists.
  \item A \textbf{relation} from $a$ to $b$ is a set $r$ of ordered
    pairs $(x,y)$ such that $x\in a$ and $y\in b$, where $(x,y) =
    \{\{x\},\{x,y\}\}$.
  \item A relation $r$ from $a$ to $b$ is \textbf{entire} (or a
    \emph{many-valued function}) if for all $x\in a$ there exists
    $y\in b$ with $(x,y)\in r$.  If each such $y$ is unique, $r$ is a
    \textbf{function}.
  \item A relation $r$ is \textbf{bi-entire} if both $r$ and $r^o =
    \setof{(y,x)|(x,y)\in r}$ are entire.
  \end{blist}
\end{defns}

The following set of axioms (in intuitionistic logic) we call the
\textbf{core axioms}; they form a subsystem of basically all material
set theories.
\begin{blist}
\item \emph{Extensionality}: \qq{for any $x$ and $y$, $x=y$ if and
    only if for all $z$, $z\in x$ iff $z\in y$}.
\item \emph{Empty set:} \qq{the set $\emptyset = \{\}$ exists}.
\item \emph{Pairing:} \qq{for any $x$ and $y$, the set $\{x,y\}$
    exists}.
\item \emph{Union:} \qq{for any $x$, the set $\setof{z | \exists y\in
      x. (z\in y)}$ exists}.
\item \emph{\ddo-Separation:} For any \ddo-formula $\ph(x)$, \qq{for
    any $a$, $\setof{x\in a | \ph(x)}$ exists}.
% \item \emph{Two-variable \ddo-replacement of subsets:} For any
%   \ddo-formula $\ph(x_1,x_2,y)$, \qq{for all $a_1$, $a_2$, and $b$, if
%     for every $x_1\in a_1$ and $x_2\in a_2$ there exists a unique $y$
%     with $\ph(x_1,x_2,y)$, and moreover $y\subseteq b$ for this $y$,
%     then the following set exists: \[\setof{y | \exists x_1\in
%       a_1.\exists x_2\in a_2.\ph(x_1,x_2,y)}.\]}
\item \emph{Limited \ddo-replacement}: For any \ddo-formula
  $\ph(x,y)$, \qq{for all $a$ and $b$, if for every $x\in a$ there
    exists a unique $y$ with $\ph(x,y)$, and moreover $z\subseteq b$
    for all $z\in y$, then the set $\setof{y | \exists x\in
      a.\ph(x,y)}$ exists}.
  % \item \emph{Bounded collection}: for any \ddo-formula $\ph(x,y)$,
  %   \qq{for all $a$ and $c$, if for every $x\in a$ there exists a
  %     $y$ such that $\ph(x,y)$, and for any such $y$ we have
  %     $z\subseteq c$ for all $z\in y$, then there is a $b$ such that
  %     for all $x\in a$, there is a $y\in b$ with $\ph(x,y)$, and for
  %     all $y\in b$, there is a $x\in a$ with $\ph(x,y)$}.
\end{blist}

The first five of these are standard, while the last, though very
weak, is unusual.  We include it because we want our core axioms to
imply a well-behaved category of sets, for which purpose the other
five are not quite enough.  It is satisfied in basically all material
set theories, since it is implied both by ordinary replacement (even
\ddo-replacement), and by the existence of power sets (using
\ddo-separation).

%   We also remark on the following.

% \begin{lem}\label{thm:ddorepl-cart}
%   Over the other core axioms, the axiom of two-variable
%   \ddo-replacement of subsets is equivalent to the conjunction of the
%   following.\rm
%   \begin{blist}
%   \item \emph{Cartesian products:} \qq{for any $a$ and $b$, the
%       set $a\times b = \setof{ \{ \{x\},\{x,y\}\} | x\in a, y\in b}$
%       exists}.
%   \item \emph{One-variable \ddo-replacement of subsets:} For any
%     \ddo-formula $\ph(x,y)$, \qq{for all $a$ and $b$, if for every
%       $x\in a$ there exists a unique $y$ with $\ph(x,y)$, and moreover
%       $y\subseteq b$ for this $y$, then the set $\setof{y | \exists
%         x\in a.\ph(x,y)}$ exists.}
%   \end{blist}
% \end{lem}
% \begin{proof}
%   Clearly the two-variable version implies the one-variable version,
%   while we can apply the one-variable version to a cartesian product
%   to recover the two-variable version; thus it remains to construct
%   cartesian products from the two-variable version.  One application
%   constructs $c = \setof{\{x,y\} | x\in a,y\in b}$ from $a$ and $b$
%   (where the bounding set is $a\cup b$), another constructs $d =
%   \setof{ \{x\} | x\in a}$ (the bounding set being $a$), and then a
%   third application constructs $a\times b$ (the bounding set being
%   $c\cup d$).
% \end{proof}

We now list some additional axioms of material set theory, pointing
out the most important implications between them.  Of course, there
are many possible set-theoretic axioms; we will consider only the most
common and important ones.

One major axis along which set theories vary is ``impredicativity,''
expressed by the following sequence of axioms (of increasing
strength):
\begin{blist}
\item \emph{Exponentiation:} \qq{for any $a$ and $b$, the set $b^a$ of
    all functions from $a$ to $b$ exists}.
\item \emph{Fullness:} \qq{for any $a,b$ there exists a set $m$ of
    entire relations from $a$ to $b$ such that for any entire
    relation $f$ from $a$ to $b$, there is a $g\in m$ with
    $g\subseteq f$}.
% \item \emph{Subset collection:} for any formula $\ph(x,y,u)$, \qq{for
%     any $a,b$ there exists a $c$ such that for any $u$, if for all
%     $x\in a$ there is a $y\in b$ with $\ph(x,y,u)$, then there is a
%     $z\in c$ such that for all $x\in a$ there is a $y\in z$ with
%     $\ph(x,y,u)$ and for all $y\in z$ there is an $x\in a$ with
%     $\ph(x,y,u)$}.
\item \emph{Power set:} \qq{for any set $x$, the set $P x = \setof{y |
      y \subseteq x}$ exists}.
\end{blist}
Each of these is strictly stronger than the preceding one, but in the
presence of either of the following:
\begin{blist}
\item \emph{\ddo-classical logic:} for any \ddo-formula \ph, we have
  $\ph\join\neg\ph$.
\item \emph{Full classical logic:} for any formula \ph, we have
  $\ph\join\neg\ph$.
\end{blist}
the hierarchy of predicativity collapses, since power sets can then be
obtained (using limited \ddo-replacement) from sets of functions into a
2-element set.  Hence classical logic is also impredicative.  Also
considered impredicative is:
\begin{blist}
\item \emph{(Full) Separation:} For any formula $\ph(x)$, \qq{for any
    $a$, $\setof{x\in a | \ph(x)}$ exists}.
\end{blist}
Full separation and \ddo-classical logic together imply full classical
logic, since we can form $a = \setof{\emptyset|\ph}\subseteq
\setof{\emptyset}$, and then $(\emptyset\in a)\join (\emptyset\not\in
a)$ is an instance of \ddo-classical logic that implies
$\ph\join\neg\ph$.

% Note that power set and \ddo-separation together imply bounded
% collection.

We also have the collection and replacement axioms.
\begin{blist}
\item \emph{Collection:} for any formula $\ph(x,y)$, \qq{for any $a$,
    if for all $x\in a$ there is a $y$ with $\ph(x,y)$, then there is
    a $b$ such that for all $x\in a$, there is a $y\in b$ with
    $\ph(x,y)$, and for all $y\in b$, there is a $x\in a$ with
    $\ph(x,y)$}.
\item \emph{Replacement:} for any formula $\ph(x,y)$, \qq{for any $a$,
    if for every $x\in a$ there exists a unique $y$ with $\ph(x,y)$,
    then the set $\setof{y | \exists x\in a.\ph(x,y)}$ exists}.
\end{blist}
With an evident extension of \ref{defn:rels} to formulas, we can state
collection as ``if $\ph$ is entire on $a$, then there exists $b$ such
that $\ph$ is bi-entire from $a$ to $b$.''  Collection also evidently
implies replacement.  Classical ZF set theory is usually defined using
replacement rather than collection, but in the presence of the other
axioms of ZF, collection can be proven.  (Replacement and full
classical logic together also imply full separation.)  This is no
longer true in intuitionistic logic, though, or in classical logic
without the axiom of foundation, and in practice the extra strength of
collection is sometimes important.  There are also other versions of
collection, all of which are equivalent in the presence of full
separation, but the one we have chosen seems most appropriate in its
absence.

% ; we will see some exceptionally striking
% examples of this in \S\ref{sec:internal-logic}.  Notwithstanding this,
% however, in an intuitionistic theory lacking full separation, the
% axiom of collection is known to not add any consistency strength; this
% will also fall out of our constructions in
% \S\S\ref{sec:constr-mat}--\S\ref{sec:internal-logic}.

% We also remark on two other versions of collection, which we
% distinguish by ad-hoc names.
% \begin{blist}
% \item \emph{Classical collection:} for any $\ph(x,y)$, \qq{for
%     any $a$, if for all $x\in a$ there is a $y$ with $\ph(x,y)$, then
%     there is a $b$ such that for all $x\in a$, there is a $y\in b$
%     with $\ph(x,y)$}.
% \item \emph{Unconditional collection:} for any $\ph(x,y)$,
%   \qq{for any $a$ there is a $b$ such that for all $x\in a$, if there
%     is a $y$ with $\ph(x,y)$, then there is such a $y\in b$}.
% \end{blist}
% All three versions are equivalent in the presence of full separation,
% but the one we have chosen seems most appropriate in its absence.

We state the axiom of infinity as the existence of the set $\omega$ of
finite von Neumann ordinals.  Together with the core axioms, this
implies the Peano axioms for $\omega$, including induction for any
property to which we can apply separation.  For a stronger version of
induction, we need a separate axiom.
\begin{blist}
\item \emph{Infinity:} \qq{there exists a set $\omega$ such that
    $\emptyset\in\omega$, if $x\in \omega$ then $x\cup \{x\}\in
    \omega$, and if $z$ is any other set such that $\emptyset \in z$
    and if $x\in z$ then $x\cup \{x\}\in z$, then $\omega\subseteq
    z$}.
\item \emph{Induction:} for any formula \ph, \qq{if $\ph(\emptyset)$
    and $\ph(x) \imp \ph(x\cup \{x\})$ for any $x\in \omega$, then
    $\ph(x)$ for all $x\in \omega$}.
\end{blist}
There is also:
% Infinity is the only axiom other than ``empty set'' which asserts the
% existence of a set unconditionally.  Of course, with \ddo-separation
% one can construct the empty set from any other set.

% NB: This only implies an NNO in \Set, in the strong parametrized
% version (which implies the Peano postulates in any Heyting
% pretopos), if we also have either powersets or induction.

\begin{blist}
\item \emph{Choice:} \qq{for any set $a$, if for all $x\in a$ there
    exists a $y\in x$, then there exists a function $f$ from $a$ to
    $\bigcup a$ such that $f(x)\in x$ for all $x\in a$}.
\end{blist}
By Diaconescu's argument~\cite{diaconescu:ac}, choice and power set
together imply \ddo-classical logic.  There are of course many
variants of choice, such as countable, dependent, or ``multiple''
choice, or the ``presentation axiom'' (a.k.a. ``EPSets'' or
``COSHEP,'' the Category Of Sets Has Enough Projectives), but we will
confine our attention to the ordinary version.

% REA?  AMC?  SVC?

% \begin{blist}
% \item \emph{Regular extension:} \qq{for any set $u$, there exists a
%     transitive set $v$ with $u\subseteq v$ such that for any $a\in v$
%     and any entire relation $f$ from $a$ to $v$, there exists a $b\in
%     v$ such that $f$ is bi-entire from $a$ to $b$}.
% \end{blist}

Finally, we have the family of ``foundation-type'' axioms.  Recall
that a set $x$ is called \emph{transitive} if $z\in y\in x$ implies
$z\in x$.
% An \emph{accessible pointed graph} or \apg\
% is a set $X$ equipped with a binary relation $\prec$ and a
% distinguished point $\star$ such that every element of $X$ admits some
% path $x \prec \cdots \prec \star$ to $\star$.  If $a$ is a set, an
% \emph{exact picture} of $a$ is an \apg\ $X$ with a function assigning
% a set $d_x$ to each $x\in X$ such that $d_x = \setof{ d_y | y\prec x}$
% for all $x$, $d_x = d_y$ implies $x=y$, and $a = d_\star$.  (Note that
% in this case, $\setof{d_x | x \in X}$ is a transitive set, i.e.\ a
% superset of its own union.)
A set $X$ with a binary relation $\prec$ is called \emph{well-founded}
if whenever $Y\subseteq X$ is \emph{inductive}, in the sense that
$\forall y\prec x. (y\in Y)$ implies $x\in Y$, then in fact $Y=X$.  It
is \emph{extensional} if $\forall z\in X.(z\prec x\Iff z\prec y)$
implies $x=y$.
\begin{blist}
\item \emph{Set-induction:} for any formula $\ph(x)$, \qq{if for any
    set $y$, $\ph(x)$ for all $x\in y$ implies $\ph(y)$, then $\ph(x)$
    for any set $x$}.
\item \emph{Foundation}, a.k.a.\ \emph{\ddo-Set-induction:} like
  set-induction, but only for \ddo-formulas.
\item \emph{Transitive closures:} \qq{every set is a subset of a
    smallest transitive set}.
\item \emph{Mostowski's principle:} \qq{every well-founded extensional
    relation is isomorphic to a transitive set equipped with the
    relation $\in$}.
\end{blist}
The usual ``classical'' formulation of foundation is
\begin{blist}
\item \emph{Regularity:} \qq{if $x$ is nonempty, there is some $y\in
    x$ such that $x\cap y = \emptyset$}.
\end{blist}
In the presence of transitive closures, regularity is equivalent to
the conjunction of \ddo-set-induction and \ddo-classical logic.  Thus,
in an intuitionistic theory, \ddo-set-induction is preferable.  Of
course, \ddo-set-induction follows from full set-induction, while the
converse is true if we have full separation.  Set-induction also
implies ordinary induction, since $<$ and $\in$ are the same for von
Neumann ordinals.

Transitive closures can be constructed either from set-induction and
replacement (see~\cite{aczel:cst}), or from infinity, ordinary (full)
induction, and replacement.  Mostowski's original proof shows that his
principle follows from replacement and full separation.  Transitive
closures and Mostowski's principle are not usually considered
``foundation'' axioms, but we group them thusly for several reasons:
\begin{blist}
\item The conjunction of transitive closures, Mostowski's principle,
  and foundation is equivalent to the single statement ``to give a
  \emph{set} is the same as to give a well-founded extensional
  relation with a specified element.''  One could argue that this sums
  up the objects of study of well-founded material set theory.
\item As we will see in \S\ref{sec:constr-mat}, it follows that the
  material set theories that can be constructed from structural ones
  are precisely those which satisfy these three axioms.
\item Finally, the various axioms of \emph{anti-foundation} are also
  naturally stated as ``to give a set is the same as to give a
  \emph{(blank)} relation with a specified element,'' where
  \emph{(blank)} denotes some stronger type of extensionality (there
  are several choices; see~\cite{aczel:afa}).  Thus, these axioms
  naturally include analogues not just of foundation, but also of
  transitive closures and Mostowski's principle.
\end{blist}

Some particular material set theories with names include the
following.  Note that the prefix \emph{constructive} is generally
reserved for ``predicative'' theories (including, potentially,
exponentiation and fullness, but never power sets or full separation)
while \emph{intuitionistic} refers to ``impredicative'' (but still
non-classical) theories which may have power sets or full separation.
None of these theories include our axiom of limited \ddo-replacement
explicitly, but all include either power sets or a stronger version of
replacement, both of which imply it.
\begin{blist}
\item The standard \textbf{Zermelo-Fraenkel set theory with Choice
    (ZFC)} includes \emph{all} the axioms mentioned above, although it
  suffices to assume the core axioms together with full classical
  logic, power sets, replacement, infinity, foundation, and choice.
\item By \textbf{Zermelo set theory (Z)} one usually means the core
  axioms together with full classical logic, power sets, full
  separation, and infinity.
  % (Zermelo himself also assumed choice, but
  % now one writes ZC when choice is included.  The axiom of foundation
  % was also added later by Skolem.)
\item The theory \textbf{ZBQC} of~\cite{maclane:formandfunction},
  called \textbf{RZC} in~\cite[VI.10]{mm:shv-gl}, consists of the core
  axioms together with full classical logic, power sets, infinity,
  choice, and regularity.
\item The \textbf{Mostowski set theory (MOST)}
  of~\cite{mathias:str-maclane} consists of \textbf{ZBQC} together
  with transitive closures and Mostowski's principle.
  % The theory \textbf{M$_0$} of~\cite{mathias:str-maclane} is the core
  % axioms together with power sets and full classical logic.  His
  % \textbf{M$_1$} adds to this transitive closures and foundation, his
  % \textbf{M} adds infinity to M$_1$ (To Do: in the right form?), his
  % \textbf{MAC} adds choice to M, and what he calls \textbf{Mostowski
  %   set theory (MOST)} is MAC together with Mostowski's principle.
\item By \textbf{Kripke-Platek set theory (KP)} is usually meant the
  core axioms together with full classical logic, a form of
  \ddo-collection (sufficient to imply our limited \ddo-replacement),
  and set-induction for some class of formulas.
  % (such as \ddo, $\Pi_1$, or arbitrary, depending on the source).
\item Aczel's \textbf{Constructive Zermelo-Fraenkel (CZF)} consists of
  the core axioms together with fullness, collection, infinity, and
  set-induction; see~\cite{aczel:cst}.  His \textbf{CZF$_0$} consists
  of the core axioms, replacement, infinity, and induction.
%%  Sometimes weak variants of choice are included.
% \item Myhill's \textbf{Constructive Set Theory (CST)} consists of the
%   core axioms together with exponentials, replacement, and infinity
%   ; see~\cite{myhill:cst}.  (Has a separate type of naturals.)
\item The usual meaning of \textbf{Intuitionistic Zermelo-Fraenkel
    (IZF)} is the core axioms together with power sets, full
  separation, collection, infinity, and set-induction.  This is the
  strongest theory that can be built from the above axioms without any
  form of classical logic or choice.
\item The \textbf{Basic Constructive Set Theory (BCST)}
  of~\cite{afw:ast-ideals} consists essentially of the core axioms
  together with replacement.  Their \textbf{Constructive Set Theory
    (CST)} adds exponentials to this, and their \textbf{Basic
    Intuitionistic Set Theory (BIST)} adds power sets.  (These
  theories also allow atoms; see below.)
\item The \textbf{Rudimentary Set Theory (RST)}
  of~\cite{vdbm:pred-i-exact} consists of the core axioms together
  with collection and set-induction.
\end{blist}

% Finally, there is one further axiom we will have occasion to consider.
% The core axioms by themselves are insufficient to construct a model of
% structural set theory; we need a little bit more.  However, none of
% the axioms we have so far mentioned are satisfied by \emph{all} of the
% material set theories listed above.  Thus, we extract the following
% very weak axiom of replacement, which suffices to construct a
% well-behaved category of sets and is satisfied by all of the above
% material set theories.

So far we have considered only theories of pure sets, i.e.\ in which
everything is a set.  If we want to also allow ``atoms'' or
``urelements'' which are not sets, we can modify any of the above
theories by adding a predicate \qq{$x$ is a set}, whose negation is
read as \qq{$x$ is an atom}.  We then add the following axioms:
\begin{blist}
\item \emph{Decidability of sethood:} \qq{every $x$ is either a set or
    an atom}.
\item \emph{Only sets have elements:} \qq{if $x\in y$, then $y$ is a set}.
\item \emph{Set of atoms:} \qq{the set $\setof{x | x \text{ is an
        atom}}$ exists}.
\end{blist}
and modify the extensionality axiom so that it applies only to sets:
\begin{blist}
\item \emph{Extensionality for sets:} \qq{for any sets $x$ and $y$,
    $x=y$ if and only if for all $z$, $z\in x$ iff $z\in y$}.
\end{blist}
% It is common to add ``A'' to theories which allow atoms, e.g.\ ZFA is
% ZF with atoms.

\section{Structural set theory: well-pointed Heyting pretoposes}
\label{sec:struct-set}

We now move on to structural set theories.  Here we must be a little
more careful with the ambient logic.  The theory of categories can, of
course, be formulated in ordinary first-order logic, but this is
unsatisfactory because it allows us to discuss equality of objects.
We need all our formulas to express ``category-theoretic'' facts, and
thus we must avoid ever asserting that two objects are equal (rather
than isomorphic).  However, we cannot simply remove the equality
predicate on objects from the ordinary first-order theory, because
there are situations in which we \emph{do} need to know that two
objects are the same.  For example, to compose two arrows $f$ and $g$, we
need to know that the source of $g$ is the same as the target of
$f$.

The solution to this problem, which has seemingly been rediscovered
many times (see, for
instance,~\cite{blanc:eqv-log,freyd:invar-eqv,makkai:folds,makkai:comparing})
is to use a language of \emph{dependent types}.  We give here a
somewhat informal description in an attempt to avoid the complicated
precise syntax of dependent type theory, but the reader familiar with
DTT should easily be able to expand our description into a formal one.

\begin{defn}\label{defn:lang-cat}
  The \textbf{language of categories} (with terminal object) consists
  of the following.
  \killspacingtrue
  \begin{enumerate}
  \item There is a type of \emph{objects}.
  \item There is a specified object-term $1$.
  \item For object-terms $X$ and $Y$, there is a type $X\to Y$
    of \emph{arrows from $X$ to $Y$}.
  \item For each object-term $X$, there is an arrow-term $1_X\colon
    X\to X$.
  \item For any object-terms $X$, $Y$, and $Z$ and each pair of
    arrow-terms $f\maps X\to Y$ and $g\maps Y\to Z$, there is an
    arrow-term $(g\circ f)\maps X\to Z$.
  \item For any arrow-terms $f,g\colon X\to Y$, there is an atomic
    formula $(f=g)$.
  \end{enumerate}
  \killspacingfalse
  Non-atomic formulas are built up from atomic ones in the usual way,
  using connectives $\top$, $\bot$, $\meet$, $\join$, $\imp$, $\neg$,
  and quantifiers $\im$ and $\coim$.  There are of course two types of
  quantifiers, for object-variables and for arrow-variables.  Any
  category is a structure for this language, and we have the usual
  satisfaction relation $\bA\ss\ph$ for a category \bA\ and a sentence
  \ph\ in \bA.
\end{defn}

Of course, this theory uses very little of the full machinery of DTT,
and in particular it may be said to live in the ``first-order''
fragment of DTT.  In this way it is closely related to the logic FOLDS
studied in~\cite{makkai:folds}, although we are allowing some term
constructors in addition to relations.

Our inclusion of a specified term $1$, intended to denote a terminal
object, is just for convenience; it will make it easier to single out
the \ddo-formulas.  The other main thing to note is that, as in FOLDS,
the only atomic formulas are equalities between parallel arrows; the
language does not allow us to even discuss whether two objects are
equal.  This implies that ``truth is isomorphism- and
equivalence-invariant,'' in a sense we can make precise as follows.

Let \bA\ be a category, let \ph\ be a sentence in \bA, and suppose we
are given, for each object-parameter $A$ in \ph, an isomorphism
$A\toiso A'$ in \bA.  Let $\ph'$ denote the sentence in \bA\ obtained
from \ph\ by replacing each object-parameter $A$ by $A'$ and each
arrow-parameter $A\too[j] B$ by the composite
\[A' \too[\iso] A \too[j] B\too[\iso] B'.
\]
We call $\ph'$ an \textbf{isomorph} of \ph.  Likewise, if $F\colon
\bA\to\bB$ is any functor and \ph\ is a sentence in \bA, we write
$F(\ph)$ for the sentence in \bB\ obtained by applying $F$ to all the
parameters of \ph.  The following can then easily be proven by
induction on the construction of formulas.

\begin{lem}[Isomorphism-invariance of truth]\label{thm:isoinvar-truth}
  If \ph\ is a sentence in \bA\ and $\ph'$ an isomorph of \ph, then
  $\bA\ss\ph$ if and only if $\bA\ss\ph'$.\qed
\end{lem}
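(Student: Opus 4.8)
The plan is structural induction on \ph. As is standard for invariance statements of this kind, the induction does not go through for sentences alone, so I would first strengthen the statement to \emph{open} formulas: given a category \bA, a formula \ph\ in the language of categories, a choice of isomorphism $\iota_A\colon A\toiso A'$ in \bA\ for each object-parameter $A$ of \ph, and a choice of isomorphism $\kappa\colon X\toiso X'$ for each object assigned to a free object-variable of \ph, we have $\bA\ss\ph \iff \bA\ss\ph'$, where \ph'\ is formed as in the statement from the $\iota$'s (and $\kappa$'s), and where the value assigned on the right to a free arrow-variable $g\colon X\to Y$ is obtained from its left-hand value $h$ by conjugation $h' = \kappa_Y\circ h\circ\kappa_X\inv$ (and likewise for arrow-parameters, using $\iota$'s). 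The original lemma is the special case with no free variables. The engine of the argument is a preliminary remark about terms: since every object-term is either $1$ or an object-parameter, the chosen isomorphisms assign to every object-term $X$ occurring in \ph\ an isomorphism $\iota_X\colon X\toiso X'$ (with $\iota_1 = 1_1$), and a routine sub-induction on arrow-terms shows that if an arrow-term $t\colon X\to Y$ evaluates to $h$ in \bA, then its translate $t'$ evaluates to $\iota_Y\circ h\circ\iota_X\inv$ --- the cases being the parameter clause, $1_X\mapsto 1_{X'}$, and $(g\circ f)' = g'\circ f'$, where the $\iota$'s cancel telescopically.

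Granting this, the atomic case $(f=g)$ is precisely the assertion that $h_f = h_g$ if and only if $\iota_Y\circ h_f\circ\iota_X\inv = \iota_Y\circ h_g\circ\iota_X\inv$, which holds because $\iota_X$ and $\iota_Y$, being isomorphisms, are monic and epic. The steps for the propositional connectives $\top,\bot,\meet,\join,\imp,\neg$ are immediate, since $(-)'$ commutes with each connective and the (intuitionistic) satisfaction clauses are compositional. For an object quantifier $\im x. \ph$, the statement $\bA\ss\im x.\ph$ unwinds to the existence of an object $X$ of \bA\ with $\bA\ss\ph$ (with $X$ for $x$); applying the inductive hypothesis with the trivial isomorphism $1_X$ on $x$ gives $\bA\ss\ph'$ with $X$ for $x$, hence $\bA\ss\im x.\ph' = (\im x.\ph)'$, and the converse is symmetric (using that forming the isomorph with respect to the inverse isomorphisms returns the original formula up to provable equality of arrow-terms). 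For an arrow quantifier $\im g.\ph$ with $g\colon X\to Y$, the possible values of $g$ in \bA\ biject, via $h\mapsto\kappa_Y\circ h\circ\kappa_X\inv$, with those of the translate, so the inductive hypothesis yields the equivalence directly; the universal quantifiers \coim\ are dual.

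I do not expect a genuine obstacle --- the content is bookkeeping --- but the point that requires care, and that forces both the passage to open formulas and the term-translation lemma, is the dependent typing: the type $X\to Y$ of a free arrow-variable depends on object-variables, so arrow-values must be transported simultaneously with, and compatibly with, the isomorphisms chosen on the corresponding object-values, and one must verify that the syntactic operation ``form the isomorph'' commutes with substitution into quantified variables up to the identities $1\circ j\circ 1 = j$ and $\iota_Z\circ(g\circ f)\circ\iota_X\inv = (\iota_Z\circ g\circ\iota_Y\inv)\circ(\iota_Y\circ f\circ\iota_X\inv)$. The term-translation lemma is exactly what records this compatibility, and once it is in place the induction is mechanical.
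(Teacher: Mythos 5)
Your proposal is correct and is exactly the induction on the construction of formulas that the paper invokes (the paper dispenses with the proof in one line, leaving the induction as routine). Your strengthening to open formulas with transported free-variable values and the term-translation sub-lemma are precisely the bookkeeping needed to make that induction go through.
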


\begin{lem}[Equivalence invariance of truth]
  If $F$ is fully faithful and essentially surjective, then
  $\bA\ss\ph$ if and only if $\bB \ss F(\ph)$.\qed
\end{lem}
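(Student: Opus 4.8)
The plan is to prove, by induction on the construction of formulas, the following more uniform statement: for every formula $\psi$ of the language of categories in a context $\Gamma$ of typed variables, and every interpretation $\gamma$ of $\Gamma$ in \bA\ (an object of \bA\ for each object-variable, an appropriately typed arrow of \bA\ for each arrow-variable), one has $\bA\ss\psi[\gamma]$ if and only if $\bB\ss\psi[F\gamma]$, where $F\gamma$ is the interpretation of $\Gamma$ in \bB\ obtained by applying $F$ to every object and arrow in $\gamma$. The lemma is the instance in which $\Gamma$ is the list of object- and arrow-parameters of \ph\ and $\gamma$ their chosen values, since $F(\ph)$ is by definition the underlying formula of \ph\ interpreted via $F\gamma$. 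Before starting the induction I would record the small observation that, because $F$ is fully faithful and essentially surjective, it carries the terminal object of \bA\ to one isomorphic to the terminal object of \bB\ (an \bA-object $A$ is terminal iff $FA$ is: full-faithfulness gives ``reflects'', and full-faithfulness together with essential surjectivity gives ``preserves''); this canonical isomorphism is the only place the constant $1$ enters, and it is threaded through the induction exactly as the isomorphisms produced by essential surjectivity are, via \autoref{thm:isoinvar-truth}.

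The base case is an atomic formula $(s=t)$ between parallel arrow-terms $s,t$. Under any interpretation, $s$ and $t$ denote arrows of \bA\ assembled from interpreted variables by identities and composition, so functoriality gives $s[F\gamma]=F(s[\gamma])$ and likewise for $t$; hence $\bA\ss(s=t)[\gamma]$ implies $\bB\ss(s=t)[F\gamma]$, and the converse holds because $F$ is \emph{faithful}. The clauses for $\top,\bot,\meet,\join,\imp,\neg$ follow immediately from the inductive hypothesis, so the substance is in the quantifiers.

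For an arrow-quantifier $\exists f\colon X\to Y.\,\chi$ with $\gamma(X)=A$ and $\gamma(Y)=A'$: the forward direction sends an \bA-witness $f_0\colon A\to A'$ to the \bB-witness $Ff_0$ and applies the inductive hypothesis (functoriality only); the backward direction takes a \bB-witness $g\colon FA\to FA'$, uses \emph{fullness} to write $g=Ff_0$, and applies the inductive hypothesis. Universal arrow-quantifiers are dual. For an object-quantifier $\exists X.\,\chi$ (so $\chi$ lives in the context $\Gamma,X$): the forward direction again sends an \bA-witness $A$ to $FA$ and uses the inductive hypothesis. For the backward direction, given a \bB-object $B$ with $\bB\ss\chi[F\gamma,X\mapsto B]$, use \emph{essential surjectivity} to pick an object $A$ of \bA\ with an isomorphism $\theta\colon FA\toiso B$; since $X$ is introduced last in $\Gamma,X$, no variable of $\Gamma$ has $X$ in its type, so the interpretations $(F\gamma,X\mapsto B)$ and $(F\gamma,X\mapsto FA)$ differ only at $X$, and hence $\chi[F\gamma,X\mapsto FA]$ is an \emph{isomorph} of $\chi[F\gamma,X\mapsto B]$ (transport along $\theta^{-1}$ at the parameter $X$, identities elsewhere). \autoref{thm:isoinvar-truth} then gives $\bB\ss\chi[F\gamma,X\mapsto FA]$, which is $\chi$ under the interpretation $F(\gamma,X\mapsto A)$, so the inductive hypothesis yields $\bA\ss\chi[\gamma,X\mapsto A]$ and therefore $\bA\ss(\exists X.\chi)[\gamma]$. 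Universal object-quantifiers are dual. Every appeal to essential surjectivity occurs either inside an existential being proved or under a universal being proved, so the argument is constructive and valid in the intuitionistic metatheory.

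I expect the genuine obstacle to be the object-quantifier case: verifying that $\chi[F\gamma,X\mapsto FA]$ really is an isomorph of $\chi[F\gamma,X\mapsto B]$ in the precise sense of the paragraph preceding \autoref{thm:isoinvar-truth}, so that that lemma applies. This is where the dependent typing must be handled with care — one must check that re-interpreting the single object-parameter $X$ forces exactly the re-composition of the arrow-parameters incident to it and nothing else, which is what the isomorph construction does and what is ensured by having peeled the quantifiers one at a time (rebinding the affected bound arrow-variables along $\theta$ as part of that step). The handling of the constant $1$ is a second, milder wrinkle of the same flavor. The remaining clauses — atomic formulas, propositional connectives, arrow-quantifiers — form a routine induction resting on functoriality, faithfulness, and fullness.
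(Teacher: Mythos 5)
Your proof is correct and is exactly the argument the paper intends: the paper gives no written proof, simply noting that the lemma "can easily be proven by induction on the construction of formulas," and your induction (functoriality/faithfulness for atomic formulas, fullness for arrow-quantifiers, essential surjectivity plus \autoref{thm:isoinvar-truth} for object-quantifiers) is the standard way to carry that out. Your extra care with the constant $1$ and with the typing of the context in the object-quantifier case addresses real wrinkles that the paper's statement glosses over, and your handling of them is sound.
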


In fact, the dependently typed theory of categories characterizes
exactly those properties of categories which are invariant under
equivalence in this sense;
see~\cite{blanc:eqv-log,freyd:invar-eqv,makkai:folds}.

We now begin a listing of the axioms of the structural set theories we
will consider.  First of all, we must have a category, so we include:
\begin{blist}
\item \emph{Identity:} $1_Y\circ f = f = f\circ 1_X$.
\item \emph{Associativity:} $h\circ (g\circ f)=(h\circ g)\circ f$.
\end{blist}
Many of the additional axioms of structural set theories assert
well-known categorical properties.  Recall that a \textbf{Heyting
  category} is a category \bS\ satisfying the following axioms.
\begin{blist}
\item \emph{Finite limits:} \qq{\bS\ has pullbacks, and $1$ is a terminal object}.
% \item \emph{Terminal object:} \qq{there is an object $1$ such that for
%     any object $X$, there is a unique arrow $X\to 1$}.
% \item \emph{Pullbacks:} \qq{for any objects $X,Y,Z$ and arrows $f\maps
%     X\to Z$ and $g\maps Y\to Z$, there exists an object $P$ and arrows
%     $p\maps P\to X$ and $q\maps P\to Y$ such that $fp=gq$ and for any
%     object $Q$ and maps $r\maps Q\to X$, $s\maps Q\to Y$ with $fr=gs$,
%     there is a unique arrow $h\maps Q\to P$ such that $r=ph$ and
%     $s=qh$}.
\item \emph{Regularity:} \qq{every morphism factors as a regular epi
    followed by a monic, and regular epis are stable under pullback}.
\item \emph{Coherency:} \qq{finite unions of subobjects exist and are
    stable under pullback}.
\item \emph{Dual images:} \qq{for any $f\maps X\to Y$, the pullback
    functor $f^*\maps \Sub(Y)\to\Sub(X)$ has a right adjoint
    $\coim_f$}.
\end{blist}
Here $\Sub(X)$ denotes the poset of subobjects of $X$, i.e.\ of
monomorphisms into $X$ modulo isomorphism.  If \bS\ also satisfies
\begin{blist}
\item \emph{Positivity/extensivity:} \qq{\bS\ has disjoint and
    pullback-stable binary coproducts}.
\end{blist}
it is called \textbf{positive} (or \textbf{extensive}).  If it is
positive and also satisfies:
\begin{blist}
\item \emph{Exactness:} \qq{every equivalence relation is a kernel}.
\end{blist}
it is called a \textbf{Heyting pretopos}.  A Heyting pretopos which
also satisfies:
\begin{blist}
\item \emph{Local cartesian closure:} \qq{for every arrow $f\maps X\to Y$,
    the pullback functor $f^*\maps \bS/Y\to\bS/X$ has a right adjoint
    $\Pi_f$}
\end{blist}
(which implies dual images) is called a \textbf{\Pi-pretopos}.
Finally, if it satisfies:
\begin{blist}
\item \emph{Power objects:} \qq{every object $X$ has a power object $P
    X$}
% for every object $X$, there is a monic
%     $\ordin_X\mono X\times PX$ such that for any monic $R\mono X\times
%     Y$, there is a unique $f\maps Y\to PX$ such that
%     $(1,f)^*\ordin_X\iso R$
\end{blist}
(which, together with finite limits, implies all the other axioms of a
\Pi-pretopos) it is called an \textbf{(elementary) topos}.  See,
e.g.,~\cite[\S A.1]{ptj:elephant} for more details.  In particular, we
remind the reader that in a pretopos, every monic and every epic is
regular, and thus it is \emph{balanced} (every monic epic is an
isomorphism).  Also, the self-indexing of any regular category is a
prestack for its regular topology (generated by regular epimorphisms),
and it is a stack if the category is exact (such as a pretopos).

\begin{rmk}
  Axioms which assert the existence of adjoints (such as dual images
  and local cartesian closure) seemingly go beyond first-order logic,
  but they can easily be formulated in an elementary way, as simply
  asserting the existence of objects with a suitable universal
  property.
\end{rmk}

% \begin{rmk}
%   As in case of material set theory, there are other natural
%   category-theoretic axioms in between the theory of \Pi-pretoposes
%   and toposes, such as the existence of `W-types'~\cite{mp:wftrees}.
%   For simplicity we limit ourselves to those mentioned above, although
%   we will give a structural version of the \emph{fullness} axiom
%   below.
% \end{rmk}

Three other important axioms are the following:
\begin{blist}
\item \emph{Booleanness:} \qq{every subobject has a complement}.
\item \emph{Full classical logic:} for any formula \ph, we have
  $\ph\join\neg\ph$.
\item \emph{Infinity:} \qq{there exists a natural number object
    (\nno)}.
\item \emph{Choice:} \qq{every regular epimorphism splits}.
\end{blist}
(If \bS\ is not cartesian closed, then the definition of an \nno\
should be taken with arbitrary parameters;
see~\cite[A2.5.3]{ptj:elephant}.)

Each of the above axioms (beyond those of a Heyting pretopos) is, more
or less obviously, a counterpart of some axiom of material set theory.
The first easy observation is that each material axiom implies its
corresponding structural axiom for the category of sets.

\begin{thm}\label{thm:iz-topos}
  If \bV\ satisfies the core axioms of material set theory, then the
  sets and functions in \bV\ form a Heyting pretopos $\bbSet(\bV)$.
  Moreover:
  \killspacingtrue
  \begin{enumerate}
  \item If \bV\ satisfies exponentiation, then $\bbSet(\bV)$ is
    locally cartesian closed.\label{item:iz-topos-lcc}
  % \item \bV\ satisfies exponentiation if and only if $\bbSet(\bV)$ is
  %   cartesian closed, and if and only if it is locally cartesian
  %   closed.\label{item:iz-topos-lcc}
%  \item \bV\ satisfies fullness if and only if $\bbSet(\bV)$ does.
  \item If \bV\ satisfies the power set axiom, then $\bbSet(\bV)$ is a
    topos.
  % \item \bV\ satisfies the power set axiom if and only if $\bbSet(\bV)$
  %   is a topos.
  \item If \bV\ satisfies \ddo-classical logic, then $\bbSet(\bV)$ is
    Boolean.
  % \item \bV\ has \ddo-classical logic if and only if $\bbSet(\bV)$ is
  %   Boolean.
  \item If \bV\ satisfies full classical logic, so does $\bbSet(\bV)$.
  \item If \bV\ satisfies infinity and exponentials, then
    $\bbSet(\bV)$ has a \nno.\label{item:iz-topos-nno}
  \item If \bV\ satisfies the axiom of choice, then so does $\bbSet(\bV)$.
  % \item \bV\ satisfies the axiom of choice if and only if $\bbSet(\bV)$
  %   does.
  \end{enumerate}
  \killspacingfalse
\end{thm}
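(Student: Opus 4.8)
The plan is to take $\bbSet(\bV)$ to have the sets of $\bV$ as objects, the functions of $\bV$ as arrows, $\setof{\emptyset}$ as the chosen terminal object $1$, $\setof{(x,x) | x\in a}$ as $1_a$, and ordinary relational composition as composition. The identity and associativity axioms are then immediate from extensionality, once one knows that the relevant sets exist. The first real work is \emph{finite limits}: a pullback of $f\maps a\to c$ and $g\maps b\to c$ is cut out of the product $a\times b$ by $\ddo$-separation along the $\ddo$-formula ``$f(x)=g(y)$'' (where $f(x)$ abbreviates ``the $z$ with $(x,z)\in f$'', which is $\ddo$), so everything reduces to constructing binary products. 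This is precisely where limited $\ddo$-replacement is needed: one forms $\setof{x}\times b$ by replacing $y\mapsto(x,y)$ along $y\in b$ (the elements of $(x,y)=\setof{\setof{x},\setof{x,y}}$ being subsets of $\setof{x}\cup b$), then obtains by a short further sequence of replacements and unions a single bounding set $c_0$ with $z\subseteq c_0$ whenever $x\in a$ and $z\in\setof{x}\times b$, and finally applies limited $\ddo$-replacement to $x\mapsto\setof{x}\times b$, bounded by $c_0$, to produce $a\times b$. Composites then exist by $\ddo$-separation from $a\times c$, and $\bbSet(\bV)$ is a category with finite limits.

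For the remaining Heyting-pretopos axioms: any $f\maps a\to b$ factors as the surjection $a\epi\operatorname{im}f$ followed by the inclusion $\operatorname{im}f\mono b$, with $\operatorname{im}f=\setof{y\in b | \exists x\in a.\,f(x)=y}$ given by $\ddo$-separation; a surjection $q$ is the coequalizer of its kernel pair (which exists as a pullback), since any map coequalizing the kernel pair factors uniquely through $q$ via a function carved out of a product by $\ddo$-separation, and surjections are pullback-stable, so $\bbSet(\bV)$ is regular. Subobjects are represented by subsets; $\emptyset$ and binary unions $s\cup t$ give finite unions of subobjects, and preimages $f^{-1}(s)=\setof{x | f(x)\in s}$ commute with them, giving coherency. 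The dual image is $\coim_f(s)=\setof{y\in b | \forall x\in a.\,(f(x)=y\imp x\in s)}$, again by $\ddo$-separation, the adjunction $f^{*}(t)\subseteq s\Iff t\subseteq\coim_f(s)$ being routine. For positivity take $a+b=(\setof{0}\times a)\cup(\setof{1}\times b)$ with $0=\emptyset$ and $1=\setof{\emptyset}$; disjointness uses $0\ne1$ (from extensionality), and pullback-stability uses that membership in $\setof{0,1}$ is a decidable disjunction handed to us by pairing, so that any object over $a+b$ is the coproduct of the preimages of the two summands. Finally, an equivalence relation $r$ on $a$ has quotient $a/r=\setof{[x] | x\in a}$, where $[x]=\setof{y\in a | (x,y)\in r}$; this exists by limited $\ddo$-replacement bounded by $\bigcup a$, the map $x\mapsto[x]$ is a surjection, and $[x]=[y]\Iff(x,y)\in r$ identifies $r$ with its kernel pair, so $\bbSet(\bV)$ is exact. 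Hence it is a Heyting pretopos.

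The six refinements are localized additions. (i) Given exponentiation, the cartesian-closed structure of a slice $\bbSet(\bV)/a$ is computed fibrewise: the exponential of $(x\to a)$ by $(y\to a)$ has fibre $y_i^{x_i}$ over $i\in a$, and its total space $\setof{(i,g) | i\in a,\ g\in y_i^{x_i}}$ is assembled by limited $\ddo$-replacement with the $y_i^{x_i}$ bounded via exponentials, so $\bbSet(\bV)$ is locally cartesian closed. (ii) Given power sets, $Pa=\setof{y | y\subseteq a}$ with the membership relation is a power object: the classifying arrow of $r\subseteq a\times b$ sends $y\mapsto\setof{x\in a | (x,y)\in r}$ (by $\ddo$-separation) and is unique by extensionality. (iii) Given $\ddo$-classical logic, $s\subseteq a$ has complement $a\setminus s=\setof{x\in a | x\notin s}$, and $s\cup(a\setminus s)=a$ is exactly an instance of $\ddo$-classical logic, so $\bbSet(\bV)$ is Boolean. (iv) Given full classical logic in $\bV$, set up the obvious translation of the language of categories into the language of $\bV$ (object-variables to set-variables, arrow-variables to function-variables, equality of arrows to equality of sets), under which $\bbSet(\bV)\ss\ph$ holds iff $\bV$ satisfies the translate; since the translation is homomorphic on connectives and quantifiers, excluded middle for $\ph$ follows from excluded middle in $\bV$ for the (possibly unbounded) translated formula. (v) Given infinity and exponentials, $\omega$ with zero $\emptyset$ and successor $x\mapsto x\cup\setof{x}$ is a \nno: a recursively defined map $\omega\to a$ is the union of its finite approximations $(n+1)\to a$, and the collection of all such approximations is a set because each lies in the exponential $a^{n+1}$ while being bounded as a subset of $\omega\times a$, so limited $\ddo$-replacement applies; coherence and uniqueness of the approximations come from ordinary induction over $\omega$, and the parameterized form of the universal property (needed when $\bbSet(\bV)$ is not cartesian closed) is handled the same way. (vi) Given choice in $\bV$, a surjection $f\maps b\to a$ splits: apply choice to the set $\setof{f^{-1}(x) | x\in a}$ of its inhabited fibres to obtain $g$ with $g(w)\in w$, and then $x\mapsto g(f^{-1}(x))$ is a section of $f$.

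The argument involves no deep idea; the one thing needing real care is that the core axioms supply only $\ddo$-separation and limited $\ddo$-replacement, so every set used above---binary products and pullbacks, image factorizations, quotients by equivalence relations, the fibrewise exponentials of (i), and, most delicately, the recursion data witnessing the \nno\ in (v)---must be produced by an explicit (and sometimes nested) application of limited $\ddo$-replacement with a carefully chosen bounding set, and each formula used in a separation or a replacement must be checked to be $\ddo$. The \nno\ verification in (v) is the fussiest instance of this, and the classical-logic transfer in (iv) is the one place where the syntax of the dependently typed language of categories must be matched up against ordinary first-order formulas over $\bV$; everything else is a direct unwinding of definitions.
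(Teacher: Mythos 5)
Your construction of $\bbSet(\bV)$ and your verification of the Heyting-pretopos axioms, local cartesian closure, power objects, Booleanness, classical logic, and choice all follow essentially the same route as the paper (the paper builds $a\times b$ by first forming all unordered pairs from $a\cup b$ and then cutting out the ordered ones by \ddo-separation, rather than your nested replacement, but this is a cosmetic difference), and at the same level of elision about which formulas are \ddo\ and which bounding sets are used.

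The one place where I think you have a genuine gap is the \nno\ argument in (v). You propose to collect the finite approximations by limited \ddo-replacement after establishing their unique existence ``by ordinary induction over $\omega$.'' But the core axioms only give you induction for properties to which \ddo-separation applies, and the existence half of your inductive statement --- \emph{there exists} an approximation of length $n+1$ --- is not a \ddo-formula as you have set things up: the existential is either unbounded, or bounded by the set $a^{n+1}$, which varies with $n$ and cannot itself be specified by a \ddo-formula in $n$ (saying ``$y$ contains \emph{every} function $n+1\to a$'' requires an unbounded quantifier). You cannot use limited \ddo-replacement to manufacture the bounding set either, since its hypothesis already demands that unique existence be proved first; that is the circularity. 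The repair is to construct, \emph{before} running any induction, a single ambient set of candidate partial functions whose fibre over $n$ contains all functions $A\times\{m\mid m\in n\}\to B$. The paper does this with one local exponential in $\Set/\omega$ (available from exponentiation via your part (i)); one can also code partial functions $A\times\omega\rightharpoonup B$ as total functions into $B+1$ and carve the candidates out of $(B+1)^{A\times\omega}$ by \ddo-separation. With that set $Y$ in hand, the statement ``there exists $f\in Y$ with $n$ in its domain, and any two such agree at $n$'' is \ddo\ in $n$, the induction goes through, and your union-of-approximations argument then works as stated.
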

\begin{proof}
  We first show that $\bbSet(\bV)$ is a category.  Using pairing and
  limited \ddo-replacement, we can form, for any set $a$ and any $x\in
  a$, the set $p_{a,x}$ of all pairs $\{x,y\}$ for $y\in a$.  Again
  using limited \ddo-replacement, we can form the set of all the
  $p_{a,x}$ for $x\in a$, and take its union, thereby obtaining the
  set of all pairs $\{x,y\}$ for $x,y\in a$.  Applying this
  construction twice to $a\cup b = \bigcup\{a,b\}$, we can then use
  \ddo-separation to cut out the set $a\times b$ of ordered pairs
  $(x,y)$ where $x\in a$ and $y\in b$.  With this in hand we can
  define function composition and identities using \ddo-separation.

  Finite limits are straightforward: we already have products,
  $\{\emptyset\}$ is a terminal object, and \ddo-separation supplies
  equalizers.  The construction of pullback-stable images, unions,
  and dual images is also easy from \ddo-separation.  A stable and
  disjoint coproduct $a+b$ can as usual be obtained as a subset of
  $\{0,1\}\times (a\cup b)$.  And if $r\subseteq a\times a$ is an
  equivalence relation, \ddo-separation supplies the equivalence class
  of any $x\in a$, and limited \ddo-replacement then supplies the set
  of all such equivalence classes.

  Exponentiation clearly implies cartesian closedness.  For local
  cartesian closedness, suppose given $f\maps a\to b$ and $g\maps x\to
  a$.  Then the fiber of $\Pi_f(g)\to b$ over $j\in b$ should be the
  set of all functions $s\colon f\inv(j) \to x$ such that $g\circ s =
  1_{f\inv(j)}$; this can be cut out of $x^{f\inv(j)}$ by
  \ddo-separation.  Finally, the entire set $\Pi_f(g)$ can be
  constructed from these and limited \ddo-replacement, since each
  element of each fiber is a subset of $a\times x$.
  % Conversely, if $\bbSet(\bV)$ is cartesian closed, for
  % any sets $a$ and $b$ we have a set $c$ and a universal function
  % $c\times a \to b$; then limited \ddo-replacement applied to $c$
  % constructs the set $b^a$ of all functions from $a$ to $b$.

  The relationship between power sets and power objects is analogous.
  Likewise, \ddo-classical logic is equivalent to saying that every
  subset has a complement, which certainly implies Booleanness of
  $\bbSet(\bV)$.  The implication for full classical logic is also
  evident.

  Now suppose that \bV\ satisfies infinity and exponentials, and let
  $\omega$ be as in the axiom of infinity; we define $0\in \omega$ and
  $s\colon \omega\to\omega$ in the obvious way.  Suppose given
  $A\xto{g} B \xleftarrow{t} B$; we must construct a unique function
  $f\colon A\times \omega \to B$ such that $f(1_A\times 0) = g \pi_1$
  and $f(1_A\times s) = t f$.  Let $R = \setof{ (a,b)\in
    \omega\times\omega | a\in b}$ with projection $\pi_2\colon R\to
  \omega$, and let $X$ be the exponential $(B\times \omega \to
  \omega)^{(A\times R \to \omega)}$ in $\Set/\omega$; thus an element
  of $X$ is equivalent to an $n\in \omega$ together with a function
  $f\colon A \times \setof{ m | m \in n } \to B$.  Using
  \ddo-separation, we have the subset $Y\subseteq X$ of those $f\in X$
  such that $f(a,0) = g(a)$ and $f(a,s(m)) = t(f(a,m))$ whenever
  $s(m)\in n$.  We then prove by induction that for all $n\in\omega$
  there exists an $f\in Y$ with $n\in \mathrm{dom}(f)$ and that for
  any two such $f,f'$ we have $f(n)=f'(n)$.  The union of $Y$ is then
  the desired function.

  Finally, if \bV\ satisfies choice, then from any surjection $p\colon
  e \to b$ we can construct, using limited \ddo-replacement, the set
  $\setof{ p^{-1}(x) | x\in b}$, and applying the material axiom of
  choice to this gives a section of $p$.
  % Conversely, if epics split
  % in $\bbSet(\bV)$, then if $a$ is a set of inhabited sets, from
  % $a\times \bigcup a$ we can construct $\coprod_{x\in a} x$, and
  % splitting the surjection $\coprod_{x\in a} x \to a$ gives a choice
  % function for $a$.
\end{proof}

However, not every Heyting pretopos deserves to be called a model of
structural set theory.  The distinguishing characteristic of a
\emph{set}, as opposed to an object of some more general category, is
that a set is determined by its elements and nothing more.  This is
expressed by the following property.

\begin{defn}\label{def:wpt}
  A Heyting category \bS\ is \textbf{constructively well-pointed} if
  it satisfies the following.
  \killspacingtrue
  \begin{enumerate}[(a)]
  \item If $m\maps A\mono X$ is monic and every $1\too[x] X$ factors
    through $m$, then $m$ is an isomorphism \emph{(1 is a strong
      generator)}.\label{item:wpt-strgen}
  \item Every regular epimorphism $X\epi 1$ splits \emph{(1 is
      projective)}.\label{item:wpt-proj}
  \item If $1$ is expressed as the union of two subobjects $1 = A\cup
    B$, then either $A$ or $B$ must be isomorphic to $1$ \emph{(1 is
      indecomposable)}.\label{item:wpt-indec}
  \item There does not exist a map $1\to 0$
    \emph{(1 is nonempty)}.\label{item:wpt-nondeg}
  \end{enumerate}
  \killspacingfalse
\end{defn}

% In brief, \bS\ is pinned if $1$ is a Projective Indecomposable
% Nonempty strong generator: a PIN.  The term is also intended to
% suggest that \bS\ is ``pinned down'' by the ``point'' $1$, leaving no
% room for ``cohesion'' or ``variability'' in its objects.  (Lawvere has
% described the category of sets as consisting of ``constant'' and
% ``abstract'' sets, as contrasted with the ``cohesive'' or ``variable''
% sets that one finds in toposes of sheaves; see~\cite{lawvere:fndapp}.)

\begin{rmk}\label{rmk:nonelem-wpt}
  Of course, since regular epis are stable under pullback, $1$ is
  projective if and only if for any regular epi $Y\xepi{p} X$, every
  global element $1\to X$ lifts to $Y$.  Likewise, since unions are
  stable under pullback, $1$ is indecomposable if and only if whenever
  $X = A\cup B$, every global element $1\to X$ factors through either
  $A$ or $B$.  In particular, if \bS\ is locally small in some
  external set theory, then \bS\ is constructively well-pointed if and only if
  $\bS(1,-)\maps \bS\to\Set$ is a conservative coherent functor.
\end{rmk}

We immediately record the following.

\begin{prop}\label{thm:set-wpt}
  If \bV\ satisfies the core axioms of material set theory, then
  $\bbSet(\bV)$ is constructively well-pointed.
\end{prop}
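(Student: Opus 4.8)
The plan is to verify directly the four clauses of \autoref{def:wpt}, using the concrete description of $\bbSet(\bV)$ supplied by \autoref{thm:iz-topos}. First I would record the identifications we need: the terminal object is $1=\{\emptyset\}$ and the initial object is $0=\emptyset$; every monomorphism in $\bbSet(\bV)$ is, up to isomorphism, a subset inclusion; a morphism $1\to X$ is the function $\{(\emptyset,x)\}$ for a unique $x\in X$, so global elements of $X$ are simply its members, and such a global element factors through a subset inclusion $A\mono X$ exactly when $x\in A$; and the join $A\vee B$ in $\Sub(X)$ of two subsets is the set-theoretic union $A\cup B$, which by the proof of \autoref{thm:iz-topos} is cut out of $X$ by $\ddo$-separation.

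Granting these, clauses \ref{item:wpt-strgen}, \ref{item:wpt-indec}, and \ref{item:wpt-nondeg} are essentially immediate. For \ref{item:wpt-strgen}: if $m\maps A\mono X$ is a subset inclusion through which every $1\to X$ factors, then $x\in A$ for all $x\in X$, so $A=X$ by extensionality and $m$ is an isomorphism. For \ref{item:wpt-indec}: if $1=A\cup B$ with $A,B\subseteq\{\emptyset\}$, then $\emptyset\in A\cup B$, which by the union axiom means $\emptyset\in A$ or $\emptyset\in B$; in the first case $A\subseteq\{\emptyset\}$ together with $\emptyset\in A$ forces $A=\{\emptyset\}=1$ by extensionality, and the second case is symmetric, so one of the inclusions $A\mono 1$, $B\mono 1$ is an isomorphism. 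For \ref{item:wpt-nondeg}: a map $1\to 0$ would be an entire function $\{\emptyset\}\to\emptyset$, producing some $y$ with $y\in\emptyset$ and contradicting the empty set axiom.

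The one clause where I would take care is \ref{item:wpt-proj}, precisely because the ambient logic is intuitionistic. A regular epimorphism $X\epi 1$ is a surjection onto $\{\emptyset\}$, which unwinds to the bare assertion that $X$ is inhabited (i.e.\ that there is an $x$ with $x\in X$), while a splitting of it is any morphism $1\to X$ --- the equation $f\circ s=1_1$ holding automatically since $\Set(1,1)$ is a singleton --- i.e.\ any element of $X$. So from the hypothesis that $X$ is inhabited I must produce a section $1\to X$. Since the existence of such a section is itself an existential statement, I may eliminate the existential quantifier in the hypothesis: given any $x$ with $x\in X$, the set $\{(\emptyset,x)\}$ exists by pairing (and the empty set axiom) and is the required section. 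The point is to organize this step as an existential-elimination into an existential goal rather than as a ``choice'' of an element of $X$, so that no choice principle is needed; this is possible exactly because clause \ref{item:wpt-proj} is itself phrased existentially. Apart from this one point of constructive hygiene, the verification is routine.
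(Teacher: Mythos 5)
Your proposal is correct and follows essentially the same route as the paper's proof: identify global elements of $X$ with its members, and verify the four clauses directly, with projectivity of $1$ reducing to ``an inhabited set has an element,'' which is an existential-to-existential inference requiring no choice. The paper's version is terser but makes exactly the same moves; your extra remarks on constructive hygiene for clause \ref{item:wpt-proj} are a fair elaboration of what the paper leaves implicit.
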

\begin{proof}
  Functions $1\to X$ in $\bbSet(\bV)$ are in canonical correspondence
  with elements of $X$, and a function is monic just when it is
  injective.  Thus, if every map from $1$ factors through a monic, it
  must be bijective, and hence an isomorphism, so $1$ is a strong
  generator.  Next, the epics in $\bbSet(\bV)$ are the surjections,
  and if $X\xepi{p} 1$ is a surjection, then $X$ must be inhabited,
  hence $p$ splits.  And if $1 = A\cup B$, then the unique element of
  $1$ must be in either $A$ or $B$ by definition of unions, hence
  either $A$ or $B$ must be inhabited.  Finally, $\emptyset$ has no
  elements, so $1$ is nonempty.
\end{proof}

Recall that classically, a topos is said to be \emph{well-pointed} if
$1$ is a generator (i.e. for $f,g\maps X\toto Y$, having $fx=gx$ for
all $1\too[x] X$ implies $f=g$) and there is no map $1\to 0$ (1 is
nonempty).  Thus, any constructively well-pointed topos is
well-pointed in the classical sense.  Conversely, the following is
well-known (see, for instance,~\cite[VI.1 and VI.10]{mm:shv-gl}).

\begin{lem}\label{thm:wpt-bool}
  Let \bS\ be a Heyting category satisfying full classical logic, and
  in which $1$ is a nonempty strong generator.  Then \bS\ is Boolean
  and constructively well-pointed.
\end{lem}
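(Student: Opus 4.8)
The plan is to derive both conclusions from two facts about \bS: that it satisfies full classical logic, and that (by the strong-generator hypothesis, clause~(a) of~\autoref{def:wpt}) a monomorphism into an object of \bS\ is an isomorphism as soon as every global element $1\to X$ of its codomain factors through it. The second fact lets us reduce questions about subobjects to questions about global elements, and the first makes each such question decidable. Throughout I would use freely that the initial object of the coherent category \bS\ is strict, so that $0\mono X$ is monic for every $X$, and that a monomorphism which admits a section, or which is a regular epimorphism, is an isomorphism.

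The first step I would carry out is the lemma that \emph{every subobject $u\maps U\mono 1$ is isomorphic to $0$ or to $1$}. Apply classical logic to the sentence \qq{there exists a morphism $1\to U$}. In the positive case, composing such a morphism with $u$ gives $1_1$ (as $1$ is terminal), so $u$ is a monomorphism with a section, hence an isomorphism, and $U\iso 1$. In the negative case, every global element of $U$ factors vacuously through $0\mono U$, so the strong-generator hypothesis forces $0\mono U$ to be an isomorphism and $U\iso 0$. Consequently $\Sub(1)$ is the two-element Boolean algebra. I would then prove Booleanness of \bS\ as follows: given $A\mono X$, to show that $A\join\neg_X A = X$ in $\Sub(X)$ it suffices, by the strong-generator hypothesis, to show that every $x\maps 1\to X$ factors through the subobject $A\join\neg_X A$, i.e.\ that $x^*(A\join\neg_X A) = 1$ in $\Sub(1)$; since pullback functors in a Heyting category are homomorphisms of Heyting algebras (see~\cite[\S A1.4]{ptj:elephant}), the left-hand side equals $x^*A\join\neg_1(x^*A)$, which is $1$ because $\Sub(1)$ is Boolean.

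It then remains to verify clauses~(b) and~(c) of constructive well-pointedness, since clauses~(a) and~(d) are the hypotheses. Indecomposability is immediate from the lemma: if $1 = A\cup B$ with $A,B\mono 1$, then each of $A$ and $B$ is $0$ or $1$, and they cannot both be $0$, so one of them equals $1$. For projectivity, let $p\maps X\epi 1$ be a regular epimorphism; since any morphism $1\to X$ is automatically a section of $p$, and \bS\ is classical, I would argue by contradiction, assuming that there is no morphism $1\to X$. Then every global element of $X$ factors vacuously through $0\mono X$, so $X\iso 0$ by the strong-generator hypothesis; but then $p$ exhibits $0\mono 1$ as a regular epimorphism, which forces $0\iso 1$ and hence the existence of a morphism $1\to 0$, contradicting nonemptiness (clause~(d) of~\autoref{def:wpt}). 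Thus $p$ splits, and \bS\ is constructively well-pointed.

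The main obstacle --- really the only step that is not routine formal manipulation --- is propagating the two-valuedness of $\Sub(1)$ to Booleanness of all of the lattices $\Sub(X)$. This rests on the fact that pullback functors in a Heyting category preserve Heyting implication, and hence complements; it is standard, but it is the single place where the full Heyting (as opposed to merely coherent) structure of \bS\ is used, and without it one would obtain only that $\Sub(1)$ is two-valued rather than that every subobject lattice is Boolean.
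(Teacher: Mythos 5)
Your proof is correct and follows essentially the same strategy as the paper's: use full classical logic together with strong generation of $1$ to show that an object (or subterminal) either has a global element or is initial, read off projectivity and indecomposability of $1$, and then establish Booleanness by pulling a subobject $A\mono X$ back along global elements $1\to X$. The only cosmetic difference is in the last step: you route Booleanness through the two-valuedness of $\Sub(1)$ plus the fact that pullback functors are Heyting algebra homomorphisms, whereas the paper argues directly by cases on whether $x^*(A\cup\neg A)$ is initial --- the same content in a slightly different package.
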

\begin{proof}
  For any object $X$, either $X$ is initial or it isn't.  If it isn't,
  then there must be a global element $1\to X$, since otherwise the
  monic $0\mono X$ would be an isomorphism (since $1$ is a strong
  generator).  Therefore, if $X$ is not initial, then $X\to 1$ is
  split epic.  Now suppose that $X$ is any object such that $X\to 1$
  is regular epic; since $0\to 1$ is not regular epic, it follows that
  $X$ is not initial, and so $X\to 1$ must in fact be split epic; thus
  $1$ is projective.  Also, if $1 = A\cup B$, then $A$ and $B$ cannot
  both be initial, so one of them has a global element; thus $1$ is
  indecomposable.

  Now let $A\mono X$ be a subobject and assume that $A\cup \neg A$ is
  not all of $X$.  Then since $1$ is a strong generator, there is a
  $1\too[x] X$ not factoring through $A\cup \neg A$.  Let $V =
  x^*(A\cup \neg A)$.  Then $V$ is a subobject of $1$.  If $V$ is not
  initial, then it has a global element and hence is all of $1$, so
  $x$ factors through $A\cup \neg A$.  But if $V$ is initial, then
  $x^*(A)$ must also be initial, which implies that $x$ factors
  through $\neg A$.  This is a contradiction, so $A\cup \neg A = X$;
  hence \bS\ is Boolean.
\end{proof}

\begin{cor}
  Let \bS\ be a topos satisfying full classical logic, which is
  well-pointed in the classical sense.  Then \bS\ is Boolean and
  constructively well-pointed.
\end{cor}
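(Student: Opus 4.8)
The plan is to derive this as an essentially immediate consequence of Lemma~\ref{thm:wpt-bool}. A topos is in particular a Heyting pretopos, hence a Heyting category, and it satisfies full classical logic by hypothesis; so the only hypothesis of Lemma~\ref{thm:wpt-bool} that still needs checking is that the terminal object $1$ of \bS\ is a \emph{nonempty strong generator}. That $1$ is nonempty is precisely the assumption that there is no map $1\to 0$, which is one half of classical well-pointedness. The remaining task is to upgrade the other half --- that $1$ is a generator, i.e.\ that parallel arrows $X\toto Y$ agreeing on all global elements $1\to X$ must be equal --- to the ``strong generator'' condition~\ref{item:wpt-strgen} of Definition~\ref{def:wpt}.

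So let $m\maps A\mono X$ be monic, and suppose every $1\too[x] X$ factors through $m$; I want to conclude that $m$ is an isomorphism. Since \bS\ is a pretopos, every monic is regular, so I may write $m$ as the equalizer of some pair $f,g\maps X\toto Y$. For each global element $x\maps 1\to X$, the hypothesis that $x$ factors through $m$ forces $f\circ x = g\circ x$; since $1$ is a generator, $f=g$, and therefore the equalizer $m$ of $f$ and $g$ is an isomorphism. Hence $1$ is a strong generator. (Alternatively, one can argue via the subobject classifier: $x$ factors through $m$ iff $\chi_m\circ x = \top$, so if this holds for every $x\maps 1\to X$ then $\chi_m = \top\circ{!_X}$ by the fact that $1$ is a generator, and this forces $m$ to be an isomorphism.)

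Having verified that $1$ is a nonempty strong generator, I apply Lemma~\ref{thm:wpt-bool} to conclude that \bS\ is Boolean and constructively well-pointed. There is no genuine obstacle here; the one step deserving comment is the passage from ``generator'' to ``strong generator,'' which is exactly where regularity of monomorphisms in a pretopos --- or, if one prefers, the subobject classifier of the topos --- is used.
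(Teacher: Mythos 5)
Your proof is correct and follows the same route as the paper: the paper's entire proof is the one-line remark that in a topos any generator is a strong generator, after which Lemma~\ref{thm:wpt-bool} applies. You have simply filled in the justification of that remark (via regularity of monomorphisms, or the subobject classifier), which is a valid and standard argument.
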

\begin{proof}
  In a topos, any generator is a strong generator.
\end{proof}

Thus, in the presence of full classical logic, our notion of
constructive well-pointedness is equivalent to the usual notion of
well-pointedness.  However, \autoref{thm:set-wpt} shows that a
constructively well-pointed topos not satisfying full classical logic
need not be Boolean.  One can also construct examples showing that in
the absence of full classical logic, a Boolean topos in which $1$ is a
nonempty generator need not be \emph{constructively} well-pointed.  It
is true, however, even intuitionistically, that if $1$ is projective,
indecomposable, and nonempty in a Boolean topos, it must also be a
generator; see~\cite[V.3]{awodey:thesis}.

% We consider pinning to be an appropriate intuitionistic version of
% well-pointedness, just as, for instance, ``inhabited'' is a more
% useful intuitionistic version of ``nonempty.''  (In fact, when working
% intuitionistically, we believe it would not be unreasonable to
% \emph{define} ``well-pointed'' to mean what we have called ``pinned,''
% but we have chosen to introduce a new term to avoid any possible
% confusion.)

Other authors have also recognized the importance of explicitly
assuming projectivity and indecomposability of $1$ in an
intuitionistic context.  In~\cite{awodey:thesis} a topos in which $1$
is projective, indecomposable, and nonempty was called
\emph{hyperlocal} (but in~\cite{ptj:pp-bag-hloc,ptj:elephant} that
word is used for a stronger, non-elementary, property).  And
in~\cite{palmgren:cetcs}, indecomposability of $1$ is assumed
explicitly, while projectivity of $1$ is deduced from a factorization
axiom.  We note that most classical properties of a well-pointed topos
make use of projectivity and indecomposability of $1$, and many of
these are still true intuitionistically as long as the category is
\emph{constructively} well-pointed.

\begin{lem}\label{thm:pin-props}
  Let \bS\ be a constructively well-pointed Heyting category.  Then:
  \killspacingtrue
  \begin{enumerate}
  \item A morphism $p\colon Y\to X$ is regular epic if and only if
    every map $1\to X$ factors through it.\label{item:pp1}
  \item Likewise, $f\colon Y\to X$ is monic if and only if every map
    $1\to X$ factors through it in at most one way, and an isomorphism
    if and only if every map $1\to X$ factors through it
    uniquely.\label{item:pp1a}
  \item Given two subobjects $A\mono X$ and $B\mono X$, we have $A\le
    B$ if and only if every map $1\to X$ which factors through $A$
    also factors through $B$.\label{item:pp2}
  \item $X$ is initial if and only if there does not exist any
    morphism $1\to X$.\label{item:pp3}
  \end{enumerate}
  \killspacingfalse
\end{lem}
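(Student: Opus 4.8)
The plan is to prove each of the four equivalences in \autoref{thm:pin-props} by a direct argument from the four clauses of constructive well-pointedness in \autoref{def:wpt}, using the stability of regular epis and unions under pullback (as recorded in \autoref{rmk:nonelem-wpt}). Throughout I will freely use that in a Heyting category we have pullback-stable images and that $\Sub(X)$ is a Heyting algebra.

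For \ref{item:pp1}: the ``only if'' direction is immediate from projectivity of $1$ via its pullback-stable reformulation in \autoref{rmk:nonelem-wpt}. For ``if'', suppose every $1\to X$ factors through $p\colon Y\to X$; factor $p$ as a regular epi $Y\epi I$ followed by a monic $m\colon I\mono X$. Then every $1\to X$ factors through $m$ (since it factors through $p$), so by clause \ref{item:wpt-strgen} ($1$ a strong generator) $m$ is an isomorphism, whence $p$ is regular epic. For \ref{item:pp1a}: a monic $f$ certainly has the property that maps $1\to X$ factor through it in at most one way; conversely, if $f\colon Y\to X$ has this property, consider the kernel pair $R\toto Y$ of $f$; the two projections $R\toto Y$ become equal after composing with $f$, and one checks using that $1$ is a strong generator (applied to the equalizer of the two projections $R\toto Y$, or more simply to the diagonal $Y\mono R$) that $R\to Y\times Y$ factors through the diagonal, i.e. $f$ is monic. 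The isomorphism clause then combines this with \ref{item:pp1}, since an isomorphism is exactly a monic regular epi (the category, being Heyting, need not be balanced, but a monic which is also regular epic is invertible).

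For \ref{item:pp2}: ``only if'' is trivial. For ``if'', suppose every $1\to X$ factoring through $A$ also factors through $B$; then every $1\to X$ factoring through $A$ factors through $A\meet B \le A$, so by \ref{item:pp1a} (applied to the monic $A\meet B\mono A$, noting every $1\to A$ gives a map $1\to X$ through $A$) the inclusion $A\meet B\mono A$ is an isomorphism, i.e. $A\le B$. For \ref{item:pp3}: ``only if'' holds because there is no map $1\to 0$ (clause \ref{item:wpt-nondeg}), and any map $1\to X$ with $X$ initial would pull back to a map $1\to 0$ over $X$. For ``if'', suppose no map $1\to X$ exists; then vacuously every $1\to X$ factors through the monic $0\mono X$, so by \ref{item:wpt-strgen} this monic is an isomorphism and $X$ is initial.

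The main subtlety to get right is \ref{item:pp1a}, specifically deducing monicity of $f$ from the ``at most one factorization'' condition: one must reduce the condition on global elements to a statement about a suitable subobject so that strong generation of $1$ applies. Concretely, if $R\rightrightarrows Y$ is the kernel pair of $f$ and $\Delta\colon Y\mono R$ the diagonal, then for any $1\to R$, its two composites to $Y$ are identified by $f$, hence are equal by hypothesis, hence the map $1\to R$ factors through $\Delta$; since $1$ is a strong generator and $\Delta$ is monic, $\Delta$ is an isomorphism, so $f$ is monic. Everything else is a routine unwinding of definitions together with pullback-stability, and the isomorphism case of \ref{item:pp1a} follows by combining monicity with \ref{item:pp1}.
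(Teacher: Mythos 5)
Your proof is correct and follows essentially the same route as the paper's: image factorization plus strong generation of $1$ for \ref{item:pp1}, the diagonal of the kernel pair for \ref{item:pp1a}, the intersection pullback for \ref{item:pp2}, and the monic $0\mono X$ for \ref{item:pp3}. The only cosmetic difference is that you route \ref{item:pp2} through \ref{item:pp1a} rather than invoking strong generation directly, which changes nothing of substance.
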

\begin{proof}
  The ``only if'' part of~\ref{item:pp1} follows because $1$ is
  projective.  For the ``if'' part, note that a map $p\colon Y\to X$
  in a regular category is regular epic iff its image is all of $X$,
  while if every $1\to X$ factors through $p$ then it must factor
  through $\mathrm{im}(p)$ as well; hence $\mathrm{im}(p)$ is all of
  $X$ since $1$ is a strong generator.

  The ``only if'' parts of~\ref{item:pp1a} are obvious.  If $f\colon
  Y\to X$ is injective on global elements, then the canonical
  monomorphism $Y\to Y\times_X Y$ is bijective on global elements, and
  hence an isomorphism; thus $Y$ is monic.  And if $f$ is bijective on
  global elements, then by this and by~\ref{item:pp1} it must be both
  monic and regular epic, hence an isomorphism.

  The ``only if'' part of~\ref{item:pp2} is also obvious.  If every $1\to
  X$ which factors through $A$ also factors through $B$, then in the
  pullback
  \[\vcenter{\xymatrix{A\cap B \pullbackcorner \ar[r]\ar[d] & B \ar[d]\\
      A\ar[r] & X}}
  \]
  the map $A\cap B\to A$ must be an isomorphism, since $1$ is a strong
  generator; hence its inverse provides a factorization of $A$ through
  $B$.

  Finally, the ``only if'' part of~\ref{item:pp3} is the nonemptiness
  assumption, while if $X$ has no global elements, then the map $0\to
  X$ is bijective on global elements and hence an isomorphism.
\end{proof}

\begin{rmk}
  In terms of the internal logic, indecomposability of $1$ corresponds
  to the \emph{disjunction property} (if $\ph\vee\psi$ is true, then
  either $\ph$ is true or $\psi$ is true), while projectivity of $1$
  corresponds to the \emph{existence property} (if $\exists x.\ph(x)$
  is true, then there is a particular $a$ such that $\ph(a)$ is true).
  We might also call nonemptiness of $1$ the \emph{negation property}
  ($\bot$ is not true).
  % These are, of course, the logical connectives
  % which exhibit the most characteristic behavior of intuitionistic
  % logic.
\end{rmk}

\begin{rmk}
  (Constructive) well-pointedness has a very different flavor from all
  the other axioms of structural set theory we have encountered so
  far.  In particular, it is not preserved by slicing, or by most
  other categorical constructions.  This is to be expected, however,
  since ``being the category of sets'' is quite a fragile property.
\end{rmk}

We have now reached the point where we can define what we mean by a
\emph{structural set theory}: an extension of the axiomatic theory
describing a constructively well-pointed Heyting pretopos.  For
example:
\begin{blist}
\item Lawvere's \textbf{Elementary Theory of the Category of Sets
    (ETCS)}, defined in~\cite{lawvere:etcs}, is the theory of a
  well-pointed topos with a \nno\ satisfying full classical logic and
  the axiom of choice.
\item Palmgren's \textbf{Constructive ETCS (CETCS)}, defined
  in~\cite{palmgren:cetcs}, is the theory of a constructively
  well-pointed $\Pi$-pretopos with a \nno\ and enough projectives.
\item We will refer to the theory of a constructively well-pointed
  topos with a \nno\ as \textbf{Intuitionistic ETCS (IETCS)}.
\end{blist}

\begin{cnv}\label{cnv:elements}
  When working in a structural set theory, we usually write \Set\ for
  the category in question.  We call its objects \emph{sets} and its
  arrows \emph{functions}.  We speak of morphisms $1\to X$ as
  \emph{elements} of $X$, and we write $x\in X$ synonymously for
  $x\maps 1\to X$.  If $f\colon X\to Y$ is a function and $x\in X$, we
  write $f(x)$ for $f\circ x\colon 1\to Y$.  We speak of monomorphisms
  $S\mono X$ as \emph{subsets} of $X$ and write $S\subseteq X$.  For
  $x\in X$ and $S\subseteq X$, we write $x\in S$ and say ``$x$ is
  contained in $S$'' to mean that $1\too[x] X$ factors through $S\mono
  X$.  Similarly, for $S,T\subseteq X$ we write $S\subseteq T$ to mean
  that $S\mono X$ factors through $T\mono X$.  We also use the
  following terminology.
  \begin{blist}
  \item An arrow-variable $x\colon 1\to X$ whose domain is $1$ is a
    \emph{\ddo-variable}.
  \item An equality $(f=g)$ of arrow-terms $f,g\colon 1\to X$ with
    source $1$ is a \emph{\ddo-atomic formula}.
  \item A quantifier over a \ddo-variable is a \emph{\ddo-quantifier}.
  \item A formula whose only variables are \ddo-variables, whose
    only atomic subformulas are \ddo-atomic, and whose only
    quantifiers are \ddo-quantifiers is a \emph{\ddo-formula}.
  \end{blist}
\end{cnv}

These conventions make doing mathematics in a structural set theory
sound much more familiar.  They also make it sound very much like the
\emph{internal logic} of a category \bS.  Recall that the latter is a
type theory with types for all objects of \bS, function symbols for
all arrows in \bS, and relation symbols for all monics in \bS.  The
following lemma makes this correspondence precise.

\begin{lem}\label{thm:ddo-corresp}
  For any Heyting category \bS, there is a canonical bijection (up to
  provable equivalence) between
  \begin{enumerate}[(a)]
  \item \ddo-formulas in the language of categories with parameters in
    \bS, and\label{item:dc1}
  \item formulas in the internal first-order logic of
    \bS.\label{item:dc2}
  \end{enumerate}
\end{lem}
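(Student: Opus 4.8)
The plan is to exhibit explicit translations in both directions, by recursion on the construction of formulas and terms, and then to check that each round-trip returns a formula provably equivalent to the one we started with; all of this is routine bookkeeping except for a single point about how monics correspond to ``factoring through'' formulas, which is where the content lies. Throughout, the asserted bijection should be read context-by-context, matching a context $x_1\colon 1\to X_1,\dots,x_n\colon 1\to X_n$ of \ddo-variables with the internal-logic context $x_1\colon X_1,\dots,x_n\colon X_n$. Recall also from \autoref{defn:lang-cat} and \autoref{cnv:elements} that in a \ddo-formula with parameters in \bS\ every object-term is $1$ or an object-parameter, hence an object of \bS, every arrow-term $1\to X$ is built by composition from \ddo-variables, element-parameters, arrow-parameters and identities, and every atomic subformula is an equality of two such terms.

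\emph{From (a) to (b).} Send a \ddo-variable $x\colon 1\to X$ to a variable $x$ of type $X$, an element-parameter $a\colon 1\to X$ to the corresponding constant, and an arrow-term $1\to X$ to an internal-logic term of type $X$ by recursion (using that each arrow of \bS\ is a function symbol of the internal logic, and that an arrow-term with source and target $1$ must denote the identity); send each equality of terms to the corresponding equality and propagate the connectives and \ddo-quantifiers unchanged. An evident induction shows this yields a well-typed internal-logic formula $\ph^{\sharp}$ in the matching context; since no choices are involved, $\ph\mapsto\ph^{\sharp}$ is a genuine, canonical function.

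\emph{From (b) to (a).} Given an internal-logic formula $\psi$, first replace it, up to provable equivalence, by one all of whose atomic subformulas have the shape $x=y$, $f(x)=y$, or $R(x_1,\dots,x_n)$, with $x,y,x_i$ variables, $f$ an arrow of \bS, and $R$ the relation symbol of a monic $m\colon R\mono X_1\times\dots\times X_n$: this is the standard procedure of pulling nested function applications out under fresh existential quantifiers, which also disposes of any tupling into product types. Now send $x\colon X$ to the \ddo-variable $x\colon 1\to X$, the atom $x=y$ to $(x=y)$, the atom $f(x)=y$ to $(f\circ x=y)$, and the decisive atom $R(x_1,\dots,x_n)$ to the \ddo-formula $\exists r\colon 1\to R.\bigl(\bigwedge_i\pi_i\circ m\circ r=x_i\bigr)$, with the projections $\pi_i\colon X_1\times\dots\times X_n\to X_i$ taken as parameters; and propagate connectives and quantifiers. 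This produces a \ddo-formula $\psi^{\flat}$, well-defined up to provable equivalence, the only latitude being the choice of representatives for products and projections.

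\emph{The round-trips.} Starting from a \ddo-formula $\ph$, the composite $(\ph^{\sharp})^{\flat}$ is $\ph$ with each atomic subformula re-expanded by the flattening step, and an induction peeling off one composite at a time --- using $\chi[f\circ t]\dashv\vdash\exists y.(f\circ t=y\meet\chi[y])$ --- shows it is provably equivalent to $\ph$. Starting from a flattened internal-logic formula $\psi$, the composite $(\psi^{\flat})^{\sharp}$ agrees with $\psi$ except that each atom $R(x_1,\dots,x_n)$ has become $\exists r\colon R.\bigl(\bigwedge_i\pi_i(m(r))=x_i\bigr)$; that these are provably equivalent is precisely the assertion that the relation symbol of a monic $m$ names the image of $m$, which holds because $m$ is monic in \bS\ and so the existential quantifier loses no information. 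This single step rests on the soundness and completeness of the internal logic of a Heyting category (\cite{ptj:elephant}), both formulas denoting the same subobject; everything else is bookkeeping. Since the two translations manifestly respect provable equivalence, they descend to mutually inverse bijections between the provable-equivalence classes of formulas in corresponding contexts, which is the claim. I expect the only genuine difficulty to lie in keeping these inductions honest: it is the treatment of product types, tupling and projections that threatens to make the ``routine'' verifications fiddly, so some care is needed in choosing the normal forms above so that they really are mechanical.
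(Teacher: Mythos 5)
Your proposal is correct and follows essentially the same route as the paper: term-by-term translation in the forward direction, and in the reverse direction the elimination of higher-arity function symbols via fresh variables of product type with projection equations (the definitional-extension trick), together with the replacement of a relation atom $R(x_1,\dots,x_n)$ by an existential over $R$ using its jointly monic projections. Your explicit verification of the round-trips, and your observation that the key equivalence for relation atoms rests on $m$ being monic, are details the paper leaves implicit but are consistent with its argument, as is your remark about the choice of product representatives.
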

\begin{proof}
  Given a \ddo-formula as in~\ref{item:dc1}, we construct a formula in
  the internal logic by following formally the informal description in
  \ref{cnv:elements}.  Note that every arrow-term $1\to X$ must be
  constructed as a composite of some number of parameters, possibly
  beginning with a single \ddo-variable.  (The possibility of
  nontrivial terms of type $1\to 1$ can be excluded, up to provable
  equivalence.)  Thus, any such term such as $f\circ (g\circ x)\colon
  1\to X$ can be represented by a term $f(g(x)):X$ in the internal
  logic.  Similarly, \ddo-atomic formulas give atomic equalities
  between such terms, and connectives and \ddo-quantifiers translate
  in the obvious way.

  In the other direction, we have to do a little more work because the
  internal logic usually includes function-symbols of higher arity,
  corresponding to arrows in \bS\ whose domain is a cartesian product.
  Whenever a term such as $f(t_1,t_2):Y$ occurs, for terms $t_1:X_1$
  and $t_2:X_2$ and an arrow $f\colon X_1\times X_2\to Y$ in \bS, in
  the translation we introduce a new \ddo-variable $z\colon 1\to
  X_1\times X_2$ (in addition to those occurring in $t_1$ and $t_2$),
  represent $f(t_1,t_2)$ by $f\circ z$, and carry along an extra
  stipulation that $p_1\circ z = t_1$ and $p_2\circ z = t_2$, where
  $p_i\colon X_1\times X_2\to X_i$ are the product projections in \bS.
  This extra condition has then to be placed at an appropriate point
  in the resulting formula, but this process is familiar from the
  procedure of definitional extensions in first-order logic.

  The translation of atomic equalities between terms is obvious, but
  the internal logic includes relation symbols of arbitrary arity,
  with corresponding atomic formulas such as $R(x_1,x_2)$.  However,
  such an atomic formula can be replaced by the \ddo-formula $\exists
  z\in R. (p_1(z)=x_1 \meet p_2(z)=x_2)$, where $p_i\colon R\to X_i$ are
  the jointly monic projections of the relation $R$ in \bS.  Thus we
  can translate all atomic formulas, and the extension to connectives
  and quantifiers is straightforward.
\end{proof}

\begin{rmk}
  Note that in the translation of function symbols of higher arity,
  the object $X_1\times X_2$ is now a \emph{parameter} of the resulting
  \ddo-formula, as are $p_1$ and $p_2$.  Thus, we actually have to
  \emph{choose} some particular cartesian product of $X_1$ and $X_2$
  in \bS.  If \bS\ doesn't come with ``specified'' products, then this
  requires some axiom of choice to define the entire bijective
  correspondence.  But we will only need to apply the correspondence
  to particular formulas or finite sets of formulas, in which case
  there is no problem since we only need to make finitely many
  choices.
\end{rmk}

From now on we will identify the two types of formulas whose
equivalence is shown in \autoref{thm:ddo-corresp}.  The following
observation shows that not only do the formulas themselves correspond,
so do their ``extensions,'' in the senses appropriate to the internal
logic and to structural set theory respectively.

\begin{prop}\label{thm:ddo-sep}
  Let \bS\ be a category with finite limits in which $1$ is a strong
  generator.  Then the following are equivalent.
  \begin{enumerate}
  \item \bS\ is a constructively well-pointed Heyting category.\label{item:ddosep-phc}
  \item \bS\ satisfies the schema of \emph{\ddo-separation}: for any
    \ddo-formula $\ph(x)$ with free variable $x\colon 1\to X$, there
    exists a subobject $S\mono X$ such that any map $x\colon 1\to X$
    factors through $S$ if and only if $\ph(x)$
    holds.\label{item:ddosep-sep}
  \end{enumerate}
\end{prop}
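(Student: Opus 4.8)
The plan is to prove both implications by comparing, for a $\ddo$-formula $\ph$, its structural-set-theory truth value against its extension in the internal logic of $\bS$, under the identification of \autoref{thm:ddo-corresp}. For \emph{(i) $\Rightarrow$ (ii)}: since $\bS$ is a Heyting category, its internal logic is available and every internal formula $\ph$ in a context $x_i\colon 1\to X_i$ has an extension $\mm{\ph}\mono X_1\times\dots\times X_n$, so I would take $S\mathrel{:=}\mm{\ph}$. The point to verify is that a tuple $\vec x\colon 1\to X_1\times\dots\times X_n$ of \emph{global} elements factors through $\mm{\ph}$ if and only if $\bS\ss\ph(\vec x)$, which I would prove by induction on $\ph$. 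The atomic cases and those of $\top$ and $\meet$ are immediate from finite limits; the case of $\bot$ uses nonemptiness of $1$, so that no global element factors through the initial subobject; the case of $\ph\join\psi$ uses indecomposability of $1$, by pulling the union $\mm{\ph}\cup\mm{\psi}$ back along $\vec x$ and splitting the resulting equality in $\Sub(1)$ (cf.\ \autoref{rmk:nonelem-wpt}); the existential case uses projectivity of $1$ to lift $\vec x$ along the regular epimorphism $\mm{\ph}\twoheadrightarrow\mathrm{im}$; and the universal case (together with the bookkeeping for composite terms from \autoref{thm:ddo-corresp}) uses that $1$ is a strong generator, via a Beck--Chevalley argument for dual images along the pullback of the relevant projection by $\vec x$, reducing ``$\vec x$ factors through $\coim_{\pi}\mm{\ph}$'' to ``every $z\colon 1\to Z$ has $(\vec x,z)$ factoring through $\mm{\ph}$.''

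I expect the case of $\ph\imp\psi$ to be the main obstacle. One must show that $\vec x$ factors through $\mm{\ph}\imp\mm{\psi}$ exactly when ``$\vec x$ factors through $\mm{\ph}$'' implies ``$\vec x$ factors through $\mm{\psi}$.'' The forward direction is just the inequality $(\mm{\ph}\imp\mm{\psi})\meet\mm{\ph}\le\mm{\psi}$. For the converse, I would use that any morphism out of $1$ is monic, so that pulling $\mm{\ph}$ and $\mm{\psi}$ back along $\vec x$ yields subterminal objects $P,Q\mono 1$ for which ``$\vec x$ factors through $\mm{\ph}$'' is equivalent to ``$P$ has a global element,'' and likewise for $Q$; the task then becomes deriving $P\le Q$ from the implication ``$P$ has a global element $\Rightarrow$ $Q$ has a global element.'' Given any global element $p$ of $P$, the hypothesis supplies a global element $q$ of $Q$, and since $P\mono 1$ and $Q\mono 1$ both compose to $1_{1}$, the pair $(p,q)$ is a global element of $P\times_{1}Q$ over $p$; hence every global element of $P$ factors through $P\meet Q\mono P$, so $P\meet Q\cong P$ since $1$ is a strong generator, i.e.\ $P\le Q$.

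For \emph{(ii) $\Rightarrow$ (i)}: here $\ddo$-separation supplies precisely the subobject-level operations making $\bS$ a Heyting category---images as the $\ddo$-separation subobject of $\exists y\colon 1\to Y.\,(f\circ y=x)$, finite unions as that of $(x\in A)\join(x\in B)$, dual images as that of a suitable bounded universal formula, and a strict initial object as that of $\bot$. In verifying their universal properties I would use repeatedly that, since $1$ is a strong generator, a subobject is determined by which global elements factor through it; I would also note that coequalizers of kernel pairs---needed to see that these image factorizations really are regular-epi/mono factorizations---are themselves obtainable from images constructed as above. Once the Heyting structure is in place, the remaining clauses of constructive well-pointedness come for free: a regular epimorphism $Y\twoheadrightarrow 1$ has image all of $1$, which by construction means it splits, so $1$ is projective; if $1=A\cup B$ then, by the construction of the union, $1_{1}$ factors through $A$ or through $B$, so $1$ is indecomposable; and the initial object has no global element by its defining property, so there is no map $1\to 0$. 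The bulk of the work in this direction is the routine but lengthy verification of these universal properties.
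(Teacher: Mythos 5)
Your proposal is correct and follows essentially the same route as the paper: for (i)$\Rightarrow$(ii) one takes $S=\mm{\ph}$ from the internal logic and verifies the global-element characterization by induction, with $\bot$, $\join$, $\im$ handled by nonemptiness, indecomposability, and projectivity of $1$ and with $\imp$, $\coim$ handled by strong generation (your subterminal argument for $\imp$ is a correct expansion of the paper's one-line remark); for (ii)$\Rightarrow$(i) one uses $\ddo$-separation to manufacture images, unions, the bottom element, and dual images, reading off projectivity, indecomposability, and nonemptiness of $1$ from those constructions. The only place you are vaguer than the paper is the regularity step: the paper nails this down by observing that $\ddo$-separation yields a pullback-stable (surjective-on-global-elements, monic) factorization system, from which regularity and the identification of that left class with the regular epimorphisms follow by the standard theory, whereas your appeal to "coequalizers of kernel pairs obtainable from images" would need to be routed through that same factorization-system argument to avoid circularity.
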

\begin{proof}
  Assuming~\ref{item:ddosep-phc}, and given a \ddo-formula \ph\ as
  in~\ref{item:ddosep-sep}, we construct $S$ as the usual
  ``interpretation'' in the internal logic of \bS\ of the formula
  corresponding to \ph\ under \autoref{thm:ddo-corresp}.  That is, we
  build $S$ by induction on the structure of \ph, using intersections,
  unions, images, dual images, and so on in \bS.  That this satisfies
  the required property also follows from an inductive argument.  The
  cases of $\top$ and $\meet$ are clear, while $\imp$ and $\coim$
  follow from the assumption that $1$ is a strong generator.  Finally,
  the cases of $\im$, $\join$, and $\bot$ use the projectivity,
  indecomposability, and nonemptiness of $1$, respectively.

  Conversely, assume \ddo-separation~\ref{item:ddosep-sep} and that
  $1$ is a strong generator.  The argument of
  \autoref{thm:pin-props}\ref{item:pp1a} still applies to show that a
  morphism is monic iff it is injective on global elements.  Let \cM\
  denote the class of monics and \cE\ the class of morphisms that are
  surjective on global elements.  Because $1$ is a strong generator,
  any morphism in \cE\ is extremal epic (i.e.\ factors through no
  proper subobject of its codomain).  Moreover, \cE\ is evidently
  stable under pullback.

  For any map $f\colon Y\to X$, apply \ddo-separation to the formula
  $\exists y\in Y. (f(y)=x)$ to obtain a monic $m\colon S\mono X$.
  The pullback of $m$ along $f$ is a monic which is bijective on
  global elements, hence an isomorphism; thus $f$ factors through $m$,
  and the factorization $e\colon Y\to S$ is in \cE\ by construction.
  Therefore, $(\cE,\cM)$ is a pullback-stable factorization system,
  from which it follows that \bS\ is a regular category and that \cE\
  is exactly the class of regular epics---and hence $1$ is projective.

  Now given monics $m\colon A\mono X$ and $n\colon B\mono X$, apply
  \ddo-separation to the formula $\exists a\in A.(m(a)=x) \join
  \exists b\in B.(n(b)=x)$ to obtain a monic $S\mono X$.
  \autoref{thm:pin-props}\ref{item:pp2} still applies to show that
  $A\subseteq S$ and $B\subseteq S$ and that if $A\subseteq T$ and
  $B\subseteq T$ then $S\subseteq T$; thus $S=A\cup B$.  The defining
  property of $S$ (it contains precisely those $1\to X$ factoring
  through either $A$ or $B$) is evidently stable under pullback.
  Similarly, from the formula $\ph(x) = \bot$ we obtain a
  pullback-stable bottom element $0\mono X$, so \bS\ is a coherent
  category.  Moreover, by the construction of unions and empty
  subobjects, it follows that $1$ is indecomposable and nonempty.
  Finally, dual images can similarly be constructed by applying
  \ddo-separation to a formula with a universal \ddo-quantifier.
\end{proof}

Thus, the definition of a constructively well-pointed Heyting category, though it may seem
categorically technical, is equivalent to a very natural structural
analogue of the \ddo-separation axiom.

% In practice, of course, when
% doing mathematics it is much more convenient to invoke a separation
% axiom than to explicitly construct subsets out of unions, images, dual
% images, and so on.

\begin{rmk}\label{thm:higher-ddo-sep}
  If \bS\ is a \Pi-pretopos or a topos, then the direction
  \ref{item:dc2}$\to$\ref{item:dc1} of \autoref{thm:ddo-corresp} can
  be extended to formulas in the appropriate higher-order logic
  involving dependent product types and/or power types, analogously to
  how we dealt with finite products, by choosing appropriate objects
  in \bS\ to introduce as extra parameters.  Therefore, the
  direction~\ref{item:ddosep-phc}$\to$\ref{item:ddosep-sep} of
  \autoref{thm:ddo-sep} can be applied to such formulas as well.  We
  will need this generalization in what follows.  Unfortunately, it
  seems difficult to precisely describe a class of formulas in the
  language of structural set theory which would satisfy higher-order
  versions of \autoref{thm:ddo-corresp} and \autoref{thm:ddo-sep}.
  This can be remedied by describing structural set theory as a type
  theory with quantification over types, as mentioned in
  footnote~\ref{fn:2} on page~\pageref{fn:2}.
\end{rmk}

\section{Structural separation, fullness, and induction}
\label{sec:more-sst}

There are several axioms from material set theory for which we have
not yet considered structural versions.  In this section, we consider
separation, fullness, and induction; in the next we consider
collection and replacement.

The structural separation axiom simply generalizes
\autoref{thm:ddo-sep}\ref{item:ddosep-sep} to unbounded quantifiers.
\begin{blist}
\item \emph{Separation:} For any formula $\ph(x)$, \qq{for any set
    $X$, there exists a subobject $S\mono X$ such that for any $x\in
    X$, we have $x\in S$ if and only if $\ph(x)$}.
\end{blist}

\begin{lem}
  If \bV\ satisfies the core axioms together with full separation,
  then $\bbSet(\bV)$ satisfies separation.
\end{lem}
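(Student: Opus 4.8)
The plan is to mirror the proof of \autoref{thm:ddo-sep}, replacing the inductive construction in the internal logic with a direct construction using the material full separation axiom applied to the objects and arrows of \bV\ that underlie the relevant structural data. The key point is that full separation in \bV\ allows us to carve out subsets defined by \emph{arbitrary} formulas, in particular formulas containing unbounded quantifiers, which is exactly what is needed to handle a structural formula $\ph(x)$ that may itself contain unbounded quantifiers over objects and arrows of $\bbSet(\bV)$.

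First I would recall that, by \autoref{thm:set-wpt} and \autoref{thm:iz-topos}, $\bbSet(\bV)$ is a constructively well-pointed Heyting pretopos, so elements $x\in X$ in $\bbSet(\bV)$ are literally elements $x\ordin X$ in \bV, and a subobject $S\mono X$ is (up to iso) a subset $S\ordsub X$ in \bV. Given a structural formula $\ph(x)$ with $x\colon 1\to X$, I would translate it, clause by clause, into a formula $\phhat(x)$ in the first-order language of \bV: each object-quantifier $\im A$ or $\coim A$ in $\ph$ becomes a quantifier over sets of \bV, each arrow-quantifier over $A\to B$ becomes a quantifier over elements of the set $B^A$ (or, to avoid assuming exponentials, over the set of functional relations from $A$ to $B$, which exists by limited \ddo-replacement and \ddo-separation as in the proof of \autoref{thm:iz-topos}), each atomic equality of arrows becomes an equality in \bV, and the connectives pass through unchanged. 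The translation is arranged so that for $x\ordin X$, $\phhat(x)$ holds in \bV\ if and only if $\bbSet(\bV)$, reasoning in its structural language, satisfies $\ph(x)$; this equivalence is proven by a routine induction on the structure of $\ph$, using at the atomic and quantifier steps the correspondence between structural notions and their material counterparts established in Sections \ref{sec:sets} and \ref{sec:struct-set}.

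Then I would apply the material axiom of full separation in \bV\ to the formula $\phhat$ and the set $X$, obtaining the set $S = \setof{x\in X \mid \phhat(x)}$. This $S$, viewed as a subobject $S\mono X$ in $\bbSet(\bV)$, has the property that for any element $x\in X$, $x$ factors through $S$ iff $x\ordin S$ iff $\phhat(x)$ iff $\ph(x)$. That is exactly the statement of structural separation, so we are done.

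The main obstacle is bureaucratic rather than conceptual: making the translation $\ph\mapsto\phhat$ precise and checking the equivalence $\phhat(x) \iff \bbSet(\bV)$-satisfies-$\ph(x)$ cleanly, particularly for arrow-quantifiers (where one must be careful that quantifying over elements of the function-set in \bV\ genuinely corresponds to quantifying over all morphisms $A\to B$ in $\bbSet(\bV)$, using that every function in \bV\ between sets is an arrow of $\bbSet(\bV)$ and vice versa) and for formulas whose parameters are themselves objects or arrows (which simply become parameters of $\phhat$). There is also a minor wrinkle, already flagged in the remark after \autoref{thm:ddo-corresp}, about choosing cartesian products and jointly-monic representations of relations when higher-arity function and relation symbols appear; but since any given $\ph$ involves only finitely many such choices, this causes no difficulty. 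Everything else is an immediate consequence of full separation in \bV\ together with the already-established fact that $\bbSet(\bV)$ is a constructively well-pointed Heyting pretopos.
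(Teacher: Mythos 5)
Your proposal is correct and is essentially the paper's own proof: translate the structural formula $\ph$ into a material formula $\phhat$ in $\bV$ (object-quantifiers becoming set-quantifiers, arrow-quantifiers becoming quantifiers over functional relations, etc.), apply full material separation to get $S=\setof{x\in X\mid\phhat(x)}$, and observe that $S$ classifies $\ph$. One tiny correction: under the core axioms alone the \emph{set} of all functional relations from $A$ to $B$ need not exist, but this does not matter---you can translate an arrow-quantifier as an unbounded quantifier over all sets $r$ guarded by the predicate ``$r$ is a functional relation from $A$ to $B$,'' which is admissible precisely because you are invoking \emph{full} separation.
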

\begin{proof}
  Note that any formula \ph\ in $\bbSet(\bV)$ in the language of
  categories may be translated into a formula \phhat\ in \bV\ in the
  language of material set theory.  Moreover, if \ph\ is \ddo, then so
  will \phhat\ be.  Now, if \bV\ satisfies material separation, it is
  easy to see that $S = \setof{x\in X | \phhat(x)}$ has the desired
  property for structural separation.
\end{proof}

The structural axiom of fullness is also a direct translation of the
material one.
% The main thing to note is that unlike all other
% structural axioms we have considered so far, the things it asserts to
% exist are not determined uniquely, even up to isomorphism.
\begin{blist}
\item \emph{Fullness:} \qq{for any sets $X,Y$ there exists a
    relation $R\mono M\times X\times Y$ such that $R\epi M\times X$ is
    regular epic, and for any relation $S\mono X\times Y$ such that
    $S\epi X$ is regular epic, there exists an $s\in M$ such that
    $(s,1)^*R \subseteq S$}.
\end{blist}

\begin{lem}
  If \bV\ satisfies the core axioms together with fullness, then
  $\bbSet(\bV)$ satisfies structural fullness.
\end{lem}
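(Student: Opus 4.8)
The plan is to follow the same recipe used for the preceding structural axioms: translate the hypotheses of structural fullness back into material set theory, apply the material fullness axiom of \bV, and translate the answer forward. Given sets $X$ and $Y$ of $\bbSet(\bV)$ --- which are simply sets of \bV\ --- I would first invoke the material axiom of fullness to obtain a set $m$ of entire relations from $X$ to $Y$ such that every entire relation from $X$ to $Y$ contains some $g\in m$, and then set $M := m$.

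Next I would assemble the family of relations $g$ (for $g\in M$) into a single subobject. Using \ddo-separation together with the cartesian products furnished by \autoref{thm:iz-topos}, carve out
\[ R \;=\; \setof{ (g,x,y)\in M\times X\times Y | (x,y)\in g } \;\subseteq\; M\times X\times Y, \]
which is legitimate since ``$(x,y)\in g$'' is a \ddo-formula with the evident parameters. By construction the fiber of $R$ over a point $g\colon 1\to M$ is exactly the relation $g\subseteq X\times Y$.

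Then I would verify the two clauses of structural fullness. For the first, that $R\to M\times X$ is regular epic: recall that in $\bbSet(\bV)$ the epimorphisms are precisely the surjective functions (as in the proof of \autoref{thm:set-wpt}) and that in a pretopos every epimorphism is regular, so it is enough to observe surjectivity; but given $(g,x)\in M\times X$, entireness of the relation $g$ supplies a $y$ with $(x,y)\in g$, i.e.\ $(g,x,y)\in R$ lies over $(g,x)$. For the second clause, suppose $S\mono X\times Y$ is a relation with $S\to X$ regular epic; by the same remark $S\to X$ is surjective, so $S$, regarded as a subset of $X\times Y$ in \bV, is exactly an entire relation from $X$ to $Y$ in the material sense. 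Material fullness then yields $g\in m = M$ with $g\subseteq S$, and unwinding the definition of $R$ shows that the pullback $(g,1)^*R$ of $R$ along $(g,1)\colon X\times Y\to M\times X\times Y$ is precisely the subobject $g$ of $X\times Y$; hence $(g,1)^*R = g\subseteq S$, as required.

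I do not expect any genuinely hard step; essentially all the content sits in the material-to-structural dictionary. The one place I would be careful to make explicit is the identification, in both directions, between ``an entire relation from $X$ to $Y$'' in the material sense (a set of Kuratowski pairs whose first projection is all of $X$) and ``a relation $S\mono X\times Y$ in $\bbSet(\bV)$ whose projection to $X$ is regular epic'', together with the routine check that $M$ and $R$ genuinely are objects of $\bbSet(\bV)$ --- which they are, since only the core axioms (pairing, union, the products built from them, \ddo-separation, and limited \ddo-replacement) are used to construct them.
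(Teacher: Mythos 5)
Your proposal is correct and follows exactly the route the paper intends: the paper's entire proof is the one-line remark ``Just like the proofs for exponentials and power sets in \autoref{thm:iz-topos}'', and your argument is a faithful, correctly detailed elaboration of that recipe (take $M=m$ from material fullness, carve out $R$ by \ddo-separation, and use the surjection/regular-epi and subobject/material-relation dictionaries to check both clauses).
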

\begin{proof}
  Just like the proofs for exponentials and power sets in
  \autoref{thm:iz-topos}.
\end{proof}

\begin{rmk}
  The axiom of fullness appears quite different from our statements of
  the structural axioms of exponentiation and powersets in
  \S\ref{sec:struct-set}.  Specifically, the former refers explicitly
  to global elements and is thus only suitable in a well-pointed category,
  while the latter are phrased in a more ``category-theoretic'' way
  that makes sense in more generality.

  In fact, however, in a well-pointed category, the existence of power
  objects is easily shown to be equivalent to the following more
  ``set-like'' version: for any $X$ there is an object $P X$ and a
  relation $R_X \mono X\times P X$ such that for any subset $S\mono
  X$, there exists a unique $s\in P X$ such that $S \cong (1,s)^*R_X$.
  This is also true for exponentiation as long as we also assume the
  collection axioms from the next section.  Conversely, from our axiom
  of fullness one can derive a more category-theoretic version by
  interpreting it in the stack semantics.  So the difference is only
  an artifact of our chosen presentations.
\end{rmk}

The axiom of induction, of course, only makes sense in the presence of
the axiom of infinity.  The structural axiom of infinity (existence of
an \nno) asserts that functions can be constructed by recursion, which
implies Peano's induction axiom in the sense that any subset $S\mono
N$ which contains $o\colon 1\to N$ and is closed under $s\colon N\to
N$ must be all of $N$.  (The proof of
\autoref{thm:iz-topos}\ref{item:iz-topos-nno} essentially shows that
the converse holds in any \Pi-pretopos.)  It follows from
\ddo-separation that \ddo-formulas can be proven by induction; the
axiom of full induction extends this to arbitrary formulas.
\begin{blist}
\item \emph{Induction:} For any formula $\ph(x)$ with free variable
  $x\in N$, where $N$ is an \nno, \qq{if $\ph(0)$ and $\forall n\in
    N.(\ph(x)\imp \ph(s x))$, then $\ph(x)$ for all $x\in N$}.
\end{blist}
Just as in material set theory, infinity and full separation together
imply full induction.  We also have:

\begin{prop}
  If \bV\ satisfies the core axioms of material set theory, and also
  the axioms of infinity, exponentials, and induction, then
  $\bbSet(\bV)$ has an \nno\ and satisfies induction.
\end{prop}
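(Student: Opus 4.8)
The plan is to obtain the natural numbers object for free from \autoref{thm:iz-topos}\ref{item:iz-topos-nno}, and then to reduce the structural induction schema for $\bbSet(\bV)$ to the material induction schema of \bV\ by means of the formula-translation $\ph\mapsto\phhat$ already used above in the discussion of structural separation. The single point requiring any care is the soundness of that translation, and I expect everything else to be bookkeeping.

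Since \bV\ satisfies infinity and exponentials, \autoref{thm:iz-topos}\ref{item:iz-topos-nno} shows that $\bbSet(\bV)$ has an \nno, and inspection of that proof shows it may be taken to be $\omega$ with zero $\emptyset$ and successor $x\mapsto x\cup\{x\}$. Any \nno\ is uniquely isomorphic to this one, so by isomorphism-invariance of truth (\autoref{thm:isoinvar-truth}) it suffices to verify the induction schema with $N$ equal to this $\omega$. So fix a formula $\ph(x)$ in the language of categories with a distinguished free \ddo-variable $x\colon 1\to N$, possibly with further parameters from $\bbSet(\bV)$, and let $\phhat$ be its translation into the language of material set theory (replacing object-quantifiers by unbounded set-quantifiers, arrow-quantifiers over $A\to B$ by quantifiers ranging over functions $A\to B$, and equalities of parallel arrows by equalities of the corresponding functions). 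A routine induction on the construction of formulas --- using, as in the proof of \autoref{thm:set-wpt}, that morphisms $1\to X$ in $\bbSet(\bV)$ are precisely the elements of $X$ --- shows that for every sentence $\psi$ in $\bbSet(\bV)$ one has $\bbSet(\bV)\ss\psi$ iff $\bV\ss\psihat$; in particular, for each element $n\in\omega$ we have $\bbSet(\bV)\ss\ph(n)$ iff $\bV\ss\phhat(n)$, with the structural $0$, $s$, and \ddo-quantifier $\forall n\in N$ corresponding respectively to the material $\emptyset$, $x\mapsto x\cup\{x\}$, and $\forall n\in\omega$.

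Granting this, the argument is immediate. Suppose $\bbSet(\bV)\ss\ph(0)$ and $\bbSet(\bV)\ss\forall n\in N.(\ph(n)\imp\ph(s n))$. Translating, $\bV\ss\phhat(\emptyset)$ and $\bV\ss\forall n\in\omega.(\phhat(n)\imp\phhat(n\cup\{n\}))$, and since $\phhat$ is permitted arbitrary parameters, the material axiom of induction gives $\bV\ss\forall x\in\omega.\phhat(x)$. Translating back yields $\bbSet(\bV)\ss\forall x\in N.\ph(x)$, which is exactly the conclusion of the structural induction schema. The only step that deserves attention is the soundness biconditional $\bbSet(\bV)\ss\psi\iff\bV\ss\psihat$; but this is the same essentially definitional induction on the structure of formulas that was tacitly invoked in the separation lemma above, so nothing genuinely new is needed.
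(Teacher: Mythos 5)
Your proposal is correct and follows essentially the same route as the paper: the \nno\ comes from \autoref{thm:iz-topos}\ref{item:iz-topos-nno} (realized as $\omega$ with successor $x\mapsto x\cup\{x\}$), and the induction schema is verified by translating $\ph$ to the material formula $\phhat$ and invoking the material induction axiom. The paper's proof is just a one-line version of this same argument; your additional care about the soundness biconditional and isomorphism-invariance is exactly the bookkeeping it leaves implicit.
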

\begin{proof}
  Any formula $\ph(x)$ in $\bbSet(\bV)$ with $x\in N = \omega$ can be
  rewritten as a formula in \bV, to which the material axiom of
  induction can be applied.
\end{proof}

% TODO: It would be nice to mention the solvability of iteration
% problems in the presence of induction and collection.

There is no particularly natural structural ``axiom of foundation,''
although in \S\ref{sec:constr-mat} we will mention a somewhat related
property.  We can, however, formulate a structural axiom which is
closely related to the material axiom of set-induction.
\begin{blist}
\item \emph{Well-founded induction:} For any formula $\ph(x)$ with
  free variable $x\in A$, \qq{if $A$ is well-founded under the relation
  $\prec$, and moreover if $\ph(y)$ for all $y\prec x$ implies
  $\ph(x)$, then in fact $\ph(x)$ for all $x\in A$}.
\end{blist}
This axiom is implied by separation, since then we can form $\setof{
  x\in A | \ph(x) }$ and apply the definition of well-foundedness.  We
can also say:

\begin{prop}
  If \bV\ satisfies the core axioms of material set theory, and also
  the axioms of power set, set-induction, and Mostowski's principle,
  then $\bbSet(\bV)$ satisfies well-founded induction.
\end{prop}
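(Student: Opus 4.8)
The plan is to proceed exactly as in the neighbouring propositions: translate the structural statement into material set theory and verify it there. The one new feature is that ``full well-founded induction'' is not literally one of the material axioms in our list, so the real content will be to \emph{derive} the material well-founded-induction schema from the core axioms together with power sets, set-induction, and Mostowski's principle.

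First I would translate. Given a structural formula $\ph(x)$ with free variable $x\in A$ and parameters $A$ and $\prec$ (i.e.\ a relation $R\mono A\times A$), I apply the translation of \autoref{thm:ddo-corresp}, extended to power types by \autoref{thm:higher-ddo-sep} if necessary, just as in the proof of the structural separation lemma, obtaining a material formula $\widehat{\ph}$ in which unbounded structural quantifiers over objects have become unbounded material quantifiers over sets. Since $\bbSet(\bV)$ is constructively well-pointed, subobjects of $A$ correspond to subsets of the set $A$ in \bV, so the structural hypothesis ``$A$ is well-founded under $\prec$'' translates to material well-foundedness of the corresponding set-with-relation, and ``progressivity'' of $\ph$ (that $\ph(y)$ for all $y\prec x$ implies $\ph(x)$) translates to progressivity of $\widehat{\ph}$. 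Hence it suffices to show, in \bV: for every well-founded set-relation $(A,\prec)$ and every (possibly unbounded) progressive formula $\psi$, one has $\psi(a)$ for all $a\in A$.

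The heart of the argument will be this last claim, which is genuinely nontrivial: with full separation one would just form $Y=\setof{a\in A|\psi(a)}$, note it is inductive and conclude $Y=A$, but $\psi$ is unbounded and we have only \ddo-separation. Instead I would use set-induction, which proves $\forall x.\,\chi(x)$ for unbounded $\chi$ by $\in$-induction; to exploit it one must transport the $\prec$-induction to an $\in$-induction, i.e.\ produce the transitive collapse of $(A,\prec)$, and here Mostowski's principle does the job once the relation has been made extensional. The key point, which I expect to be the main obstacle, is to carry out this extensionalization \emph{without any recursion} (which would be circular) and intuitionistically: using power sets, form the set of all bisimulations $R\in P(A\times A)$ (being a bisimulation is a \ddo-condition), let $\sim$ be their union --- the largest bisimulation --- and check that $\sim$ is an equivalence relation, since the diagonal, the opposite of a bisimulation, and the composite of two bisimulations are all bisimulations. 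Then form $A'=A/{\sim}$ (a subset of $P(A)$ cut out by \ddo-separation) with induced relation $[b]\lessdot[a]$ iff $b\sim b'$ for some $b'\prec a$, and verify that $\lessdot$ is well-defined, that $(A',\lessdot)$ is well-founded (an inductive subset of $A'$ pulls back to an inductive subset of $A$), and that it is extensional (if $[a],[b]$ have the same $\lessdot$-predecessors, then ``$[x]$ and $[y]$ have the same $\lessdot$-predecessors'' defines a bisimulation relating $a$ and $b$).

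To finish, I would apply Mostowski's principle to the well-founded extensional relation $(A',\lessdot)$ to get a transitive set $T$ and an isomorphism onto $(T,\in)$; composing with the quotient map yields $q\colon A\to T$ with $q(a)=\setof{q(b)|b\prec a}$ for all $a$. Setting $\psi'(t):\equiv$ ``$\psi(a)$ for every $a\in A$ with $q(a)=t$'', progressivity of $\psi$ and the identity $q(a)=\setof{q(b)|b\prec a}$ show that $\psi'$ is progressive for $\in$ on $T$; applying set-induction to ``$x\in T\imp\psi'(x)$'' and using transitivity of $T$ gives $\psi'(t)$ for all $t\in T$, and hence $\psi(a)$ for all $a\in A$ by taking $t=q(a)$. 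Besides the extensional-quotient step, the only thing to watch is that all the auxiliary sets used ($A\times A$, $P(A\times A)$, $A'$, pullbacks such as $\setof{a\in A|[a]\in Y}$) are available from the core axioms and power sets; the remaining bookkeeping --- the translation and the two $\in$-induction arguments --- is then routine.
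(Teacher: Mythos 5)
Your proof is correct and follows essentially the same route as the paper's (much terser) argument: collapse the well-founded relation to a well-founded extensional one via the largest bisimulation (available from power sets), apply Mostowski's principle to obtain a transitive set, and transport the induction to an $\in$-induction handled by set-induction. The extensionalization step you flag as the main obstacle is exactly the ``extensional quotient'' the paper constructs in \S\ref{sec:constr-mat}, where it is likewise observed that the largest bisimulation exists given power sets and that its quotient is extensional and remains well-founded.
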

\begin{proof}
  With power sets, any well-founded relation can be collapsed to a
  well-founded extensional one (an ``extensional quotient'' in the
  sense of \S\ref{sec:constr-mat}), which by Mostowski's principle
  will be isomorphic to a transitive set.  Thus, set-induction
  performed over the resulting transitive set can be used for
  inductive proofs over the original well-founded relation.
\end{proof}

%\section{Structural set theory II: separation, collection, and replacement}
\section{Structural collection and replacement}
\label{sec:strong-ax}

We now turn to structural versions of the collection and replacement
axioms.  Various such axioms have been proposed in the context of ETCS
(see~\cite{cole:cat-sets,osius:cat-setth,lawvere:etcs-long,mclarty:catstruct}),
but none of these seem to be quite appropriate in an intuitionistic or
predicative theory.  Hence our axioms must be different from all
previous proposals (though they are most similar to the replacement
axiom of~\cite{mclarty:catstruct}).

The intuition behind  structural collection is that
since the elements of a set in a structural theory are not
themselves sets, instead of ``collecting'' sets as \emph{elements} of
another set we must collect them as a family indexed \emph{over}
another set.  Also, since the language of category theory is
two-sorted, it is unsurprising that we have to assert collection for
objects and morphisms separately.  In fact, we find it conceptually
helpful to formulate \emph{three} axioms of collection, although the
third one is automatically satisfied.
\begin{blist}
\item \emph{Collection of sets:} For any formula $\ph(u,X)$, \qq{for
    any set $U$, if for every $u\in U$ there exists an $X$ with
    $\ph(u,X)$, then there exists a regular epi $V\xepi{p} U$ and an
    $A\in \Set/V$ such that for every $v \in V$ we have
    $\ph(pv,v^*A)$}.
\item \emph{Collection of functions:} For any formula $\ph(u,f)$,
  \qq{for any set $U$ and any $A,B\in\Set/U$, if for all $u\in U$
    there exists $u^*A\too[f] u^*B$ with $\ph(u,f)$, then there exists
    a regular epi $V\xepi{p} U$ and a function $p^*A\too[g] p^*B$ in
    $\Set/V$ such that for all $v\in V$, we have $\ph(pv,v^*g)$}.
\item \emph{Collection of equalities:} \qq{For any set $U$, any
    $A,B\in \Set/U$, and any functions $f,g\colon A\to B$ in $\Set/U$,
    if $u^*f = u^*g$ for every $u\in U$, then there is a regular epi
    $V\xepi{p} U$ such that $p^*f = p^*g$.}
\end{blist}
We will say that \Set\ ``satisfies collection'' if it satisfies all
three of these axioms.  However, one, and sometimes two, of these are
redundant.

\begin{prop}\label{thm:collmor}
    Let \Set\ be a constructively well-pointed Heyting category.  Then:
    \killspacingtrue
    \begin{enumerate}
    \item \Set\ satisfies collection of equalities.\label{thm:colleq}
    \item If \Set\ satisfies full separation and is a \Pi-pretopos,
      then it satisfies collection of functions.\label{item:collmor2}
    \end{enumerate}
    \killspacingfalse
\end{prop}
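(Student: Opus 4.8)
The plan splits along the two clauses, and clause~(i) is by far the easier.

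For clause~(i) the plan is to show that the hypothesis already forces $f=g$ as morphisms of $\Set/U$, so that one may simply take $V=U$ and $p=1_U$, which is an isomorphism and hence regular epic. To prove $f=g$ I would use that $1$ is a strong generator and argue on global elements. Given $a\colon 1\to A$, set $u:=p_A\circ a$, where $p_A\colon A\to U$ is the structure map; then $a$ factors through the fibre inclusion $\iota_A\colon u^*A\mono A$ via a unique $\tilde a\colon 1\to u^*A$, and since by the very definition of the restricted map we have $f\circ\iota_A=\iota_B\circ(u^*f)$ (and likewise for $g$), the hypothesis $u^*f=u^*g$ gives $f\circ a=\iota_B\circ(u^*f)\circ\tilde a=\iota_B\circ(u^*g)\circ\tilde a=g\circ a$. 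Hence $f=g$ by \autoref{thm:pin-props}\ref{item:pp1a}. There is no real obstacle here.

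For clause~(ii) the plan is as follows. First, since \Set\ is a \Pi-pretopos the slice $\Set/U$ is cartesian closed, so I form the exponential $W:=B^A$ in $\Set/U$, with structure map $\pi\colon W\to U$; the exponential adjunction (together with the fact that maps over $U$ out of $u^*A$ automatically land in $u^*B$) identifies global elements $w\colon 1\to W$ lying over a given $u:=\pi\circ w$ with functions $f_w\colon u^*A\to u^*B$, and base-changing the evaluation map to $\Set/W$ yields a single ``universal'' morphism $\mathrm{ev}\colon\pi^*A\to\pi^*B$ there with $w^*(\mathrm{ev})=f_w$ for every such $w$. Next I would form the formula $\psi(w):\equiv\ph(\pi\circ w,\,w^*(\mathrm{ev}))$ with free variable $w\colon 1\to W$ --- it carries exactly the unbounded quantifiers (if any) that $\ph$ has --- and apply \emph{full separation} to get a subobject $V\mono W$ through which $w$ factors if and only if $\psi(w)$ holds. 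Then I set $p$ to be the composite $V\mono W\xto{\pi}U$ and $g\colon p^*A\to p^*B$ to be the restriction of $\mathrm{ev}$ along $V\mono W$. Finally I would verify two things: (a) $p$ is regular epic, which by \autoref{thm:pin-props}\ref{item:pp1} reduces to showing that every $u\colon 1\to U$ lifts through $p$, and it does, because the hypothesis of the axiom supplies some $f$ with $\ph(u,f)$, hence a $w$ over $u$ with $f_w=f$, so $\psi(w)$ holds and $w$ factors through $V$; and (b) for every $v\in V$, writing $w:=(V\mono W)\circ v$ we have $p\circ v=\pi\circ w$ and $v^*g=w^*(\mathrm{ev})=f_w$, while $\psi(w)$ holds by construction, and this is precisely $\ph(p\circ v,\,v^*g)$.

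The one genuinely delicate point, and the place I would take most care, is the bookkeeping around choices of pullbacks and of the exponential: making precise that ``$w^*(\mathrm{ev})$ is the function classified by $w$'' and that $\psi$ is a legitimate formula of the language. I expect to handle this either by fixing specified pullbacks once and for all, or --- more cleanly --- by carrying out the construction inside the higher-order logic with dependent product (and, if one likes, power) types as in \autoref{thm:higher-ddo-sep}, invoking isomorphism-invariance of truth (\autoref{thm:isoinvar-truth}) to see that these choices do not affect which instances of $\ph$ come out true. Everything else is routine manipulation of pullbacks and of the universal property of exponentials.
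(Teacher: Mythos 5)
Your proposal is correct and follows essentially the same route as the paper: for (i) both arguments reduce $f=g$ to a check on global elements via strong generation of $1$ (the paper phrases this with the equalizer and a dual image along the structure map, you phrase it pointwise, but it is the same idea), and for (ii) both form the exponential $B^A$ in $\Set/U$, carve out $V$ by full separation applied to the formula about the fibrewise evaluation map, and use projectivity/strong generation of $1$ to see the projection $V\to U$ is regular epic. The bookkeeping point you flag about expressing $w^*(\mathrm{ev})$ inside the formula is handled the same way (implicitly) in the paper, so there is no gap.
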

\begin{proof}
  In the situation of collection of equalities, let $E \xto{e} A$ be the
  equalizer of $f$ and $g$ and let $S = \coim_a E$, where $A\xto{a} U$
  is the structure map of $A\in \Set/U$.  The assumption that $u^*f =
  u^*g$ for every $1\too[u] U$ implies that every such $u$ factors
  through $S$.  Since $1$ is a strong generator, this implies $S\cong
  U$, and therefore $E\cong A$ and so $f=g$; thus we can take $V=U$
  and $p=1_U$.  This shows~\ref{thm:colleq}.

  In the situation of~\ref{item:collmor2}, let $C= (B\to U)^{(A\to
    U)}$ be the exponential in $\Set/U$, with projection $C \too[g]
  U$.  Then each $c\in C$ corresponds to a map $f_c\maps (gc)^*A \to
  (gc)^*B$.  Using the axiom of separation, find a subobject
  $V\xmono{m} C$ such that $c\in C$ is contained in $V$ iff
  $\ph(f_c)$.  By assumption, every $u\in U$ lifts to some $c\in V$,
  so since \Set\ is well-pointed, the projection $V\xepi{gm} U$ is regular
  epi.  Finally, there is an evident map $h\maps (gm)^*A\to (gm)^*B$
  such that $\ph(gmc,c^*h)$ for all $c\in V$.
\end{proof}

% Since the three collection axioms are certainly ``of a piece,''
% \autoref{thm:collmor}\ref{thm:colleq} suggests that collection of
% functions and sets are ``strengthened'' versions of pinning.  We will
% see further evidence for this point of view in \autoref{thm:int-coll}
% and \S\ref{sec:families}.

The appropriate structural formulation of the axiom of
\emph{replacement} is a bit more subtle than that of collection.  The
idea is that if we modify the hypotheses of collection by asserting
\emph{unique} existence, then passage to a cover $V\epi U$ should be
unnecessary.  As with collection, we may expect three versions for
sets, functions and equalities, and of these the second two are easy
and follow from collection.

\begin{prop}\label{thm:replacement}
  Let \Set\ be a constructively well-pointed Heyting category.
  \killspacingtrue
  \begin{enumerate}
  \item \Set\ always satisfies \emph{replacement of equalities}: \qq{for
      any set $U$, any $A,B\in \Set/U$, and any functions $f,g\colon
      A\to B$ in $\Set/U$, if $u^*f = u^*g$ for every $u\in U$, then
      $f=g$}.\label{item:repl-eq}
  \item If \Set\ satisfies collection of functions, then it satisfies
    \emph{replacement of functions}: \qq{for any set $U$ and any $A,B\in
      \Set/U$, if for every $1\too[u] U$ there exists a unique $u^*A
      \too[f] u^*B$ such that $\ph(u,f)$, then there exists $A\too[g] B$
      in $\Set/U$ such that for each $1\too[u] U$ we have
      $\ph(u,u^*f)$}.\label{item:repl-mor}
  \end{enumerate}
  \killspacingfalse
\end{prop}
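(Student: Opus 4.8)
The plan is to dispatch~\ref{item:repl-eq} by a direct appeal to an earlier proof, and to prove~\ref{item:repl-mor} by feeding $\ph$ to collection of functions and then \emph{descending} the morphism so produced along the cover it provides, using the uniqueness hypothesis to make the descent succeed.

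Part~\ref{item:repl-eq} is, in fact, exactly what the proof of \autoref{thm:collmor}\ref{thm:colleq} establishes: writing $E\xto{e}A$ for the equalizer of $f$ and $g$ and $a\colon A\to U$ for the structure map of $A$, the hypothesis $u^*f=u^*g$ for every $u\colon 1\to U$ forces each such $u$ to factor through $\coim_a E$, so $\coim_a E\iso U$ since $1$ is a strong generator, hence $E\iso A$ and $f=g$. So I would simply observe that that argument yields $f=g$ outright, only afterwards taking $V=U$.

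For part~\ref{item:repl-mor}, I first apply collection of functions to $\ph$, obtaining a regular epi $p\colon V\epi U$ and a morphism $h\colon p^*A\to p^*B$ in $\Set/V$ with $\ph(pv,v^*h)$ for every $v\colon 1\to V$. The problem then reduces to producing $g\colon A\to B$ in $\Set/U$ with $p^*g=h$: granting this, each $u\colon 1\to U$ lifts along $p$ to some $v$ (since $1$ is projective and regular epis are pullback-stable, \autoref{rmk:nonelem-wpt}), so $u^*g=v^*h$ and $\ph(u,u^*g)$ holds. To descend $h$, form the kernel pair $q_1,q_2\colon V\times_U V\toto V$ and first check $q_1^*h=q_2^*h$: by part~\ref{item:repl-eq}, applied with $V\times_U V$ as the base, it suffices to verify this after restricting along each $w\colon 1\to V\times_U V$, and for $u:=pq_1w=pq_2w$ both $(q_1w)^*h$ and $(q_2w)^*h$ are maps $u^*A\to u^*B$ satisfying $\ph(u,-)$, hence coincide by the uniqueness clause. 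Now descend $h$ \emph{as a subobject}: the graph $\Gamma_h$ is a subobject of $p^*A\times_V p^*B\iso p^*W$, where $W:=A\times_U B$, the equation $q_1^*h=q_2^*h$ is precisely the descent condition for $\Gamma_h$ relative to $p$, and I let $T\mono W$ be the image of the composite $\Gamma_h\mono p^*W\epi W$ whose second map is the (regular epi) projection $\rho$. A Beck--Chevalley and Frobenius computation over the kernel pair of $\rho$, using the descent condition, gives $p^*T=\Gamma_h$; this needs only that \Set\ is regular, not exact. It remains to see that $T$ is the graph of a morphism $A\to B$ over $U$, i.e.\ that $T\to A$ is an isomorphism, which by \autoref{thm:pin-props}\ref{item:pp1a} means exactly that every $a\colon 1\to A$ lifts uniquely to $T$. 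Existence of a lift: chase $a$ up a lift of its $U$-image through $p$, apply $h$, and push the result back down $\rho$ (projectivity of $1$). Uniqueness: two lifts of $a$ to $T$ lift further, along the regular epi $\Gamma_h\epi T$, to points of $\Gamma_h$ agreeing in their $W$-coordinate and differing only in a $V$-coordinate, and restricting $q_1^*h=q_2^*h$ along the resulting point of $V\times_U V$ forces those two lifts to coincide. This produces $g$ with $\Gamma_g=T$, whence $\Gamma_{p^*g}=p^*\Gamma_g=p^*T=\Gamma_h$ and so $p^*g=h$.

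The main obstacle is this descent step --- manufacturing a morphism over $U$ from one over the cover $V$. The device that keeps it elementary is to descend the \emph{graph} as a subobject, so that only the regularity of \Set\ enters (no exactness or effective-descent hypothesis), and then to verify, reasoning with global elements --- legitimate precisely because \Set\ is constructively well-pointed --- that the descended subobject is again a total single-valued relation; single-valuedness is where the uniqueness clause of the hypothesis is indispensable. This is also why it was worth isolating~\ref{item:repl-eq}: it is invoked twice inside the proof of~\ref{item:repl-mor}.
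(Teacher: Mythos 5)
Your proof is correct and follows the paper's argument almost exactly: part~(i) by observing that the earlier equalizer argument for collection of equalities already gives $f=g$, and part~(ii) by applying collection of functions, using the uniqueness hypothesis together with part~(i) over $V\times_U V$ to verify the descent condition on the kernel pair, and then descending $h$ along $p$. The one place you diverge is the descent step itself: the paper simply invokes the fact (recorded in \S\ref{sec:struct-set}) that the self-indexing of any regular category is a prestack for its regular topology, whereas you reprove this instance by hand, descending the graph $\Gamma_h$ as a subobject via image factorization, Beck--Chevalley and Frobenius, and then checking on global elements that the descended subobject is again a functional graph; that unpacking is sound and correctly uses only regularity. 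One phrase is garbled, though: the lifts to $\Gamma_h$ of two lifts of $a$ to $T$ agree a priori only in their $A$-coordinate --- agreement of the full $W$-coordinates is precisely what restricting $q_1^*h=q_2^*h$ along the resulting point of $V\times_U V$ is then used to establish.
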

\begin{proof}
  The proof of \autoref{thm:collmor}\ref{thm:colleq} already
  shows~\ref{item:repl-eq}.  For~\ref{item:repl-mor}, collection of
  functions gives us a regular epimorphism $V\xepi{p} U$ and a
  function $h\colon p^*A \to p^*B$ in $\Set/V$ such that $\ph(p v,
  v^*h)$ for any $v\in V$.  If $(r,s)\colon V\times_U V\toto V$ is the
  kernel pair of $p$, then for every $z = (v_1,v_2)\in V\times_U V$ we
  have $p v_1 = p v_2 = u$, say, and thus $\ph(u, v_1^*h)$ and $\ph(u,
  v_2^*h)$.  By uniqueness, $v_1^*h = v_2^*h$, and so
  \[z^*r^*h = v_1^* h = v_2^* h = z^* s^* h.
  \]
  By replacement of equalities, $r^*h = s^*h$.  So since the
  self-indexing of \Set\ is a prestack, $h$ descends to $g\colon A\to
  B$ with $p^*g = h$.  Since $p$ is regular epic, for any $u\in U$
  there exists a $v\in V$ with $p v = u$, so we have $u^* h = v^* p^*
  h = v^* g$, whence $\ph(u,u^*h)$.
\end{proof}

These two replacement axioms imply, in particular, that
\emph{universal properties are reflected by global elements}.  Rather
than give a precise statement of this, we present a paradigmatic
example.

\begin{prop}\label{thm:cc-lcc-mor}
  Let \Set\ be a constructively well-pointed Heyting category satisfying replacement of
  equalities and functions, and suppose we have $A,B,E\in \Set/X$ and
  a morphism $v\colon E\times_X A\to B$ such that for all $x\in X$,
  $x^*v$ exhibits $x^*E$ as an exponential $(x^*B)^{(x^*A)}$ in \Set.  Then
  $v$ exhibits $E$ as an exponential $B^A$ in $\Set/X$.
\end{prop}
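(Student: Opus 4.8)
The plan is to verify the universal property of $B^A$ in $\Set/X$ directly and fibrewise, using the axiom of \emph{replacement of functions} to glue the fibrewise exponential transposes into a single global one, and \emph{replacement of equalities} to check the two identities this requires. So fix an object $Z\in\Set/X$ together with a morphism $g\colon Z\times_X A\to B$ in $\Set/X$; the task is to produce a unique $\hat g\colon Z\to E$ in $\Set/X$ with $v\circ(\hat g\times_X 1_A)=g$.

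First I would handle existence. For each $x\colon 1\to X$, pulling back along $x$ and inserting the canonical isomorphism $x^*(Z\times_X A)\cong x^*Z\times x^*A$ produces a morphism $x^*g\colon x^*Z\times x^*A\to x^*B$; since by hypothesis $x^*v$ exhibits $x^*E$ as the exponential $(x^*B)^{(x^*A)}$ in $\Set$, there is a unique $f_x\colon x^*Z\to x^*E$ with $x^*v\circ(f_x\times 1_{x^*A})=x^*g$. Let $\ph(x,f)$ be the formula, with parameters $A,B,E,v,g,Z$, asserting this transpose identity for a morphism $f\colon x^*Z\to x^*E$. Then the hypothesis of replacement of functions is met over $U=X$ with the objects $Z,E\in\Set/X$, so we get $\hat g\colon Z\to E$ in $\Set/X$ with $x^*v\circ(x^*\hat g\times 1_{x^*A})=x^*g$ for every $x$; by uniqueness of transposes, $x^*\hat g=f_x$.

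Next I would verify the two identities via \autoref{thm:replacement}\ref{item:repl-eq}, which says that a morphism in $\Set/X$ is determined by its pullbacks along global elements of $X$. For the factorization: $v\circ(\hat g\times_X 1_A)$ and $g$ are parallel morphisms $Z\times_X A\to B$ in $\Set/X$, and for each $x$ we have $x^*\bigl(v\circ(\hat g\times_X 1_A)\bigr)=x^*v\circ(x^*\hat g\times 1_{x^*A})=x^*g$ (again using the canonical iso above and that $x^*$ preserves composition and finite limits), so $v\circ(\hat g\times_X 1_A)=g$. For uniqueness: if $\hat g'\colon Z\to E$ over $X$ also satisfies $v\circ(\hat g'\times_X 1_A)=g$, then for each $x$ the morphism $x^*\hat g'$ is a transpose of $x^*g$ with respect to $x^*v$, hence equals $f_x=x^*\hat g$; applying \autoref{thm:replacement}\ref{item:repl-eq} once more gives $\hat g'=\hat g$.

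I do not expect any real difficulty: no cartesian closure or pretopos structure on $\Set$ is needed beyond finite limits (so that the slices $\Set/X$, $\Set/1$ and the pullback functors $x^*$ exist), and constructive well-pointedness enters only through the two replacement axioms. The one point that needs a little care is making $\ph$ a genuine formula of the dependently typed language, since the canonical isomorphism $x^*(Z\times_X A)\cong x^*Z\times x^*A$ must be built into its statement; but exactly as in the discussion following \autoref{thm:ddo-corresp}, this involves only finitely many choices and so presents no real obstacle.
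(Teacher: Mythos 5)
Your proof is correct and follows essentially the same route as the paper's: fiberwise transposes are glued by replacement of functions, and both the factorization identity and uniqueness are checked by replacement of equalities. The extra care you take with the canonical isomorphism $x^*(Z\times_X A)\cong x^*Z\times x^*A$ is a sensible refinement of detail rather than a change of approach.
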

\begin{proof}
  The assumption means that for any object $C$ and any morphism
  $f\colon C\times x^*A \to x^*B$, there exists a unique $g\colon C\to
  x^*E$ such that $f = x^*v \circ (g\times 1)$.  In particular, given
  any $D\in \Set/X$ and morphism $f\colon D\times_X A\to B$, for every
  $x\in X$ there is a unique $g_x\colon x^*D\to x^*E$ such that $x^*f
  = x^*v \circ (g_x\times 1)$.  By replacement of functions, we have
  $g\colon D\to E$ such that $x^*f = x^*v \circ (x^*g \times 1)$ for
  each $x\in X$, or equivalently $x^* f = x^*(v\circ (g\times 1))$.
  By replacement of equalities, $f=v\circ (g\times 1)$.  Moreover, if
  $f = v\circ (h\times 1)$ for some $h\colon D\to E$, then for each
  $x\in X$ we have $x^*f = x^*v \circ (x^*h \times 1)$.  By the
  universal property of the exponential $x^*E$, we have $x^*h = g_x =
  x^*g$, and hence $h=g$ by replacement of equalities.
\end{proof}

In future, we will invoke similar facts frequently, trusting the
reader to supply analogous arguments as necessary.  Note also that all
universal properties reflected in this way are automatically
pullback-stable.

We would also like an axiom of ``replacement of sets,'' which would
ensure that the \emph{existence} of an object satisfying a universal
property is also reflected by global elements.  Since no structural
theory can determine an object of a category more uniquely than up to
unique isomorphism, one natural such statement would be:
\begin{blist}
\item \emph{Replacement of sets:} \qq{if for every $u\in U$ there is a
    set $A$ with $\ph(u,A)$ which is unique up to unique isomorphism,
    then there is a $B\in \Set/U$ such that $\ph(u,u^*B)$ for all
    $u\in U$}.
\end{blist}
If \Set\ is additionally \emph{exact}, then replacement of sets
follows from collection, using the fact that in this case the
self-indexing $\Set/(-)$ a stack for the regular topology.  Moreover,
just as in material set theory, we have:

\begin{prop}\label{thm:collsep}
  If \bS\ is a well-pointed Heyting category which satisfies full
  classical logic and replacement of sets, then it also satisfies
  separation.
\end{prop}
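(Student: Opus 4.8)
The plan is to transcribe, into the structural setting, the classical material-set-theory argument (noted in \S\ref{sec:sets}) that replacement together with full classical logic yields full separation.  Let $\ph(x)$ be the given formula, with free variable $x\colon 1\to X$ and, as usual, possibly further parameters.  I would introduce the auxiliary formula $\psi(x,A)$ given by
\[\bigl(\ph(x)\meet A\iso 1\bigr)\;\join\;\bigl(\neg\ph(x)\meet A\iso 0\bigr),\]
in which ``$A\iso 1$'' and ``$A\iso 0$'' abbreviate the evident first-order assertions that there is an isomorphism $A\to 1$, respectively $A\to 0$ (these are perfectly legitimate formulas in the language of categories).  The point is that a family $B\in\Set/X$ with $\psi(x,x^*B)$ for all $x$ is exactly a subobject of $X$ whose ``characteristic predicate'' is $\ph$: each fibre $x^*B$ is forced to be $1$ when $\ph(x)$ holds and $0$ when it fails.

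First I would check that the hypothesis of the replacement-of-sets schema holds for $\psi$, namely that for every $x\colon 1\to X$ there is a set $A$ with $\psi(x,A)$, unique up to unique isomorphism.  Here full classical logic does the essential work: it supplies $\ph(x)\join\neg\ph(x)$, so one may take $A=1$ or $A=0$ accordingly, and in either case the uniqueness clause is automatic, since the hom-set between any two terminal objects (respectively, between any two initial objects) is a singleton whose unique element is an isomorphism.  Replacement of sets, applied with $U=X$, then produces an object $B\in\Set/X$ --- say with structure map $b\colon B\to X$ --- such that $\psi(x,x^*B)$ holds for every $x\colon 1\to X$.

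Next I would verify that $b$ is a monomorphism, so that it represents a genuine subobject of $X$.  Since $1$ is a generator, it suffices to show that whenever $g,h\colon Z\to B$ satisfy $bg=bh$ we have $gz=hz$ for every $z\colon 1\to Z$; and, writing $x=bgz=bhz$, the elements $gz$ and $hz$ are global elements of $B$ lying over $x$, hence correspond to global elements of the fibre $x^*B$, which has at most one global element because $x^*B\iso 0$ or $x^*B\iso 1$ (the case $x^*B\iso 0$ being vacuous, as nonemptiness of $1$ excludes a map $1\to 0$).  It then remains to show that $x\colon 1\to X$ factors through $b$ exactly when $\ph(x)$: any factorization furnishes a global element of $x^*B$, so $x^*B\not\iso 0$ and hence $\psi(x,x^*B)$ gives $\ph(x)$, while conversely $\ph(x)$ forces $x^*B\iso 1$, whose unique global element, composed with the projection $x^*B\to B$, factors $x$ through $b$.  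This exhibits $b\colon B\mono X$ as a witness for the instance of separation associated with $\ph$.

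I do not expect a real obstacle here: the construction is essentially a direct translation of the material argument.  The only points needing care are the bookkeeping that turns ``$A\iso 1$'' and ``$A\iso 0$'' into honest formulas and lets one feed $\psi$ to the replacement-of-sets schema (whose statement already incorporates ``unique up to unique isomorphism''), and the verification that $b$ is monic --- which, as indicated, requires nothing beyond $1$ being a nonempty generator.  (One could alternatively route the monomorphism step through \autoref{thm:pin-props}\ref{item:pp1a}, first invoking \autoref{thm:wpt-bool} to note that $\bS$ is in fact Boolean and constructively well-pointed.)
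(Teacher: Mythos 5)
Your proposal is correct and follows essentially the same route as the paper: define $\psi(x,A)$ to say that $A$ is terminal when $\ph(x)$ holds and initial when $\neg\ph(x)$ holds, use full classical logic to verify the unique-existence hypothesis, and apply replacement of sets to obtain a monomorphism $B\mono X$ classifying $\ph$. Your additional verifications that $B\to X$ is monic and that factorizations through it correspond exactly to $\ph(x)$ are details the paper leaves implicit, and they check out.
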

\begin{proof}
  For any formula $\ph$ with a free variable $1\too[x] X$, let
  $\psi(x,A)$ be \qq{$A$ is terminal and $\ph(x)$, or $A$ is initial
    and $\neg\ph(x)$}.  Since $\forall x.(\ph(x)\vee\neg\ph(x))$ holds
  by classical logic, and initial and terminal objects are unique up
  to unique isomorphism, we can apply replacement of sets to obtain a
  a $B\in \Set/X$ such that for any $x\in X$, if $\ph(x)$ then $x^*B$
  is terminal, while if $\neg\ph(x)$ then $x^*B$ is initial.  It
  follows that $B\to X$ is monic, and is the subobject classifying
  $\ph$ required by the separation axiom.
\end{proof}

% \begin{rmk}
%   Together with \autoref{thm:collmor}\ref{item:collmor2}, this means
%   that a model of ETCS satisfies separation and collection as soon as
%   it satisfies collection of sets.
%   % In Appendix \ref{sec:repl-etcs} we
%   % will show that a number of other ``replacement'' axioms
%   % for ETCS are also equivalent, over ETCS, to our collection schema.
%   % However, in weaker theories such as CETCS and IETCS, our collection
%   % schema appears more appropriate and useful.
% \end{rmk}

However, for most other purposes replacement of sets is basically
useless, since hardly ever is a set determined up to \emph{absolutely}
unique isomorphism, only an isomorphism which is compatible with some
additional functions whose existence is being asserted at the same
time.  For example, a cartesian product $A\times B$ is only determined
up to an isomorphism which is unique \emph{such that} it respects the
projections $A\times B\to A$ and $A\times B\to B$ with which the
product comes equipped.  In order to state a useful version of
replacement, we invoke the notion of \emph{context} from dependent
type theory (which also simplifies greatly in our particular
situation).

\begin{defn}
  Let \bS\ be a category.
  \killspacingtrue
  \begin{enumerate}
  \item A \textbf{context} \Gm\ in \bS\ is a finite (ordered) list of
    typed variables, with the property that whenever an arrow-variable
    $f\colon X\to Y$ occurs in \Gm, each of $X$ and $Y$ is either
    a parameter, or an object-variable occurring in \Gm\ prior to $f$.
  \item A \textbf{instantiation} $\vec{A}$ of a context \Gm\ in \bS\
    is a well-typed assignation of objects and arrows in \bS\ to the
    variables of \Gm.  We write $\vec{A}:\Gm$ and use the notations
    $X\mapsto X_A$ and $(f\colon Y\to Z) \mapsto (f_A\colon Y_A\to
    Z_A)$, where if $Y$ or $Z$ is a parameter then $Y_A$ or $Z_A$
    denotes simply that parameter.
  \item A \textbf{morphism} of instantiations
    $\vec{A}\to\vec{B}$ consists of
    \begin{enumerate}
    \item For each object-variable $X$ in \Gm, a morphism
      $\al_X\colon X_A\to X_B$, such that
    \item For each arrow-variable $f\colon X\to Y$ in \Gm, we have
      $\al_Y \circ f_A = f_B \circ \al_X$.  Here if $X$ or $Y$ is a
      parameter, then $\al_X$ or $\al_Y$ denotes the identity arrow of
      that parameter.
    \end{enumerate}
  \item If $(\Gm,\De)$ is a context (meaning the concatenation of \Gm\
    and \De), and $\vec{A}$ is an instantiation of \Gm, then an
    \textbf{extension} of $\vec{A}$ to \De\ is a $\vec{B}$ such that
    $(\vec{A},\vec{B})$ is an instantiation of \De.  Similarly we
    define extensions of morphisms.
  \end{enumerate}
  \killspacingfalse
\end{defn}

Of course, instantiations of a given, fixed, context can be discussed
(and quantified over) in the language of categories over \bS.  If \ph\
is a formula whose free variables are those in the context \Gm, we
write $\ph(\Gm)$, and similarly $\ph(\vec{A})$ for the instantiation
of \ph\ with these variables replaced by the parameters in $\vec{A}$.
The pullback of contexts and instantiations along a morphism $p\colon
V\to U$ is defined in the obvious way.

As a particularly important example, the context $(X,x\colon 1\to X)$
can be instantiated in $\Set/U$ by the pair $(U^*U,\Delta_U)$
consisting of the object $U^*U = U\times U$ and the morphism
$\Delta_U\colon 1_U\to U^*U$.  Moreover, this pair is the
\emph{universal} such instantiation over $U$, in that for any $u\colon
1\to U$ we have an isomorphism of instantiations $u^*(U^*U,\Delta_U)
\cong (U,u)$.  Similarly, for any $V\too[p] U$ we can consider
$(V^*U,(1_V,p))$, which has the analogous property that
$v^*(V^*U,(1_V,p)) \cong (U, p v)$ for any $v\colon 1\to V$.

\begin{prop}\label{thm:coll-ctxt}
  Any constructively well-pointed Heyting category satisfying
  collection also satisfies \emph{collection of contexts:} for any
  context $(X,x\colon 1\to X,\Gm)$ and any formula $\ph(X,x,\Gm)$,
  \qq{for any set $U$, if for every $u\in U$ there exists
    $\vec{A}:\Gm$ extending $(U,u)$ such that $\ph(U,u,\vec{A})$, then
    there exists a regular epi $V\xepi{p} U$ and a $\vec{B}:\Gm$
    extending $(V^*U,(1_V,p))$ in $\Set/V$ such that for every $v \in
    V$ we have $\ph(U,pv,v^*\vec{B})$}.
\end{prop}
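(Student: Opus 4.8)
The plan is to argue by induction on the length of the tail context $\Gm$, at each stage peeling off the last variable of $\Gm$ and never disturbing the prefix $(X,x\colon 1\to X)$. The base case, $\Gm$ empty, is immediate: the only instantiation of the empty context extending $(U,u)$ is $(U,u)$ itself, so the hypothesis asserts exactly that $\ph(U,u)$ holds for every $u\in U$, and one takes $V=U$, $p=1_U$, and $\vec B$ the empty instantiation.

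For the inductive step, write $\Gm=(\Gm',z)$ with $z$ the last variable of $\Gm$. Since $\Gm$ is well-formed, $(X,x\colon 1\to X,\Gm')$ is again a well-formed context, and if $z$ is an arrow-variable $g\colon P\to Q$ then each of $P$ and $Q$ is a parameter or an object-variable of $(X,x,\Gm')$; hence the formula $\psi(X,x,\Gm') := \exists z.\,\ph(X,x,\Gm',z)$ (meaning $\exists g\colon(P\to Q).\,\ph$ when $z$ is an arrow-variable) is well-formed, with free variables exactly those of $(X,x,\Gm')$. Unwinding the definition of ``extension,'' the hypothesis of the proposition for $\ph$ says precisely that for every $u\in U$ there is some $\vec A'\colon\Gm'$ extending $(U,u)$ with $\psi(U,u,\vec A')$. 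I would then apply the inductive hypothesis to $(X,x,\Gm')$ and $\psi$, obtaining a regular epi $V_1\xepi{p_1}U$ and an instantiation $\vec B'\colon\Gm'$ extending $(V_1^*U,(1_{V_1},p_1))$ in $\Set/V_1$ such that $\psi(U,p_1v,v^*\vec B')$ holds for every $v\in V_1$.

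It remains to collect the missing last component over $V_1$, and here I split into two cases. If $z$ is an object-variable $Y$, then for each $v\in V_1$ some object of $\Set$ witnesses $\psi(U,p_1v,v^*\vec B')$, so \emph{collection of sets}, applied over $V_1$ to the formula $\ph(U,p_1v,v^*\vec B',Y)$ (with $\ddo$-variable $v$, and $U$, $p_1$, and the components of $\vec B'$ as parameters), yields a regular epi $V\xepi{q}V_1$ and $C\in\Set/V$ with $\ph(U,p_1qw,(qw)^*\vec B',w^*C)$ for all $w\in V$. If instead $z$ is an arrow-variable $g\colon P\to Q$, I set $A:=P_{\vec B'}$ and $B:=Q_{\vec B'}$ in $\Set/V_1$; since pulling back an instantiation commutes with extracting components, $v^*A=P_{v^*\vec B'}$ and $v^*B=Q_{v^*\vec B'}$, so for each $v\in V_1$ there is a map $v^*A\to v^*B$ witnessing $\psi(U,p_1v,v^*\vec B')$, and \emph{collection of functions}, applied over $V_1$ to $A$, $B$ and the formula $\ph(U,p_1v,v^*\vec B',g)$, yields a regular epi $V\xepi{q}V_1$ and a map $h\colon q^*A\to q^*B$ in $\Set/V$ with $\ph(U,p_1qw,(qw)^*\vec B',w^*h)$ for all $w\in V$. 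In either case I then set $p:=p_1q$---a regular epi, since regular epis compose in a Heyting category---and let $\vec B$ be $q^*\vec B'$ together with the new component $C$ (resp.\ $h$). Since $q^*(V_1^*U)=V^*U$ and $q^*$ carries the arrow $(1_{V_1},p_1)$ to $(1_V,p)$, the instantiation $q^*\vec B'$ extends $(V^*U,(1_V,p))$; the new component has exactly the type needed to extend this over $\Gm$ (again using that $q^*$ commutes with component extraction, so $q^*A=P_{q^*\vec B'}$ in the arrow case); and $w^*\vec B=((qw)^*\vec B',w^*C)$, resp.\ $((qw)^*\vec B',w^*h)$, so $\ph(U,pw,w^*\vec B)$ holds for all $w\in V$, which is what was wanted.

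I expect the content to be essentially bookkeeping, with no genuine obstacle, but the two points to take care over are that $\psi$ is a legitimate formula---which is just the well-formedness condition on $\Gm$---and that the pullback operations $q^*(-)$ and $v^*(-)$ commute with passing to the components of an instantiation, so the objects and maps handed to \emph{collection of sets} and \emph{collection of functions} have the correct types. The one conceptual point worth spelling out is that a parameter, or an instantiation, living in a slice $\Set/V_1$ unfolds into genuine object- and arrow-parameters in $\Set$ together with commutativity constraints over $V_1$; it is in this sense that the collection axioms, stated over $\Set$, apply in the sliced situation, and this is the step I would present most carefully.
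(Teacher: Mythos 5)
Your proof is correct and is essentially a careful expansion of the paper's own (one-line) argument: apply collection of sets and collection of functions repeatedly, one context variable at a time, composing the resulting regular epis. The bookkeeping points you flag (well-formedness of $\exists z.\ph$, and compatibility of pullback with component extraction) are exactly the implicit content of the paper's "simply apply... repeatedly."
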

\begin{proof}
  Simply apply collection of sets and functions repeatedly, using the
  fact that regular epimorphisms compose.
\end{proof}

We can now improve ``replacement of sets'' to a more useful axiom.
\begin{blist}
\item \emph{Replacement of contexts:} for any context $(X, x\colon
  1\to X, \Gm)$ and any formula $\ph(X,x,\Gm)$, \qq{for any set $U$,
    if for every $u\in U$ there is an extension $\vec{A}:\Gm$ of
    $(U,u)$ such that $\ph(U,u,\vec{A})$, and moreover $\vec{A}$ is
    unique up to a unique isomorphism of instantiations extending
    $1_U$, then there is an extension $\vec{B}:\Gm$ of $(U^*U,
    \Delta_U)$ in $\Set/U$ such that $\ph(U,u,u^*\vec{B})$ for all
    $u\in U$}.
\end{blist}

\begin{prop}\label{thm:repl-ctxt}
  Any constructively well-pointed Heyting pretopos satisfying
  collection also satisfies replacement of contexts.
\end{prop}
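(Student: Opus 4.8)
The plan is to mimic the proof of replacement of functions (\autoref{thm:replacement}\ref{item:repl-mor}) at the level of contexts, using collection of contexts (\autoref{thm:coll-ctxt}) to produce data over a cover and then descending it along the kernel pair, with the exactness of the pretopos providing effective descent. First I would apply collection of contexts to the hypothesis: since for every $u\in U$ there is an extension $\vec{A}:\Gm$ of $(U,u)$ with $\ph(U,u,\vec{A})$, we obtain a regular epi $V\xepi{p} U$ and an instantiation $\vec{B}:\Gm$ extending $(V^*U,(1_V,p))$ in $\Set/V$ such that $\ph(U,pv,v^*\vec{B})$ for all $v\in V$. Let $(r,s)\colon V\times_U V\toto V$ be the kernel pair of $p$. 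For $z = (v_1,v_2)\in V\times_U V$ we have $pv_1 = pv_2 =: u$, so both $v_1^*\vec{B}$ and $v_2^*\vec{B}$ are extensions of $(U,u)$ satisfying $\ph(U,u,-)$; by the uniqueness-up-to-unique-isomorphism hypothesis, there is a unique isomorphism of instantiations $v_1^*\vec{B} \cong v_2^*\vec{B}$ extending $1_U$, i.e.\ an isomorphism $z^*r^*\vec{B}\cong z^*s^*\vec{B}$ over $1$.

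The core of the argument is then to assemble these fiberwise isomorphisms into a single descent isomorphism $\theta\colon r^*\vec{B} \toiso s^*\vec{B}$ over $V\times_U V$ and to verify it satisfies the cocycle condition, so that $\vec{B}$ descends along $p$. For the \emph{objects} in $\Gm$, this is where I would use that a Heyting pretopos is exact, hence its self-indexing $\Set/(-)$ is a stack for the regular topology (noted at the end of \S\ref{sec:struct-set}); thus, given descent data consisting of the objects $X_B\in\Set/V$ together with gluing isomorphisms over $V\times_U V$ satisfying the cocycle condition, we get objects over $U$. The gluing isomorphisms themselves are obtained by the replacement-of-sets--style argument: the fiberwise isomorphisms $z^*r^*X_B\cong z^*s^*X_B$ are unique (as components of the unique isomorphism of instantiations), so by replacement of contexts applied one level down — or more elementarily, by \autoref{thm:replacement} together with the universal property pinning down the gluing map — they assemble into a morphism over $V\times_U V$, which is an isomorphism by \autoref{thm:pin-props}, and the cocycle condition over $V\times_U V\times_U V$ holds because it holds on global elements (again by uniqueness and replacement of equalities). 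For the \emph{arrow-variables} $f\colon X\to Y$ in $\Gm$, once the objects $X_B, Y_B$ have been descended, the morphisms $f_B$ descend by the prestack property exactly as in \autoref{thm:replacement}\ref{item:repl-mor}: the compatibilities $z^*r^*f_B = z^*s^*f_B$ (transported across the gluing isomorphisms) hold on global elements by uniqueness, hence $r^*f_B = s^*f_B$ by replacement of equalities, and the prestack condition yields the descended arrow.

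This produces an instantiation $\vec{B}':\Gm$ in $\Set/U$; I would then check it extends $(U^*U,\Delta_U)$ — this is forced because its pullback along $p$ is $\vec{B}$ extending $(V^*U,(1_V,p)) = p^*(U^*U,\Delta_U)$ and $p$ is a regular epi, so by descent the extension data matches — and that $\ph(U,u,u^*\vec{B}')$ holds for each $u\in U$. For the last point, given $u\in U$, since $p$ is regular epic there is $v\in V$ with $pv = u$ (using projectivity of $1$); then $u^*\vec{B}' = v^*p^*\vec{B}' = v^*\vec{B}$, which satisfies $\ph(U,u,-)$ by construction of $\vec{B}$, and the claim is invariant under the isomorphism of instantiations by isomorphism-invariance of truth (\autoref{thm:isoinvar-truth}). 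The main obstacle I expect is bookkeeping the descent simultaneously for all the objects and arrows of a general context $\Gm$ while keeping the isomorphisms coherent — in particular, making sure the cocycle condition genuinely follows from the fiberwise uniqueness — rather than any single hard idea; the cleanest route is probably an induction on the length of $\Gm$, descending one variable at a time and folding each newly-descended variable into the parameters, so that at each stage one is only invoking the already-established replacement of functions, replacement of equalities, and the (pre)stack property of the self-indexing.
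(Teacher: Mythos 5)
Your proposal is correct and follows essentially the same route as the paper's proof: apply collection of contexts to get data over a cover, use the uniqueness hypothesis together with replacement of functions and equalities to assemble the fiberwise isomorphisms into a descent isomorphism over the kernel pair satisfying the cocycle condition, and then descend via the stack property of the self-indexing granted by exactness. The additional bookkeeping you flag (handling a general context one variable at a time) is exactly what the paper elides with the phrase ``applied some finite number of times.''
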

\begin{proof}
  By \autoref{thm:coll-ctxt}, there is a regular epi $p\colon V\epi U$
  and a $\vec{C}:\Gm$ in $\Set/V$ such that for every $v \in V$ we
  have $\ph(U,pv,v^*\vec{C})$.  Let $(r,s)\colon V\times_U V \toto V$ be
  the kernel pair of $f$, and consider $r^*\vec{C}$ and $s^*\vec{C}$
  in $\Set/V\times_U V$.  The assumption implies that when pulled back
  along any $z\colon 1\to V\times_U V$, these two instantiations of
  \Gm\ become uniquely isomorphic.  Thus, by replacement of functions
  and equalities applied some finite number of times, we actually have
  a unique isomorphism $r^*\vec{C}\cong s^*\vec{C}$ in $\Set/V\times_U
  V$.  Uniqueness implies that this isomorphism satisfies the cocycle
  condition over $V\times_U V\times_U V$.  Thus, since \Set\ is a
  stack for its regular topology (applied some finite number of
  times), the entire instantiation $\vec{C}:\Gm$ descends to some
  $\vec{B}:\Gm$ in $\Set/U$.  Since $p$ is surjective, the desired
  property for $\vec{B}$ follows.
\end{proof}

In general, it seems that replacement of contexts does not imply
collection, although we do not have a counterexample.  As in material
set theory, however, replacement suffices in the classical world.

\begin{prop}\label{thm:class-repcoll}
  Over ETCS, the following are equivalent.
  \begin{enumerate}
  \item Replacement of contexts.
  \item Collection of sets.
  \item Full separation and collection (of both sets and functions).
  \end{enumerate}
\end{prop}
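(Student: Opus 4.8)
The plan is to establish the cycle (i)$\Rightarrow$(ii)$\Rightarrow$(iii)$\Rightarrow$(i), noting first that a model of ETCS is in particular a constructively well-pointed Heyting pretopos (being a classical well-pointed topos), so that all the results cited below apply. The implication (iii)$\Rightarrow$(i) is then immediate from \autoref{thm:repl-ctxt}: (iii) provides collection of sets and of functions, collection of equalities holds automatically by \autoref{thm:collmor}\ref{thm:colleq}, so $\Set$ satisfies collection and hence replacement of contexts.

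For (ii)$\Rightarrow$(iii) I would first extract full separation from collection of sets. Given $\ph(x)$ with $x\colon 1\to X$, apply collection of sets to the formula saying ``$A$ is terminal and $\ph(x)$, or $A$ is initial and $\neg\ph(x)$'', which is entire on $X$ by classical logic; this yields a cover $p\colon V\epi X$ and an $A\in\Set/V$ each of whose fibres over a global element is subterminal. By \autoref{thm:pin-props}\ref{item:pp1a} (a map is monic iff injective on global elements) the structure map $A\to V$ is then monic, so names a subobject, and a short computation using the projectivity and nonemptiness of $1$ together with classical logic shows that the image of $A\mono V\xepi{p} X$ is the subobject classifying $\ph$; alternatively, since ETCS has choice one may split $p$ and pull $A$ back along a section. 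With full separation in hand, \autoref{thm:collmor}\ref{item:collmor2} applies (ETCS being a topos) to give collection of functions, and \autoref{thm:collmor}\ref{thm:colleq} gives collection of equalities, so (iii) holds.

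The substantive step is (i)$\Rightarrow$(ii), and the idea is to rigidify the choice of witnessing set by means of the axiom of choice. Let $U$ be a set and $\ph(u,Z)$ a formula with the property that for every $u\in U$ there is a $Z$ such that $\ph(u,Z)$; by \autoref{thm:isoinvar-truth} the truth of $\ph(u,Z)$ depends only on the isomorphism class of $Z$. By choice every such $Z$ admits a well-ordering, so classically there is, for each $u$, a least order-type $\al_u$ realized by a well-ordering of some $Z$ with $\ph(u,Z)$; then a well-ordered set $(Z,{<})$ of order-type $\al_u$ with $\ph(u,Z)$ exists and, crucially, is \emph{unique up to unique isomorphism}, because a well-ordering has no nontrivial automorphisms. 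I would package ``a set equipped with a well-ordering'' as an instantiation of a fixed context $\Gm$ (object-variables for $Z$ and for a product $Z\times Z$, arrow-variables for the projections, and a monic variable cutting out the order relation) and phrase ``$\ph(u,Z)$ holds and $(Z,{<})$ realizes the least achievable order-type'' as a formula over a context whose base part $(X,x\colon 1\to X)$ is to be instantiated as $(U,u)$; the well-foundedness of ${<}$ and the minimality of its order-type are expressed using quantification over subobjects (harmless since $\Set$ is a topos) together with the single genuinely unbounded quantifier ``no set $Z'$ with $\ph(u,Z')$ has smaller order-type'', which is permitted since replacement of contexts applies to arbitrary formulas. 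Applying replacement of contexts then produces an instantiation of $\Gm$ over $U$ itself whose underlying object $A\in\Set/U$ satisfies $\ph(u,u^*A)$ for every $u\in U$, which is exactly collection of sets (with the identity of $U$ as the required cover).

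The main obstacle is this last step, and within it the delicate point is to arrange that the witnessing datum be unique up to a \emph{unique} isomorphism of instantiations, so that replacement of contexts --- rather than the a priori stronger collection --- can be invoked; the rigidity of well-orderings is precisely what makes this possible, and is the reason both choice and classical logic enter essentially. (This parallels the classical fact that replacement implies collection only in the presence of foundation, with ``least order-type'' playing the role of rank.) The remaining work --- writing out the context $\Gm$ and expressing the order-theoretic side conditions as formulas in the language of categories --- is routine bookkeeping that I would not belabor.
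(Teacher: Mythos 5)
Your proof is correct and follows essentially the same route as the paper: the reductions via \autoref{thm:collmor}, \autoref{thm:collsep}, and \autoref{thm:repl-ctxt} leave only the implication from replacement of contexts to collection of sets, and your key idea there --- rigidifying the witness by equipping it with a well-ordering of least realizable order-type, which is unique up to unique isomorphism --- is exactly the paper's argument. The only difference is presentational: you spell out the derivation of full separation from collection of sets directly (adapting the terminal/initial trick of \autoref{thm:collsep} and splitting the cover by choice), where the paper simply cites that proposition.
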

\begin{proof}
  By Propositions \ref{thm:collmor}\ref{item:collmor2},
  \ref{thm:collsep}, and \ref{thm:repl-ctxt}, it suffices to prove
  that replacement of contexts implies collection of sets.  Given a
  formula $\ph(u,X)$, we can construct a context \Gm\ including both a
  set $X$ and a binary relation on it, and let $\psi(u,\Gm)$ assert
  that $\ph(u,X)$ holds and that the given relation is a
  well-ordering, which has the smallest possible order-type among
  well-ordered sets $X$ such that $\ph(u,X)$.  Since classically, any
  set admits a smallest well-ordering which is unique up to unique
  isomorphism, we can apply replacement of contexts to $\psi$ and
  thereby deduce collection of sets.
\end{proof}

As promised, replacement of contexts implies that \emph{the existence
  of objects satisfying a universal property is reflected by global
  elements}.  Continuing the example of \autoref{thm:cc-lcc-mor}, we
have the following.

\begin{prop}\label{thm:cc-lcc}
  Let \Set\ be a constructively well-pointed Heyting pretopos
  satisfying collection.  If \Set\ is cartesian closed, then it is
  locally cartesian closed.
\end{prop}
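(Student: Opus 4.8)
The plan is to reduce to the existence of exponentials in the slices of \Set, and then to build those exponentials fiberwise using \autoref{thm:cc-lcc-mor} together with replacement of contexts.

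Recall that a category with finite limits is locally cartesian closed if and only if each of its slices is cartesian closed (see, e.g., \cite{ptj:elephant}), so it suffices to show that every slice $\Set/X$ is cartesian closed; and since $\Set/X$ has finite limits, this amounts to showing that for all $A,B\in\Set/X$ there is an exponential $B^A$ in $\Set/X$. Fix $X$, $A$, $B$. Since \Set\ satisfies replacement of equalities (\autoref{thm:replacement}\ref{item:repl-eq}) and, because it satisfies collection, also replacement of functions (\autoref{thm:replacement}\ref{item:repl-mor}), \autoref{thm:cc-lcc-mor} applies, so it is enough to produce $E\in\Set/X$ and a morphism $v\colon E\times_X A\to B$ in $\Set/X$ such that for every $x\colon 1\to X$ the pullback $x^*v$ exhibits $x^*E$ as an exponential $(x^*B)^{(x^*A)}$ in \Set.

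I would obtain $(E,v)$ from replacement of contexts, which \Set\ satisfies by \autoref{thm:repl-ctxt}. Take $U=X$, and let $\Gm$ extend the initial data $(X,x\colon 1\to X)$ by: object- and arrow-variables encoding a chosen pullback $A'$ of the structure map of $A$ along $x$, and likewise a chosen pullback $B'$ of that of $B$; object- and arrow-variables encoding a chosen product $E'\times A'$, where $E'$ is a new object-variable; and an arrow-variable $v'\colon E'\times A'\to B'$. Let $\ph(X,x,\Gm)$ assert that $A'$, $B'$ are such pullbacks, that $E'\times A'$ is such a product, and that $v'$ exhibits $E'$ as the exponential $(B')^{(A')}$ with evaluation morphism $v'$. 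For each $x\colon 1\to X$, cartesian closedness of \Set\ provides an instantiation of $\Gm$ over $(X,x)$ satisfying $\ph$ --- take $A'=x^*A$, $B'=x^*B$, $E'=(x^*B)^{(x^*A)}$, and $v'$ the evaluation morphism --- and the universal property of the exponential, \emph{together with} its evaluation map, makes this instantiation unique up to a unique isomorphism of instantiations fixing $(X,x)$. Replacement of contexts then produces an instantiation of $\Gm$ over $(X^*X,\Delta_X)$ in $\Set/X$, from which the values of $E'$ and of $v'$ (the latter using the descended product) yield $E\in\Set/X$ and $v\colon E\times_X A\to B$ with $x^*v$ exhibiting $x^*E$ as $(x^*B)^{(x^*A)}$ for every $x$. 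Applying \autoref{thm:cc-lcc-mor} then exhibits $v$ as the exponential $B^A$ in $\Set/X$, as required.

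The one delicate point --- and essentially the only real work --- is the encoding just described: one must spell out, as honest variables of a context and as a first-order formula in the language of categories, both the chosen finite products (so that $E'\times A'$ is a legitimate type) and the universal property of the exponential together with its evaluation morphism. It is precisely the presence of the evaluation map that makes the fiberwise data rigid --- ``unique up to \emph{unique} isomorphism'', as replacement of contexts requires --- rather than merely determined up to non-canonical isomorphism, which is what would make the argument fail. This is the exponential analogue of the bookkeeping carried out for finite products in \autoref{thm:ddo-corresp} and \autoref{thm:higher-ddo-sep}, and I would present it in the condensed style the paper adopts after \autoref{thm:cc-lcc-mor}.
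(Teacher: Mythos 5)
Your proof is correct and follows essentially the same route as the paper's: encode the fiberwise exponential (together with the pullbacks, the product, and the evaluation map) as a context, observe that the evaluation map rigidifies the data so that replacement of contexts applies, and then conclude via \autoref{thm:cc-lcc-mor}. The only cosmetic difference is that you make explicit the reduction to cartesian closedness of each slice, which the paper leaves implicit by directly constructing the exponential $(B\to X)^{(A\to X)}$ in $\Set/X$.
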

\begin{proof}
  Let $A,B\in\Set/X$; we want to construct the exponential $(B\to
  X)^{(A\to X)}$.  For $x\in X$, let $\ph(x,E,e)$ assert that $e\colon
  E\times x^*A \to x^*B$ exhibits $E$ as the exponential
  $(x^*B)^{x^*A}$.  This formula can be phrased as $\psi(X,x,\Gm)$ for
  some \Gm, although we note that \Gm\ involves not just $E$ and $e$
  but also object-variables for $x^*A$, $x^*B$, and $E\times x^*A$,
  and arrow-variables giving the projections that exhibit these as two
  pullbacks and a cartesian product, respectively.

  Now, since \Set\ is cartesian closed, for every $x\in X$ there
  exists $E$ and $e$ with $\ph(x,E,e)$, and such are unique up to a
  unique isomorphism which respects all the structure.  Therefore,
  replacement of contexts supplies an instantiation of \Gm\ extending
  $(X^*X, \Delta_X)$ in $\Set/X$.  This consists essentially of an
  object $C$ and a morphism $c\colon C\times_X A \to B$.  The
  conclusion of replacement of contexts implies that $C$ and $c$
  satisfy the hypotheses of \autoref{thm:cc-lcc-mor}, so they must
  be an exponential $(B\to X)^{(A\to X)}$ as desired.
\end{proof}

  % so by collection, we
  % have an epimorphism $Y\xepi{p} X$, a set $D\in \Set/Y$, and a
  % function $d\colon D\times_Y p^*A\to p^*B$ such that for all $y\in
  % Y$, $y^*d$ exhibits $y^*D$ as the exponential $(y^*p^*B)^{(y^*
  %   p^*A)}$.  By \autoref{thm:cc-lcc-mor}, $d$ exhibits $D$ as the
  % exponential $(p^*B)^{(p^*A)}$ in $\Set/Y$.

  % Now let $(r,s)\colon Y\times_X Y \to Y$ be the kernel pair of $p$.
  % \autoref{thm:cc-lcc-mor} again implies that $r^*d$ exhibits $r^*D$
  % as $(r^*p^*B)^{r^*p^*A}$ and likewise for $s$.  But we have
  % canonical isomorphisms $r^*p^*A\cong s^*p^*A$ and $r^*p^*B\cong
  % s^*p^*B$, so we obtain a unique isomorphism $r^*D\cong s^*D$
  % commuting with $r^*d$ and $s^*d$.  Uniqueness shows that this
  % isomorphism satisfies the cocycle condition, so $D$ and $d$ descend
  % to $C$ and $c\colon C\times A\to B$ in $\Set/X$ with $p^*C \iso D$.
  % Finally, \autoref{thm:cc-lcc-mor} implies that $c$ exhibits $C$ as
  % $B^A$.

% In particular, collection implies that the structural axiom of
% exponentiation (``\Set\ is a \Pi-pretopos'') takes on a form much more
% like the material axiom of exponentiation.

This is of course reminiscent of the way in which local cartesian
closure comes ``for free'' in
\autoref{thm:iz-topos}\ref{item:iz-topos-lcc}.  We end this section by
extending \autoref{thm:iz-topos} to the axiom of collection.

%   This provides
% additional evidence that the collection axiom is a strengthened
% version of pinning.

% \begin{rmk}
%   One can think of the axioms of replacement as asserting that the
%   self-indexing of \Set\ is a ``stack'' for the topology where $X$ is
%   covered by the family of all its global elements $1\to X$.  Of
%   course, in our first-order theory we can only assert the stack
%   condition for ``definable'' families.
% \end{rmk}

\begin{thm}\label{thm:iz-ext-seprepcoll}
  If \bV\ satisfies the core axioms of material set theory along with
  collection, then $\bbSet(\bV)$ also satisfies collection.
\end{thm}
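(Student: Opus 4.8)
The plan is to verify separately the three clauses that together constitute ``collection'' for $\bbSet(\bV)$, transferring the work to $\bV$ through the translation of formulas in the language of categories into formulas of material set theory. First, two reductions. By \autoref{thm:iz-topos} and \autoref{thm:set-wpt}, $\bbSet(\bV)$ is a constructively well-pointed Heyting pretopos, so \autoref{thm:collmor}\ref{thm:colleq} already gives \emph{collection of equalities}, using nothing about $\bV$ beyond the core axioms; and by \autoref{thm:pin-props}\ref{item:pp1} a function of $\bbSet(\bV)$ is a regular epimorphism as soon as it is surjective on elements. It remains to establish \emph{collection of sets} and \emph{collection of functions}. For a formula $\ph$ in the language of categories over $\bbSet(\bV)$, write $\phhat$ for its translation into the language of material set theory over $\bV$ (the same translation used when full material separation was shown to imply structural separation), so that $\bbSet(\bV)\ss\ph$ if and only if $\bV\ss\phhat$.

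The point to be careful about is that $\phhat$ generally contains \emph{unbounded} quantifiers, so one cannot cut subclasses out of it by \ddo-separation, which is all the core axioms give; the naive route --- apply material collection to $\phhat(u,X)$ to get a set $b$ of witnesses, then use $\setof{(u,X)\in U\times b\mid\phhat(u,X)}$ as the cover --- is therefore blocked. The fix is to fold the desired ordered pair into the witness and then exploit the \emph{second} clause of the material collection axiom. So for collection of sets, suppose $U$ is given and for each $u\in U$ some $X$ satisfies $\ph(u,X)$ in $\bbSet(\bV)$, i.e.\ $\phhat(u,X)$ in $\bV$. Apply material collection to the formula $\theta(u,Z)$ saying ``$Z$ is an ordered pair whose first component is $u$ and whose second component $X$ satisfies $\phhat(u,X)$''. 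This produces a set $b$ such that every $u\in U$ admits some $Z\in b$ with $\theta(u,Z)$, and, crucially, \emph{every} $Z\in b$ is an ordered pair $(u,X)$ with $u\in U$ and $\phhat(u,X)$. Put $V=b$. Since the projections of an ordered pair are definable by \ddo-formulas, \ddo-separation and the union axiom yield a function $p\colon V\to U$, $(u,X)\mapsto u$, and a family $A\in\Set/V$ whose fibre over $(u,X)$ is (a copy of) $X$; and $p$ is surjective, hence regular epic. For $v=(u,X)$ in $V$ we have $pv=u$ and $v^*A\iso X$ with $\phhat(u,X)$, so translating back and applying isomorphism-invariance of truth (\autoref{thm:isoinvar-truth}) gives $\bbSet(\bV)\ss\ph(pv,v^*A)$, as needed.

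Collection of functions is handled in exactly the same shape, now applying material collection to the formula $\theta(u,Z)$ saying ``$Z$ is an ordered pair $(u,f)$ where $f$ is a function $u^*A\to u^*B$ satisfying $\phhat(u,f)$'', where $u^*A$ and $u^*B$ denote the fibres of the given $A,B\in\Set/U$ over $u$ --- honest subsets of $A$, $B$ that are \ddo-definable from $A$, $B$, $u$, so that this clause is expressible. Material collection again returns a set $V=b$ of such pairs covering $U$ with no spurious elements, and \ddo-separation then extracts $p\colon V\to U$, the pulled-back families $p^*A,p^*B\in\Set/V$, and a morphism $g\colon p^*A\to p^*B$ over $V$ acting as $f$ in the fibre over $(u,f)$, with $\ph(pv,v^*g)$ holding for every $v\in V$ as before. (As always $\ph$ may carry further parameters, held fixed throughout.) The only real obstacle is the one flagged above, the impossibility of separating by the unbounded formula $\phhat$; what circumvents it is precisely the ordered-pair packaging together with the ``no junk'' conclusion of the collection schema, and the remaining checks are of the same routine nature as those in \autoref{thm:iz-topos}.
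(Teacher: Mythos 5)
Your proposal is correct and follows essentially the same route as the paper's proof: package the witness into a Kuratowski ordered pair with $u$ as first component, apply material collection to that formula (using its second, ``no junk'' clause to ensure the resulting set consists only of such pairs), take the projection to $U$ as the regular epi, build $A$ from a union and \ddo-separation, and finish with isomorphism-invariance. Your explicit remarks on why \ddo-separation alone cannot handle the unbounded formula, and the reduction disposing of collection of equalities via \autoref{thm:collmor}, are sound elaborations of what the paper leaves implicit.
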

\begin{proof}
  If \bV\ satisfies material collection, then given the setup of
  collection of sets for some formula $\ph$, let $\psi(u,X)$ assert
  that $X$ is a Kuratowski ordered pair of the form $(u,X')$ with
  $\ph(u,X')$.  By material collection, let $V$ be a set such that for
  any $u\in U$, there is an $X\in V$ with $\psi(u,X)$, and for any
  $X\in V$, there is a $u\in U$ with $\psi(u,X)$.  Then every element
  of $V$ is an ordered pair whose first element is in $U$, so there is
  a projection $V\to U$, which by assumption is surjective.  We can
  then form $B = \bigcup_{(u,X')\in V} X'$, and by \ddo-separation cut
  out
  \[ A = \setof{((u,X'),a) \in V\times B | a \in X'}.
  \]
  Then $A \in \bbSet(\bV)/V$, and for any $v=(u,X')\in V$, $v^*A$ is
  isomorphic to $X'$.  Thus, by isomorphism-invariance, we have
  $\ph(u,v^*A)$, as required.  Collection of functions is analogous.
\end{proof}

The material axiom of replacement, however, does not seem to imply the
structural one.  This, we feel, is one of the reasons (though not the
only one) that the material axiom of collection is necessary in
practice.

% It seems
% that in practice, many applications of that axiom are really
% applications of the structural axioms of replacement.

% The backwards implication (if $\bbSet(\bV)$ satisfies separation or
% collection, then so does \bV) requires more work, since it is not
% immediately obvious how to interpret formulas in the language of
% material set theory in the language of categories.  We will return to
% this question in \S\ref{sec:constr-mat}, where we will actually prove
% that from any model of structural set theory we can construct a model
% of material set theory.

\section{Constructing material set theories}
\label{sec:constr-mat}

So far we have summarized the axioms of material set theory and
structural set theory, and explained how most axioms of material set
theory are reflected in structural properties of the resulting
category of sets.  We now turn to the opposite construction: how to
recover a material set theory from a structural one.  A ``set'' in
material set theory, of course, contains much more information than a
``set'' in structural set theory, namely the membership relations
between its elements, their elements, and so on.  This gives rise to
the idea of modeling a ``material set'' by a \emph{graph} or
\emph{tree} with nodes depicting sets and edges depicting membership.

This basic idea was used
by~\cite{cole:cat-sets,mitchell:topoi-sets,osius:cat-setth} in the
first equiconsistency proofs of ETCS with BZC, and nearly identical
constructions have been used for relative consistency proofs by others
such as~\cite{aczel:afa} and~\cite{mathias:str-maclane}.  We are not
aware of an exposition at our required level of generality, though.
The constructions are the same as always, though, as long as we are
careful about the definitions.

For the rest of this section, let \Set\ be a constructively
well-pointed \Pi-pretopos with an \nno.

\begin{defns}\ 
  \killspacingtrue
  \begin{enumerate}
  \item A \textbf{graph} is a set $X$, whose elements are called
    \textbf{nodes}, equipped with a binary relation $\prec$.
  \item If $x\prec y$ we say that $x$ is a \textbf{child} of $y$.
  \item A \textbf{pointed} graph is one equipped with a distinguished
    node $\star$ called the \textbf{root}.
  \item A pointed graph is \textbf{accessible} if for every node $x$
    there exists a path $x = x_n \prec \cdots \prec x_0 = \star$ to
    the root.
  \item For any node $x$ of a graph $X$, we write $X/x$ for the full
    subgraph of $X$ consisting of those $y$ admitting some path to
    $x$.  It is, of course, pointed by $x$, and accessible.
  \end{enumerate}
  \killspacingfalse
  ``Accessible pointed graph'' is abbreviated \apg.
\end{defns}

\begin{rmk}
  The hypothesis on \Set\ is necessary to formalize ``accessibility''
  and to define $X/x$.  Specifically, a pointed graph $X_{\prec} \mono
  X\times X$ with root $\star\colon 1\to X$ is accessible if for every
  $x\colon 1\to X$, there exists a nonzero finite cardinal $[n]$ and a
  map $[n] \to X_{\prec}$ realizing a path from $x$ to $\star$.
  (Recall that a finite cardinal is the pullback along a map $1\to N$
  of the second projection $N_< \to N$ of the strict order relation on
  the \nno.)  The definition of $X/x$ is similar, using the extension
  of \ddo-separation to functions described in \autoref{thm:higher-ddo-sep}.
\end{rmk}

We are using the terminology of~\cite{aczel:afa}.  The idea is that an
arbitrary graph represents a collection of material-sets with $\prec$
representing the membership relation between them, a pointed graph
represents a \emph{particular} set (the root) together with all the
data required to describe its hereditary membership relation, and an
\apg\ does this without any superfluous data (all the nodes bear some
relation to the root).  Thus, an arbitrary \apg\ can be considered a
picture of a possibly non-well-founded set: the root represents the
set itself, its children represent the elements of the set, and so on.

We will henceforth restrict attention to graphs for which $\prec$ is
well-founded, thereby ensuring that the models of material set theory
we construct satisfy the axiom of foundation.  It is also possible, by
changing this requirement, to construct models satisfying the various
axioms of anti-foundation (see~\cite{aczel:afa}), but we will not do
that here.  Recall the definition:

\begin{defn}
  A subset $S$ of a graph $X$ is \textbf{inductive} if for any node
  $x\in X$, if all children of $x$ are in $S$, then $x$ is also in
  $S$.  A graph $X$ is \textbf{well-founded} if any inductive subset
  of $X$ is equal to all of $X$.
\end{defn}

In the presence of classical logic, well-foundedness is equivalent to
saying that every nonempty subset of $X$ has a $\prec$-least element,
but in intuitionistic logic that version is both fairly useless and
rarely satisfied.

With the above definition of well-foundedness, we can perform proofs
by induction on a well-founded graph: by proving that a given subset
is inductive, we conclude it is the whole graph.  In order to prove a
\emph{statement} by well-founded induction, we must apply separation
to the statement to turn it into a subset; thus either the statement
must be \ddo\ or we must have a stronger separation axiom.  We can
also use well-founded induction in more than one variable, since if
$X$ and $Y$ are well-founded then so is $X\times Y$.

We record some observations about well-founded graphs.

\begin{lem}\label{thm:wf-hered}
  Any subset of a well-founded graph is well-founded with the induced
  relation.
\end{lem}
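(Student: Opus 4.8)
The plan is to deduce well-foundedness of a subset from that of the ambient graph by transporting inductive subsets back and forth. Fix a subset $A\subseteq X$, carrying the induced relation (the restriction of $\prec$ along the monic $A\times A\mono X\times X$), and let $S\subseteq A$ be inductive in $A$; I must show $S=A$. Since $1$ is a strong generator, by \autoref{thm:pin-props}\ref{item:pp2} it is enough to show that every node $x\colon 1\to X$ which lies in $A$ also lies in $S$. Now ``$x\in A$ implies $x\in S$'' is a \ddo-formula with the monics $A\mono X$ and $S\mono X$ as parameters, so by \autoref{thm:ddo-sep} there is a subobject $T\mono X$ with the property that a node $x\colon 1\to X$ factors through $T$ exactly when $x\in A$ implies $x\in S$.

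The key step --- and the only place any care is needed --- is to check that $T$ is an inductive subset of $X$. Suppose $x\colon 1\to X$ is a node all of whose $\prec$-children in $X$ lie in $T$, and let us show $x\in T$: assume $x\in A$, and deduce $x\in S$. Any child of $x$ for the induced relation on $A$ is in particular a $\prec$-child $z$ of $x$ in $X$ that happens to lie in $A$; since $z$ lies in $T$ and $z\in A$, the defining property of $T$ forces $z\in S$. Hence every child of $x$ in $A$ lies in $S$, so $x\in S$ by inductivity of $S$ in $A$, as needed. This argument is constructive; working instead with the naive candidate $S\cup\neg A$ would force a case distinction on whether $x\in A$, which is exactly what the implication-style subobject $T$ lets us avoid.

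Finally, since $X$ is well-founded and $T$ is inductive, $T$ is all of $X$. Therefore every node $x\colon 1\to X$ with $x\in A$ factors through $T$, so $x\in A$ implies $x\in S$, hence $x\in S$; by the reduction in the first paragraph, $S=A$, so $A$ is well-founded. The main obstacle, such as it is, is keeping this inductivity check intuitionistic; beyond that the argument is purely formal.
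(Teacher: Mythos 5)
Your proof is correct and is essentially the paper's proof: the subobject $T$ you build by $\ddo$-separation from \qq{$x\in A$ implies $x\in S$} is exactly the Heyting implication $(A\imp S)$ in $\Sub(X)$, and the paper's one-line argument is precisely that this implication is inductive in $X$. You have simply written out the inductivity check (and the reason the naive $S\cup\neg A$ won't do intuitionistically) that the paper leaves implicit.
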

\begin{proof}
  For any $Z\subseteq X$, if $S\subseteq Z$ is inductive in $Z$, then
  the Heyting implication $(Z\imp S) \subseteq X$ is inductive in $X$.
\end{proof}

\begin{lem}\label{thm:wf-no-cycles}
  If $X$ is a well-founded graph, then there does not exist any cyclic
  path $x = x_n \prec \cdots \prec x_0 = x$ in $X$.
\end{lem}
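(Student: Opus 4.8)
The plan is to argue by contradiction: from an assumed cyclic path I will manufacture an inductive subobject of $X$ that fails to contain $x$, which is impossible since well-foundedness forces every inductive subset to be all of $X$ (in particular to contain $x$).

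So I would suppose a cyclic path $x = x_n \prec \cdots \prec x_0 = x$ exists, necessarily with $n\ge 1$. Its nodes are the values of a single arrow out of the finite cardinal $[n+1]$, which we may treat as a parameter, so by \ddo-separation (valid in \Set\ by \autoref{thm:ddo-sep}) we may form the subobject $S\mono X$ consisting of those $y\in X$ such that $y\ne x_i$ for every $i\le n$; equivalently, $y\in S$ says that $y$ lies \emph{off} the cycle, $\neg\exists i.\,(y = x_i)$. The crux is to check that $S$ is inductive. Let $y\in X$ have all of its children in $S$, and suppose toward a contradiction that $y = x_i$ for some $i\le n$. Since the path is a cycle, $x_i$ has a child which is again one of the $x_j$ --- namely $x_{i+1}$ when $i<n$, and $x_1$ when $i=n$ (here using $x_1 \prec x_0 = x_n$). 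This node is therefore a child of $y$, hence lies in $S$ by assumption, and so differs from every $x_k$, in particular from itself --- absurd. Thus $\neg\exists i.\,(y=x_i)$, i.e.\ $y\in S$, and $S$ is inductive. By well-foundedness $S=X$, so $x\in S$, which asserts $x\ne x_0$, contradicting $x=x_0$. Hence no cyclic path exists.

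The combinatorial content is entirely routine; the two points that I expect to call for a little care are that the formula defining $S$ is genuinely \ddo\ (the nodes enter only through an arrow-parameter $[n+1]\to X$, and ``for all $i\le n$'' is a bounded quantifier over a finite cardinal) and that the inductivity argument is kept intuitionistically valid --- I establish $y\in S$ in its negative form $\neg\exists i.\,(y = x_i)$ by extracting a witness and deriving a contradiction, so that decidability of equality in $X$ is never invoked.
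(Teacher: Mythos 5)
Your proof is correct and is essentially the paper's argument: the paper's one-line proof observes that the Heyting complement of $\{x_0,\dots,x_n\}$ is inductive but not all of $X$, and your $S=\{y\mid \neg\exists i.(y=x_i)\}$ is exactly that Heyting complement, with the inductivity check and the intuitionistic bookkeeping spelled out in detail.
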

\begin{proof}
  If there were, then the Heyting complement of $\{ x_0, \dots,
  x_n\}\subseteq X$ would be inductive but not all of $X$.
\end{proof}

\begin{lem}\label{thm:root-decid}
  If $X$ is a well-founded graph and $x\in X$, then $x\colon 1\to X/x$
  is a complemented subobject.  In particular, it is decidable whether
  or not a node of a well-founded \apg\ is the root.
\end{lem}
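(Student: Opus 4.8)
The plan is to write down the complement of the subobject $x\colon 1\to X/x$ explicitly and then verify its two defining properties using constructive well-pointedness. Since $1$ is a strong generator, it suffices to produce a subobject $U\mono X/x$ with $x^{*}U = 0$ and such that every global element $1\to X/x$ factors through $x$ or through $U$: the first condition gives $\mathrm{im}(x)\meet U = 0$, and the second, together with strong generation, gives $\mathrm{im}(x)\join U = X/x$. The natural candidate for $U$ is the subobject consisting of those nodes which are, \emph{inside $X/x$}, a child of some node of $X/x$; formally, $U$ is the image of the evident projection $(X/x)_{\prec}\to X/x$. Before anything else I would record that this makes sense: $X/x$ is a bona fide subobject of $X$ (cut out by the version of \ddo-separation for functions, \autoref{thm:higher-ddo-sep}), hence so is $U$; and by \autoref{thm:wf-hered} the graph $X/x$ is again well-founded, so \autoref{thm:wf-no-cycles} applies to it and rules out cyclic paths. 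These are the only consequences of well-foundedness the argument needs.

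For disjointness, suppose the root $x$ were a child of some node $z$ of $X/x$. Since $z\in X/x$ it admits a path $z\prec\cdots\prec x$, and prepending the edge $x\prec z$ produces a cyclic path based at $x$, contradicting \autoref{thm:wf-no-cycles}. Hence $x\colon 1\to X/x$ factors through $\neg U$, so $\mathrm{im}(x)\meet U\le\neg U\meet U = 0$.

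For covering, I would show that every node $y\colon 1\to X/x$ satisfies ``$y = x$ or $y\in U$''. By definition of $X/x$ there is a path $y = y_{n}\prec\cdots\prec y_{0} = x$ of some finite length $n$; since ``$n = 0$'' is a decidable property of a natural number, split into cases. If $n = 0$ then $y = x$. If $n\ge 1$ then $y\prec y_{n-1}$, while $y_{n-1}$ inherits the sub-path $y_{n-1}\prec\cdots\prec y_{0} = x$ and hence lies in $X/x$; so $y$ is a child of a node of $X/x$, i.e.\ $y\in U$. Because finite unions of subobjects are pullback-stable, this disjunction says precisely that $y$ factors through $\mathrm{im}(x)\join U$; since $1$ is a strong generator, the monic $\mathrm{im}(x)\join U\mono X/x$ is therefore an isomorphism. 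Together with the previous paragraph, $U$ is a complement of $x$ in $X/x$.

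I expect the covering step to be the one demanding the most care, not for conceptual depth but because it must be run cleanly in intuitionistic logic: the case split ``$n = 0$ or $n\ge 1$'' has to be genuinely available (it is, from the recursion/Peano structure of the natural number object), the passage from ``there exists a path'' to the disjunction must not smuggle in any choice (it does not, since ``$y = x$ or $y\in U$'' refers to neither $n$ nor the chosen path), and one must already know that $X/x$ and $U$ are legitimate objects of \Set\ (which is exactly what \autoref{thm:higher-ddo-sep} provides). Granting this, the ``in particular'' clause is immediate: a well-founded \apg\ $(X,\prec,\star)$ satisfies $X = X/\star$ by accessibility, so $\star\colon 1\to X$ is a complemented subobject; pulling this subobject back along an arbitrary node $y\colon 1\to X$ exhibits the truth value of ``$y$ is the root'' as a complemented subobject of $1$, hence a decidable proposition.
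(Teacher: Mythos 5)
Your proof is correct and follows essentially the same route as the paper's: the case split on whether the path length $n$ is $0$ or positive (using that $0$ is complemented in $N$), combined with \autoref{thm:wf-no-cycles} to rule out the root being a proper descendant of itself. The only difference is presentational — you name the complement explicitly as the image of $(X/x)_{\prec}\to X/x$ and verify disjointness and covering separately, whereas the paper phrases the same argument as ``for all $y$, either $y=x$ or $y\neq x$'' and leaves the passage to a complemented subobject implicit.
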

\begin{proof}
  By definition, every node $y\in X/x$ admits some path to $x$, of
  length $n\in N$ say.  Since $0$ is a complemented subobject of $N$,
  either $n=0$, in which case $y=x$, or $n>0$.  If $n>0$, then if
  $x=y$ there would be a cyclic path from $x$ to itself; hence in this
  case $x\neq y$.  Thus for all $y$, either $y=x$ or $y\neq x$, as
  desired.
\end{proof}

\begin{defn}
  We will write $X\csl x$ for the complement of $x$ in $X/x$, i.e.\
  the set of nodes admitting a path to $x$ of length $>0$.  In
  particular, $X\csl \star$ is the complement of the root.
\end{defn}

We need one more requirement on our \apgs, namely that they satisfy
the axiom of extensionality.

\begin{defn}
  An graph $X$ is \textbf{extensional} if whenever $x$ and $y$ are
  nodes such that $z\prec x \Leftrightarrow z\prec y$ for all $z$,
  then $x=y$.
\end{defn}

For non-well-founded graphs, this definition would have to be
strengthened in one of various possible ways.

\begin{rmk}
  An equivalent characterization of the universe of extensional
  well-founded \apgs\ can be obtained by working with well-founded
  \emph{rigid trees} instead.  A \emph{tree} is an \apg\ in which
  every node $x$ admits a \emph{unique} path to the root, and it is
  \emph{rigid} if for any node $z$ and any children $x\prec z$ and
  $y\prec z$, if $X/x \cong X/y$, then $x=y$.
  % (Classically, this is
  % equivalent to having no nontrivial automorphisms, but
  % intuitionistically it is stronger.)
  Every extensional well-founded \apg\ $X$ has an ``unfolding'' into a
  well-founded rigid tree $X^t$, whose nodes are the paths in $X$, and
  conversely every well-founded rigid tree is the unfolding of some
  extensional well-founded \apg.  Rigid trees are used
  by~\cite{cole:cat-sets,mitchell:topoi-sets} and~\cite{mm:shv-gl},
  while extensional relations are used by~\cite{osius:cat-setth}
  and~\cite{ptj:topos-theory}, but they result in essentially
  equivalent theories.  (In the non-well-founded case, however,
  extensional relations seem more generally applicable than rigid
  trees.)
\end{rmk}

We now record some observations about extensionality and its
interaction with well-foundedness.

\begin{defn}
  An \textbf{initial segment} of a graph $X$ is a subset $Y\subseteq
  X$ such that $x\prec y \in Y$ implies $x\in Y$.
\end{defn}

\begin{lem}\label{thm:ext-hered}
  Any initial segment of an extensional graph is extensional with the
  induced relation.\qed
\end{lem}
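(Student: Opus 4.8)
The plan is to deduce extensionality of the initial segment $Y$ directly from extensionality of the ambient graph $X$, the only work being to transport the extensionality hypothesis from the (small) set of test nodes $z\in Y$ to all test nodes $z\in X$; the initial-segment condition is exactly what makes this transport possible.

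Concretely, I would argue as follows. Suppose $x,y\in Y$ are such that $z\prec x \Iff z\prec y$ for all $z\in Y$ (here $\prec$ is the relation induced on $Y$, which agrees with the relation of $X$ on pairs of elements of $Y$). I claim the same biconditional holds for every $z\in X$. Indeed, if $z\prec x$ in $X$, then since $x\in Y$ and $Y$ is an initial segment we get $z\in Y$, so the hypothesis applies to $z$ and yields $z\prec y$; the converse implication is obtained symmetrically, using $y\in Y$. Thus $z\prec x \Iff z\prec y$ for all $z\in X$, and extensionality of $X$ gives $x=y$, as required.

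I do not expect any genuine obstacle: the argument is a single, short logical step, and it is manifestly intuitionistically valid, since it invokes no case analysis or decidability — it merely chains implications through the initial-segment closure condition. The only point worth flagging is the (trivial) compatibility of the induced relation on $Y$ with that of $X$, which is what lets the hypothesis on $z\in Y$ and the conclusion on $z\in X$ speak about the same relation.
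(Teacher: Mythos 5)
Your proof is correct and is exactly the argument the paper has in mind (the lemma is stated with no written proof, as immediate): the initial-segment condition upgrades the hypothesis from test nodes $z\in Y$ to all $z\in X$, and extensionality of $X$ finishes. Nothing is missing.
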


\begin{lem}\label{thm:wf-rigid}
  If $X$ is a well-founded extensional graph and $X/x \cong X/y$, then
  $x=y$.
\end{lem}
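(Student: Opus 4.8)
The plan is to prove the stronger statement that the isomorphism is forced to be the identity: if $f\colon X/x \to X/y$ is an isomorphism of pointed graphs, then $f(z)=z$ for every node $z$ of $X/x$. Since $f$ sends the root $x$ to the root $y$, taking $z=x$ then yields $x=y$.

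First I would form the subobject $S\subseteq X/x$ of those nodes $z$ with $f(z)=z$ --- concretely the equalizer of the two maps $X/x\to X$ given by the inclusion and by $f$ followed by $X/y\hookrightarrow X$, which exists since \Set\ is a \Pi-pretopos. By \autoref{thm:wf-hered}, $X/x$ is well-founded, so it suffices to check that $S$ is inductive. So suppose $z$ is a node of $X/x$ all of whose children (computed in $X/x$) lie in $S$. Two bookkeeping observations make the step go through. First, since both $X/x$ and $X/y$ are initial segments of $X$ (prepending an edge to a path to $x$, resp.\ to $y$, still ends at $x$, resp.\ $y$), the children of $z$ in $X/x$ coincide with its children in $X$, and likewise the children of $f(z)$ in $X/y$ coincide with its children in $X$. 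Second, since $f$ both preserves and reflects the edge relation, the children of $f(z)$ in $X/y$ are exactly $\{f(w): w\prec z\}$, which by the inductive hypothesis equals $\{w:w\prec z\}$. Combining the two, $z$ and $f(z)$ have the same set of children in $X$, so extensionality of $X$ gives $z=f(z)$, i.e.\ $z\in S$. Hence $S$ is inductive, so $S=X/x$; in particular $f(x)=x$, and since $f(x)=y$ we are done.

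The one point deserving a remark is why an isomorphism $X/x\cong X/y$ --- however exactly the hypothesis is parsed --- necessarily matches the roots. This is automatic in the well-founded setting: the root of $X/x$ is the unique node to which every node of $X/x$ admits a path, since if $u$ and $v$ both had that property with $u\neq v$, then composing a (necessarily positive-length) path $u\to v$ with one $v\to u$ would produce a cycle, contradicting \autoref{thm:wf-no-cycles}; and graph isomorphisms carry paths to paths. I do not expect a genuine obstacle here --- the core is just a routine well-founded induction --- the only thing requiring care is the first bookkeeping observation, which is precisely what allows extensionality to be applied in the ambient graph $X$ rather than having to match $z$ and $f(z)$ up inside a common subgraph.
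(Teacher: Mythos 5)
Your proof is correct. It differs from the paper's in the shape of the induction, though the engine (match the children, then apply extensionality) is the same. The paper inducts on the pair $(x,y)$ over the two-variable statement ``$X/x\cong X/y$ implies $x=y$'' (using that $X\times X$ is well-founded): given $g\colon X/x\toiso X/y$, each child $x'\prec x$ yields a restricted isomorphism $X/x'\cong X/g(x')$ with $g(x')\prec y$, so the induction hypothesis gives $x'=g(x')$, symmetrically for the children of $y$, and extensionality applied to $x$ and $y$ concludes. You instead fix a single isomorphism $f$ and induct on its fixed-point set inside $X/x$, proving the stronger claim that $f$ is the identity inclusion. This buys two small things: the subset you induct over is a plain equalizer, whereas making the paper's induction fully formal requires realizing the relation $\setof{(x,y) \mid X/x\cong X/y}$ as a subobject of $X\times X$ (which the paper does only later, in the proof of \autoref{thm:ext-quotient}, via \ddo-separation and local exponentials); and you get essentially for free the rigidity statement that the paper records separately as \autoref{thm:wf-rigid-global}. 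Your two bookkeeping observations (that $X/x$ and $X/y$ are initial segments, so children can be computed in $X$, and that the root is intrinsically characterized so any graph isomorphism matches roots) are exactly the right points to check; the latter is also automatic under the paper's convention that $X/x$ is a \emph{pointed} graph with root $x$.
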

\begin{proof}
  We prove this by well-founded induction.  If $g\colon X/x\toiso
  X/y$, then for all $x'\prec x$ we have $X/x' \cong X/g(x')$, where
  $g(x')\prec y$; hence by induction $x' = g(x')$.  Similarly, we have
  $y' = g^{-1}(y')$ for all $y'\prec y$.  By extensionality, $x=y$.
\end{proof}

In the language of~\cite{aczel:afa}, \autoref{thm:wf-rigid} says that
well-founded extensional graphs are ``Finsler-extensional.''

\begin{lem}\label{thm:wf-rigid-global}
  Any automorphism of a well-founded extensional graph is the
  identity.  Therefore, any two parallel isomorphisms of well-founded
  extensional graphs are equal.
\end{lem}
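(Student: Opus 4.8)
The plan is to show that any automorphism $f$ of a well-founded extensional graph $X$ equals $1_X$ by induction on the well-founded relation, and then to deduce the second assertion formally.  First I would form the subobject $Y = \setof{x\in X | f(x) = x}\mono X$; this is legitimate because $f(x)=x$ is a \ddo-atomic formula in the variable $x\colon 1\to X$ (equivalently, $Y$ is just the equalizer of $f$ and $1_X$, which exists since \Set\ has finite limits).  By the definition of well-foundedness it then suffices to prove that $Y$ is inductive, for this forces $Y = X$, i.e.\ $f(x) = x$ for every node $x$.

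For the inductive step, suppose $x$ is a node all of whose children lie in $Y$, i.e.\ $f(x') = x'$ for every $x'\prec x$; I would show $x$ and $f(x)$ have the same children and invoke extensionality.  If $z\prec x$, then $f(z)\prec f(x)$ since $f$ is a morphism of graphs, and $f(z) = z$ by the inductive hypothesis, so $z\prec f(x)$.  Conversely, since $f$ is an \emph{iso}morphism, $f\inv$ is also a morphism of graphs, so $z\prec f(x)$ gives $f\inv(z)\prec x$; then $f(f\inv(z)) = f\inv(z)$ by the inductive hypothesis, i.e.\ $z = f\inv(z)$, whence $z\prec x$.  Thus $z\prec x \Leftrightarrow z\prec f(x)$ for all $z$, and extensionality gives $f(x) = x$, so $x\in Y$.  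Hence $Y$ is inductive, which completes the induction.

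Finally, for the ``therefore'' clause I would observe that two parallel isomorphisms $g,h\colon X\to X'$ of well-founded extensional graphs differ by the automorphism $h\inv\circ g$ of $X$, which is $1_X$ by the first part, so $g = h$.  The argument is routine and uses no classical logic, so I do not anticipate a genuine obstacle; the one point deserving attention is the availability of the induction, i.e.\ that the inductive predicate really cuts out a subobject of $X$ --- which holds here precisely because $f(x)=x$ is \ddo\ (and, if one prefers, can be arranged with no appeal to \ddo-separation at all by taking $Y$ to be the equalizer of $f$ and $1_X$).
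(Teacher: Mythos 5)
Your proof is correct and follows the same route as the paper: well-founded induction on the predicate $f(x)=x$ (which is \ddo, so the inductive subset exists), with extensionality closing the inductive step, and the second claim obtained by applying the first to $h\inv\circ g$. The paper merely compresses the "same children" verification that you spell out explicitly.
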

\begin{proof}
  Let $f\colon X\toiso X$ be an isomorphism; we prove by well-founded
  induction that $f(x)=x$ for all $x\in X$.  But if $f(x')=x'$ for all
  $x'\prec x$, then extensionality immediately implies $f(x)=x$.
\end{proof}

\begin{defn}
  We write $\bbV(\Set)$ for the class of well-founded extensional
  \apgs\ in \Set.
\end{defn}

Our goal is now to show that $\bbV(\Set)$ is a model of a material set
theory.  We consider two \apgs\ to be \emph{equal}, i.e.\ to represent
the same material-set, when they are isomorphic.  And we model the
membership relation $\ordin$ in the expected way:

\begin{defn}
  If $X$ is an \apg, the children of its root are called its
  \textbf{members}.  We write $|X|$ for the set of members of $X$.  If
  $X$ and $Y$ are \apgs, we write $X\iin Y$ to mean that $X\cong Y/y$
  for some member $y\in |Y|$.
\end{defn}

Here is our omnibus theorem.

\begin{thm}\label{thm:str->mat}
  Let \Set\ be a constructively well-pointed \Pi-pretopos with a \nno,
  and let \ph\ be a formula of material set theory with parameters in
  $\bbV(\Set)$.  Then $\bbV(\Set)\ss\ph$ whenever any of the following
  holds.
  \begin{blist}
  \item \ph\ is the axiom of extensionality, empty set, pairing,
    union, exponentiation, infinity, foundation, transitive closures,
    Mostowski's principle, or an instance of \ddo-separation.
%  \item \ph\ is the exponentiation axiom.
  \item \Set\ satisfies structural fullness and \ph\ is material fullness.
  \item \Set\ is a topos and \ph\ is the power set axiom.
  \item \Set\ satisfies structural separation and \ph\ is an instance
    of material separation.
  \item \Set\ satisfies structural collection and \ph\ is an instance
    of material collection.
  \item \Set\ satisfies replacement of contexts and \ph\ is an instance
    of material replacement.
  \item \Set\ is Boolean and \ph\ is an instance of \ddo-classical
    logic.
  \item \Set\ satisfies full classical logic and \ph\ is an instance
    of full classical logic.
%  \item \Set\ has an \nno\ and \ph\ is the axiom of infinity.
  \item \Set\ satisfies induction and \ph\ is an instance of full
    induction.
  \item \Set\ satisfies the axiom of choice and \ph\ is the material
    axiom of choice.
  \item \Set\ satisfies well-founded induction and \ph\ is an instance
    of set-induction.
  \end{blist}
\end{thm}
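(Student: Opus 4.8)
The plan is to verify each of the listed axioms by exhibiting, inside \Set, the underlying graph of a witnessing \apg\ (or family of \apgs) and checking that it is well-founded and extensional. Two preliminary devices do most of the bookkeeping. First, to each formula \ph\ of material set theory with parameters in $\bbV(\Set)$ we associate a formula \phhat\ in the language of categories over \Set\ (with the parameter apgs as parameters) by rendering $\iin$ via ``$X/x$ for $x$ a child of the root'' as in the definitions above; a routine induction on \ph\ yields the absoluteness equivalence $\bbV(\Set)\ss\ph \iff \Set\ss\phhat$, and \ddo-formulas translate to \ddo-formulas (in the extended sense of \autoref{thm:higher-ddo-sep} when \Set\ is a \Pi-pretopos or topos). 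Second, every well-founded graph in \Set\ admits a well-founded \emph{extensional quotient}, obtained by a well-founded recursion that identifies nodes with the same hereditary picture; this recursion, together with the constructions of $X/x$, of finite paths, and of the exponentials and quotients used below, is where the hypothesis that \Set\ is a constructively well-pointed \Pi-pretopos with an \nno\ is actually consumed. By \autoref{thm:wf-hered} and \autoref{thm:ext-hered} every sub-\apg\ $X/x$ of a well-founded extensional \apg\ is again one, and passing to the extensional quotient turns any well-founded pointed graph into a well-founded extensional \apg; so the recurring ``assembly'' step --- take a fresh root, form the coproduct of the given graphs beneath it, then collapse --- stays inside $\bbV(\Set)$.

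The always-valid axioms are then direct. Extensionality is essentially the rigidity lemmas: if two apgs have the same $\iin$-members, \autoref{thm:wf-rigid} gives a bijection of their member-sets respecting the sub-apgs at each node, and a further well-founded recursion (using extensionality of the codomain and \autoref{thm:wf-rigid-global}) assembles an isomorphism. Empty set is the terminal graph $(1,\emptyset)$; pairing, union, and transitive closures are assembly constructions from copies of the given apg(s) and of the sub-apgs $X/x$ and $X\csl\star$. Infinity is witnessed by the graph on $1+N$ whose relation is the strict order of the \nno\ and whose fresh root dominates all of $N$, the minimality clause being proved by induction over $N$; this also handles exponentiation, since \Set\ is cartesian closed and the \apg\ of functions $A\to B$ can be built from $|B|^{|A|}$ together with the given graphs by assembly. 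Foundation, i.e.\ \ddo-set-induction, is proved by a well-founded induction over the graph $X$ itself applied to the subobject $\setof{x\in X | \phhat(X/x)}$, which exists by \ddo-separation (\autoref{thm:ddo-sep}) as \phhat\ is \ddo; and Mostowski's principle follows by reading off, from an \apg\ carrying an internally well-founded extensional relation on its member-set, the corresponding well-founded extensional graph in \Set\ and adjoining a top node.

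Each conditional axiom transports the named structural axiom across the translation. Full separation is the Foundation argument with \ddo-separation replaced by structural separation, applied to the now-arbitrary \phhat; the power set axiom (when \Set\ is a topos) and fullness use the power object of $|A|$, resp.\ structural fullness, to name the subsets of $|A|$, resp.\ the entire relations, which assembly turns into an \apg; collection follows by applying structural collection --- equivalently collection of contexts, since an \apg\ is an instantiation of the context ``an object with a binary relation and a point'' --- to ``for each $u\in|U|$ there is a graph with $\phhat$'' and assembling the family of graphs it produces over a cover of $|U|$; replacement of contexts gives material replacement the same way, its uniqueness hypothesis upgrading for free to ``unique up to unique isomorphism of instantiations'' because well-founded extensional \apgs\ are rigid (\autoref{thm:wf-rigid-global}); \ddo- and full classical logic come from Booleanness, resp.\ full classical logic, of \Set\ together with the disjunction property of a constructively well-pointed category, which converts $\phhat\vee\neg\phhat$ into ``$\phhat$ or $\neg\phhat$''; full induction and set-induction transport structural induction over $N$ and structural well-founded induction over the reconstructed graph (the latter being precisely why set-induction, unlike foundation, needs more than \ddo-separation); and choice follows by using structural choice to split the evident regular epimorphism onto $|A|$ whose fibre over a member of $A$ is the (inhabited, by hypothesis) set of its members, yielding a function that selects a member of each member, which is then packaged as a function-\apg.

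The main obstacle is the combination of the extensional-quotient lemma and the absoluteness translation: carrying out the identifying recursion in a \Pi-pretopos with \nno\ but without power objects takes care, and one must be precise that the material quantifiers over $\bbV(\Set)$ correspond to object-and-arrow quantifiers over \Set\ in such a way that the per-axiom structural inputs genuinely suffice. Once those are in place, each remaining step amounts to exhibiting the right subobject of a product, exponential, or coproduct and verifying well-foundedness and extensionality, for which the extensional quotient is the universal safety net.
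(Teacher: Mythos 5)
Your overall architecture --- translate material formulas into structural ones, then witness each axiom by an explicit graph built in \Set\ and collapsed to an extensional quotient --- is the same as the paper's. But the lemma you lean on as the ``universal safety net'' is precisely the delicate point, and as you state it it is too strong. You assert that \emph{every} well-founded graph in \Set\ admits an extensional quotient, obtained by a well-founded recursion identifying nodes with the same hereditary picture. In a mere \Pi-pretopos this appears not to be provable: the extensional quotient of an arbitrary well-founded graph is the quotient by the \emph{largest} bisimulation, whose existence the paper only obtains from power objects or full separation, and quotienting once by the relation ``$X/x\cong X/y$'' does not in general yield an extensional graph. The paper's \autoref{thm:ext-quotient} is correspondingly restricted: it constructs the quotient only for graphs that are already extensional below some externally fixed depth $n$ from the root, by an \emph{external} induction on $n$ (one quotient step reduces the non-extensional depth by one), and every assembly in the proof --- pairing, union, cartesian products, fullness, collection --- is engineered so that the graph to be collapsed satisfies this hypothesis with $n\le 3$. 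Your constructions (a fresh root over a coproduct of already-extensional \apgs) do in fact satisfy such a bounded hypothesis, so the gap is reparable, but as written your argument rests on an unproved, and in this generality apparently unprovable, lemma.

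A second, smaller gap: you claim that \ddo-formulas translate to \ddo-formulas. The naive translation does not do this: a material quantifier $\exists x\in a$ becomes an unbounded quantifier over well-founded extensional \apgs\ $X$ with $X\iin A$, which is not a \ddo-quantifier of the structural theory. To verify \ddo-separation, foundation, and \ddo-classical logic using only the \ddo-separation available in a constructively well-pointed Heyting category (\autoref{thm:ddo-sep}), one needs a genuinely \ddo\ translation; the paper obtains it by first forming a single transitive extensional quotient $T$ of the finitely many parameters and then interpreting material-set variables as \emph{elements of} $T$, with $\in$ becoming $\prec$ in $T$, and separately proving this equivalent to the naive translation. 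Without some such device your proofs of the \ddo-schemata do not go through from the stated hypotheses.
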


We will prove this theorem with a series of lemmas, but first we need
to introduce some auxiliary notions leading up to the construction of
\emph{extensional quotients}.

\begin{defn}
  Let $X$ and $Y$ be graphs.  A \textbf{simulation} from $X$ to $Y$ is
  a function $f\colon X\to Y$ such that
  \begin{enumerate}
  \item if $x'\prec x$, then $f(x')\prec f(x)$, and
  \item if $y\prec f(x)$, then there exists an $x'\prec x$ with $f(x')=y$.
  \end{enumerate}
  A \textbf{bisimulation} from $X$ to $Y$ is a relation $R\mono
  X\times Y$ such that both projections $R\to X$ and $R\to Y$ are
  simulations (where $R$ is considered as a full subgraph of $X\times
  Y$).  A bisimulation is \textbf{bi-entire} if $R\to X$ and $R\to Y$
  are surjective.
\end{defn}

Note that if $f\colon X\to Y$ is a simulation, then the relation
$(1,f)\colon X\to X\times Y$ is a bisimulation, which is bi-entire iff
$f$ is surjective.

An obvious example of a simulation is the inclusion of an initial
segment.  The following lemma says that for well-founded extensional
graphs, these are the only simulations.

\begin{lem}\label{thm:sim-inj}
  If $X$ is well-founded and extensional, then any simulation $f\colon
  X\to Y$ is injective, and isomorphic to the inclusion of an initial
  segment.
\end{lem}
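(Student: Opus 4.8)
The plan is to establish injectivity first, by well-founded induction on $X$, and then read off the remaining claim essentially for free. Fix $x\in X$ and take as inductive hypothesis that for every $a\prec x$ and every $a'\in X$, $f(a)=f(a')$ implies $a=a'$; the goal is to show that $f(x)=f(x')$ implies $x=x'$ for every $x'\in X$. Given such an $x'$, by extensionality of $X$ it suffices to prove $z\prec x\Leftrightarrow z\prec x'$ for all $z$. If $z\prec x$, then $f(z)\prec f(x)=f(x')$ by the first clause in the definition of a simulation, so by the second clause applied at $x'$ there is $z'\prec x'$ with $f(z')=f(z)$; since $z\prec x$, the inductive hypothesis forces $z=z'$, hence $z\prec x'$. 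Conversely, if $z\prec x'$ then $f(z)\prec f(x')=f(x)$, so by the second clause applied at $x$ there is $z''\prec x$ with $f(z'')=f(z)$; since $z''\prec x$, the inductive hypothesis forces $z''=z$, hence $z\prec x$. Thus $z\prec x\Leftrightarrow z\prec x'$, and extensionality yields $x=x'$, completing the induction step.

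To see that this induction is legitimate in our (possibly weak, intuitionistic) ambient theory, note that the predicate ``$\forall x'\in X.\,(f(x)=f(x')\imp x=x')$'' is $\Delta_0$ in the free $\Delta_0$-variable $x$: its only quantifier is bounded by the set $X$, and its only atomic subformulas are equalities between global elements. Hence by \autoref{thm:ddo-sep} it cuts out a subobject $S\mono X$, and the argument above shows $S$ is inductive, so $S=X$ by well-foundedness. Therefore $f$ is injective on global elements, and so monic by \autoref{thm:pin-props}\ref{item:pp1a}; we may thus regard $X$ as a subobject of $Y$ via $f$.

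It remains to check that this subobject is an initial segment of $Y$ and that $f$ reflects $\prec$. For the first point, suppose $y\prec f(x)$ in $Y$; by the second clause of the definition of simulation there is $x'\prec x$ with $f(x')=y$, so (working on global elements, which suffices by \autoref{thm:pin-props}\ref{item:pp2}) $y$ lies in the image of $f$; thus $X\mono Y$ is an initial segment. For the second point, if $f(x')\prec f(x)$ then again by the second clause there is $a\prec x$ with $f(a)=f(x')$, whence $a=x'$ by injectivity, so $x'\prec x$; together with the first clause this shows $x'\prec x\Leftrightarrow f(x')\prec f(x)$. Hence $f$ is an isomorphism of graphs from $X$ onto the initial segment $\mathrm{im}(f)\subseteq Y$ with its induced relation, as claimed. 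The only real care needed anywhere is in the induction: one must keep the inductive hypothesis quantified over all children of $x$ and all of $X$, and one must verify both directions of the extensionality biconditional directly, since nothing here may appeal to excluded middle.
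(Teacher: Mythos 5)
Your proof is correct and follows essentially the same route as the paper's: well-founded induction establishes injectivity via extensionality, and the initial-segment claim then follows directly from the simulation property. The only cosmetic difference is that you run a single-variable induction on the predicate $\forall x'\in X.\,(f(x)=f(x')\imp x=x')$ where the paper uses a two-variable induction on $X\times X$; both are legitimate, and your added care about \ddo-separation and arguing on global elements is accurate though not spelled out in the paper.
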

\begin{proof}
  We show by well-founded induction that $f(x_1)=f(x_2)$ implies
  $x_1=x_2$.  Suppose $f(x_1)=f(x_2)$; then for any $z_1\prec x_1$, we
  have $f(z_1)\prec f(x_1) = f(x_2)$, so since $f$ is a simulation
  there is a $z_2\prec x_2$ with $f(z_2)=f(z_1)$.  Hence $z_1=z_2$ by
  induction, so $z_1\prec x_2$.  Dually, $z_2\prec x_2$ implies
  $z_2\prec x_1$, so by extensionality $x_1=x_2$.  Finally, any
  injective simulation must be an initial segment.
\end{proof}

\begin{lem}\label{thm:bisim-wf}
  If $R$ is a bi-entire bisimulation from $X$ to $Y$, then $X$ is
  well-founded if and only if $Y$ is so.
\end{lem}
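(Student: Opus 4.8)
The plan is to prove the two implications separately, after first observing that the hypothesis is symmetric in $X$ and $Y$. Indeed, if $R\mono X\times Y$ is a bi-entire bisimulation, then its transpose across the symmetry isomorphism $X\times Y\cong Y\times X$ (which is an isomorphism of graphs, since the product relation on $X\times Y$ is symmetric) is a bi-entire bisimulation from $Y$ to $X$. Hence it suffices to show that if $Y$ is well-founded then so is $X$.

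So I would assume $Y$ well-founded, take an inductive $S\subseteq X$, and aim to conclude $S=X$. The idea is to transport inductivity across $R$ by setting
\[
  T = \setof{ y\in Y | \forall x\in X.\,((x,y)\in R)\imp (x\in S) }.
\]
The first thing to check is that $T$ is a legitimate subobject of $Y$: the displayed formula has only $\Delta_0$-quantifiers (the variable $x$ ranges over the \emph{set} $X$), so $T$ exists by $\Delta_0$-separation, which holds in \Set\ by \autoref{thm:ddo-sep}. Next I would verify that $T$ is inductive in $Y$. Suppose $y\in Y$ has all of its children in $T$, and let $(x,y)\in R$; we want $x\in S$, and since $S$ is inductive it suffices to show $x'\in S$ for every child $x'\prec x$. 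Because the projection $R\to X$ is a simulation, from $x'\prec x$ and $(x,y)\in R$ we obtain some $y'\prec y$ with $(x',y')\in R$; as $y'$ is a child of $y$ it lies in $T$, and then $(x',y')\in R$ forces $x'\in S$ by definition of $T$. Hence $x\in S$, and since $(x,y)\in R$ was arbitrary, $y\in T$; thus $T$ is inductive.

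Well-foundedness of $Y$ then gives $T=Y$. Finally, bi-entirety means the projection $R\to X$ is (regular) epic, so by constructive well-pointedness every $x\in X$ admits some $y$ with $(x,y)\in R$ (\autoref{thm:pin-props}); since $y\in T=Y$, this yields $x\in S$. Hence $S=X$, so $X$ is well-founded, completing one direction and, by the symmetry noted above, the lemma. I do not expect a serious obstacle here; the only points needing care are the two just flagged — that $T$ is cut out by $\Delta_0$-separation rather than full separation (which is why a constructively well-pointed \Pi-pretopos suffices), and the use of surjectivity of $R\to X$ in the last step — while everything in between is a routine unwinding of the two simulation axioms, which I would be careful to phrase entirely internally to \Set.
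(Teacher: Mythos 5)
Your proof is correct and is essentially the paper's own argument: the paper transports an inductive $S\subseteq Y$ to the subset $T=\{x\in X\mid R(x,y)\imp y\in S\}$ and runs the identical induction, merely proving the direction with $X$ and $Y$ swapped relative to yours (both directions follow from the symmetry of the hypothesis, as you note). Your extra remarks on $\Delta_0$-separation and on using projectivity of $1$ to extract $y$ with $R(x,y)$ are correct clarifications of points the paper leaves implicit.
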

\begin{proof}
  Suppose $X$ is well-founded and $S\subseteq Y$ is inductive.  Let
  $T\subseteq X$ consist of those $x\in X$ such that $R(x,y)$ implies
  $y\in S$; we show that $T$ is inductive.  Suppose $x\in X$ is such
  that $x'\prec x$ implies $x'\in T$, and suppose that $R(x,y)$.  Then
  for any $y'\prec y$ there is an $x'\prec x$ with $R(x',y')$, whence
  $x'\in T$ and thus $y'\in S$.  So since $S$ is inductive, $y$ must
  be in $S$.  Thus $x\in T$, so $T$ is inductive.  Since $X$ is
  well-founded, $T=X$, and then since $R$ is bi-entire, $S=Y$; thus
  $Y$ is well-founded.
\end{proof}

\begin{lem}\label{thm:bisim-iso}
  Any bi-entire bisimulation between extensional well-founded graphs
  must be an isomorphism.
\end{lem}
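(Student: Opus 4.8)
The plan is to exhibit the bisimulation $R\mono X\times Y$ as (the graph of) a graph isomorphism by showing that $R$, with the relation induced from the product graph, is itself well-founded and extensional, and then applying \autoref{thm:sim-inj} to its two projections. Recall that these projections $\pi_X\colon R\to X$ and $\pi_Y\colon R\to Y$ are simulations by definition of a bisimulation, and they are surjective because $R$ is bi-entire; the relation on $R$ is $(x',y')\prec(x,y)$ iff $x'\prec x$ and $y'\prec y$.

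First I would note that $R$ is well-founded: a product of well-founded graphs is well-founded, as already observed, and $R$ is a subgraph of $X\times Y$, so \autoref{thm:wf-hered} applies. The substance of the argument is then to check that $R$ is \emph{extensional}, and this is the step where the hypotheses (that $R$ is a bisimulation and that \emph{both} $X$ and $Y$ are extensional) actually get used. Suppose $(x_1,y_1)$ and $(x_2,y_2)$ are nodes of $R$ with exactly the same children in $R$; I must deduce $x_1=x_2$ and $y_1=y_2$. For the first: given $x'\prec x_1$, since $\pi_X$ is a simulation there is some $y''\prec y_1$ with $(x',y'')\in R$, and $(x',y'')$ is then a child of $(x_1,y_1)$ in $R$, hence by assumption a child of $(x_2,y_2)$, which forces $x'\prec x_2$; by the symmetric argument every child of $x_2$ is a child of $x_1$, so $x_1$ and $x_2$ have the same children and $x_1=x_2$ by extensionality of $X$. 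Running the identical argument through $\pi_Y$ and the extensionality of $Y$ gives $y_1=y_2$, hence $(x_1,y_1)=(x_2,y_2)$. This is just an unwinding of definitions — no induction is needed here — so while it is the technical heart of the proof, it is not really an obstacle.

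Finally, with $R$ well-founded and extensional, \autoref{thm:sim-inj} applies to the simulation $\pi_X\colon R\to X$: it is injective and isomorphic to the inclusion of an initial segment of $X$, and since it is also surjective that initial segment must be all of $X$, so $\pi_X$ is an isomorphism of graphs; likewise $\pi_Y$ is an isomorphism. Then $\pi_Y\circ\pi_X^{-1}\colon X\to Y$ is an isomorphism of graphs, and its graph — the subobject of $X\times Y$ picked out by $r\mapsto(\pi_X r,\pi_Y r)$ — is exactly $R\mono X\times Y$; hence $R$ is the graph of an isomorphism $X\cong Y$. I would close by remarking that the whole argument is intuitionistically valid, the only appeal to well-founded induction being inside \autoref{thm:sim-inj}, which is already established.
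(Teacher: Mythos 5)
Your proof is correct, but it takes a genuinely different route from the paper's. The paper argues directly: by well-founded induction (using the bisimulation property twice and extensionality of $Y$) it shows $R(x,y_1)$ and $R(x,y_2)$ imply $y_1=y_2$, then by symmetry gets functionality in the other direction, and concludes from bi-entirety. You instead promote $R$ itself to an object of the category under study: it is well-founded as a subgraph of the product (\autoref{thm:wf-hered} plus the paper's observation that products of well-founded graphs are well-founded), and your extensionality check for $R$ is a clean, induction-free unwinding of the simulation property together with extensionality of \emph{both} $X$ and $Y$. That lets you quote the rigidity lemma \autoref{thm:sim-inj} for each projection, with surjectivity forcing the initial segments to be everything. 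The induction has not disappeared, of course---it is encapsulated in \autoref{thm:sim-inj}---but your factoring makes the proof more modular and makes explicit that a bijective bisimulation is automatically a graph isomorphism (a point the paper's ``must be an isomorphism'' passes over rather quickly). The trade-off is that you rely on two closure facts (products and subgraphs of well-founded graphs) that the direct argument does not need; both are available in the paper, so there is no gap.
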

\begin{proof}
  Let $R\mono X\times Y$ be such.  We show by well-founded induction
  that if $R(x,y_1)$ and $R(x,y_2)$, then $y_1=y_2$.  For if $z_1\prec
  y_1$, then since $R$ is a bisimulation, there is a $w\prec x$ with
  $R(w,z_1)$, and again since $R$ is a bisimulation, there is a
  $z_2\prec y_2$ with $R(w,z_2)$.  By induction, $z_1=z_2$, so that
  $z_1\prec y_2$ for any $z_1\prec y_1$.  By symmetry, for any
  $z_2\prec y_2$ we have $z_2\prec y_1$, hence by extensionality
  $y_1=y_2$.

  By symmetry, if $R(x_1,y)$ and $R(x_2,y)$ then $x_1=x_2$, so $R$ is
  functional in both directions.  Since it is also bi-entire, it must
  be an an isomorphism.
\end{proof}

\begin{lem}\label{thm:wfext-bisid}
  If $X$ is well-founded, then it is extensional if and only if every
  bisimulation from $X$ to $X$ is contained in the identity.
\end{lem}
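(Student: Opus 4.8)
The plan is to prove the two implications separately. Only the direction ``extensional $\Rightarrow$ every bisimulation $X\to X$ is contained in the identity'' will use well-foundedness, and it runs by a well-founded induction of the same shape as in \autoref{thm:bisim-iso}; the converse needs no hypothesis on $X$ at all.

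For the forward direction, suppose $X$ is extensional and let $R\mono X\times X$ be a bisimulation. I would consider the subset $S\subseteq X$ consisting of those nodes $x$ such that $x=y$ for every $y$ with $R(x,y)$ (a \ddo-formula, so $S$ exists by \ddo-separation), and show it is inductive; then $S=X$ gives $R\subseteq\Delta_X$ immediately. For the inductive step, assume every child of $x$ lies in $S$ and that $R(x,y)$. Given $z\prec x$, the lifting clause for the first projection $R\to X$, applied at the node $(x,y)$, produces a $y'\prec y$ with $R(z,y')$; since $z\in S$ this forces $z=y'$, so $z\prec y$. Symmetrically, via the second projection, every $w\prec y$ satisfies $w\prec x$. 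Hence $x$ and $y$ have the same children, and extensionality gives $x=y$, so $x\in S$.

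For the converse, suppose every bisimulation $X\to X$ is contained in $\Delta_X$, and let $x,y$ be nodes with $z\prec x\Iff z\prec y$ for all $z$. I would produce a bisimulation containing the pair $(x,y)$, namely $R=\Delta_X\cup\{(x,y)\}$, a subobject of $X\times X$ with its full-subgraph structure. Checking that each projection $R\to X$ is a simulation is a short case analysis: the preservation clauses are automatic, and for the lifting clauses one only needs that $\Delta_X$ is itself a bisimulation (its projections being identities) and that at the extra node $(x,y)$ a child $c$ of $x$ is also a child of $y$ by hypothesis, so that $(c,c)\in R$ sits over it. Then $R$ is a bisimulation inside $\Delta_X$ containing $(x,y)$, whence $x=y$.

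The only subtle point is the bookkeeping in the forward inductive step: one must extract from ``$R$ is a bisimulation'' exactly the statement that $z\prec x$ together with $R(x,y)$ yields some $y'\prec y$ with $R(z,y')$ (and the mirror statement for the other projection), and then apply the inductive hypothesis at the child $z$, not at $x$. As with the surrounding lemmas, the whole argument stays within \ddo-separation, so it is valid in the stated generality of a constructively well-pointed $\Pi$-pretopos.
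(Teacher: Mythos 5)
Your proof is correct. The backward direction is exactly the paper's argument: the relation $R=\Delta_X\cup\{(x,y)\}$ is the same witness the paper uses, and the verification that it is a bisimulation is routine. The forward direction, however, takes a genuinely different (and more self-contained) route. The paper restricts a bisimulation $R$ with $R(x_1,x_2)$ to a bi-entire bisimulation from $X/x_1$ to $X/x_2$, invokes \autoref{thm:bisim-iso} to get an isomorphism $X/x_1\cong X/x_2$, and then applies the rigidity statement \autoref{thm:wf-rigid} to conclude $x_1=x_2$. You instead run a single well-founded induction directly on the \ddo-definable subset $S=\setof{x\in X\mid \forall y.\,R(x,y)\imp x=y}$, using the two lifting clauses of the bisimulation at the node $(x,y)$ together with the inductive hypothesis at the children of $x$ to show that $x$ and $y$ have the same children, and then applying extensionality once. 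Your inductive step is essentially the same computation that occurs inside the proof of \autoref{thm:bisim-iso}, so the two proofs are close in spirit; what the paper's version buys is reuse of already-established lemmas, while yours buys independence from them and makes transparent exactly where well-foundedness and extensionality enter. Both arguments stay within \ddo-separation, so both are valid at the stated level of generality.
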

\begin{proof}
  Suppose first that $X$ is well-founded and extensional and that $R$
  is a bisimulation with $R(x_1,x_2)$.  Then $R$ is a bi-entire
  bisimulation from $X/x_1$ to $X/x_2$ (by ordinary induction on
  length of paths), so by \autoref{thm:bisim-iso} it must be an
  isomorphism $X/x_1\cong X/x_2$.  But $X$ is well-founded and
  extensional, so by \autoref{thm:wf-rigid}, $x_1=x_2$.

  Now suppose that every bisimulation from $X$ to $X$ is contained in
  the identity, and also that $x,y\in X$ are such that $z\prec x
  \Leftrightarrow z\prec y$ for all $z$.  Define $R(a,b)$ to hold if
  either $a=b$, or $a=x$ and $b=y$.  Then $R$ is a bisimulation, and
  if it is contained in the identity, then $x=y$; hence $X$ is
  extensional.
\end{proof}

In the language of~\cite{aczel:afa}, \autoref{thm:wfext-bisid} says
that for well-founded graphs, extensionality is equivalent to ``strong
extensionality.''

\begin{cor}
  If $Y$ is well-founded and extensional, then any two simulations
  $f,g\colon X\toto Y$ are equal.
\end{cor}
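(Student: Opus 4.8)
The plan is to exhibit the pair $(f,g)$ as a bisimulation from $Y$ to itself, and then apply \autoref{thm:wfext-bisid}. Concretely, I would take $R\mono Y\times Y$ to be the image of $(f,g)\colon X\to Y\times Y$, which on elements is the set of pairs $(f(x),g(x))$ for $x\in X$. One regards $R$ as a full subgraph of $Y\times Y$, whose relation is the product relation: $(a_1,a_2)\prec(b_1,b_2)$ iff $a_1\prec b_1$ and $a_2\prec b_2$.

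The one thing that needs checking is that both projections $R\to Y$ are simulations, so that $R$ is a bisimulation from $Y$ to $Y$. Monotonicity is immediate from the definition of the product relation. For the lifting clause of the first projection: given $(y_1,y_2)\in R$, say $y_1=f(x)$ and $y_2=g(x)$, and given $z\prec y_1$, the simulation property of $f$ yields an $x'\prec x$ with $f(x')=z$; the simulation property of $g$ then gives $g(x')\prec g(x)=y_2$; and $(z,g(x'))=(f(x'),g(x'))$ belongs to $R$ and lies below $(y_1,y_2)$. The second projection is handled symmetrically, using the simulation property of $g$ first and then of $f$.

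Finally, since $Y$ is well-founded and extensional, \autoref{thm:wfext-bisid} gives that $R$ is contained in the identity relation on $Y$. As $(f(x),g(x))\in R$ for every $x\in X$, we conclude $f(x)=g(x)$ for all $x$, and hence $f=g$ because $1$ is a strong generator. I expect no real obstacle here: the only content is the back-and-forth verification that $R$ is a bisimulation, which is a routine unwinding of the definition of simulation — and it is worth noting that the statement uses no hypothesis on $X$ at all.
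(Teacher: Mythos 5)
Your proof is correct and is essentially the paper's own argument: the paper's proof is exactly the one-liner ``the image of $(f,g)\colon X\to Y\times Y$ is a bisimulation, hence contained in the identity,'' and you have simply supplied the routine back-and-forth verification that the paper leaves implicit. The appeal to \autoref{thm:wfext-bisid} and the closing observation that no hypothesis on $X$ is needed are both right.
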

\begin{proof}
  The image of $(f,g)\colon X\to Y\times Y$ is a bisimulation,
  hence contained in the identity.
\end{proof}

Therefore, well-founded extensional graphs and simulations form a
(large) preorder.  In the language of material set theory, this
preorder represents the partial order of transitive-sets and subset
inclusions.

% \begin{lem}\label{thm:kernel-bisim}
%   If $f\colon X\to Y$ is a simulation and $R$ is a bisimulation from
%   $Y$ to $Y$, then $(f\times f)^*R$ is a bisimulation from $X$ to $X$.
%   In particular, the kernel pair of any simulation is a bisimulation.
% \end{lem}
% \begin{proof}
%   If $f\colon X\to Y$ is a simulation and $R(f(x_1),f(x_2))$ with
%   $z_1\prec x_1$, then we have $f(z_1)\prec f(x_1)$.  Since $R$ is a
%   bisimulation, there is a $y\prec f(x_2)$ with $R(f(z_1),y)$, and
%   then since $f$ is a simulation there is a $z_2\prec x_2$ with
%   $f(z_2)=y$, hence $R(f(z_1),f(z_2))$, as desired.
% \end{proof}

\begin{lem}\label{thm:bisim-quot}
  If $X$ is a graph and $R$ is a bisimulation from $X$ to $X$ which is
  an equivalence relation, then its quotient $Y$ inherits a graph
  structure such that the quotient map $[-]\colon X\epi Y$ is a
  simulation.  Also, if $X$ is an \apg, then so is $Y$.
\end{lem}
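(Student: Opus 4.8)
The plan is to push the edge relation of $X$ forward along the quotient map and then verify the two clauses in the definition of a simulation; the second clause is where the bisimulation hypothesis is really needed.

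Since \Set\ is a \Pi-pretopos it is exact, so the equivalence relation $R\mono X\times X$ has a quotient $q\colon X\epi Y$ of which it is the kernel pair. I would give $Y$ its graph structure by declaring the edge relation $Y_\prec\mono Y\times Y$ to be the image of the composite $X_\prec\mono X\times X\xrightarrow{q\times q}Y\times Y$. With this definition, clause~(i) of the definition of a simulation holds for $q$ by construction: if $x'\prec x$ in $X$ then $(x',x)\in X_\prec$, so $(q(x'),q(x))\in Y_\prec$.

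The substance is clause~(ii): given a node $x$ of $X$ and a node $\bar y$ of $Y$ with $\bar y\prec_Y q(x)$, I must produce $x'\prec x$ in $X$ with $q(x')=\bar y$. Because $Y_\prec$ is the image of $q\times q$ restricted to $X_\prec$, the map $X_\prec\epi Y_\prec$ is a regular epi, so — using that $1$ is projective — the global element $(\bar y,q(x))\colon 1\to Y_\prec$ lifts to some $(a,b)\colon 1\to X_\prec$; thus $a\prec b$, $q(a)=\bar y$, and $q(b)=q(x)$. The last equality says exactly that $(b,x)\in R$. Now the hypothesis that $R$ is a bisimulation from $X$ to $X$ enters: its first projection $\pi_1\colon R\to X$ is a simulation, so from the node $(b,x)\in R$ and the edge $a\prec b=\pi_1(b,x)$, clause~(ii) for $\pi_1$ supplies $c$ with $c\prec x$ and $(a,c)\in R$, whence $q(c)=q(a)=\bar y$. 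Taking $x'=c$ completes the verification. Getting the direction of this last simulation condition right — and recognising that it is clause~(ii) for a projection of $R$ that one wants — is the only genuinely delicate point; the rest is bookkeeping.

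For the final assertion, suppose $X$ is an \apg\ with root $\star$, and point $Y$ by $q(\star)$. By \autoref{thm:pin-props}\ref{item:pp1} the regular epi $q$ is surjective on global elements, and by clause~(i) it carries each edge of $X$ to an edge of $Y$ and therefore each path $x=x_n\prec\cdots\prec x_0=\star$ in $X$ to a path $q(x_n)\prec\cdots\prec q(x_0)=q(\star)$ in $Y$. Hence, given any node $\bar x$ of $Y$, lift it to some $x\in X$ with $q(x)=\bar x$ and transport a path from $x$ to $\star$; this shows $Y$ is accessible, so $Y$ is an \apg\ and $q$ is the promised simulation. This part is routine once $q$ has been shown to be a simulation.
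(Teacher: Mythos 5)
Your proof is correct and follows essentially the same route as the paper's: define $\prec_Y$ as the image of $\prec_X$ under $q\times q$ (the minimal relation making $q$ preserve $\prec$), get clause (i) for free, and obtain clause (ii) by lifting an edge of $Y$ to an edge $a\prec b$ of $X$ with $q(b)=q(x)$, i.e.\ $R(b,x)$, and then invoking the simulation property of the projection $R\to X$ to produce $c\prec x$ with $R(a,c)$. The rooting by $q(\star)$ and transport of paths for accessibility likewise matches the paper.
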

\begin{proof}
  Define $\prec$ on $Y$ to be minimal such that $[-]$ preserves
  $\prec$, i.e.\ $y_1\prec y_2$ if there exist $x_1\prec x_2$ in $X$
  with $[x_1] = y_1$ and $[x_2]=y_2$.  Now suppose that $y\prec [x]$.
  By definition this means that there exist $z_1\prec z_2$ with
  $[z_1]=y$ and $[z_2] = [x]$, i.e.\ $R(z_2,x)$ holds.  But $R$ is a
  bisimulation, so there exists $x'\prec x$ with $R(z_1,x')$,
  i.e. $[x']=y$; hence $[-]$ is a simulation.  If $X$ is an \apg, we
  define the root of $Y$ to be $[\star]$; accessibility of $Y$ follows
  directly from that of $X$.
\end{proof}

Of course, by \autoref{thm:bisim-wf}, if $X$ is well-founded, then so
is the quotient $Y$.  If $R$ is the \emph{largest} bisimulation on
$X$, then its quotient is easily verified to be extensional.  The
largest bisimulation exists if \Set\ is a topos, or if it satisfies
full separation, so in these situations every well-founded graph has
an extensional quotient.  In general, this seems not to be provable,
but we can still construct extensional quotients in a useful amount of
generality.

\begin{lem}\label{thm:ext-quotient}
  Let $n$ be a fixed \emph{external} natural number, let $X$ be a
  well-founded \apg, and assume that $X/x$ is extensional whenever $x$
  is a node that admits a path of length $n$ to the root.  Then there
  is an extensional well-founded \apg\ $\Xbar$ and a surjective
  simulation $q\colon X\to \Xbar$.
\end{lem}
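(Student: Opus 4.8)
The plan is to induct on the external natural number $n$. For $n=0$ the hypothesis says exactly that $X=X/\star$ is itself extensional, so we take $\Xbar=X$ and $q=1_X$. For the inductive step, assume the result for $m$ and let $X$ be a well-founded \apg\ with $X/x$ extensional whenever $x$ admits a path of length $m+1$ to the root. First note that $X/x$ is extensional whenever $x$ admits a path of length $\ge m+1$ to the root: this is the hypothesis when the length is exactly $m+1$, and for a longer path $x$ lies in $X/x'$ for the node $x'$ at distance $m+1$ from the root along it, so $X/x=(X/x')/x$ is an initial segment of the extensional graph $X/x'$ and is extensional by \autoref{thm:ext-hered}. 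Consequently, for any non-root node $x$ — which, by accessibility, admits some path to the root, of length $d\ge 1$ — concatenating with a length-$m$ path inside $X/x$ shows that $(X/x)/w=X/w$ is extensional whenever $w$ lies at distance $m$ from $x$; hence the \apg\ $X/x$ satisfies the hypothesis of the lemma with $m$ in place of $n$, and the inductive hypothesis applies to it.

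First I would carry out the construction uniformly, maintaining as part of the inductive hypothesis that $\overline Z$ is obtained as the quotient of $Z$ by an equivalence relation definable from $Z$ by a higher-order $\ddo$-formula of the kind available in a \Pi-pretopos (\autoref{thm:higher-ddo-sep}), and that the quotient map $Z\epi\overline Z$ restricts on each slice $Z/z$ to the quotient $Z/z\epi\overline{Z/z}$. This yields a family $\bigl(\overline{X/x}\bigr)_{x\in X\csl\star}$ in $\Set/(X\csl\star)$, with surjective simulations $q_x\colon X/x\epi\overline{X/x}$, the fibrewise quotient existing by exactness of \Set. Define a relation $\sim$ on $X$ by: $x_1\sim x_2$ iff either $x_1=x_2$, or else $x_1,x_2\ne\star$ (a decidable alternative, \autoref{thm:root-decid}) and there is an isomorphism of \apgs\ $\overline{X/x_1}\cong\overline{X/x_2}$. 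Any such isomorphism carries root to root and restricts to isomorphisms on downsets, and from this and the ``restriction'' clause above one checks readily that $\sim$ is both an equivalence relation and a bisimulation on $X$. Hence $\Xbar:=X/\sim$ is a well-founded \apg\ by \autoref{thm:bisim-quot} and \autoref{thm:bisim-wf}, with quotient map $q\colon X\epi\Xbar$ a surjective simulation.

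The hard part will be verifying that $\Xbar$ is extensional. Suppose $[x_1]$ and $[x_2]$ have the same children in $\Xbar$. If $x_1=\star$: were $x_2\ne\star$, then $x_2$ would admit a path of length $\ge 1$ to $\star$, descending to a path $[x_2]=\alpha_k\prec\cdots\prec\alpha_0=[\star]$ with $k\ge 1$; then $\alpha_1$ is a child of $[\star]$, hence of $[x_2]$ by hypothesis, and composing $\alpha_1\prec[x_2]$ with the segment $[x_2]\prec\cdots\prec\alpha_1$ yields a cyclic path, contradicting \autoref{thm:wf-no-cycles}; so $x_2=\star$ by decidability of being the root, and $[x_1]=[x_2]$. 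If instead $x_1,x_2\ne\star$: in $\overline{X/x_i}$ the children of the root are the $q_{x_i}(c)$ for $c\prec x_i$, with $\overline{X/x_i}/q_{x_i}(c)\cong\overline{X/c}$, and since $\overline{X/x_i}$ is extensional, \autoref{thm:wf-rigid} forces distinct children to have non-isomorphic downsets. The equality of the sets of $\sim$-classes of children of $x_1$ and of $x_2$ then yields a bijection between the children of the two roots pairing those with isomorphic downsets; the pairing isomorphisms are unique (\autoref{thm:wf-rigid-global}) and glue — with \autoref{thm:wf-rigid} used again to check well-definedness on overlapping downsets — into an isomorphism $\overline{X/x_1}\cong\overline{X/x_2}$. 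Thus $x_1\sim x_2$, so $[x_1]=[x_2]$, and the induction is complete.

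I expect the extensionality check, together with the bookkeeping needed to keep the construction uniform, to be the main obstacle. What makes both possible is that the depth-$n$ hypothesis confines every genuine identification to the topmost $n$ layers, so that the well-founded recursion which would define the extensional collapse in full generality — and which is unavailable in a bare \Pi-pretopos — is replaced by $n$ external steps, each collapsing only ``one further layer'' over pieces that the inductive hypothesis has already made extensional, hence rigid by \autoref{thm:wf-rigid} and \autoref{thm:wf-rigid-global}.
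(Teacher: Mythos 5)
Your proof is correct, but it runs the external induction in the opposite direction from the paper's. The paper quotients first and recurses afterwards: it defines a single relation $R(x,y)\Leftrightarrow(X/x\cong X/y)$ directly on $X$ (definable from $X$ alone by \ddo-separation and local exponentials), forms the quotient $Y$ by \autoref{thm:bisim-quot}, and then the whole content of the inductive step is to show that $Y$ satisfies the hypothesis of the lemma with $n$ in place of $n+1$ --- so the inductive hypothesis is applied to the quotient, and extensionality is only ever verified in the base case. You instead recurse on the pieces first and quotient last: you apply the inductive hypothesis fibrewise to the family $(X/x)_{x\in X\csl\star}$, identify $x_1\sim x_2$ when $\overline{X/x_1}\cong\overline{X/x_2}$, and verify extensionality of $X/\!\sim$ outright. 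Both arguments draw on the same toolbox (\autoref{thm:root-decid}, \autoref{thm:wf-no-cycles}, \autoref{thm:ext-hered}, \autoref{thm:wf-rigid}, \autoref{thm:wf-rigid-global}), and by uniqueness of extensional quotients (an easy consequence of \autoref{thm:bisim-iso}) they produce the same $\Xbar$. What the paper's arrangement buys is that the equivalence relation at each stage is definable from the ambient graph with no reference to previously constructed quotients, so there is nothing to internalize; what yours buys is a proof shaped like the intuitive bottom-up recursive definition of the collapse, at the cost of the two pieces of bookkeeping you flag: (i) the stage-$m$ construction must be carried out uniformly over $X\csl\star$ so that $(\overline{X/x})_x$ exists as an object of $\Set/(X\csl\star)$ and $\sim$ is a legitimate subobject of $X\times X$ (fine for fixed external $m$, since the construction unwinds to a finite composite of definable operations), and (ii) the ``restriction clause'' $\overline{Z}/[z]\cong\overline{Z/z}$ must be re-established for your own construction at each stage, not merely assumed --- it does hold, because the image of $X/x$ under the quotient map is exactly the downset of $[x]$ with the induced relation (using that $\sim$ is a bisimulation) and $\sim_X$ restricted to $X/x$ coincides with $\sim_{X/x}$, but this verification belongs in the proof. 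With those two points made explicit, your argument is complete.
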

\begin{proof}
  The proof is by external induction on $n$.  (We will only need this
  lemma for $n\le 3$, so the reader is encouraged not to worry too
  much about what this induction requires of the metatheory.)  The
  base case is easy: since the root $\star$ admits a path of length
  $0$ to itself and $X\cong X/\star$, we can take $\Xbar=X$.

  Now suppose the statement is true for some $n$, and let $X$ satisfy
  the hypothesis for $n+1$.  For any $k$, write $X_{k}$ for the set of
  nodes admitting a path of length $k$ to the root.  Let the relation
  $R$ on $X$ be defined by $R(x,y)$ if there exists an isomorphism
  $X/x \toiso X/y$.  (This can be constructed using \ddo-separation
  and local exponentials in \Set.)  Then $R$ is a bisimulation and an
  equivalence relation, so by \autoref{thm:bisim-quot} it has a
  quotient $Y$ which is again a well-founded \apg.

  We claim that $Y$ satisfies the hypothesis for $n$.  Let $y\in Y_n$
  and suppose that $y_1, y_2\in Y/y$ satisfy $z\prec y_1 \Iff z\prec
  y_2$; we must show $y_1=y_2$.  Applying the simulation property
  inductively, we have $y = [x]$ for $x\in X_n$, and $y_i = [x_i]$
  with $x_i \in X/x$ for $i=1,2$.  By \autoref{thm:root-decid}, for
  each $i$ either $x_i = x$ or $x_i \in X/w_i$ for some $w_i\prec x$.
  If $x_1 = x$ and $x_2=x$, then of course $y_1 = [x_1]=[x_2]=y_2$.
  If $x_1 = x$ and $x_2\in X/w_2$ with $w_2\prec x = x_2$, then there
  would be a cyclic path in $X$ from $x_2$ to itself, contradicting
  \autoref{thm:wf-no-cycles}.  Hence the only remaining case is when
  $x_i \in X/w_i$ with $w_i\prec x$ for both $i=1,2$.

  Now this implies that $w_i\in X_{n+1}$, so each $X/w_i$ is
  extensional, and thus so is each $X/x_i$ by \autoref{thm:ext-hered}.
  Therefore, by \autoref{thm:sim-inj}, the quotient map $[-]\colon
  X\to Y$ induces an isomorphism $X/x_i \cong Y/[x_i]= Y/y_i$.  But
  $z\prec y_1 \Iff z\prec y_2$ means that $Y\csl y_1 \cong Y\csl y_2$,
  and hence (by \autoref{thm:root-decid}) also $Y/y_1 \cong Y/y_2$;
  thus we also have $X/x_1 \cong X/x_2$.  Thus, by definition,
  $R(x_1,x_2)$, and so $y_1 = [x_1]=[x_2]=y_2$.

  We have shown that $Y$ satisfies the hypothesis for $n$.  Thus it
  has an extensional quotient $\Ybar$, and so the composite $X\to Y\to
  \Ybar$ is an extensional quotient of $X$.
\end{proof}

We can now start verifying the axioms of material set theory.

\begin{lem}
  The axiom of extensionality holds.  That is, two well-founded
  extensional \apgs\ $X$ and $Y$ are isomorphic iff $Z\iin X \Iff
  Z\iin Y$ for all $Z$.
\end{lem}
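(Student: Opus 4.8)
The plan is to treat the two implications separately; the left-to-right direction is routine and the substance is in the converse. If $g\colon X\toiso Y$ is an isomorphism of \apgs, then $g(\star_X)=\star_Y$, so $g$ restricts to a bijection $|X|\cong|Y|$ between the children of the roots, and for each $x\in|X|$ it restricts to an isomorphism $X/x\cong Y/g(x)$ (a graph isomorphism preserves and reflects $\prec$, hence paths, hence the property ``reaches $x$''). Thus $Z\cong X/x$ for some $x\in|X|$ iff $Z\cong Y/y$ for some $y\in|Y|$, i.e.\ $Z\iin X\Iff Z\iin Y$.

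For the converse, assume $Z\iin X\Iff Z\iin Y$ for all well-founded extensional \apgs\ $Z$. First note that for any node $a$ of $X$ the \apg\ $X/a$ is an initial segment of $X$ (if $z\prec a'$ and $a'$ reaches $a$, then $z$ reaches $a$), hence it is well-founded by \autoref{thm:wf-hered} and extensional by \autoref{thm:ext-hered}, so $X/a\in\bbV(\Set)$, and similarly for $Y$. Now define a subobject $R\mono X\times Y$ by taking $R(a,b)$ to hold iff \emph{either} $a=\star_X$ and $b=\star_Y$, \emph{or} $a\neq\star_X$, $b\neq\star_Y$, and there exists an isomorphism $X/a\toiso Y/b$. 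This is a genuine subobject: ``$a=\star_X$'' is decidable by \autoref{thm:root-decid}, and the last clause is cut out using \ddo-separation and local exponentials exactly as in the proof of \autoref{thm:ext-quotient}. The reason for inserting the root pair $(\star_X,\star_Y)$ by hand, rather than using the condition $X/a\cong Y/b$ throughout, is that bi-entirety of the latter relation \emph{at the root} is precisely the statement $X\cong Y$ we are trying to prove; the argument must therefore bootstrap downward from the roots.

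The remaining work is to verify that $R$ is a bi-entire bisimulation and invoke \autoref{thm:bisim-iso}. For the bisimulation property: if $R(\star_X,\star_Y)$, a child $a'$ of $\star_X$ is a member, so $X/a'\iin X$, hence $X/a'\iin Y$ by hypothesis, giving $X/a'\cong Y/b'$ with $b'$ a child of $\star_Y$; since $X$ and $Y$ have no cycles (\autoref{thm:wf-no-cycles}), $a'\neq\star_X$ and $b'\neq\star_Y$, so $R(a',b')$, and symmetrically for children of $\star_Y$. If $R(a,b)$ holds via an isomorphism $\psi\colon X/a\toiso Y/b$, then for $a'\prec a$ the restriction of $\psi$ gives $X/a'\cong Y/\psi(a')$ with $\psi(a')\prec b$, where $a'\neq\star_X$ and $\psi(a')\neq\star_Y$ by accessibility and acyclicity, so $R(a',\psi(a'))$; the converse half uses $\psi^{-1}$. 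For bi-entirety, $\star_X$ is matched with $\star_Y$, while any $a\neq\star_X$ lies in $X/x$ for some member $x$ (the node immediately preceding $\star_X$ on a path from $a$); the hypothesis supplies $X/x\cong Y/y$ with $y$ a member, carrying $a$ to some $b\in Y/y$ with $b\neq\star_Y$ and restricting to $X/a\cong Y/b$, so $R(a,b)$, and symmetrically every $b\neq\star_Y$ is matched. By \autoref{thm:bisim-iso} the bi-entire bisimulation $R$ is an isomorphism, and it carries $\star_X$ to $\star_Y$ since $R(\star_X,\star_Y)$, so $X\cong Y$ as \apgs. The one point demanding care is the definition of $R$: the naive choice makes the key bi-entirety clause circular, and the acyclicity and root-decidability lemmas are exactly what keeps the non-root cases under control.
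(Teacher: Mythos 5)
Your proof is correct and follows essentially the same route as the paper's: both build a relation $R\mono X\times Y$ that pairs the roots by fiat and relates non-root nodes via isomorphisms of the sub-\apgs\ below them, verify it is a bi-entire bisimulation, and conclude by \autoref{thm:bisim-iso}. Your clause ``$X/a\cong Y/b$'' and the paper's clause ``$h_x(a)=b$ for some member $x$'' define the same relation by \autoref{thm:wf-rigid}, so the only difference is that you spell out the bisimulation and bi-entirety checks that the paper leaves to the reader.
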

\begin{proof}
  The ``only if'' direction is clear, so suppose that $Z\iin X \Iff
  Z\iin Y$ for all $Z$.  Then for every $x\in |X|$, we have $X/x \iin
  Y$, hence $X/x \cong Y/y$ for some $y\in Y$.  By
  \autoref{thm:wf-rigid}, this $y$ must be unique, and conversely as
  well; hence we have a bijection $g\colon |X|\toiso |Y|$ such that
  for any $x\in |X|$ there exists an isomorphism $h_x\colon X/x \toiso
  Y/g(x)$ (which must be unique, by \autoref{thm:wf-rigid-global}).
  Define a relation $R$ from $X$ to $Y$ such that $R(a,b)$ holds if:
  \begin{enumerate}
  \item $a=\star$ and $b=\star$, or
  \item there exists $x\in |X|$ such that $a\in X/x$, $b\in Y/g(x)$,
    and $h_x(a)=b$.
  \end{enumerate}
  Then $R$ is a bi-entire bisimulation, so by \autoref{thm:bisim-iso} it
  is an isomorphism.
\end{proof}

\begin{lem}\label{thm:struct-emppairun}
  The axioms of empty set, pairing, and union hold.
\end{lem}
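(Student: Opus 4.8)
The plan is to exhibit, for each of the three axioms, a well-founded extensional \apg\ having exactly the prescribed members; the one subtlety is that the obvious ``free'' construction is not extensional and must be repaired using the extensional-quotient lemma \autoref{thm:ext-quotient}. \emph{Empty set} is immediate: take $E$ to be the \apg\ with a single node (its root) and empty edge relation. It is vacuously extensional; it is well-founded, since its unique node has no children and so belongs to every inductive subset; and it is accessible via the trivial length-$0$ path. Since $|E| = \emptyset$, nothing is a member of $E$, so $E$ represents $\emptyset$.

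For \emph{pairing}, given $X, Y \in \bbV(\Set)$ I would form the graph $Z_0$ on the disjoint sum $1 + X + Y$ of a new node $\star$ with the underlying sets of $X$ and $Y$, carrying $\prec$ inherited on each summand together with two new edges making the roots $\star_X, \star_Y$ of $X, Y$ into children of $\star$. One checks routinely that $Z_0$ is a well-founded \apg\ (accessibility and well-foundedness both follow from those of $X$ and $Y$), and that the only nodes admitting a path of length $1$ to $\star$ are $\star_X$ and $\star_Y$, with $Z_0/\star_X \cong X$ and $Z_0/\star_Y \cong Y$, both extensional. So \autoref{thm:ext-quotient}, applied with $n = 1$, produces a well-founded extensional \apg\ $\overline{Z}$ and a surjective simulation $q \colon Z_0 \to \overline{Z}$. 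Since $q$ carries root to root and is a surjective simulation, $|\overline{Z}| = \{q(\star_X), q(\star_Y)\}$; moreover, lifting paths shows that $q$ restricts to a surjective simulation $Z_0/\star_X \to \overline{Z}/q(\star_X)$, which by \autoref{thm:sim-inj} (as $Z_0/\star_X \cong X$ is well-founded and extensional) is injective, hence an isomorphism. Thus $\overline{Z}/q(\star_X) \cong X$ and likewise $\overline{Z}/q(\star_Y) \cong Y$, so $W \iin \overline{Z}$ iff $W \cong X$ or $W \cong Y$; that is, $\overline{Z}$ represents $\{X, Y\}$, uniquely so by the already-proved extensionality axiom.

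For \emph{union}, given $X \in \bbV(\Set)$ I would let $N$ be the set of ``grandchildren'' of the root, the nodes $x'$ with $\exists x.(x' \prec x \wedge x \prec \star)$, carved out by \ddo-separation; let $M \subseteq X$ be the downward closure of $N$, an initial segment of $X$; and take $Y_0$ to be the graph on $1 + M$ with a new root $\star'$ whose children are exactly the elements of $N$. As for pairing, $Y_0$ is a well-founded \apg, its only nodes at distance $1$ from $\star'$ are the $x' \in N$, and $Y_0/x' = X/x'$ is an initial segment of $X$, hence extensional by \autoref{thm:ext-hered}; so \autoref{thm:ext-quotient} yields an extensional well-founded \apg\ $\overline{Y}$ with $|\overline{Y}| = \setof{q(x') | x' \in N}$ and $\overline{Y}/q(x') \cong X/x'$, by the same argument as before. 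Since $X/x' \cong (X/x)/x'$ whenever $x' \prec x \prec \star$, this gives $W \iin \overline{Y}$ iff $W \cong (X/x)/x'$ for some $x' \prec x \prec \star$, iff there is $x \prec \star$ with $W \iin X/x$, iff there is $V$ with $V \iin X$ and $W \iin V$ — precisely the membership condition defining $\bigcup X$.

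The only genuinely delicate point, and where I would concentrate, is the passage to the extensional quotient together with the verification that it does not disturb the members up to isomorphism: this rests entirely on the facts that a surjective simulation restricts to surjective simulations on the sub-\apgs\ $X/x$, and that, by \autoref{thm:sim-inj}, a simulation out of a well-founded extensional graph is an embedding. The remaining checks — that the two auxiliary graphs $Z_0$ and $Y_0$ are well-founded and accessible and satisfy the very mild ($n = 1$) hypothesis of \autoref{thm:ext-quotient} — are routine.
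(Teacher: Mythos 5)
Your proof is correct and follows essentially the same route as the paper: the one-node \apg\ for the empty set, the graph $X+Y+1$ with the two roots made children of a new root for pairing, and the downward closure of the grandchildren of the root plus a new root for union, in each case finishing with \autoref{thm:ext-quotient} at $n=1$. The extra detail you supply — that the quotient map restricts, via \autoref{thm:sim-inj}, to isomorphisms on the sub-\apgs\ below each member, so the quotient has exactly the intended members — is a correct elaboration of a step the paper leaves implicit, not a different argument.
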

\begin{proof}
  The empty set is represented by the \apg\ with one node and no
  $\prec$ relations, which has no members.

  If $X$ and $Y$ are extensional well-founded \apgs, let $Z = X + Y +
  1$ with $\prec$ induced from $X$ and $Y$ along with $\star_X \prec
  \star$ and $\star_Y\prec \star$, where $\star$ is the new point
  added.  Since $X$ and $Y$ are extensional, $Z$ satisfies the
  hypothesis of \autoref{thm:ext-quotient} with $n=1$, and its
  extensional quotient represents the pair $\{X,Y\}$.

  Finally, if $X$ is an extensional well-founded \apg, let $\Vert X
  \Vert$ denote the subset of those $x\in X$ such that $x\prec y\prec
  \star$ for some $y$, let $Y$ be the subset of $X$ consisting of
  those nodes admitting some path to a node in $\Vert X \Vert$, and
  define $Z = Y + 1$ with \prec\ inherited from $Y$ and with $y\prec
  \star$ for each $y\in \Vert X\Vert \subseteq Y$, where $\star$ is
  the new element added.  Then $Z$ satisfies the hypotheses of
  \autoref{thm:ext-quotient} with $n=1$, so it has an extensional
  quotient, which is the desired union $\bigcup X$.
\end{proof}

\begin{lem}\label{thm:cart-prod}
  Cartesian products (using Kuratowski ordered pairs) exist in
  $\bbV(\Set)$.
\end{lem}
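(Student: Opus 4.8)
The plan is to carry out, ``fiberwise'' over $|A|\times|B|$, the same construction used for pairing in \autoref{thm:struct-emppairun}, and then apply \autoref{thm:ext-quotient}. Write $|A|,|B|$ for the sets of members of $A,B$ and set $M=|A|\times|B|$. The assignments $a\mapsto A/a$ and $b\mapsto B/b$ are realized in \Set\ by bundles of graphs $\tilde A\to|A|$ and $\tilde B\to|B|$: e.g.\ $\tilde A$ is the subobject of $|A|\times A$ of pairs $(a,x)$ for which $x$ admits a path to $a$, with $(a,x)\prec(a,x')$ iff $x\prec x'$ in $A$, cut out using the extension of \ddo-separation to functions (\autoref{thm:higher-ddo-sep}), exactly as in the definition of $X/x$. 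Pulling back along the two projections $M\to|A|$, $M\to|B|$ gives bundles $\tilde A_M,\tilde B_M\to M$ whose fiber over $(a,b)$ is $A/a$, resp.\ $B/b$; write $\al_{a,b}\in\tilde A_M$ and $\be_{a,b}\in\tilde B_M$ for the roots of those fibers. All of this is definable in a \Pi-pretopos with an \nno.

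Next I would build the graph $G$. As a set, $G=1+M+M+M+\tilde A_M+\tilde B_M$; call the point of the first summand $\star$ and write $p_{a,b},s_{a,b},d_{a,b}$ for the points of the three copies of $M$. Put on $G$ the relation consisting of: $p_{a,b}\prec\star$; $s_{a,b}\prec p_{a,b}$ and $d_{a,b}\prec p_{a,b}$; $\al_{a,b}\prec s_{a,b}$, $\al_{a,b}\prec d_{a,b}$, $\be_{a,b}\prec d_{a,b}$; together with the internal relations of $\tilde A_M$ and $\tilde B_M$. Then $G/p_{a,b}$ is precisely the (not necessarily extensional) \apg\ picturing the Kuratowski pair $\{\{A/a\},\{A/a,B/b\}\}$, constructed just as in the proof of pairing, only uniformly over $M$.

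Then I would verify that $G$ is a well-founded \apg. Accessibility is immediate from the listed edges and the accessibility of $A$ and $B$. Well-foundedness is a routine layered induction: given an inductive $T\mono G$, one first obtains $\tilde A_M,\tilde B_M\subseteq T$ (edges never cross fibers, each fiber $A/a$ is well-founded by \autoref{thm:wf-hered}, and this fibrewise fact upgrades to \Set\ since $1$ is a strong generator) and then climbs the finitely many remaining layers $s_{a,b},d_{a,b},p_{a,b},\star$. Moreover the only nodes of $G$ admitting a path of length $3$ to $\star$ are the $\al_{a,b}$ and $\be_{a,b}$, and $G/\al_{a,b}\cong A/a$ and $G/\be_{a,b}\cong B/b$ are extensional; in fact every node at distance $\ge 3$ from $\star$ has sub-\apg\ isomorphic to some $A/x'$ or $B/y'$, hence extensional by \autoref{thm:ext-hered}. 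So \autoref{thm:ext-quotient} applies with $n=3$, yielding an extensional well-founded \apg\ $C$ and a surjective simulation $q\colon G\epi C$.

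Finally I would identify $C$ with $A\times B$. Since $q$ is a surjective simulation, the members of any $C/q(z)$ are exactly the $q$-images of the children of $z$; tracking this through the layers of $G$ — and using that $q$ restricts to a surjective simulation $G/\al_{a,b}\to C/q(\al_{a,b})$ out of the already-extensional $A/a$, hence an isomorphism by \autoref{thm:sim-inj}, and similarly for $\be_{a,b}$ — shows that $C/q(p_{a,b})$ represents the Kuratowski pair of $A/a$ and $B/b$ (the degenerate case $A/a\cong B/b$ is harmless, since then $q(\al_{a,b})=q(\be_{a,b})$ by \autoref{thm:wf-rigid}). Because $\bbV(\Set)$ already satisfies extensionality and pairing, the usual Kuratowski pairing property holds in it, so $C/q(p_{a,b})\cong C/q(p_{a',b'})$ forces $A/a\cong A/a'$ and $B/b\cong B/b'$, hence $a=a'$ and $b=b'$ by \autoref{thm:wf-rigid}; thus $q$ is injective on the $p_{a,b}$, and $|C|$ is in bijection with $M$, the member $q(p_{a,b})$ representing $(A/a,B/b)$. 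Consequently $Z\iin C$ holds for $Z\in\bbV(\Set)$ iff $Z$ is isomorphic to the Kuratowski pair of some $P\iin A$ and $Q\iin B$, which is exactly the defining property of $A\times B$. I expect the main obstacle to be this last bookkeeping step — pinning the members of $C$ down exactly, degenerate pairs included; the graph construction and the well-foundedness check are routine adaptations of the pairing argument.
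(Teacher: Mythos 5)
Your proof is correct and follows essentially the same route as the paper's: build a well-founded \apg\ with layers of Kuratowski-pair, doubleton, and singleton nodes sitting over copies of the constituent graphs, check the hypothesis of \autoref{thm:ext-quotient} with $n=3$, and take the extensional quotient. The only differences are cosmetic --- you carry $|B|$-many redundant copies of each $A/a$ (and index the singleton nodes by $|A|\times|B|$ rather than $|A|$), all of which the extensional quotient collapses, and you spell out the final identification of the quotient with the cartesian product in more detail than the paper does.
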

\begin{proof}
  Let $X$ and $Y$ be well-founded \apgs, and consider the set
  \[ Z =
  (X\csl\star) + (Y\csl\star) +
  |X| + (|X|\times |Y|) +
  (|X|\times |Y|)
  + 1.
  \]
  For $x\in |X|$ we write $x$ for its image in $(X\csl\star)$ and $x'$
  for its image in the first copy of $|X|$.  Similarly, we write $y$ for
  images in $Y\csl\star$, $(x,y)$ for images in the first copy of
  $|X|\times |Y|$, $(x,y)'$ for images in the second copy, and $\star$
  for the final point.  We define $\prec$ on $Z$ as follows:
  \begin{blist}
  \item $\prec$ on $X\csl\star$ and $Y\csl\star$ is induced from
    $X$ and $Y$.
  \item $x\prec x'$ for all $x\in |X|$.
  \item $x\prec (x,y)$ and $y\prec (x,y)$ for all $y\in |Y|$.
  \item $x' \prec (x,y)'$ and $(x,y)\prec (x,y)'$ for all $x\in |X|$
    and $y\in |Y|$.
  \item $(x,y)'\prec \star$ for all $x\in |X|$ and $y\in |Y|$.
  \end{blist}
  It is straightforward to verify that $Z$ is then a well-founded
  \apg.  Since $X$ and $Y$ are extensional, $Z$ satisfies the
  hypothesis of \autoref{thm:ext-quotient} with $n=3$.  Its
  extensional quotient then represents the cartesian product of $X$
  and $Y$.
\end{proof}

\begin{lem}\label{thm:struc-exp}
  $\bbV(\Set)$ satisfies the exponentiation axiom.
\end{lem}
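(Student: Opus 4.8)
The plan is to follow the pattern of \autoref{thm:cart-prod}: build a single well-founded \apg\ $Z$ whose root has exactly one child for each function $a\to b$, arrange that $Z$ meets the hypotheses of \autoref{thm:ext-quotient}, and let the resulting extensional quotient represent $b^a$. So fix well-founded extensional \apgs\ $A$ and $B$ presenting $a$ and $b$, and let $C = A\times B$ be the cartesian product supplied by \autoref{thm:cart-prod}. Its members are the Kuratowski pairs of members of $A$ with members of $B$, so there is a morphism $\iota\colon |A|\times|B|\toiso|C|$ in \Set\ under which $C/\iota(x,y)$ presents the pair $(x,y)$; moreover distinct pairs give non-isomorphic subobjects by \autoref{thm:wf-rigid}, so the members of $a$ (resp.\ $b$) are presented without repetition by $|A|$ (resp.\ $|B|$), and a material function $a\to b$ is precisely the same datum as a morphism $|A|\to|B|$ in \Set. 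Since \Set\ is a \Pi-pretopos, hence cartesian closed, the exponential $E = |B|^{|A|}$ exists; write $\mathrm{ev}\colon E\times|A|\to|B|$ for evaluation and $\mathrm{val}\colon E\times|A|\to|C|$ for the map carrying $(\ph,x)$ to $\iota(x,\mathrm{ev}(\ph,x))$, the member of $C$ presenting $(x,\ph(x))$.

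Next I would let $P\mono C\times C$ be the relation ``$z$ admits a path to $y$'', with first projection $(y,z)\mapsto y$ — this is definable from $\prec_C$, finite cardinals and the \nno\ just as $C/y$ is, using \autoref{thm:higher-ddo-sep} — and form the pullback $Q = (E\times|A|)\times_C P$ of $\mathrm{val}$ against that projection, whose global elements I write $(\ph,x,z)$ with $z$ admitting a path to $\mathrm{val}(\ph,x)$ in $C$. Then $Z$ is the graph with node object $1 + E + Q$, writing $\star$ for the point of the summand $1$ and $f_\ph$ for the points of $E$, and with $\prec$ the union of three subobjects of $Z\times Z$, each readily built from $\prec_C$, $\mathrm{val}$ and finite limits: $(\ph,x,z)\prec(\ph,x,z')$ whenever $z\prec_C z'$; $(\ph,x,z)\prec f_\ph$ whenever $z = \mathrm{val}(\ph,x)$; and $f_\ph\prec\star$ for all $\ph\in E$. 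One checks readily that $Z$ is an \apg: any $(\ph,x,z)$ lifts a path $z\prec_C\cdots\prec_C\mathrm{val}(\ph,x)$ in $C$ to a path up to $(\ph,x,\mathrm{val}(\ph,x))$, which lies below $f_\ph$, which lies below $\star$ — with no case split according to whether $E$ or $|A|$ is empty, since then the summands $Q$ and $E$ are simply empty and $Z = 1$ presents $\emptyset$. And $Z$ is well-founded: $Q$ is well-founded fibrewise over $E\times|A|$, since each fibre $\iso C/\mathrm{val}(\ph,x)$ is a subobject of the well-founded $C$ (\autoref{thm:wf-hered}) and $\prec$ relates nodes only within a fibre, while every node of $Q$ has all its $Z$-children inside $Q$; hence an inductive subobject of $Z$ contains $Q$, then every $f_\ph$, then $\star$.

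Finally I would verify that $Z$ satisfies the hypothesis of \autoref{thm:ext-quotient} with $n = 2$: a node admitting a path of length $2$ to $\star$ must be one of the $(\ph,x,\mathrm{val}(\ph,x))$, because $\star$ and the $f_\ph$ admit only paths of lengths $0$ and $1$ (\autoref{thm:wf-no-cycles}) while every other node of $Q$ admits only paths of length $\ge 3$; and for such a node $Z/(\ph,x,\mathrm{val}(\ph,x))\iso C/\mathrm{val}(\ph,x)$ is an initial segment of the extensional graph $C$, hence extensional by \autoref{thm:ext-hered}. Thus \autoref{thm:ext-quotient} yields an extensional well-founded \apg\ $\overline{Z}$ with a surjective simulation $Z\to\overline{Z}$, and one checks as in \autoref{thm:cart-prod} that $\overline{Z}$ represents $b^a$: its members are the images of the $f_\ph$, pairwise non-isomorphic because distinct $\ph$ name distinct functions, and the member coming from $f_\ph$ presents the function $\setof{(x,\ph(x)) | x\in |A|}$ from $a$ to $b$. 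The only step needing real care — the main obstacle — is arranging $Q$ so that every node is accessible; this is exactly why $Q$ is the pullback of the ancestor relation $P$ along $\mathrm{val}$ rather than, say, $E$ copies of all of $C$, which would leave the parts of $C$ lying over members of $b$ outside the image of $\ph$ stranded with no path to the root.
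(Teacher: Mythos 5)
Your proof is correct, and it rests on the same core idea as the paper's: the exponential $b^a$ is represented by an \apg\ with one member for each element of the \Set-exponential $|B|^{|A|}$, whose members in turn are the Kuratowski pairs $(x,\varphi(x))$ drawn from the material cartesian product of \autoref{thm:cart-prod}. The difference is in the assembly. The paper's $W = (Z\csl\star) + |Y|^{|X|} + 1$ keeps a \emph{single shared copy} of the pair-nodes $(x,y)'$ inside $Z\csl\star$ and simply declares $(x,y)'\prec f$ whenever $f(x)=y$; since distinct functions then have distinct sets of children drawn from an already-extensional graph, $W$ is (essentially) already extensional and no quotient is invoked. You instead manufacture a disjoint copy of $C/\mathrm{val}(\varphi,x)$ for each pair $(\varphi,x)$ via the pullback $Q$, and must then re-identify all these copies by \autoref{thm:ext-quotient} with $n=2$. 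This costs you the extra verifications (the depth-$2$ analysis via \autoref{thm:root-decid} and \autoref{thm:wf-no-cycles}, and the fibrewise well-foundedness of $Q$ --- where, note, you need well-foundedness of the relation ``equal in $E\times|A|$ and $\prec_C$ in $C$'' rather than the product relation, an easy dual-image argument but not literally \qq{$X\times Y$ is well-founded}), and it buys you a construction that is arguably more transparently accessible and extensional-after-quotienting, since you never have to worry about interference between the new nodes and the old ones. Both routes are sound; the paper's is shorter because sharing the pair-nodes makes the extensional quotient unnecessary.
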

\begin{proof}
  If $X$ and $Y$ are extensional well-founded \apgs, let $Z$ be their
  material cartesian product as above, and define
  \[W =
  (Z\csl \star) +
  |Y|^{|X|} +
  1
  \]
  with $\prec$ induced from $Z$ along with (using the notation of
  \autoref{thm:cart-prod}) $(x,y)' \prec f$ whenever $f(x)=y$, and
  $f\prec \star$ for any $f\in {|Y|}^{|X|}$.  Then $W$ is a
  well-founded \apg, and in fact is already extensional; we claim it
  represents the material function-set.

  It is clear that if $F\iin W$, then $F\in \bbV(\Set)$ is a function
  from $X$ to $Y$ in the sense of material set theory.  Conversely,
  from any $F\in \bbV(\Set)$ which is a function from $X$ to $Y$,
  consider the subset of $|X|\times |Y|$ determined by those $(x,y)$
  such that the Kuratowski ordered pair $\{ \{ X/x\}, \{X/x,Y/y\}\}$
  is $\iin F$.  This defines a function $|X|\to |Y|$ in \Set, which
  therefore induces an $f\in |W|$ such that $F\cong W/f$.
\end{proof}

Observe that in particular, we have shown that for $X,Y\in\bbV(\Set)$,
there is a 1-1 correspondence between functions $|X|\to |Y|$ in \Set\
and isomorphism classes of \apgs\ $F\in\bbV(\Set)$ which represent
functions from $X$ to $Y$ in the sense of material set theory.  In
fact, although $\bbV(\Set)$ need not satisfy limited \ddo-replacement,
so that \autoref{thm:iz-topos} does not apply to it directly, we still
have:

\begin{lem}\label{thm:struc-mat-set}
  The sets and functions in $\bbV(\Set)$ form a category
  $\bbSet(\bbV(\Set))$, which can naturally be identified with a full
  subcategory of $\Set$ closed under finite limits, subsets,
  quotients, and local exponentials.  In particular,
  $\bbSet(\bbV(\Set))$ is a \Pi-pretopos.
\end{lem}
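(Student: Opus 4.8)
The plan is to identify $\bbSet(\bbV(\Set))$ with a full subcategory of $\Set$ by means of the functor that sends a well-founded extensional \apg\ $X$ to its set of members $|X|\in\Set$, and a material function from $X$ to $Y$ (that is, an isomorphism class of \apgs\ $F\in\bbV(\Set)$ representing a function from $X$ to $Y$) to the underlying function $|X|\to|Y|$ extracted from $F$ in the proof of \autoref{thm:struc-exp}. Because parallel isomorphisms of well-founded extensional graphs coincide (\autoref{thm:wf-rigid-global}), this assignment is well defined on the objects and morphisms of $\bbSet(\bbV(\Set))$ (taken, as usual, up to isomorphism), and is plainly a functor. The 1-1 correspondence recorded immediately after \autoref{thm:struc-exp} says exactly that it is fully faithful, and it carries the material singleton $1=\{\emptyset\}$ to a terminal object of $\Set$. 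Hence $\bbSet(\bbV(\Set))$ is equivalent to a full subcategory $\cT\subseteq\Set$ containing $1$, and the statement reduces to checking that $\cT$ is closed in $\Set$ under finite limits, subobjects, quotients of equivalence relations, binary coproducts, and local exponentials.

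The first four closures I would handle as follows. \emph{Finite products:} \autoref{thm:cart-prod} puts the material product $X\times Y$ in $\bbV(\Set)$, and its construction identifies $|X\times Y|$ with $|X|\times|Y|$ --- distinct Kuratowski-pair nodes survive the extensional quotient because $X/x$ and $Y/y$ determine $x$ and $y$ (\autoref{thm:wf-rigid}) --- so, with the terminal object, $\cT$ has all finite limits once it has equalizers. \emph{Subobjects:} given a mono $S\mono|X|$ in $\Set$, form the well-founded \apg\ whose nodes are a new root $\star$ together with every node of $X$ admitting a path to some node in $S$, with $\prec$ inherited from $X$ and $s\prec\star$ for each $s\in S$; each member-subgraph $X/s$ is extensional (an initial segment of the extensional graph $X$, cf.\ \autoref{thm:ext-hered}), so \autoref{thm:ext-quotient} with $n=1$ produces $X_S\in\bbV(\Set)$ with $|X_S|\cong S$ (as a subobject of $|X|$); equalizers are subobjects of this kind. \emph{Quotients:} given a surjection $q\colon|X|\epi Q$ in $\Set$, form the well-founded \apg\ on the node set $(X\csl\star)+Q+1$ in which $\prec$ is inherited on $X\csl\star$, the children of $\bar q\in Q$ are the $x\in|X|$ with $q(x)=\bar q$, and $\bar q\prec\star$ for all $\bar q$; the nodes admitting a path of length $2$ to the root are members of $X$, so \autoref{thm:ext-quotient} with $n=2$ yields $X_Q\in\bbV(\Set)$ with $|X_Q|\cong Q$ (no distinct elements of $Q$ are collapsed, since they have distinct fibres, equivalence relations being effective in the pretopos $\Set$). \emph{Coproducts:} the material disjoint union $X+Y$, built as usual from the product (\autoref{thm:cart-prod}) and the empty set, pairing, and union of \autoref{thm:struct-emppairun}, lies in $\bbV(\Set)$ and has member-set $|X|+|Y|$, while the material $\emptyset$ gives $0$.

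For local exponentials, given $B\to A$ and $C\to A$ in $\bbSet(\bbV(\Set))$, the exponential $(|C|\to|A|)^{(|B|\to|A|)}$ exists in $\Set/|A|$ because $\Set$ is a \Pi-pretopos, and one must realize its total space as the member-set of an explicitly described well-founded extensional \apg\ over $A$ --- an elaboration of the exponential construction in \autoref{thm:struc-exp}, carried out fibrewise and reassembled into one graph, with the extensional quotient of \autoref{thm:ext-quotient} taken at a bounded depth. Granting all five closures, the proof concludes by a routine general argument: a full subcategory $\cT$ of the \Pi-pretopos $\Set$ closed under finite limits, subobjects, quotients of equivalence relations, binary coproducts and local exponentials is itself a \Pi-pretopos, with all of its structure computed as in $\Set$. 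Indeed, images and finite unions of subobjects are subobjects, so $\cT$ is regular and coherent (the factorization and stability arguments being those of the proof of \autoref{thm:ddo-sep}); equivalence relations are effective in $\Set$ and their quotients lie in $\cT$, so $\cT$ is exact; binary coproducts are disjoint and pullback-stable because they are so in $\Set$; and $\cT/A$ is cartesian closed for each $A\in\cT$ by closure under finite limits and local exponentials, so $\cT$ is locally cartesian closed and in particular has dual images. Therefore $\bbSet(\bbV(\Set))$ is a \Pi-pretopos.

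The step I expect to be the real obstacle is the construction of local exponentials: gluing the fibrewise exponentials over $\Set$ into a single well-founded extensional \apg\ is a more delicate version of the already nontrivial construction behind \autoref{thm:struc-exp}. A secondary point requiring care is the bookkeeping in the subobject and quotient constructions --- checking that the extensional quotients produced really do have member-sets canonically isomorphic to $S$ and $Q$, compatibly with the inclusion $S\mono|X|$ and the surjection $|X|\epi Q$ respectively.
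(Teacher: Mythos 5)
Your proposal follows the paper's own route at every step but one: the identification of $\bbSet(\bbV(\Set))$ with a full subcategory via $X\mapsto|X|$, the use of \autoref{thm:cart-prod} for products, the explicit graph constructions for subsets and quotients, and the concluding observation that a full subcategory of $\Set$ closed under these operations inherits all the \Pi-pretopos structure are essentially what the paper does. (The paper realizes the subset directly as a sub-\apg\ of $X$ retaining its original root, and observes that the quotient graph is already extensional, so it avoids invoking \autoref{thm:ext-quotient} at these points; your variants with a fresh root and an extensional quotient at depth $1$ or $2$ are harmless.)

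The genuine gap is the closure under local exponentials, which you explicitly defer, describing it as ``the real obstacle'' requiring fibrewise exponentials to be ``reassembled into one graph.'' No reassembly is needed, and the step is no harder than \autoref{thm:struc-exp}: since $\Set$ is a \Pi-pretopos, the local exponential $h\colon\Pi_f(g)\to|B|$ of $g\colon|X|\to|A|$ along $f\colon|A|\to|B|$ already exists as a single object of $\Set$, together with its counit $e\colon\Pi_f(g)\times_{|B|}|A|\to|X|$. Letting $Z$ be the material cartesian product of $A$ and $X$ from \autoref{thm:cart-prod}, one forms
\[ W \;=\; (Z\csl\star)+\Pi_f(g)+1, \]
with $\prec$ inherited from $Z$, with $(x,y)'\prec j$ whenever $f(x)=h(j)$ and $e(j,x)=y$, and with $j\prec\star$ for all $j\in\Pi_f(g)$; the children of a node $j$ thus encode exactly the graph of the corresponding section over the fiber $f^{-1}(h(j))$, and $W$ is a well-founded extensional \apg\ with $|W|\cong\Pi_f(g)$. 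Until some such explicit graph is exhibited, your claim that $\cT$ is closed under local exponentials --- and hence the conclusion that it is a \Pi-pretopos rather than merely a pretopos --- remains unproved.
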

\begin{proof}
  We have observed closure under products and non-local exponentials.
  Closure under subsets is easy: if $U\subseteq |X|$, then the
  sub-graph $Y$ of $X$ consisting of the root and all nodes admitting
  a path to $U$ is a well-founded extensional \apg\ with $|Y|\cong U$.
  This then implies closure under finite limits.

  For quotients, if $R$ is an equivalence relation on $|X|$, let
  $|X|\xepi{q}Y$ be the quotient of $|X|$ by $R$ in \Set; then the
  \apg\
  \[ Z = (X\csl\star) + Y + 1,\]
  with $\prec$ inherited from $X$ along with $x\prec q(x)$ and $y\prec
  \star$ for all $x\in |X|$ and $y\in Y$, is well-founded and
  extensional and has $|Z|\cong Y$.

  Finally, given $f\colon |A|\to |B|$ and $g\colon |X|\to |A|$, we
  have the local exponential $h\colon \Pi_f(g) \to |B|$ in \Set, with
  counit $e\colon \Pi_f(g)\times_{|B|} |A| \to |X|$.  Let $Z$
  represent the material cartesian product of $A$ and $X$ as in
  \autoref{thm:cart-prod}, and consider the \apg\
  \[ W= (Z\csl\star) + \Pi_f(g) + 1 \]
  with $\prec$ inherited from $Z$, along with $(x,y)' \prec j$
  whenever $f(x) = h(j)$ and $e(j,x)=y$, and $j\prec \star$ for all
  $j\in \Pi_f(g)$.  Then $W$ is well-founded and extensional and
  $|W|\cong \Pi_f(g)$; hence $\bbSet(\bbV(\Set))$ is closed under
  local exponentials.
\end{proof}

% Comparing this with \autoref{thm:iz-topos} we see a difference in
% approach between material and structural set theories.  Where material
% set theories generally use an axiom of replacement, structural ones
% generally assert directly the existence of structure ``locally.''  As
% we saw in \S\ref{sec:strong-ax}, in the presence of a structural axiom
% of collection/replacement, the local structure comes for free.

We return to verifying the axioms of material set theory.

\begin{lem}
  If \Set\ satisfies structural fullness, then $\bbV(\Set)$ satisfies
  material fullness.
\end{lem}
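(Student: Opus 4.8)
My plan is to follow the template set by \autoref{thm:cart-prod} and \autoref{thm:struc-exp}: apply the axiom of structural fullness to the sets $|X|$ and $|Y|$ underlying two given $X,Y\in\bbV(\Set)$, and then build a well-founded \apg\ whose members encode exactly the entire relations cut out by the resulting witness. So structural fullness first supplies an object $M$ and a subobject $R\mono M\times|X|\times|Y|$ with $R\epi M\times|X|$ regular epic and with the property that every $S\mono|X|\times|Y|$ for which $S\epi|X|$ is regular epic admits an $s\in M$ with the fibre $(s,1)^*R$ contained in $S$ as subobjects of $|X|\times|Y|$. Since regular epis are pullback-stable, each fibre $(s,1)^*R\epi|X|$ is regular epic as well.

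The key preliminary observation is the $|{-}|$-dictionary, in the spirit of the rest of this section: for $X,Y\in\bbV(\Set)$, the isomorphism classes of $\iin$-elements of $\bbV(\Set)$ that are \emph{entire relations from $X$ to $Y$} in the material sense are in bijection with those subobjects $S\mono|X|\times|Y|$ for which $S\epi|X|$ is regular epic. Indeed, a material relation from $X$ to $Y$ is a material set all of whose members are Kuratowski pairs $\{\{X/x\},\{X/x,Y/y\}\}$ with $x\in|X|$ and $y\in|Y|$; by \autoref{thm:wf-rigid} the pair $(x,y)$ is recovered from such a member, so (using $\ddo$-separation, and local exponentials to express the relation $\iin$, exactly as in the proof of \autoref{thm:ext-quotient}) such a material set is cut out of $|X|\times|Y|$ as a subobject $S$, and conversely each $S$ determines such a material set with $|S|$-many members. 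Under this correspondence, \qq{$f$ is entire} becomes \qq{$S\epi|X|$ is regular epic} — here one uses that $1$ is a strong generator and projective, via \autoref{thm:pin-props}\ref{item:pp1} — and \qq{$g\subseteq f$} becomes inclusion of the associated subobjects of $|X|\times|Y|$.

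For the construction, let $Z$ be the \apg\ built in the proof of \autoref{thm:cart-prod}, so that in the notation there it has, for each $x\in|X|$ and $y\in|Y|$, a node $(x,y)'$ with $Z/(x,y)'$ the Kuratowski pair $\{\{X/x\},\{X/x,Y/y\}\}$. I would form
\[ W \;=\; (Z\csl\star) + M + 1, \]
with $\prec$ inherited from $Z$ on $Z\csl\star$, together with $(x,y)'\prec s$ whenever $(s,x,y)\in R$ and $s\prec\star$ for each $s\in M$ (where $\star$ is the newly adjoined node), and then pass to the accessible full subgraph (on $\star$, the $s$, and the nodes admitting a path to some $s$), as in the union construction inside \autoref{thm:struct-emppairun}. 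Then $W$ is a well-founded \apg; its nodes at depth $\le 3$ from the root are $\star$, the $s$, the $(x,y)'$, and the $x'$ and $(x,y)$ of \autoref{thm:cart-prod}, while every node at depth $\ge 4$ is a node of $X$ or of $Y$, so that $W/w$ is then an initial segment of the extensional \apg\ $X$ or $Y$ and hence extensional by \autoref{thm:ext-hered}. Thus $W$ satisfies the hypothesis of \autoref{thm:ext-quotient} with $n=4$; let $\overline{W}$ be the resulting extensional well-founded \apg, with surjective simulation $q\colon W\to\overline{W}$.

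It then remains to check that $\overline{W}$ witnesses material fullness for $X$ and $Y$. The members of $\overline{W}$ are the $[s]$ for $s\in M$; restricting $q$, each $\overline{W}/[s]$ is the extensional quotient of $W/s$, and $W/s$ is the material set whose members are the Kuratowski pairs $\{\{X/x\},\{X/x,Y/y\}\}$ with $(s,x,y)\in R$. By \autoref{thm:wf-rigid} these members biject with the fibre $(s,1)^*R\subseteq|X|\times|Y|$, so $\overline{W}/[s]$ is exactly the material set corresponding under the dictionary above to the subobject $(s,1)^*R$; since $(s,1)^*R\epi|X|$ is regular epic, it is an entire relation from $X$ to $Y$. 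Conversely, given any $F\in\bbV(\Set)$ that is an entire relation from $X$ to $Y$, let $S_F\mono|X|\times|Y|$ be its associated subobject; then $S_F\epi|X|$ is regular epic, so structural fullness yields $s\in M$ with $(s,1)^*R\subseteq S_F$, and hence $\overline{W}/[s]$ — an $\iin$-element of $\overline{W}$ — is contained in $F$ in the sense of material set theory. So $\overline{W}$ is a set of entire relations from $X$ to $Y$ such that every entire relation from $X$ to $Y$ contains one of its elements, which is material fullness. The construction and the appeals to \autoref{thm:ext-quotient}, \autoref{thm:wf-rigid}, and \autoref{thm:struct-emppairun} are entirely parallel to the earlier lemmas; I expect the only points needing genuine care to be the dictionary of the second paragraph — matching ``entire relation'' with ``subobject of $|X|\times|Y|$ surjecting onto $|X|$'', and material inclusion with subobject inclusion — together with the minor nuisance that not every node $(x,y)'$ of $Z\csl\star$ need lie below some $s$, which is what forces the passage to the accessible subgraph.
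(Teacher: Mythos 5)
Your proof is correct and follows essentially the same route as the paper's: the paper likewise sets $W=(Z\csl\star)+M+1$ with $(x,y)'\prec m$ whenever $R(m,x,y)$ and $m\prec\star$, applies \autoref{thm:ext-quotient}, and verifies genericity ``by a similar argument as for exponentiation.'' The only differences are cosmetic --- you work with the pre-quotient $Z$ (hence invoke the lemma with $n=4$ rather than $n=1$) and you make explicit the entire-relation/subobject dictionary and the restriction to the accessible part, both of which the paper leaves implicit.
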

\begin{proof}
  Analogously to exponentiation, if $R\mono M\times X\times Y$ is a
  generic set of multi-valued functions from $|X|$ to $|Y|$, consider
  \[W = (Z\csl\star) + M + 1
  \]
  with $\prec$ induced from $Z$ along with $(x,y)'\prec m$ if
  $R(m,x,y)$ and $m\prec \star$ for all $m\in M$.  Then $W$ is a
  well-founded \apg\ and satisfies the hypothesis of
  \autoref{thm:ext-quotient} with $n=1$.  Its extensional quotient
  represents a generic set of multi-valued functions, by a similar
  argument as for exponentiation.
\end{proof}

\begin{lem}\label{thm:struc-pow}
  If \Set\ is a topos, then $\bbV(\Set)$ satisfies the power set
  axiom, and $\bbSet(\bbV(\Set)) \subseteq \Set$ is a logical
  subtopos.
\end{lem}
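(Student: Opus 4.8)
The plan is to adapt the exponentiation construction of \autoref{thm:struc-exp}, with the power object $P|X|$ (which exists since \Set\ is a topos) playing the role that $|Y|^{|X|}$ played there. Recall from \autoref{thm:struc-mat-set} and the ``set-like'' description of power objects in \S\ref{sec:more-sst} that, for $X\in\bbV(\Set)$, the material subsets of $X$ are classified by subobjects $U\mono|X|$ in \Set: such a $U$ determines the sub-\apg\ $Y_U$ of $X$ spanned by the root together with all nodes admitting a path into $U$, which is well-founded and extensional with $|Y_U|\cong U$ by \autoref{thm:struc-mat-set}; and conversely, by \autoref{thm:wf-rigid} together with the extensionality of $\bbV(\Set)$ already established, every material subset of $X$ is isomorphic to $Y_U$ for a unique such $U$. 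So I would set
\[ W = (X\csl\star) + P|X| + 1, \]
with $\prec$ inherited from $X$ on $X\csl\star$, together with $x\prec s$ whenever $x\in|X|$ lies in the subobject of $|X|$ classified by $s\in P|X|$, and $s\prec\star$ for every $s\in P|X|$. Then $W$ is a well-founded \apg: it is accessible since each node of $X\csl\star$ has a path up to some member of $X$, which in turn lies below the top element $|X|\in P|X|$; and an inductive subset of $W$ must successively contain all of $X\csl\star$ (well-founded by \autoref{thm:wf-hered}), then every $s\in P|X|$, then $\star$.

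Unlike in the exponentiation case, $W$ will not in general be extensional --- a member of $X$ which happens to coincide with some subset of $X$ must be identified with the corresponding node of $P|X|$ --- but this is remedied by \autoref{thm:ext-quotient}. For $s\in P|X|$ classifying $U\mono|X|$, the pointed sub-\apg\ $W/s$ is visibly isomorphic to $Y_U$ (matching the root $s$ with the root of $Y_U$ and using the identity elsewhere), hence well-founded and extensional; as these $W/s$ are precisely the sub-\apgs\ of $W$ at distance $1$ from the root, $W$ satisfies the hypothesis of \autoref{thm:ext-quotient} with $n=1$, and we may form its extensional quotient $q\colon W\epi\overline W$. By \autoref{thm:sim-inj} the simulation $q$ restricts to an isomorphism $W/s\cong\overline W/q(s)$ for each $s$, so the members of $\overline W$ are exactly the $\overline W/q(s)\cong Y_U$; hence (using \autoref{thm:wf-rigid} and the extensionality of $\bbV(\Set)$ for the correspondence) $Z\iin\overline W$ holds if and only if $Z$ is isomorphic to a material subset of $X$, which is precisely the power set axiom. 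Moreover $q(s_1)=q(s_2)$ forces $Y_{U_1}\cong Y_{U_2}$, hence $U_1=U_2$ and so $s_1=s_2$; thus the evident map $P|X|\to|\overline W|$, $s\mapsto q(s)$, is monic, and since its image is all of $|\overline W|$ it is also epic, hence an isomorphism in the balanced category \Set.

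For the second assertion, recall from \autoref{thm:struc-mat-set} that $\bbSet(\bbV(\Set))$ is a full subcategory of \Set, closed under finite limits, subsets, quotients, and local exponentials, identified with it via $X\mapsto|X|$; in particular it is a $\Pi$-pretopos. Writing $PX$ for the \apg\ $\overline W$ just built, the relation $R_X\mono|X|\times|PX|$ defined by $R_X(x,p)\iff X/x\iin PX/p$ corresponds, under the isomorphism $|PX|\cong P|X|$ of the previous paragraph, to the universal subobject of $|X|\times P|X|$; so every subobject of $|X|$ equals $(1,p)^*R_X$ for a unique $p\in|PX|$. Since \Set\ is a well-pointed topos, this ``set-like'' universal property (cf.\ \S\ref{sec:more-sst}) exhibits $(|PX|,R_X)$ as the power object of $|X|$ computed in \Set; and as $R_X$ is a subobject of $|X|\times|PX|$ it lies in $\bbSet(\bbV(\Set))$. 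Hence $\bbSet(\bbV(\Set))$ has power objects; being already a $\Pi$-pretopos it is then a topos, and the inclusion into \Set\ preserves power objects, i.e.\ is logical. The only fiddly point is the bookkeeping of the second paragraph --- verifying $W/s\cong Y_U$ and tracking members through the extensional quotient --- but this runs entirely parallel to the exponentiation argument, so no essentially new obstacle arises.
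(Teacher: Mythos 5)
Your proof is correct and follows essentially the same construction as the paper: the graph $(X\csl \star) + P|X| + 1$ with $x\prec A$ iff $x\in A$, representing the material power set, with the second statement unpacked in the natural way. In fact you are more careful than the paper on one point: the paper asserts this graph is \emph{already} extensional, which fails in general (for $X$ the von Neumann ordinal $3$, the node $2$ and the subset $\{0,1\}$ have the same children), so your passage to the extensional quotient via \autoref{thm:ext-quotient} with $n=1$ --- after checking that each $W/s$ is extensional --- is the right repair.
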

\begin{proof}
  For an extensional well-founded \apg\ $X$, define
  \[ Y = (X\csl \star) + P |X| + \star
  \]
  with $\prec$ induced from $X$ along with $x\prec A$ whenever $x\in
  |X|$, $A\in P |X|$, and $x\in A$; and also of course $A\prec
  \star$.  This is an extensional well-founded \apg\ that represents
  the material power set of $X$.  The second statement is immediate.
\end{proof}

\begin{lem}\label{thm:inf}
  $\bbV(\Set)$ satisfies the axiom of infinity.
\end{lem}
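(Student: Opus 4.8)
The plan is to build the \apg\ that represents $\omega$ directly out of the natural numbers object $N$ of \Set, and then to verify, clause by clause, that it satisfies the infinity axiom inside $\bbV(\Set)$.

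First I would set $Z = N+1$, take the adjoined point $\star$ as the root, and let $\prec$ be the union (in the coherent category \Set) of the two subobjects $N_<\mono N\times N\mono Z\times Z$ (the strict order of the \nno) and $N\times\{\star\}\mono Z\times Z$; thus $m\prec n\Leftrightarrow m<n$ for $m,n\in N$, and $n\prec\star$ for every $n\in N$. Since every node reaches $\star$ by a path of length at most one, $Z$ is an \apg. The preliminary task is to see that $Z$ is well-founded and extensional, hence lies in $\bbV(\Set)$. Well-foundedness of $(N,<)$ holds in any \Pi-pretopos with an \nno\ --- given an inductive $S\subseteq N$, ordinary induction applied to the subset of those $n$ with $m\in S$ for all $m\le n$ forces $S=N$ --- and since every child of the new top node $\star$ already lies in $N$, this propagates to $Z$. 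Extensionality is routine but genuinely intuitionistic: using that equality and the order on $N$ are decidable and that $<$ is irreflexive, one checks that distinct nodes of $Z$ have distinct sets of children (the root $\star$ is separated from any $n\in N$ because $n\prec\star$ but $n$ is not a child of itself, and two distinct $m,n\in N$ are separated since trichotomy gives $m<n$ or $n<m$).

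Next I would pin down the subgraphs $Z/n$ for $n\in N$: the nodes admitting a path to $n$ are exactly those $m$ with $m\le n$, so $Z/n$ is the initial segment of $Z$ spanned by those $m$, with root $n$; by \autoref{thm:wf-hered} and \autoref{thm:ext-hered} it lies in $\bbV(\Set)$, and $(Z/n)/k=Z/k$ for all $k\le n$. Consequently the members of $Z/n$ are precisely the \apgs\ $Z/k$ with $k<n$, pairwise non-isomorphic by \autoref{thm:wf-rigid}; that is, $Z/n$ is the von Neumann ordinal $\underline n$. Now for the three clauses of infinity, with $\omega$ represented by $Z$: (i) $\emptyset\iin Z$, since $Z/0$ has no members (hence represents $\emptyset$) and $0$ is a member of $Z$; (ii) closure under $x\mapsto x\cup\{x\}$: if $X\iin Z$ then $X\cong Z/n$ for some member $n\in N=|Z|$, and forming $X\cup\{X\}$ by pairing and union (valid in $\bbV(\Set)$ by \autoref{thm:struct-emppairun}), its members are exactly the \apgs\ isomorphic to some $Z/k$ with $k\le n$, which are precisely the members of $Z/s(n)$; so by the axiom of extensionality (verified above for $\bbV(\Set)$), $X\cup\{X\}\cong Z/s(n)\iin Z$.

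It remains to check clause (iii), minimality: given any $W\in\bbV(\Set)$ with $\emptyset\iin W$ and $X\iin W\imp X\cup\{X\}\iin W$ for every $X$, I must show that every member of $\omega$ is a member of $W$, i.e.\ $Z/n\iin W$ for all $n\in N$. I would prove this by induction on $n$: the subset $S=\{n\in N : Z/n\iin W\}\subseteq N$ exists by the higher-order extension of \ddo-separation recorded in \autoref{thm:higher-ddo-sep} --- the predicate $Z/n\iin W$ is assembled from finite cardinals, the subgraph operation $Z/n$, and the object of isomorphisms $Z/n\cong W/w$, all of which are available in the \Pi-pretopos \Set --- and one has $0\in S$ (since $Z/0\cong\emptyset\iin W$) and $n\in S\imp s(n)\in S$ (since $Z/n\iin W$ gives $Z/n\cup\{Z/n\}\cong Z/s(n)\iin W$ by the computation in (ii)), whence $S=N$. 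The step I expect to be the main obstacle is exactly this induction, which must be legitimate in a metatheory only assumed to be a \Pi-pretopos with an \nno, where full induction is unavailable; the resolution is that the predicate $Z/n\iin W$ still lies within the higher-order fragment covered by \autoref{thm:higher-ddo-sep}, so the relevant subobject of $N$ genuinely exists. A lesser care-point is the intuitionistic verification of extensionality of $Z$, where the usual classical case split on a pair of nodes must be routed through the decidability of equality and of the order relation on the \nno.
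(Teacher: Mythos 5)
Your proof is correct and takes essentially the same route as the paper: the same \apg\ $\omega = N+1$ with $\prec$ given by $<$ on $N$ together with $n\prec\star$, and the minimality clause reduced to Peano induction on a subobject of $N$ (which is what the paper's terse appeal to ``the universal property of the \nno'' amounts to). Your elaborations --- extensionality of $N+1$ via decidability of the order, identification of $Z/n$ with the von Neumann numeral, and forming $\setof{n\in N \mid Z/n\iin W}$ by the higher-order extension of \ddo-separation from \autoref{thm:higher-ddo-sep} --- are exactly the details the paper leaves implicit, and the last device is the same one the paper uses in \autoref{thm:ext-quotient}.
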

\begin{proof}
  Let $\omega = N + 1$, with $\prec$ defined to be $<$ on $N$ together
  with $n\prec \star$ for all $n\in N$.  This is an extensional
  well-founded \apg\ which represents the von Neumann ordinal \omega.
  The infinity axiom follows from the universal property of the \nno.
\end{proof}

\begin{lem}\label{thm:struc-ac}
  If \Set\ satisfies the axiom of choice, then so does $\bbV(\Set)$.
\end{lem}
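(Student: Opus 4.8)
The plan is to reduce the material axiom of choice for a fixed \apg\ to a single application of the structural axiom of choice in \Set.  First I would fix a well-founded extensional \apg\ $A$ and assume that $\bbV(\Set)$ satisfies \qq{for all $x\in A$ there exists $y\in x$}.  Unwinding this through the definition of $\iin$, it says exactly that every member $x\in|A|$ (i.e.\ every child of the root of $A$) has a child $z\prec x$; equivalently, each subgraph $A/x$ has a member.

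Next I would work inside \Set.  Using \ddo-separation together with the internalization of $\prec$ from \autoref{thm:ddo-corresp}, form the subobject $D\mono|A|\times A$ consisting of those pairs $(x,z)$ with $z\prec x$, and let $p\colon D\to|A|$ be the first projection.  The previous paragraph says precisely that every global element $1\to|A|$ lifts along $p$, so by \autoref{thm:pin-props}\ref{item:pp1} the map $p$ is a regular epimorphism.  Now invoke the structural axiom of choice to obtain a section of $p$, and let $d\colon|A|\to A$ be its second component, so that $d(x)\prec x$ for every $x\in|A|$.  Since moreover $x\prec\star$, we get $d(x)\prec x\prec\star$, so $d$ factors through the subset $\Vert A\Vert\subseteq A$ of nodes lying two steps below the root that appears in the union construction of \autoref{thm:struct-emppairun}.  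Composing $d$ with the canonical surjection $\Vert A\Vert\twoheadrightarrow|\bigcup A|$ arising from that construction (the extensional quotient map restricted to members, which identifies nodes with isomorphic subgraphs) yields a function $\ph\colon|A|\to|\bigcup A|$ in \Set\ with the property that the member of $\bigcup A$ named by $\ph(x)$ is (isomorphic to) the subgraph $A/d(x)$.

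Finally I would feed $\ph$ into the $1$-$1$ correspondence recorded just after \autoref{thm:struc-exp}, which converts it into an \apg\ $F\in\bbV(\Set)$ representing a material function from $A$ to $\bigcup A$ that sends the member $A/x$ of $A$ to the member $A/d(x)$ of $\bigcup A$.  It then remains only to check that $F(x)\iin x$ for each $x\in|A|$, i.e.\ that $A/d(x)\iin A/x$; but $d(x)\prec x$ exhibits $d(x)$ as a member of $A/x$, and the subgraph of $A/x$ rooted at $d(x)$ is by definition $A/d(x)$, so this is immediate.  I expect the only delicate part to be the bookkeeping in this last step---matching up material function application with the \Set-level map $\ph$, and confirming that $\ph(x)$ genuinely names $A/d(x)$ (which uses that $A/d(x)$ is already extensional, being an initial segment of $A$, and hence agrees with its own extensional quotient)---while the rest follows the pattern of the preceding lemmas.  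The conceptual content is simply that a single application of structural choice to the regular epimorphism $p$ suffices.
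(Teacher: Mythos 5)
Your proposal is correct and follows essentially the same route as the paper: form the subobject of pairs $(x,z)$ with $z\prec x$, observe that its projection to $|A|$ is a regular epimorphism, split it by one application of structural choice, and package the resulting section as a material function $A\to\bigcup A$. You simply spell out in detail the bookkeeping that the paper compresses into ``from $s$ we can easily construct a material choice function.''
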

\begin{proof}
  Let $X$ be a well-founded extensional \apg\ such that $X\csl x$ is
  inhabited for each $x\in |X|$.  If $Y\subseteq X\times |X|$ consists
  of those $(y,x)$ such that $y\in X\csl x$, then the projection $Y\to
  |X|$ is surjective, and hence has a section, say $s$.  From $s$ we
  can easily construct a material choice function for $X$.
\end{proof}

\begin{lem}\label{thm:struc-tc}
  \Set\ has transitive closures.
\end{lem}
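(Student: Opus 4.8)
The plan is to construct, for a given well-founded extensional \apg\ $X$ representing a material-set $a$, an \apg\ $T$ representing the transitive closure $\operatorname{tc}(a)=a\cup\bigcup a\cup\bigcup\bigcup a\cup\cdots$, and then to verify it is the \emph{smallest} transitive set containing $a$ as a subset. The members of $\operatorname{tc}(a)$ are exactly the ``hereditary members'' of $a$, which in the graph picture are the sub-\apgs\ $X/y$ for $y$ a non-root node of $X$; accordingly I take $T=(X\csl\star)+1$, with $\prec$ on the summand $X\csl\star$ inherited from $X$, and with $y\prec\star_T$ for every $y\in X\csl\star$, where $\star_T$ is the adjoined node, which serves as the root.

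First I would check $T\in\bbV(\Set)$. Accessibility is immediate since every $y\in X\csl\star$ has the length-$1$ path $y\prec\star_T$. For well-foundedness, an inductive $S\subseteq T$ restricts to an inductive subset of the initial segment $X\csl\star$ (a node of $X\csl\star$ has the same $\prec$-children in $T$ as in $X$), so by \autoref{thm:wf-hered} it contains all of $X\csl\star$, and then contains $\star_T$ as well. For extensionality, suppose $z\prec y_1\Iff z\prec y_2$ for all $z\in T$: if $y_1,y_2\in X\csl\star$ then, since by \autoref{thm:wf-no-cycles} no $\prec_X$-child of $y_i$ is the root of $X$, the hypothesis gives $z\prec_X y_1\Iff z\prec_X y_2$ for all $z\in X$, so $y_1=y_2$ by extensionality of $X$; if $y_1=\star_T$ and $y_2\in X\csl\star$ then the hypothesis forces $z\prec_X y_2$ for every $z\in X\csl\star$, in particular $y_2\prec_X y_2$, contradicting \autoref{thm:wf-no-cycles}, so $y_1=y_2$; the other cases are trivial or symmetric. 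A short check (a $\prec_T$-path into a node $y\in X\csl\star$ cannot meet $\star_T$, which has no $\prec_T$-children) shows $T/y\cong X/y$, so $|T|=X\csl\star\supseteq|X|$ gives $X\subseteq T$ materially, and $T$ is transitive since a member of $T/y=X/y$ is some $X/z$ with $z\prec_X y$, hence $z\in X\csl\star$, hence a member of $T$.

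For minimality, let $T'$ be any transitive \apg\ with $X\subseteq T'$; I must show $X/y\iin T'$ for every $y\in X\csl\star$. Exactly as in the proof of \autoref{thm:ext-quotient} --- constructing $X/y$ by \ddo-separation and local exponentials and the predicate ``$X/y\iin T'$'' by quantifying over $|T'|$ and over isomorphisms, then invoking the higher-order form of \ddo-separation (\autoref{thm:higher-ddo-sep}) --- I form the subobject $S=\setof{y\in X\csl\star | X/y\iin T'}$. Then $|X|\subseteq S$, since $y\prec\star$ gives $X/y\iin X\subseteq T'$; and $S$ is closed downwards under $\prec_X$, since if $z\prec_X y$ with $y\in S$ then $z\in X\csl\star$ (again by \autoref{thm:wf-no-cycles}) and $X/z$ is a member of $X/y\iin T'$, so $X/z\iin T'$ by transitivity of $T'$. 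Finally each $y_0\in X\csl\star$ admits a path $y_0=a_m\prec\cdots\prec a_1\prec a_0=\star$ with $m\ge 1$ (non-trivial as $y_0$ is not the root); since $a_1\in|X|\subseteq S$ and $S$ is downward closed, an internal induction along this path (using the \nno) yields $a_m=y_0\in S$. Hence $S=X\csl\star$, every member of $T$ lies in $T'$, and $T$ is the smallest transitive set containing $X$.

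The only non-routine point is this last minimality argument: one must recognize that the members of $T$ are precisely the hereditary members of $X$ and then verify that each lies in an arbitrary transitive over-set $T'$ by an internal induction along an access-path --- which is exactly why one needs \Set\ to have an \nno\ and to be a \Pi-pretopos, so that $X/y$ and the predicate ``$X/y\iin T'$'' can be formed uniformly in $y$ via \autoref{thm:higher-ddo-sep}.
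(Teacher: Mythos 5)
Your construction $T=(X\csl\star)+1$ with every non-root node of $X$ made a child of the new root is exactly the one the paper uses, and your verifications (well-foundedness, extensionality via the no-cycles lemma, transitivity, and minimality by an internal induction along access paths) correctly fill in the details the paper leaves implicit. The proposal is correct and takes essentially the same approach as the paper.
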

\begin{proof}
  If $X$ is a well-founded extensional \apg, let $T = (X\csl \star) +
  1$ with $\prec$ inherited from $X$ along with $x\prec \star$ for all
  nodes $x\in X$ (not just all members).  Then $T$ is a well-founded
  extensional \apg\ which represents the transitive closure of $X$.
\end{proof}

\begin{lem}\label{thm:struc-most}
  \Set\ satisfies Mostowski's principle.
\end{lem}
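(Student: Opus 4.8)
The plan is to run the usual Mostowski collapse, exploiting the fact that a relation which is \emph{already} extensional requires no extensional quotient: the collapse is realized by simply adjoining a new root.

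First I would unwind the data. A well-founded extensional relation in $\bbV(\Set)$ consists of a set $A\in\bbV(\Set)$ --- represented by a well-founded extensional \apg\ which I will also denote $A$ --- together with a relation on it, which by \autoref{thm:cart-prod} and \autoref{thm:struc-mat-set} is carried by a subobject $R\mono |A|\times|A|$ in \Set. The hypotheses that $R$ is well-founded and extensional \emph{in $\bbV(\Set)$} translate, using \autoref{thm:struc-mat-set} (closure of $\bbSet(\bbV(\Set))$ under subsets) together with the identification of quantifiers over $A$ in $\bbV(\Set)$ with quantifiers over $|A|$ in \Set, into the assertion that the graph $(|A|,R)$ is well-founded and extensional in \Set\ in the sense already defined.

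Next I would construct the candidate transitive set. Let $M = |A|+1$, with $\prec$ given by $R$ on $|A|$ together with $x\prec\star$ for every $x\in|A|$, where $\star$ is the new node. Adjoining a top node to a well-founded graph keeps it well-founded, so $M$ is a well-founded \apg. For extensionality, suppose $a,b$ are nodes of $M$ with $z\prec a\Iff z\prec b$ for all $z$; since $\star$ is a complemented node we may split into cases. If both lie in $|A|$, then --- as the only $\prec$-edges into a node of $|A|$ come from $R$ --- they have the same $R$-predecessors and so are equal by extensionality of $(|A|,R)$. If one were $\star$ and the other some $x\in|A|$, then the predecessors of $\star$ being all of $|A|$ would force $x\mathrel R x$, contradicting \autoref{thm:wf-no-cycles}; so that case does not arise. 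Hence $M$ is a well-founded extensional \apg, i.e.\ $M\in\bbV(\Set)$.

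Finally I would check transitivity and exhibit the isomorphism. The members of $M$ are exactly the $x\in|A|$, and $M/x$ is an initial segment of $M$ whose members are the $R$-predecessors of $x$; consequently $(M/x)/w = M/w$ whenever $w\prec x$, and it follows that $Z\iin W\iin M$ implies $Z\iin M$, so $M$ is transitive. For the isomorphism, let $\phi\colon A\to M$ be the function of $\bbV(\Set)$ corresponding, via \autoref{thm:struc-mat-set}, to the identity map $|A|\to|M| = |A|$ in \Set; it is a bijection because its underlying \Set-map is. For members $a,b$ of $A$ we then have $\phi(a)\iin\phi(b)$ iff $M/a\cong M/c$ for some $R$-predecessor $c$ of $b$, iff $a = c$ for some such $c$ by \autoref{thm:wf-rigid}, iff $a\mathrel R b$; so $\phi$ is an isomorphism $(A,R)\cong(M,\in)$, as Mostowski's principle demands. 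The only step that is not routine bookkeeping --- and hence the main obstacle --- is the direct verification of extensionality of $M$, since that is precisely where extensionality of $R$ (together with the no-cycles \autoref{thm:wf-no-cycles}) is used, allowing us to avoid invoking \autoref{thm:ext-quotient} at all.
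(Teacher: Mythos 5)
Your proof is correct and follows essentially the same route as the paper's: translate the material well-founded extensional relation into a well-founded extensional graph $(|A|,R)$ in \Set\ (via closure of $\bbSet(\bbV(\Set))$ under subsets and finite limits), adjoin a new root above all nodes to obtain a well-founded extensional \apg, and then check transitivity and the isomorphism. The paper states the construction and leaves the verifications to the reader; you supply them, and they check out.
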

\begin{proof}
  Since $\bbSet(\bbV(\Set))$ is closed in \Set\ under finite limits
  and subsets, any well-founded extensional graph $X$ constructed in
  the material set theory $\bbV(\Set)$ will induce such a graph in
  $\Set$.  Therefore, $X+1$, with $\prec$ induced from $X$ along with
  $x\prec \star$ for all nodes $x\in X$, is a well-founded extensional
  \apg, i.e.\ an object of $\bbV(\Set)$.  We then verify that it is
  transitive and isomorphic to $X$ in $\bbV(\Set)$.
\end{proof}

We now turn to the axiom schemata.  For these, we need to be able to
translate material formulas into structural ones.  There is an obvious
way to do this: if \ph\ is a formula in $\bbV(\Set)$, we define
$\ph_{\ordiin}$ in \Set\ as follows:
\begin{blist}
\item We replace the material ``equality'' symbol $=$ by isomorphism
  $\cong$ of \apgs.
\item We replace the material ``membership'' symbol $\in$ by the
  relation $\iin$.
\item The connectives are unchanged.
\item We replace quantifiers over material-sets by quantifiers over
  well-founded extensional \apgs.  For example, $\exists x. \ph(x)$
  becomes \qq{there exists a well-founded extensional \apg\ $X$ such
    that $\ph_{\ordiin}(X)$}.
\end{blist}
It is easy to see that $\bbV(\Set) \ss \ph$ if and only if $\Set \ss
\ph_{\ordiin}$.  This translation works quite well for the schemata
involving arbitrary formulas.

\begin{lem}\label{thm:struc-sep}
  If \Set\ satisfies separation, then so does $\bbV(\Set)$.
\end{lem}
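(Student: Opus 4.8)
The plan is to use \emph{structural} separation to cut out those members of a given apg that satisfy $\ph$, and then reassemble them into an apg via the closure-under-subsets construction established in \autoref{thm:struc-mat-set}. So let $A$ be a well-founded extensional apg (the material ``set'' $a$) and let $\ph(x)$ be a formula of material set theory, possibly with further apg parameters, which under the translation $\ph\mapsto\ph_\ordiin$ become parameters of $\Set$.

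First I would note that $x\mapsto A/x$ is internal: given a \ddo-variable $x\colon 1\to|A|$, the subgraph $A/x$ --- whose node object is the subobject of that of $A$ consisting of the nodes admitting a path to $x$, with the inherited relation and with root $x$ --- is definable from the parameter $A$, using the \nno\ to quantify over path-lengths and the extension of \ddo-separation to functions from \autoref{thm:higher-ddo-sep}. Consequently $\psi(x):=\ph_\ordiin(A/x)$ is a legitimate formula in the language of $\Set$ with a single free \ddo-variable $x\colon1\to|A|$ (and parameters $A$ together with the translated parameters of $\ph$). Applying structural separation to $\psi$ and the object $|A|$ yields a subobject $S\mono|A|$ such that, for every $x\colon1\to|A|$, we have $x\in S$ if and only if $\ph_\ordiin(A/x)$ holds.

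Now apply \autoref{thm:struc-mat-set} (closure of $\bbSet(\bbV(\Set))$ under subsets) to $S\subseteq|A|$ to obtain a well-founded extensional apg $A'$ with $|A'|\cong S$ and such that $A'/x\cong A/x$ for each $x\in S$. (In the construction $A'$ keeps the root $\star$ of $A$, all nodes lying below some element of $S$, and only those edges $u\prec\star$ with $u\in S$; since the root of a well-founded apg is a child of no node --- there are no cyclic paths, by \autoref{thm:wf-no-cycles} --- discarding the other edges into $\star$ does not affect which nodes lie below a given $x\in S$.) Hence, for an arbitrary apg $B$, we have $B\iin A'$ iff $B\cong A'/x\cong A/x$ for some $x\in S$; and since $\ph_\ordiin$ depends only on the isomorphism type of its apg argument (it interprets $=$ as $\cong$) and $x\in S\iff\ph_\ordiin(A/x)$, this is equivalent to asserting that $B\iin A$ and $\ph_\ordiin(B)$. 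Unwinding the translation $\ph\mapsto\ph_\ordiin$, this says exactly that $\bbV(\Set)$ satisfies ``$B\iin A'$ iff $B\iin A$ and $\ph(B)$'' for every $B$ --- that is, $A'$ represents $\setof{x\in a\mid\ph(x)}$ --- so $\bbV(\Set)$ satisfies material separation.

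The only step that requires genuine care is the first: one must be sure that the passage $x\mapsto A/x$ is really internal, so that $\psi$ falls within the scope of the (unbounded) structural separation schema, and that the additional parameters of $\ph$ are faithfully carried along as parameters of $\psi$. Everything after that is routine bookkeeping about $\iin$ and isomorphism-invariance.
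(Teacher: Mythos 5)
Your proof is correct and follows essentially the same route as the paper's: apply structural separation to the formula $\ph_\ordiin(A/x)$ over $|A|$ to obtain $S\mono|A|$, then form the sub-\apg{} consisting of the root together with everything admitting a path to $S$ (the closure-under-subsets construction of \autoref{thm:struc-mat-set}). The extra care you take about the internality of $x\mapsto A/x$ and about which edges into the root to retain is welcome but only makes explicit what the paper's one-line argument leaves implicit.
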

\begin{proof}
  Let $\ph(x)$ be a formula and $A\in\bbV(\Set)$.  Using separation,
  let $U \subseteq |A|$ consist of precisely those $a\in A$ such that
  $\ph_{\ordiin}(A/a)$.  Define $B$ to consist of the root of $A$
  together with all nodes admitting a path to some node in $U$.  Then
  $B$ is a well-founded extensional \apg, and for any $C\iin A$ we
  have $C\iin B$ iff $\ph_{\ordiin}(C)$.
\end{proof}

\begin{lem}\label{thm:struc-fullbool}
  If \Set\ satisfies full classical logic, then so does $\bbV(\Set)$.
\end{lem}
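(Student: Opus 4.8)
The plan is to reduce the statement, via the translation $\ph\mapsto\ph_{\ordiin}$ set up just above, to the hypothesis that \Set\ itself satisfies the full-classical-logic schema. The only extra ingredient needed is the entirely routine observation that, by construction, the translation leaves the propositional connectives (and negation) untouched: for any material formulas $\ph,\psi$ with parameters in $\bbV(\Set)$ we have $(\ph\join\psi)_{\ordiin}$ equal to $\ph_{\ordiin}\join\psi_{\ordiin}$ and $(\neg\ph)_{\ordiin}$ equal to $\neg(\ph_{\ordiin})$.

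So, given a formula $\ph(x)$ of material set theory with all its free variables instantiated by parameters from $\bbV(\Set)$, consider the sentence $\ph\join\neg\ph$ in $\bbV(\Set)$. Its translation $(\ph\join\neg\ph)_{\ordiin}$ is the sentence $\ph_{\ordiin}\join\neg\ph_{\ordiin}$ in \Set. Now $\ph_{\ordiin}$ is a genuine formula in the first-order language of categories over \Set: each material quantifier has been replaced by a quantifier ranging over well-founded extensional \apgs, which unpacks into a quantifier over an object $X$ together with a monic $X_\prec\mono X\times X$ and a global element $\star\colon 1\to X$, subject to the first-order side conditions of accessibility, well-foundedness, and extensionality (all expressible using the \nno, exactly as in the remark following the definition of \apg), while the material predicates $=$ and $\ordin$ have been replaced by the first-order relations of isomorphism and $\iin$ of \apgs. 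Hence $\ph_{\ordiin}\join\neg\ph_{\ordiin}$ is an instance of the schema of full classical logic for \Set, and therefore $\Set\ss\ph_{\ordiin}\join\neg\ph_{\ordiin}$.

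It remains only to transport this back across the equivalence ``$\bbV(\Set)\ss\chi$ if and only if $\Set\ss\chi_{\ordiin}$'' established above: applying it with $\chi=\ph\join\neg\ph$ and using that $\chi_{\ordiin}$ is $\ph_{\ordiin}\join\neg\ph_{\ordiin}$, we conclude $\bbV(\Set)\ss\ph\join\neg\ph$, which is exactly the required instance of full classical logic in $\bbV(\Set)$. There is no serious obstacle in this argument; the one point that has to be checked (and which is shared with the proof of \autoref{thm:struc-sep}) is that the translated formula $\ph_{\ordiin}$ genuinely belongs to the language over \Set\ to which the full-classical-logic schema applies, so that the schema may legitimately be instantiated at it.
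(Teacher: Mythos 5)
Your proof is correct and is essentially the same as the paper's: apply full classical logic in \Set\ to the translated formula $\ph_{\ordiin}$ and transfer back via the equivalence $\bbV(\Set)\ss\ph \Iff \Set\ss\ph_{\ordiin}$. You have merely spelled out the routine checks (compatibility of the translation with connectives, and that $\ph_{\ordiin}$ is a legitimate formula over \Set) that the paper leaves implicit.
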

\begin{proof}
  Classical logic for \Set\ implies $\ph_{\ordiin} \vee \neg
  \ph_{\ordiin}$ for any formula $\ph$ in $\bbV(\Set)$.
  % Given any \ph, with full separation in $\Set$ we can form the
  % subsets of $1$ defined by $\ph_{\ordiin}$ and by
  % $\neg\ph_{\ordiin}$.  With Booleanness, these subsets are
  % complementary.  Thus, since $1$ is indecomposable, one of them must
  % be all of $1$, and hence either $\ph$ or $\neg\ph$ must be true.
\end{proof}

\begin{lem}\label{thm:struc-coll}
  If \Set\ satisfies collection, then so does $\bbV(\Set)$.
\end{lem}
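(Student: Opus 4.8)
The plan is to recycle the ``glue a family of \apgs, then take an extensional quotient'' technique used above for pairing, union, and cartesian products; the only novelty is that the family to be glued is now handed to us by the structural collection axiom rather than written down by hand.  Recall that $\bbV(\Set)\ss\ph$ means $\Set\ss\ph_{\ordiin}$, so I fix $A\in\bbV(\Set)$ and a formula $\ph(x,y)$ of material set theory (possibly with further parameters in $\bbV(\Set)$) and assume the $\ordiin$-translation of the hypothesis of the relevant instance of material collection: for every $X\iin A$ there is a well-founded extensional \apg\ $Y$ with $\ph_{\ordiin}(X,Y)$.  I must then produce $B\in\bbV(\Set)$ such that every $X\iin A$ admits some $Y\iin B$ with $\ph_{\ordiin}(X,Y)$, and every $Y\iin B$ admits some $X\iin A$ with $\ph_{\ordiin}(X,Y)$.

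First I would extract the family.  Let $\Gm$ be a context whose instantiations encode a well-founded extensional \apg\ --- an object for the set of nodes, a jointly monic pair of arrows for the relation, and a point for the root --- with the conditions of well-foundedness, accessibility and extensionality absorbed into a formula.  Apply \emph{collection of contexts} (\autoref{thm:coll-ctxt}, available since \Set\ satisfies collection) to $U=|A|$, the context $(X,x\colon1\to X,\Gm)$, and the formula asserting that the instantiation of $\Gm$ is a well-founded extensional \apg\ $Y$ for which $\ph_{\ordiin}(A/x,Y)$ holds --- here $A/x$ is built from the parameter $A$ and the variable $x\in|A|$ by the extension of \ddo-separation in \autoref{thm:higher-ddo-sep}.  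The hypothesis is exactly the antecedent of collection of contexts, so we obtain a regular epi $p\colon V\epi|A|$ together with an instantiation of $\Gm$ over $V$ --- an object $\cQ\to V$, a relation on it, and a section $\sigma$ --- whose fibers $Y_v$ are well-founded extensional \apgs\ with $\ph_{\ordiin}(A/pv,Y_v)$ for every $v\in V$.  The main obstacle is really this step: one needs the family of \apgs\ \emph{together with} its relation and distinguished root as a single structured object over $V$, which is precisely what collection of contexts (rather than collection of sets, or replacement of sets) delivers; a secondary nuisance is that forming the fiberwise complement $\cQ\csl\sigma$ of the section uses \autoref{thm:root-decid} in families.

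The gluing is then routine.  Put $W=(\cQ\csl\sigma)+V+1$, with $\prec$ induced from $\cQ$ on the first summand, with $z\prec v$ (for $v$ in the middle summand and $z$ lying over $v$) iff $z\prec\sigma(v)$ in $\cQ$, and with $v\prec\star$ for the new root $\star$.  Then $W$ is a well-founded \apg, and the evident map carrying $v$ to $\sigma(v)$ identifies $W/v$ with $Y_v$; since each $Y_v$ is already extensional, $W$ satisfies the hypothesis of \autoref{thm:ext-quotient} with $n=1$, so it has an extensional well-founded quotient $B$ via a surjective simulation $q\colon W\epi B$.  Because $q$ is a surjective simulation fixing the root, $|B|=\setof{q(v)|v\in V}$, and $q$ restricts to a surjective simulation $W/v\to B/q(v)$; as $W/v\cong Y_v$ is well-founded and extensional, this is injective by \autoref{thm:sim-inj}, hence (being surjective) an isomorphism, so $B/q(v)\cong Y_v$.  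To finish: given $X\iin A$, write $X\cong A/a$ with $a\in|A|$ and use that $p$ is regular epic and $1$ is a strong generator (\autoref{thm:pin-props}\ref{item:pp1}) to find $v$ with $pv=a$; then $\ph_{\ordiin}(X,Y_v)$ holds and $Y_v\cong B/q(v)\iin B$.  Conversely any $Y\iin B$ is isomorphic to some $Y_v$, and $\ph_{\ordiin}(A/pv,Y_v)$ with $A/pv\iin A$.  This yields both halves of material collection in $\bbV(\Set)$.
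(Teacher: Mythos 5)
Your proof is correct and follows essentially the same route as the paper's: apply structural collection (packaged as collection of contexts) to produce a family of well-founded extensional \apgs\ over a cover $V\epi|A|$, glue them under a new root, take the extensional quotient via \autoref{thm:ext-quotient} with $n=1$, and verify both halves of material collection. The only differences are cosmetic --- the paper keeps the fiberwise roots $s(v)$ inside the total space and simply adjoins $s(v)\prec\star$ rather than excising them into a separate summand, which spares you the appeal to \autoref{thm:root-decid}.
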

\begin{proof}
  Suppose that $A\in\bbV(\Set)$ and that \ph\ is a formula such that
  for any $X\iin A$, there exists a $Y\in\bbV(\Set)$ with $\ph(X,Y)$.
  This means that for any $x\in |A|$, there exists a well-founded
  extensional \apg\ $Y$ such that $\ph_{\ordiin}(A/x,Y)$.  By
  collection in \Set, there is a surjection $V\xepi{p} |A|$ and a
  pointed graph $B\in \Set/V$ (here ``pointed'' means we have a
  section $s\colon V\to B$ over $V$) such that for each $v\in V$,
  $v^*B$ is a well-founded extensional \apg\ and
  $\ph_{\ordiin}(A/p(v),v^*B)$.  It is easy to show that $B$,
  considered as a graph in \Set, is still well-founded.  And since $B$
  is a graph in $\Set/V$, its relation $\prec$ is fiberwise; thus for
  each $v\in V$ we have $B/s(v) \cong v^*B$, which is therefore
  extensional and accessible.  It follows that $B+1$, with $s(v)\prec
  \star$ for all $v\in V$, satisfies the hypotheses of
  \autoref{thm:ext-quotient} with $n=1$.  Its extensional quotient is
  then the set desired by the material collection axiom.
\end{proof}

\begin{lem}\label{thm:struc-rep}
  If \Set\ satisfies replacement of contexts, then $\bbV(\Set)$
  satisfies material replacement.
\end{lem}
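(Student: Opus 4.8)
The plan is to adapt the proof of \autoref{thm:struc-coll} almost verbatim, replacing the single use of collection by the hypothesis of \emph{replacement of contexts}. The only extra ingredient needed is that well-founded extensional apgs are \emph{rigid}: by \autoref{thm:wf-rigid} and \autoref{thm:wf-rigid-global}, such an apg is determined by its isomorphism type not merely up to isomorphism but up to a \emph{unique} isomorphism, and this is exactly the stronger uniqueness that distinguishes replacement from collection.

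First I would set up, just as in the proof of \autoref{thm:struc-coll} (via \autoref{thm:coll-ctxt}), a context $\Gm$ in \Set\ whose instantiations encode the data of a pointed graph: an object-variable for the set of nodes, auxiliary object-variables for the relevant cartesian product and for the underlying object of the $\prec$-relation, and arrow-variables for the two projections, for the monomorphism exhibiting $\prec$, and for the root $1\to X$. Writing $U=|A|$, I take $\psi(U,u,\Gm)$ to assert that the instantiation is an accessible well-founded extensional apg $Y$ with $\ph_{\ordiin}(A/u,Y)$ (using \autoref{thm:higher-ddo-sep} where a higher-order quantification is needed to express ``well-founded''). By the translation discussion preceding \autoref{thm:struc-sep}, the hypothesis of material replacement says precisely that for every $u\in U$ there is such a $Y$, unique up to isomorphism; and by \autoref{thm:wf-rigid-global} that isomorphism is unique, while the remaining components of a morphism of instantiations of $\Gm$ are forced by the node-component (the product component by its universal property, the $\prec$-component because its structure map is monic). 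Hence $Y$ is unique up to a unique isomorphism of instantiations extending $1_U$ --- the exact hypothesis of replacement of contexts. Applying it yields a family $B\to U$ in $\Set/U$, with fiberwise root-section $s\colon U\to B$ and fiberwise $\prec$, such that for each $u\in U$ the fiber $u^*B\cong B/s(u)$ is a well-founded extensional apg with $\ph_{\ordiin}(A/u,u^*B)$. From here the argument is the tail of the proof of \autoref{thm:struc-coll}: $B$ is again well-founded as a graph in \Set, so the graph $B+1$ obtained by adjoining a new root $\star$ with $s(u)\prec\star$ for all $u\in U$ satisfies the hypotheses of \autoref{thm:ext-quotient} with $n=1$ (since $(B+1)/s(u)=B/s(u)$ is extensional), and its extensional quotient $\Xbar$ is a well-founded extensional apg. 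I would then check, using that every $X\iin A$ is some $A/u$ together with the uniqueness clause, that $C\iin\Xbar$ iff $C\cong u^*B$ for some $u\in U$ iff there is some $X\iin A$ with $\ph_{\ordiin}(X,C)$; thus $\Xbar$ represents $\setof{y \mid \exists x\in A.\,\ph(x,y)}$, as required.

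The main obstacle is not conceptual but is the bookkeeping that makes the uniqueness upgrade airtight: one must choose the context $\Gm$ so that every object- and arrow-variable beyond the node-object is genuinely pinned down by the node-component of a morphism of instantiations, so that ``unique up to isomorphism of apgs'' really does pass to ``unique up to unique isomorphism of instantiations of $\Gm$.'' (This is the same delicacy that motivated the move from ``replacement of sets'' to ``replacement of contexts'' in \S\ref{sec:strong-ax}.) There is also the usual minor care required in expressing ``well-founded extensional apg'' as an honest formula over a \Pi-pretopos, handled exactly as in \autoref{thm:struc-coll}. Once these points are dispatched, the remainder is a routine repetition of the collection argument.
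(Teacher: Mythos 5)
Your proof is correct and follows essentially the same route as the paper's: translate $\ph$ via $\ph_{\ordiin}$, use \autoref{thm:wf-rigid-global} to upgrade uniqueness-up-to-isomorphism to unique isomorphism, apply replacement of contexts to obtain the pointed graph $B\in\Set/|A|$, and finish with the extensional quotient of $B+1$ as in \autoref{thm:struc-coll}. The extra bookkeeping you supply about choosing the context $\Gm$ and checking that the auxiliary components of a morphism of instantiations are forced is exactly the detail the paper leaves implicit, and it is handled correctly.
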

\begin{proof}
  Suppose that $A\in\bbV(\Set)$ and that \ph\ is a formula such that
  for any $X\iin A$, there exists a unique $Y\in\bbV(\Set)$ with
  $\ph(X,Y)$.  Thus, for any $x\in |A|$, there exists a well-founded
  extensional \apg\ $Y$ such that $\ph_{\ordiin}(A/x,Y)$, and any two
  such $Y$ are isomorphic.  Since any such isomorphism is unique by
  \autoref{thm:wf-rigid-global}, replacement of contexts in \Set\
  supplies a pointed graph $B\in \Set/|A|$ such that for each $x\in
  |A|$ we have $\ph_{\ordiin}(A/x,x^*B)$.  The extensional quotient of
  $B+1$ then represents the set desired by material replacement.
\end{proof}

\begin{lem}\label{thm:struc-ind}
  If \Set\ satisfies full induction, then so does $\bbV(\Set)$.
\end{lem}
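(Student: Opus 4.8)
The plan is to follow the template of \autoref{thm:struc-sep}, \autoref{thm:struc-coll}, and \autoref{thm:struc-rep}: push material full induction over $\omega$ through the translation $\ph\mapsto\ph_{\ordiin}$ and recognize the result as an instance of the structural axiom of full induction over the \nno\ $N$ of \Set.

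First I would unpack the \apg\ $\omega = N+1$ of \autoref{thm:inf}, whose relation is $<$ on $N$ together with $n\prec\star$ for all $n\in N$. Its members are the nodes of $N$, and the member at a node $n\colon 1\to N$ is the sub-\apg\ $\omega/n$, whose nodes are the $m\in N$ with $m\le n$ and whose member-set $|\omega/n|$ is the finite cardinal $[n]$; thus $\omega/n$ represents the von Neumann ordinal $n$. Two small facts then need to be recorded: $\omega/0$ is the one-node \apg\ representing $\emptyset$, and for each $n\in N$ there is an isomorphism $\omega/s(n)\cong(\omega/n)\cup\{\omega/n\}$ in $\bbV(\Set)$ --- the latter because both \apgs\ have the same members under $\iin$ (namely the $\omega/m$ with $m\le n$, up to isomorphism), so the material extensionality axiom, already verified above via \autoref{thm:wf-rigid}, identifies them; here $\cup$ and $\{-\}$ refer to the material operations supplied by \autoref{thm:struct-emppairun}. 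I would also note, using \autoref{thm:higher-ddo-sep}, that the family $n\mapsto\omega/n$ is definable over \Set, so that $\omega/n$ may legitimately be used as an object-term in the single \ddo-variable $n\colon 1\to N$ (with $\omega$ as a parameter).

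Then, given a material formula $\ph(x)$ with free variable $x$ (and arbitrary further parameters, read as fixed \apgs\ in $\bbV(\Set)$), I would let $\chi(n)$ denote the structural formula $\ph_{\ordiin}(\omega/n)$ in the free variable $n\colon 1\to N$. Since every $X\iin\omega$ is isomorphic to $\omega/n$ for some $n\in N$ (unique, by \autoref{thm:wf-rigid}) and $\ph_{\ordiin}$ is invariant under isomorphism of \apgs, quantification over members of $\omega$ in the translated formula becomes quantification over $N$ through $n\mapsto\omega/n$. Combining this with the two facts above, the image under $\ph\mapsto\ph_{\ordiin}$ of the material induction sentence for $\ph$ --- ``if $\ph(\emptyset)$ and $\ph(x)\imp\ph(x\cup\{x\})$ for all $x\in\omega$, then $\ph(x)$ for all $x\in\omega$'' --- is equivalent, by the reductions just described, to the structural full-induction sentence
\[ \bigl(\chi(0)\meet \forall n\in N.(\chi(n)\imp\chi(s(n)))\bigr)\impp \forall n\in N.\chi(n) \]
for the formula $\chi$ and the \nno\ $N$. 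Since \Set\ satisfies full induction, this sentence holds in \Set, and since $\bbV(\Set)\ss\psi$ iff $\Set\ss\psi_{\ordiin}$ for every sentence $\psi$, the material induction sentence for $\ph$ holds in $\bbV(\Set)$.

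The only real content is the bookkeeping in the middle paragraph --- matching the successor node-structure of $\omega = N+1$ with the material successor $x\mapsto x\cup\{x\}$, and checking that substituting the definable term $\omega/n$ into $\ph_{\ordiin}$ produces a genuine structural formula in $n$ --- and I do not expect any real obstacle there, given \autoref{thm:inf}, \autoref{thm:wf-rigid}, and \autoref{thm:higher-ddo-sep}.
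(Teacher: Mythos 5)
Your proposal is correct and follows essentially the same route as the paper: translate $\ph$ to $\ph_{\ordiin}$, use that every $X\iin\omega$ is isomorphic to $\omega/n$ for a unique $n\in N$ to turn the statement into a structural formula $\chi(n)$ in a free variable $n\in N$, and apply structural full induction. The paper's proof is terser and leaves the bookkeeping (the identifications $\omega/0\cong\emptyset$ and $\omega/s(n)\cong(\omega/n)\cup\{\omega/n\}$, and the definability of $n\mapsto\omega/n$) implicit, but the content is identical.
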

\begin{proof}
  If $\ph(x)$ is as in the statement of the material induction axiom,
  then $\ph_{\ordiin}(X)$ is a statement about some $X\iin \omega$,
  i.e.\ $X\in N$.  But every $X\iin N$ is isomorphic to $N/n$ for a
  unique $n\in N$, so $\ph_{\ordiin}(X)$ is equivalent to a statement
  $\ph_{\ordiin}'(n)$ about some $n\in N$, which can then be proven by
  the structural induction axiom.
\end{proof}

\begin{lem}\label{thm:struc-setind}
  If \Set\ satisfies well-founded induction, then $\bbV(\Set)$
  satisfies set-induction.
\end{lem}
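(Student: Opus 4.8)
The plan is to transport the structural axiom of well-founded induction, applied to the graph $A$ itself, through the membership-translation $\ph\mapsto\ph_{\ordiin}$. Since $\bbV(\Set)\ss\ph$ iff $\Set\ss\ph_{\ordiin}$, it suffices to prove, reasoning inside \Set, the $\ordiin$-translation of the set-induction implication. So I would assume that for every well-founded extensional \apg\ $Y$, if $\ph_{\ordiin}(Z)$ holds for all $Z$ with $Z\iin Y$ then $\ph_{\ordiin}(Y)$ holds; and, given an arbitrary well-founded extensional \apg\ $A$, I would aim to deduce $\ph_{\ordiin}(A)$.

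First some preliminary observations. The translated formula $\ph_{\ordiin}$ is invariant under replacing any \apg\ by an isomorphic one, since it mentions \apgs\ only through $\cong$ and $\iin$, and $\iin$ respects isomorphism in each argument. Next, for each node $x$ of $A$, the full subgraph $A/x$ is an initial segment of $A$, so by \autoref{thm:wf-hered} and \autoref{thm:ext-hered} it is again a well-founded extensional \apg, pointed at $x$. Moreover, for $y\prec x$ one checks directly that $(A/x)/y = A/y$ as subgraphs of $A$, and that the members of $A/x$ are exactly the nodes $y$ with $y\prec x$; hence $Z\iin (A/x)$ iff $Z\cong A/y$ for some $y\prec x$. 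Using isomorphism-invariance of $\ph_{\ordiin}$, this shows that ``$\ph_{\ordiin}(Z)$ for all $Z\iin A/x$'' is equivalent to ``$\ph_{\ordiin}(A/y)$ for all $y\prec x$''.

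Now I would apply the structural axiom of well-founded induction over the well-founded set $(A,\prec)$ to the formula $\psi(x):=\ph_{\ordiin}(A/x)$. Here $A/x$ is an object of \Set\ definable from the parameter $x\colon 1\to A$ using the higher \ddo-separation of \autoref{thm:higher-ddo-sep}, so $\psi$ is a genuine formula with free variable $x\in A$; note that it is unbounded, which is precisely why we need the \emph{full} strength of structural well-founded induction. For the induction step, fix $x$ and suppose $\psi(y)$, i.e.\ $\ph_{\ordiin}(A/y)$, holds for all $y\prec x$. By the equivalence above, $\ph_{\ordiin}(Z)$ holds for all $Z\iin A/x$; applying the assumed hypothesis to the well-founded extensional \apg\ $A/x$ then gives $\ph_{\ordiin}(A/x)$, that is, $\psi(x)$. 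Well-founded induction therefore yields $\psi(x)$ for every node $x$, in particular $\psi(\star)$; and since $A/\star = A$, this is exactly $\ph_{\ordiin}(A)$.

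The argument is essentially a direct transcription of the structural axiom into the material one, so I do not expect a serious obstacle; the only mildly fiddly point is the bookkeeping identifying the members of $A/x$ (and the subgraphs $(A/x)/y$) with the data $A/y$ for $y\prec x$, together with confirming that $\psi$ legitimately depends on the parameter $x$. Everything else — well-foundedness and extensionality of $A/x$, and the use of isomorphism-invariance — is already supplied by the lemmas above.
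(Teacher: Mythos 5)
Your proof is correct and is essentially the paper's own argument: the paper's (very terse) proof says to proceed "just like the case of full induction, using induction over the well-founded relation in $\Set$ underlying any object of $\bbV(\Set)$," which is exactly your application of structural well-founded induction over $(A,\prec)$ to the formula $\ph_{\ordiin}(A/x)$. Your additional bookkeeping (members of $A/x$ are the $A/y$ with $y\prec x$, $(A/x)/y = A/y$, isomorphism-invariance of $\ph_{\ordiin}$, and definability of $A/x$ from the parameter $x$) is just the detail the paper leaves implicit.
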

\begin{proof}
  Just like \autoref{thm:struc-ind}, using induction over the
  well-founded (extensional) relation in \Set\ that underlies any
  object of $\bbV(\Set)$.
  % Suppose that \ph\ is a formula satisfying the hypotheses of
  % set-induction.  If $X$ is any extensional well-founded \apg, let $Y$
  % be the subset of $X$ consisting of those nodes $x$ such that
  % $\ph_{\ordiin}(X/x)$ holds.  The hypothesis on \ph\ implies that $Y$
  % is inductive, and hence is all of $X$; thus in particular
  % $\ph_{\ordiin}$ holds at $X \cong X/\star$.
\end{proof}

The schemata involving \ddo-formulas require a little more work, since
if \ph\ is \ddo\ then $\ph_{\ordiin}$ need not be.  Thus, we need to
define a different translation for \ddo-formulas.  Suppose \ph\ is a
\ddo-formula in $\bbV(\Set)$ with parameters $A_1,\dots,A_k$, each of
which is a well-founded extensional \apg.  Then
\[A_1 + \dots + A_k + 1,
\]
with $\prec$ inherited from the $A_i$ along with $\star_i \prec \star$
for all $1\le i\le k$, satisfies the hypothesis of
\autoref{thm:ext-quotient} with $n=1$.  Let $T$ be its extensional
quotient.  We can now translate \ph\ into a \ddo-formula as follows,
interpreting material-set variables not by \apgs\ but by elements of
$T$.
\begin{blist}
\item Each parameter $A_i$ is replaced by $[\star_i]\in T$.
\item The material ``equality'' symbol $=$ is replaced by
  equality in $T$.
\item The material ``membership'' symbol $\in$ is replaced by
  $\prec$ in $T$.
\item The connectives are unchanged.
\item A \ddo-quantifier of the form $\exists x\in y.\ph$ is replaced
  by a \ddo-quantifier of the form $\exists x\in T.(x\prec y\meet
  \ph)$.  Similarly, $\forall x\in y.\ph$ is replaced by $\forall x\in
  T.(x\prec y\imp \ph)$.
\end{blist}
We call the formula produced in this way $\ph_{\ordiin}^{\ddo}$.

\begin{lem}
  $\ph_{\ordiin}$ is equivalent to $\ph_{\ordiin}^{\ddo}$.
\end{lem}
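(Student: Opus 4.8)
The plan is to prove, by induction on the structure of $\ph$, a single statement about all of its subformulas, carrying along an assignment of elements of $T$ to the free material-set variables. Recall that $T$ is the extensional quotient of $A_1+\dots+A_k+1$, so there is a surjective simulation $q\colon A_1+\dots+A_k+1 \epi T$ (\autoref{thm:ext-quotient}), and $T$ is a well-founded extensional \apg. First I would record three preliminaries. (a) Restricting $q$ to the copy of $A_i$ sitting inside $A_1+\dots+A_k+1$ (the down-set of $\star_i$) yields a surjective simulation onto $T/[\star_i]$; since $A_i$ is well-founded and extensional, \autoref{thm:sim-inj} forces this to be an isomorphism, so $A_i\cong T/[\star_i]$. (b) For any node $t\in T$, the down-set $T/t$ is an initial segment of $T$, hence (by \autoref{thm:wf-hered} and \autoref{thm:ext-hered}) is itself a well-founded extensional \apg, with $|T/t| = \setof{s\in T | s\prec t}$ and $(T/t)/s = T/s$. (c) By \autoref{thm:wf-rigid} applied inside $T$, we have $T/s\cong T/t$ if and only if $s=t$.

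The induction claim is: for every subformula $\psi$ of $\ph$ with free material-set variables $v_1,\dots,v_m$, and for all $t_1,\dots,t_m\in T$,
\[
  \Set\ss\psi_{\ordiin}^{\ddo}[v_j\mapsto t_j]
  \quad\iff\quad
  \Set\ss\psi_{\ordiin}[v_j\mapsto T/t_j],
\]
with the parameters $A_i$ interpreted as $[\star_i]$ on the left and as $A_i$ on the right (consistently, by (a)). The atomic case $(v_i=v_j)$ compares equality in $T$ with $T/t_i\cong T/t_j$, which agree by (c); the atomic case $(v_i\in v_j)$ compares $t_i\prec t_j$ with $T/t_i\iin T/t_j$, and since $T/t_i\iin T/t_j$ means $T/t_i\cong (T/t_j)/s = T/s$ for some $s\prec t_j$, preliminaries (b) and (c) show this is exactly $t_i\prec t_j$. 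Connectives are immediate. For $\exists x\in v_j.\chi$, the $\ddo$-translation unfolds to $\exists x\in T.(x\prec t_j\meet\chi_{\ordiin}^{\ddo})$, while the $\iin$-translation asks for a well-founded extensional \apg\ $X$ with $X\iin T/t_j$ and $\chi_{\ordiin}$; by (b) the possible $X$ are, up to isomorphism, precisely the $T/s$ with $s\prec t_j$, so applying the inductive hypothesis to $\chi$ — together with the trivial fact that $\chi_{\ordiin}$ is invariant under isomorphism of its \apg\ arguments, being built only from $\cong$ and $\iin$ — identifies the two sides along the correspondence $s\leftrightarrow T/s$. The universal quantifier is dual. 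Taking $\psi=\ph$, which has no free material-set variables (all its material-set positions are filled by the parameters $A_i$), yields the lemma.

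I do not expect a genuine obstacle here; the only thing requiring care is the bookkeeping of the induction. In particular, the hypothesis must be formulated so that the parameters $A_i$ and the variables of a subformula that were bound further out are handled by the same clause, which is what makes the identification $A_i\cong T/[\star_i]$ of preliminary (a) necessary at the very start; and in the quantifier step one must make explicit that the \apg\ quantified in $\ph_{\ordiin}$ ranges (up to isomorphism) over exactly $\setof{T/s | s\prec t_j}$, rather than over some a priori larger class of \apgs, which is precisely where $|T/t_j| = \setof{s | s\prec t_j}$ and the isomorphism-invariance of $\chi_{\ordiin}$ are used.
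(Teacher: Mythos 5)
Your proposal is correct and follows essentially the same route as the paper's proof: establish the atomic correspondences via \autoref{thm:wf-rigid} (so $T/s\cong T/t$ iff $s=t$, and $T/s\iin T/t$ iff $s\prec t$), note the connectives are immediate, and handle the \ddo-quantifiers by observing that $X\iin T/t$ exactly when $X\cong T/s$ for some $s\prec t$. The only difference is presentational: you make explicit the induction bookkeeping and the identification $A_i\cong T/[\star_i]$ for the parameters, which the paper leaves implicit.
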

\begin{proof}
  \autoref{thm:wf-rigid} implies that $T/x \cong T/y$ if and only if
  $x=y$.  Similarly, if $T/x \iin T/y$, then $T/x \cong (T/y)/y' =
  T/y'$ for some $y'\prec y$, whence $x=y'$ and so $x\prec y$.  The
  converse is easy, so $T/x \iin T/y$ if and only if $x\prec y$.  Thus
  the atomic formulas correspond, and the connectives evidently do, so
  it remains to observe that \ddo-quantifiers are adequately
  represented by quantifiers over $T$, since by definition $X\iin
  T/y$ if and only if $X\cong T/x$ for some $x\prec y$.
\end{proof}

\begin{lem}\label{thm:struc-ddosep}
  $\bbV(\Set)$ satisfies \ddo-separation.
\end{lem}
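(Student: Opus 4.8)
The plan is to imitate the proof of \autoref{thm:struc-sep}, replacing full separation in \Set\ by \ddo-separation (\autoref{thm:ddo-sep}), applied to the \ddo-translation $\ph^{\ddo}_{\ordiin}$ rather than to the unrestricted translation $\ph_{\ordiin}$. Fix a \ddo-formula $\ph(x)$ with parameters $A_1,\dots,A_k$ in $\bbV(\Set)$, where we take $A := A_1$ to be the bounding set of the instance of \ddo-separation in question, and let $q\colon A_1+\dots+A_k+1\to T$ be the extensional quotient used in the construction of $\ph^{\ddo}_{\ordiin}$; write $a_0 = [\star_{A}]\in T$. First I would record the dictionary between $A$ and $T$. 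Since each $A_i$ is already extensional, the initial-segment inclusion $A_i\cong (A_1+\dots+A_k+1)/\star_{A_i}\into A_1+\dots+A_k+1$ followed by $q$ is a simulation out of a well-founded extensional \apg, hence by \autoref{thm:sim-inj} an isomorphism onto $T/[\star_{A_i}]$; in particular $T/a_0\cong A$, the children of $a_0$ in $T$ biject with $|A|$, and a child $x\prec a_0$ and its corresponding member $a\in|A|$ satisfy $T/x\cong A/a$, with $x$ the unique such child by \autoref{thm:wf-rigid}. Combining this with the preceding lemma (applied with $T/x$ as the value of the free variable), for $x\prec a_0$ we get that $\ph^{\ddo}_{\ordiin}(x)$ holds in \Set\ if and only if $\bbV(\Set)\ss\ph(T/x)$.

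Next I would apply \ddo-separation. The formula $x\prec a_0\meet\ph^{\ddo}_{\ordiin}(x)$ with free variable $x\colon 1\to T$ is genuinely \ddo\ in the language of categories, so by \autoref{thm:ddo-sep} there is a subobject $U\mono T$ with $x\in U$ iff $x\prec a_0$ and $\ph^{\ddo}_{\ordiin}(x)$. Let $D\subseteq T$ be the set of nodes admitting a path to some node of $U$; this is a subobject of $T$ definable by the extended \ddo-separation, exactly as $X/x$ was defined in the remark following the definition of \apgs. Set $B = D+1$, with $\prec$ inherited from $D$ together with $u\prec\star$ for every $u\in U$, where $\star$ is the new root. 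Since $D$ is an initial segment of $T$, it is well-founded (\autoref{thm:wf-hered}) and extensional (\autoref{thm:ext-hered}), and one checks directly that $B$ is an accessible well-founded \apg. The one point requiring care --- and, I expect, the only real obstacle --- is that $B$ is extensional: its only possible extensionality failure is a node $y\in D$ whose $B$-children (which equal its $T$-children, as $D$ is an initial segment) coincide with those of $\star$, i.e.\ equal $U$; but $y\in D$ means $y$ admits a path to some $u\in U$, while $u\in U$ being then a child of $y$ forces $u\prec y$, yielding either a cycle through $y$ (contradicting \autoref{thm:wf-no-cycles}) or the relation $u\prec u$. Hence no such $y$ exists and $B\in\bbV(\Set)$.

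Finally I would identify the members of $B$. For $u\in U$, every node admitting a path to $u$ lies in $D$, and such paths in $B$ never pass through $\star$, so $B/u = T/u$ as full subgraphs; thus $|B|$ consists, up to the relation $\iin$, of the $T/u$ for $u\in U$. Therefore, for any $C\iin A$, writing $C\cong A/a\cong T/x$ with $x\prec a_0$ the unique such child, we have $C\iin B$ iff $T/x\cong T/u$ for some $u\in U$, iff $x\in U$ (by \autoref{thm:wf-rigid}), iff $\ph^{\ddo}_{\ordiin}(x)$ holds (as $x\prec a_0$ already holds), iff $\bbV(\Set)\ss\ph(T/x)$, iff $\bbV(\Set)\ss\ph(C)$. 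So $B$ represents $\setof{x\in A\mid\ph(x)}$, which is the required instance of \ddo-separation; everything besides the extensionality check for $B$ is bookkeeping parallel to \autoref{thm:struc-sep}.
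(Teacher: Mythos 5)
Correct, and essentially the paper's own approach: the paper proves this lemma simply by repeating the argument of \autoref{thm:struc-sep} with $\ph_{\ordiin}^{\ddo}$ in place of $\ph_{\ordiin}$ and the \ddo-separation property of \autoref{thm:ddo-sep} in place of the separation axiom, which is exactly what you do. The only divergence is presentational: you assemble $B$ inside $T$ with a fresh root whose children are exactly $U$ and then verify extensionality directly via the no-cycles argument, whereas the paper describes $B$ as built from the root of $A$ together with the downward closure of $U$ and leaves those bookkeeping checks implicit.
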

\begin{proof}
  Just like \autoref{thm:struc-sep}, but using $\ph_{\ordiin}^{\ddo}$
  instead of $\ph_{\ordiin}$, and the \ddo-separation property of
  \autoref{thm:ddo-sep} instead of the separation axiom.
\end{proof}

\begin{lem}\label{thm:struc-fdn}
  $\bbV(\Set)$ satisfies foundation.
\end{lem}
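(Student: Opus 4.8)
The plan follows the template of \autoref{thm:struc-sep} and \autoref{thm:struc-ddosep}, with one twist. Since the hypothesis of set-induction already supplies the inductive step, we cannot, as in those lemmas, simply extract a subobject and then appeal to the \emph{definition} of well-foundedness; instead we must actually run a well-founded induction inside \Set. And because the target variable of foundation is unbounded, no single ambient graph serves the whole schema: one must be built afresh for each target \apg. So fix a \ddo-formula $\ph(x)$ with parameters $A_1,\dots,A_k\in\bbV(\Set)$ and assume the translated hypothesis --- for every well-founded extensional \apg\ $Y$, if $\ph_{\ordiin}(X)$ holds for all $X\iin Y$ then $\ph_{\ordiin}(Y)$ --- and fix an arbitrary well-founded extensional \apg\ $Z$, for which we must prove $\ph_{\ordiin}(Z)$.

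First I would build an ambient graph housing both the parameters and $Z$. Form the well-founded \apg\ $A_1+\dots+A_k+Z+1$, rooted at a new node $\star$ with $\star_{A_i}\prec\star$ and $\star_Z\prec\star$; as each summand is extensional, it satisfies the hypothesis of \autoref{thm:ext-quotient} with $n=1$, and we let $T$ be its extensional quotient. Exactly as in the pairing construction of \autoref{thm:struct-emppairun}, the quotient map restricts (via \autoref{thm:sim-inj}) to isomorphisms $T/a_i\cong A_i$ and $T/\zeta\cong Z$, where $a_i=[\star_{A_i}]$ and $\zeta=[\star_Z]$. Note that every $T/t$ is an initial segment of $T$, hence by \autoref{thm:wf-hered} and \autoref{thm:ext-hered} is itself a well-founded extensional \apg, and that $X\iin T/t$ holds iff $X\cong T/t'$ for some $t'\prec t$, since $(T/t)/t'=T/t'$.

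Next I would transcribe $\ph$ into a \ddo-formula $\Psi(x)$ in the language of \Set\ with free \ddo-variable $x\colon1\to T$: this is simply $\ph_{\ordiin}^{\ddo}$ computed with respect to the larger ambient graph $T$ and the parameters $a_1,\dots,a_k$ (so $=$ becomes equality in $T$, $\in$ becomes $\prec$, and each bounded quantifier becomes a \ddo-quantifier over elements of $T$). The proof that $\ph_{\ordiin}$ is equivalent to $\ph_{\ordiin}^{\ddo}$ uses only that $T$ is a well-founded extensional graph in which $T/a_i\cong A_i$, so the same argument --- invoking \autoref{thm:wf-rigid} to identify $T/x\cong T/y$ with $x=y$ and $T/x\iin T/y$ with $x\prec y$ --- shows that for any global element $x\colon1\to T$, $\Psi(x)$ holds iff $\ph_{\ordiin}(T/x)$. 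Since \Set\ is a constructively well-pointed Heyting category, \autoref{thm:ddo-sep}\ref{item:ddosep-sep} then supplies a subobject $S\mono T$ through which a global element $x\colon1\to T$ factors iff $\ph_{\ordiin}(T/x)$.

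The one step carrying content is that $S$ is an \emph{inductive} subset of the graph $T$. If $t$ is a node all of whose children lie in $S$, then $\ph_{\ordiin}(T/t')$ holds for every $t'\prec t$; since the \apgs\ that are $\iin$-below $T/t$ are, up to isomorphism, exactly these $T/t'$, and $\ph_{\ordiin}$ is isomorphism-invariant, this says $\ph_{\ordiin}(X)$ holds for all $X\iin T/t$. Applying the assumed hypothesis to $Y=T/t\in\bbV(\Set)$ gives $\ph_{\ordiin}(T/t)$, i.e.\ $t\in S$; so $S$ is inductive, and since $T$ is well-founded $S=T$. In particular $\zeta\in S$, whence $\ph_{\ordiin}(T/\zeta)$, that is $\ph_{\ordiin}(Z)$. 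The only real obstacle here is the conceptual one already flagged: seeing that neither $\ph_{\ordiin}$ (which is not \ddo, so carries no \ddo-separation) nor the originally defined $\ph_{\ordiin}^{\ddo}$ (whose ambient graph cannot see the unbounded target $Z$) can be used off the shelf, and that attaching $Z$ to the ambient graph while replacing ``apply the definition of well-foundedness'' by ``induct over $T$'' repairs the argument; after that, every step is routine or a direct citation, in close parallel with \autoref{thm:struc-coll} and \autoref{thm:struc-ddosep}.
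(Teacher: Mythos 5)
Your proof is correct and takes essentially the same approach as the paper's: translate \ph\ into a \ddo-formula over a well-founded extensional ambient graph, use \ddo-separation to carve out a subobject, show it is inductive from the foundation hypothesis, and conclude by well-foundedness. The only cosmetic difference is that you attach the target $Z$ directly to the ambient graph $T$ and run the induction over $T$, whereas the paper runs it over an arbitrary \apg\ $X$ and then observes that every $Y$ is a member of some such $X$; the two bookkeeping schemes are interchangeable, and your version just makes explicit that the ambient graph of the \ddo-translation must contain the \apg\ being inducted over.
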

\begin{proof}
  For any \ddo-formula $\ph$ in $\bbV(\Set)$ and any well-founded
  \apg\ $X\in\bbV(\Set)$, we can form $\setof{ x\in X | \ph(X/x) }$
  using \ddo-separation, since $\ph(X/x) \equiv
  \ph_{\ordiin}^{\ddo}(x)$.  The hypothesis on \ph\ implies that this
  is an inductive subset of $X$, hence all of it.  This implies the
  desired conclusion, since for every well-founded extensional \apg\
  $Y$ there is another one $X$ with $Y\iin X$.
\end{proof}

\begin{lem}\label{thm:struc-bool}
  If \Set\ is Boolean, then $\bbV(\Set)$ satisfies \ddo-classical logic.
\end{lem}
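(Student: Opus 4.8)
The plan is to follow the template of \autoref{thm:struc-fullbool}, but to route the argument through the bounded translation $\ph^{\ddo}_{\ordiin}$ rather than $\ph_{\ordiin}$, much as \autoref{thm:struc-ddosep} does relative to \autoref{thm:struc-sep}. This detour is forced on us: Booleanness of \Set\ supplies excluded middle only for \ddo-formulas (equivalently, it supplies complemented subobjects), whereas $\ph_{\ordiin}$ is in general unbounded even when \ph\ is \ddo, since it quantifies over all well-founded extensional \apgs.

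Concretely, let \ph\ be a \ddo-formula in $\bbV(\Set)$ with parameters $A_1,\dots,A_k$, and form the extensional quotient $T$ of $A_1+\dots+A_k+1$ exactly as in the construction preceding \autoref{thm:struc-ddosep}. Since $\ph\vee\neg\ph$ is again \ddo\ with the same parameters, it is translated using the same $T$, and the connectives pass through unchanged, so $(\ph\vee\neg\ph)^{\ddo}_{\ordiin}$ is literally $\ph^{\ddo}_{\ordiin}\vee\neg\ph^{\ddo}_{\ordiin}$. Moreover $\ph^{\ddo}_{\ordiin}$ is a genuine \ddo-sentence in \Set: its variables are \ddo-variables valued in $T$, its atomic subformulas are equalities in $T$ or are built from the relation $\prec$ on $T$, and its quantifiers are \ddo-quantifiers over $T$.

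Next I would show that a Boolean constructively well-pointed Heyting category validates $\psi\vee\neg\psi$ for every \ddo-sentence $\psi$. Viewing $\psi$ as a formula with a dummy free variable of type $1\to1$ and applying \ddo-separation (\autoref{thm:ddo-sep}) gives a subobject $S\mono 1$ with $S\cong 1$ precisely when $\Set\ss\psi$; Booleanness provides a complement, so $S\cup\neg S=1$, and then indecomposability of $1$ (part of constructive well-pointedness) forces either $S\cong 1$ or $\neg S\cong 1$, i.e.\ $\Set\ss\psi$ or $\Set\ss\neg\psi$. Taking $\psi=\ph^{\ddo}_{\ordiin}$ yields $\Set\ss\ph^{\ddo}_{\ordiin}\vee\neg\ph^{\ddo}_{\ordiin}$. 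Finally, by the lemma that $\ph_{\ordiin}$ is equivalent to $\ph^{\ddo}_{\ordiin}$ (applied also to $\neg\ph$), this reads $\Set\ss\ph_{\ordiin}\vee\neg\ph_{\ordiin}=(\ph\vee\neg\ph)_{\ordiin}$, whence $\bbV(\Set)\ss\ph\vee\neg\ph$ by the basic equivalence $\bbV(\Set)\ss\chi\Leftrightarrow\Set\ss\chi_{\ordiin}$. The only step needing genuine care is the choice of translation in the second paragraph: once \ph\ is translated through $(-)^{\ddo}_{\ordiin}$, so that the bounded form of excluded middle that Booleanness provides actually applies, everything else is bookkeeping already set up in the surrounding lemmas.
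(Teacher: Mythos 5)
Your proposal is correct and follows essentially the same route as the paper: the paper's two-line proof forms $\setof{ \emptyset | \ph }\subseteq \{\emptyset\}$ using the material \ddo-separation just established for $\bbV(\Set)$ (which is itself proved via exactly your translation $\ph_{\ordiin}^{\ddo}$ and \autoref{thm:ddo-sep}), and then concludes $\ph\vee\neg\ph$ from Booleanness together with indecomposability and nonemptiness of $1$. You have simply unfolded that appeal into an explicit classifying subobject $S\mono 1$ in \Set, which is the same argument in slightly more detail.
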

\begin{proof}
  For any \ddo-formula \ph\ in $\bbV(\Set)$, \ddo-separation supplies
  a subset $\setof{ \emptyset | \ph }\subseteq \{\emptyset\}$.  If
  this is complemented in \Set, then we must have $\ph\vee\neg\ph$.
\end{proof}

This completes the proof of \autoref{thm:str->mat}.

Of course, it is natural to ask to what extent the constructions
$\bbV(-)$ and $\bbSet(-)$ are inverse.  We have already seen the
canonical inclusion $\bbSet(\bbV(\Set))\into \Set$, and the following
is easy to verify.

\begin{lem}
  The inclusion $\bbSet(\bbV(\Set))\into \Set$ is an equivalence if
  and only if every object of \Set\ can be embedded into some
  well-founded extensional graph.\qed
\end{lem}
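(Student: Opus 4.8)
The plan is to note that, by \autoref{thm:struc-mat-set}, the inclusion $\bbSet(\bbV(\Set))\into\Set$ is already full and faithful; hence it is an equivalence exactly when it is essentially surjective, i.e.\ when every object $A$ of \Set\ is isomorphic to $|X|$ for some well-founded extensional \apg\ $X$. I would prove this condition equivalent to the stated one directly. The ``only if'' half is immediate: if $A\cong|X|$ with $X$ a well-founded extensional \apg, then forgetting the root makes $X$ a well-founded extensional graph, while $|X|$ --- the set of children of the root --- is a subobject of the set of nodes of $X$, so $A\toiso|X|\mono X$ exhibits $A$ as a subobject of a well-founded extensional graph.

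For the ``if'' half, suppose every object embeds into a well-founded extensional graph, and fix $A$ together with such an embedding $A\mono G$. First I would cut $G$ down to its ``accessible part over $A$'': let $G'\subseteq G$ be the full subgraph on those nodes admitting a path to some node of $A$ (definable using the extension of \ddo-separation of \autoref{thm:higher-ddo-sep}, just as $X/x$ is). Then $G'$ is well-founded by \autoref{thm:wf-hered}, it is an initial segment of $G$ and therefore extensional by \autoref{thm:ext-hered}, and $A\subseteq G'$ since each $a\in A$ has the length-$0$ path to itself. Now adjoin a new root: let $X=G'+1$ with $\prec$ inherited from $G'$ together with $a\prec\star$ for all $a\in A$. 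Routine checks (in the style of \autoref{thm:struct-emppairun} and \autoref{thm:cart-prod}) show that $X$ is a well-founded \apg: any inductive $S\subseteq X$ meets $G'$ in an inductive subset, hence contains $G'\supseteq A$, hence contains $\star$; and every node of $G'$ has a path to some $a\in A$, so to $\star$. Finally, the nodes of $X$ admitting a path of length $1$ to $\star$ are precisely the elements of $A$, and for $a\in A$ one checks $X/a\cong G/a$, which is an initial segment of $G$ and so extensional; thus $X$ satisfies the hypothesis of \autoref{thm:ext-quotient} with $n=1$.

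Applying \autoref{thm:ext-quotient} yields a well-founded extensional \apg\ $\Xbar$ and a surjective simulation $q\colon X\to\Xbar$; and inspecting the proof of that lemma in the case $n=1$, $q$ is simply the quotient map for the bisimulation $R$ on $X$ with $R(x,y)\iff X/x\cong X/y$. Using the simulation property at the root, the set $|\Xbar|$ of members of $\Xbar$ is exactly $q[A]$; and if $q(a_1)=q(a_2)$ for $a_1,a_2\in A$, then $R(a_1,a_2)$, i.e.\ $G/a_1\cong G/a_2$, so $a_1=a_2$ by \autoref{thm:wf-rigid}. Hence $q$ restricts to a bijection $A\cong|\Xbar|$ in \Set, so $A$ is (up to isomorphism) an object of $\bbSet(\bbV(\Set))$, as desired. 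The main obstacle I anticipate is the bookkeeping in this last step: one must verify carefully that in the relevant case $n=1$ the extensional quotient really is the single quotient by ``$X/x\cong X/y$'', so that both the identification $|\Xbar|=q[A]$ and the injectivity of $q|_A$ (which rests on rigidity of well-founded extensional graphs, \autoref{thm:wf-rigid}) go through; the remaining verifications of well-foundedness, accessibility and extensionality for small modifications of given graphs are entirely routine.
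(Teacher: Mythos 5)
Your proof is correct: the paper leaves this lemma unproved (it is flagged as easy to verify), and your argument --- full faithfulness via \autoref{thm:struc-mat-set} reducing the claim to essential surjectivity, then cutting $G$ down to its accessible part over $A$, adjoining a root, and applying \autoref{thm:ext-quotient} with $n=1$ --- is exactly the natural argument being alluded to. The one step where you rely on inspecting the proof of \autoref{thm:ext-quotient} (to know that for $n=1$ the quotient identifies $x$ with $y$ precisely when $X/x\cong X/y$) can be avoided: since each $X/a=G/a$ is well-founded and extensional and the restriction of the surjective simulation $q$ to $X/a$ is again a simulation, \autoref{thm:sim-inj} gives $X/a\cong \Xbar/q(a)$, so $q(a_1)=q(a_2)$ forces $G/a_1\cong G/a_2$ and hence $a_1=a_2$ by \autoref{thm:wf-rigid} applied to $G$.
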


I propose to call this property of \Set\ the \emph{axiom of
  well-founded materialization}.  (Other axioms of materialization
would arise from using various kinds of non-well-founded graphs.)

Note that well-founded materialization follows from the axiom of
choice, since then every object can be well-ordered and thus given the
structure of an \apg\ representing a von Neumann ordinal.  We remark
in passing that this implies that ``all replacement schemata for ETCS
are equivalent.''

\begin{prop}\label{thm:etcs-rep}
  Let $\mathcal{T}_1$ and $\mathcal{T}_2$ be axiom schemata for
  structural set theory such that for each $i=1,2$,
  \begin{enumerate}
  \item if \bV\ satisfies ZFC, then $\bbSet(\bV)$ satisfies
    $\mathcal{T}_i$, and\label{item:er1}
  \item if \Set\ satisfies ETCS+$\mathcal{T}_i$, then $\bbSet(\bV)$
    satisfies ZFC.\label{item:er2}
  \end{enumerate}
  Then $\cT_1$ and $\cT_2$ are equivalent over ETCS.
\end{prop}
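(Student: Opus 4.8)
The plan is to prove the two halves of the equivalence symmetrically, so it is enough to show that every model of ETCS${}+\mathcal{T}_1$ also satisfies $\mathcal{T}_2$. So I would begin with a model $\Set$ of ETCS${}+\mathcal{T}_1$. Since ETCS makes $\Set$ a classically well-pointed topos with a \nno\ and full classical logic, the Corollary following \autoref{thm:wpt-bool} tells us that $\Set$ is a \emph{constructively} well-pointed topos, hence a constructively well-pointed $\Pi$-pretopos with a \nno, so that the construction $\bbV(\Set)$ of \S\ref{sec:constr-mat} applies. Now apply the second hypothesis of the proposition with $i=1$ to conclude that $\bbV(\Set)$ is a model of ZFC, and then apply the first hypothesis with $i=2$, taking the model there to be $\bbV(\Set)$, to conclude that $\bbSet(\bbV(\Set))\ss\mathcal{T}_2$.

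It remains to transport the validity of $\mathcal{T}_2$ back from $\bbSet(\bbV(\Set))$ to $\Set$. For this I would use that ETCS includes the axiom of choice, so every object of $\Set$ can be well-ordered and hence given the structure of an \apg\ representing a von Neumann ordinal; thus $\Set$ satisfies well-founded materialization, and by the lemma immediately preceding this proposition the canonical inclusion $F\colon\bbSet(\bbV(\Set))\into\Set$ is an equivalence of categories. Since structural set theory is formulated in the dependently typed language of categories, every instance of the schema $\mathcal{T}_2$ (with a chosen formula-parameter and chosen object- and arrow-parameters) is a sentence in that language, so I would argue one instance at a time: given an instance $\ph$ of $\mathcal{T}_2$ in $\Set$, essential surjectivity of $F$ together with isomorphism-invariance of truth (\autoref{thm:isoinvar-truth}) lets us replace each parameter of $\ph$ by an isomorphic one in the image of $F$, so that, up to provable equivalence, $\ph = F(\ph')$ for the corresponding instance $\ph'$ of $\mathcal{T}_2$ in $\bbSet(\bbV(\Set))$. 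Equivalence-invariance of truth then gives $\Set\ss\ph$ if and only if $\bbSet(\bbV(\Set))\ss\ph'$, and the latter holds because $\bbSet(\bbV(\Set))\ss\mathcal{T}_2$. As $\ph$ was an arbitrary instance, $\Set\ss\mathcal{T}_2$. Swapping the indices $1$ and $2$ throughout gives the reverse implication.

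The only step I expect to require any real care is this last transport, since the equivalence-invariance lemma is phrased for sentences while $\mathcal{T}_2$ is a schema that may carry unmentioned parameters; one must check that each instance-with-parameters over $\Set$ can be matched, up to provable equivalence, with an instance-with-parameters over the full subcategory $\bbSet(\bbV(\Set))$. The two ingredients that make this unproblematic are that the language of categories is equivalence-invariant and that \autoref{thm:isoinvar-truth} lets us push the parameters into that subcategory without changing truth values. Everything else is a direct chaining of the two hypotheses with the results of \S\ref{sec:constr-mat}.
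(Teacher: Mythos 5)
Your proposal is correct and follows essentially the same route as the paper: use AC (included in ETCS) to get well-founded materialization, hence $\Set\simeq\bbSet(\bbV(\Set))$, then chain hypothesis (ii) for $\mathcal{T}_1$ with hypothesis (i) for $\mathcal{T}_2$ and transport back along the equivalence. The paper compresses the final transport into the bare assertion $\Set=\bbSet(\bbV(\Set))$, whereas you spell out the (correct) bookkeeping with isomorphism- and equivalence-invariance of truth for instances of the schema with parameters.
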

\begin{proof}
  Since ETCS includes AC, for any model of ETCS we have $\Set \simeq
  \bbSet(\bbV(\Set))$.  Thus, if $\Set\ss\cT_1$, then
  by~\ref{item:er2} we have $\bbSet(\bV)\ss$ ZFC, hence
  by~\ref{item:er1} we have $\Set = \bbSet(\bbV(\Set))\ss\cT_2$.  The
  converse is the same.
\end{proof}

We have seen that our axioms of collection and replacement from
\S\ref{sec:strong-ax} satisfy~\ref{item:er1} and~\ref{item:er2}, so
over ETCS they are equivalent to any other such schema.  This includes
the replacement axiom of McLarty~\cite{mclarty:catstruct}, the axiom
CRS of Cole~\cite{cole:cat-sets}, the axiom RepT of
Osius~\cite{osius:cat-setth}, and the reflection axiom of
Lawvere~\cite[Remark 12]{lawvere:etcs-long}.  However, our axioms seem
to be more appropriate in an intuitionistic context.

Returning to the material-structural comparison, on the other side we
have:

\begin{prop}\label{thm:fdn-eqv}
  If \bV\ satisfies the core axioms of material set theory along with
  infinity, exponentials, foundation, and transitive closures, then
  there is a canonical embedding $\bV \to \bbV(\bbSet(\bV))$.  This
  map is an isomorphism if and only if \bV\ additionally satisfies
  Mostowski's principle.
\end{prop}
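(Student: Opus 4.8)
The plan is to build the map $F\maps\bV\to\bbV(\bbSet(\bV))$ explicitly out of transitive closures, check it is an $\in$-embedding with $\iin$-downward-closed image by well-founded induction, and then characterize its surjectivity via Mostowski's principle, using the fact that $\bbV(\bbSet(\bV))$ automatically satisfies Mostowski's principle. First observe that by \autoref{thm:iz-topos}\ref{item:iz-topos-lcc},\ref{item:iz-topos-nno} and \autoref{thm:set-wpt}, the hypotheses on $\bV$ make $\bbSet(\bV)$ a constructively well-pointed $\Pi$-pretopos with an \nno, so $\bbV(\bbSet(\bV))$ is defined and \autoref{thm:str->mat} applies to it. For $a\in\bV$ let $\overline{a}$ be the smallest transitive set with $a\subseteq\overline{a}$, and let $G_a$ be the graph with node set $\{a\}\cup\overline{a}$, relation $x\prec y :\Iff x\in y$, and root $a$. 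I would check that $G_a$ is a well-founded extensional \apg: accessibility is the statement that every element of $\overline{a}$ is joined to $a$ by a finite $\in$-chain, which holds because the set of such elements together with $a$ is transitive and contains $a$, hence equals $\overline{a}$ by minimality (the relevant ``reachability'' set is carved out using infinity, exponentials, limited $\ddo$-replacement and $\ddo$-separation); well-foundedness of $G_a$ as a graph is precisely the axiom of foundation, applied to $\phi(x):\equiv(x\in\{a\}\cup\overline{a}\imp x\in S)$ for an inductive $S$; and extensionality follows from material extensionality once one notes that, because $\overline{a}$ is transitive and $a\subseteq\overline{a}$, every element of a node of $G_a$ is again a node of $G_a$. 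Set $F(a):=G_a\in\bbV(\bbSet(\bV))$; this is canonical since $\overline{a}$ is.

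Next I would verify that $F$ preserves and reflects $=$ and $\iin$, with $\iin$-downward-closed image. For equality: an isomorphism $g\maps G_a\toiso G_b$ of pointed graphs is (by \autoref{thm:wf-rigid-global}) unique, and since the nodes of $G_a,G_b$ are sets of $\bV$ with $\prec$ equal to $\in$, an $\in$-induction — for which foundation suffices, the formula in play being $\ddo$ — shows that $g(x)=x$ in $\bV$ for every node $x$, using material extensionality at each step (as $g$ preserves and reflects $\prec$, and all elements of nodes are nodes); since $g$ carries the root $a$ of $G_a$ to the root $b$ of $G_b$, this gives $b=g(a)=a$. Hence $F$ preserves and reflects $=$ and is injective. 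For membership the key computation is that whenever $y\in b$ one has, literally, $G_b/y=G_y=F(y)$: indeed $y\in b\subseteq\overline{b}$ forces $\overline{y}\subseteq\overline{b}$ by minimality, so the nodes of $G_b$ admitting a path to $y$ are exactly the elements of $\{y\}\cup\overline{y}$, with the induced relation. Since the members of $F(b)=G_b$ are precisely the $y\in b$, this shows simultaneously that $a\in b\imp F(a)=G_b/a\iin F(b)$, that every $Y\iin F(b)$ has the form $F(y)$ with $y\in b$, and — combined with injectivity — that $F(a)\iin F(b)\imp a\in b$. Thus $F$ is a canonical embedding of models of material set theory.

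It remains to show $F$ is an isomorphism iff $\bV$ satisfies Mostowski's principle. By the first clause of \autoref{thm:str->mat}, $\bbV(\bbSet(\bV))$ always satisfies Mostowski's principle, so if $F$ is an isomorphism then so does $\bV$. Conversely, assume $\bV$ satisfies Mostowski's principle and let $Y$ be a well-founded extensional \apg\ in $\bbSet(\bV)$. Since $\bbSet(\bV)$ is constructively well-pointed, $(Y,\prec)$ may be read as a well-founded extensional relation in $\bV$ with a distinguished node $\star$; Mostowski's principle supplies an isomorphism from $(Y,\prec)$ to a transitive set $(t,\in)$, carrying $\star$ to some $a\in t$, and accessibility of $Y$ from $\star$ translates into the statement that every element of $t$ is joined to $a$ by an $\in$-chain. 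Hence $t$ is a transitive set containing $a$ and contained in $\{a\}\cup\overline{a}$, so $t=\{a\}\cup\overline{a}$ and $Y\cong F(a)$. Thus $F$ is surjective, and so by the previous paragraph an isomorphism.

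The step I expect to require the most care is the bookkeeping at the interface between ``internal to $\bbSet(\bV)$'' and ``external in $\bV$'': confirming that a well-founded extensional \apg\ in $\bbSet(\bV)$ is genuinely a well-founded extensional relation (with root) in $\bV$, and that the internally defined notions of accessibility and of the subgraphs $G_b/y$ coincide with the transitive-closure computations above. This, together with the identification $t=\{a\}\cup\overline{a}$ in the ``only if'' direction — where foundation and transitive closures do the real work — is the heart of the matter; everything else is the routine verification, already implicit in \S\ref{sec:constr-mat}, that $\bbV(-)$ and $\bbSet(-)$ are mutually inverse up to these identifications.
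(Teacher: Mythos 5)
Your proof is correct and follows essentially the same route as the paper: build the embedding from transitive closures, verify it preserves and reflects $=$ and $\in$, and settle the isomorphism question in one direction via Mostowski's principle holding in $\bbV(\bbSet(\bV))$ and in the other by identifying the transitive collapse of any well-founded extensional \apg\ with $\{a\}\cup\overline{a}$. The only (inessential) difference is that you root the graph at $a$ itself with node set $\{a\}\cup\overline{a}$, whereas the paper adjoins a fresh root to $\mathrm{TC}(a)$; under foundation these are canonically isomorphic.
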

\begin{proof}
  By assumption, any $x\in \bV$ has a transitive closure
  $\mathrm{TC}(x)$, which is a well-founded extensional graph.  If we
  define $Y = \mathrm{TC}(x)+1$, with $\prec$ induced by $\in$ on
  $\mathrm{TC}(x)$ and with $z\prec \star$ for all $z\in x$, then $Y$
  is a well-founded extensional \apg, i.e.\ an object of
  $\bbV(\bbSet(\bV))$.  This construction gives a map $\bV \to
  \bbV(\bbSet(\bV))$, and it is straightforward to verify that it
  preserves and reflects membership and equality.

  Now if this embedding is an isomorphism, then clearly \bV\ must
  satisfy Mostowski's principle, since $\bbV(\bbSet(\bV))$ does so.
  Conversely, if \bV\ satisfies Mostowski's principle, then every
  $X\in \bbV(\bbSet(\bV))$ is isomorphic in \bV\ to a transitive set,
  and therefore equal in $\bbV(\bbSet(\bV))$ to something in the image
  of the embedding.
\end{proof}

% \begin{rmk}
%   Mitchell~\cite{mitchell:booltopoi} observes that for Boolean topoi,
%   the constructions which we have called $\bbV(-)$ and $\bbSet(-)$ can
%   actually be made into a pair of adjoint functors.  This is also true
%   in our more general context, as long as we take care in defining the
%   relevant (2-)categories.  Moreover, it turns out to be an idempotent
%   adjunction, and thus induces an equivalence of 2-categories between
%   the images of the two functors.
% \end{rmk}

\begin{rmk}
  The theory M$_0$ of~\cite{mathias:str-maclane} consists of the core
  axioms together with power sets and full classical logic.  If
  $\bV\ss$ M$_0$, then $\bbV(\bbSet(\bV))$ is precisely the model
  $W_1$ constructed in~\cite[\S2]{mathias:str-maclane}, which
  satisfies M$_0$ plus regularity, transitive closures, and
  Mostowski's principle, and inherits infinity and choice from \bV.
  (In the presence of classical logic and power sets, the axiom of
  infinity is unnecessary for the construction $\bbV(-)$.)
  In particular, if $\bV\ss$ ZBQC, then $\bbV(\bbSet(\bV))\ss$ MOST,
  while $\bbV(\bbSet(\bV))\cong \bV$ if we already had $\bV\ss$ MOST.
\end{rmk}

  % I am surprised that this is not mentioned in
  % in~\cite{mathias:str-maclane}, since that paper is evidently
  % partially a response to Mac Lane's foundational proposal of ZBQC,
  % which was certainly motivated by the close connection of ZBQC with
  % ETCS.

  % Although Mathias does not mention any structural set theory
  % explicitly, he reaches a similar conclusion about the relationship
  % of ZBQC and MOST:
  % \begin{quote}
  %   I suggest that [the CAT and SET camps] capture two distinct modes
  %   of thought, and somewhere between ZBQC and the system I have
  %   called MOST is the watershed between the two\dots thus my
  %   suggestion is that the transition from ZBQC to MOST is the most
  %   natural bridge between CAT and SET.
  % \end{quote}
  % I find it a bit curious to say that the bridge between material and
  % structural set theory lies in the transition between two
  % \emph{material} set theories, but aside from that Mathias'
  % conclusion is perfectly reasonable, since a ``round trip'' through
  % structural set theory transforms ZBQC into MOST.

So far we have concentrated on building models of pure sets only.
However, we can also allow an arbitrary set of atoms: we fix some
$A\in \Set$ and modify our definitions as follows.  (The definitions
not listed below need no modification.)
\begin{blist}
\item An \textbf{$A$-graph} is a graph $X$ together with a partial
  function $\ell\colon X \rightharpoonup A$, such that
  $\mathrm{dom}(\ell)$ is a complemented subobject of $X$, and if
  $x\prec y$ then $y\notin \mathrm{dom}(\ell)$.
\item Isomorphisms between $A$-graphs are, of course, required to
  preserve the labeling functions $\ell$.
\item An $A$-graph is \textbf{$A$-extensional} if
  \begin{enumerate}
  \item $\ell$ is injective on its domain, and
  \item for any $x,y\notin\mathrm{dom}(\ell)$, if $z\prec x
    \Leftrightarrow z\prec y$ for all $z$ then $x=y$.
  \end{enumerate}
\item An \textbf{$A$-simulation} between $A$-graphs is a simulation
  $f\colon X\to Y$ such that for any $x\in X$, if either $\ell(x)$ or
  $\ell(f(x))$ is defined, then both are, and they are equal.
\end{blist}
Note that if an $A$-graph $X$ is an \apg\ and
$\star\in\mathrm{dom}(\ell)$, then $X$ can have no nodes other than
the root, so $X$ is simply an element of $A$.  Thus, accessible
pointed $A$-graphs model atoms themselves in addition to sets that can
contain atoms.

It is straightforward to verify that all the above lemmas, and the
main theorem, still hold when extensional well-founded \apgs\ are
replaced by $A$-extensional well-founded accessible pointed
$A$-graphs.  The resulting material set theories, of course, satisfy
the modified axioms for theories with atoms listed at the end of
\S\ref{sec:sets}.

\section{The stack semantics}
\label{sec:internal-logic}

We now introduce the stack semantics of a Heyting pretopos \bS, no
longer \mbox{assumed} to be well-pointed.  (In fact, we will work most of
the time with a positive Heyting category, although a few facts
require exactness.)  In the introduction, we described two approaches
to this semantics: first, as a direct generalization of the usual
Kripke-Joyal semantics, and second, as a fragment of the internal
logic of the category of sheaves or stacks.  In this section we will
take the first, more explicit, viewpoint; in \S\ref{sec:cofc} we will
show how it agrees with the second.

Recall that our goal is to embed the usual internal type theory in a
structural set theory, along the lines of \autoref{thm:ddo-corresp}.
The interpretation of the internal type theory is usually defined by
constructing, for any formula \ph\ with (say) one free variable $x$ of
type $A$ (an object of \bS), a subobject $\mm{\ph}\mono A$ which we
think of as the subset $\setof{x\in A | \ph(x)}$.  This approach is
very powerful and flexible---for instance, it generalizes to the
internal logic of any fibered poset---but it seems inadequate to deal
directly with unbounded quantifiers.

Thus, we start instead with the \emph{Kripke-Joyal semantics} (see
e.g.~\cite[VI.6]{mm:shv-gl}), according to which $\mm{\ph}\mono A$ can
be defined indirectly by characterizing the sub-presheaf
$\bS(-,\mm{\ph})\mono \bS(-,A)$, i.e.\ characterizing which maps $U\to
A$ factor through $\mm{\ph}$.  The usual construction of $\mm{\ph}$
then becomes a \emph{proof} that this subfunctor is representable.
The stack semantics consists of defining a sub-presheaf
``$\bS(-,\mm{\ph})$'' for any formula \ph\ in a similar way, though it
will not in general be representable.  (This also makes the connection
with $\bSh(\bS)$ clear, since this sub-presheaf is in fact a sub-sheaf
and thus a genuine subobject in $\bSh(\bS)$.)

% The ``natural'' home for the stack semantics is a Heyting pretopos
% \bS, but all the definitions and most of the properties only require
% \bS\ to be a positive Heyting category, so that will be our standing
% assumption.

If $U$ is an object of \bS\ and \ph\ is a formula of category theory
with parameters in $\bS/U$, we say that \ph\ is a \textbf{formula over
  $U$}.  Note that a formula over $1$ is the same as a formula in \bS\
itself.  We can think of a formula over $U$ as an assertion taking
place in each fiber.  If moreover $V\too[p] U$ is any map, its
\textbf{pullback} $p^*\ph$ is a formula over $V$ obtained by replacing
each parameter of \ph\ by its pullback along $p$.

\begin{rmk}
  Since pullbacks are only defined up to isomorphism, the notation
  $p^*\ph$ is, strictly speaking, ambiguous.  However, by
  isomorphism-invariance (Lemmas \ref{thm:isoinvar-truth} and
  \ref{thm:forcing-basics}\ref{item:forcing-4}), the particular
  choices made are irrelevant.  Moreover, since only a finite number
  of choices are ever needed for any particular formula, making such
  choices requires no axiom of choice in the metatheory.
\end{rmk}

% Following \autoref{cnv:elements}, for any $X\in\bS/U$ we write $x\in
% X$ to mean $x\maps 1_U\to X$.  If $x,y\in X$ we call the atomic
% formula $(x=y)$ \textbf{\ddo-atomic}, and quantifiers like $\exists
% x\in X.$ and $\forall x\in X.$ we call \textbf{\ddo-quantifiers}.

% In the following definition, the intended interpretation of the
% relation $U\ff\ph$ (read \textbf{$U$ forces \ph}) is that each fiber
% of $\ph$ is true (``internally'').

\begin{defn}[The stack semantics]\label{defn:ff}
  Let \bS\ be a positive Heyting category, and let \ph\ be a sentence over $U$
  in \bS.  The relation $U\ff\ph$ is defined recursively as follows.
  \begin{labellist}{}{\labelwidth=2.4em\leftmargin=3em}
  \item{\rm(\ff\fa)} $U\ff (f=g)$ iff in fact $f=g$.\label{item:ff-atomic}
  \item{\rm(\ff\top)} $U\ff \top$ always.
  \item{\rm(\ff\bot)} $U\ff \bot$ iff $U$ is an initial object.
  \item{\rm(\ff\meet)} $U\ff (\ph\meet\psi)$ iff $U\ff \ph$ and $U\ff \psi$.
  \item{\rm(\ff\join)} $U\ff (\ph\join \psi)$ iff $U=V\cup W$, where
    $V\xmono{i} U$ and $W\xmono{j} U$ are subobjects such that $V\ff
    i^*\ph$ and $W\ff j^*\psi$.\label{item:ff-or2}
  \item{\rm(\ff\imp)} $U\ff (\ph\imp\psi)$ iff for any $V\too[p] U$ such that
    $V\ff p^*\ph$, also $V\ff p^*\psi$.\label{item:ff-imp2}
  \item{\rm(\ff\neg)} $U\ff\neg\ph$ iff $U\ff(\ph\imp\bot)$.
  \item{\rm(\ff$\im_0$)} $U\ff \exists X. \ph(X)$ iff there is a
    regular epimorphism $V\xepi{p} U$ and an object $A\in\bS/V$ such that
    $V\ff p^*\ph(A)$.\label{item:ff-existsobj2}
  \item{\rm(\ff$\im_1$)} Similarly, $U\ff \exists f\maps A\to B.
    \ph(f)$ iff there is a regular epimorphism $V\xepi{p} U$ and an
    arrow $g\maps p^*A\to p^*B$ in $\bS/V$ such that $V\ff
    p^*\ph(g)$.\label{item:ff-existsarr2}
  \item{\rm(\ff$\coim_0$)} $U\ff \forall X. \ph(X)$ iff for any
    $V\too[p] U$ and any object $A\in \bS/V$, we have $V\ff
    p^*\ph(A)$.\label{item:ff-forallobj2}
  \item{\rm(\ff$\coim_1$)} Similarly, $U\ff \forall f\maps A \to B.
    \ph(f)$, where $A$ and $B$ are objects of $\bS/U$, iff for any
    $V\too[p] U$ and any arrow $p^*A\too[j] p^*B$ in $\bS/V$, we have
    $V\ff p^*\ph(j)$.\label{item:ff-forallarr2}
  \end{labellist}
  If \ph\ is a formula over $1$ (i.e.\ a formula in \bS), we say \ph\
  is \textbf{valid} if $1\ff\ph$.
\end{defn}

We occasionally write $U\ff_\bS \ph$ if we wish to emphasize the
category \bS.  We may also write $V\ff \ph$ instead of $V\ff p^*\ph$
if the map $p$ is obvious from context.

We now prove a couple of basic lemmas about the stack semantics.

\begin{lem}\label{thm:forcing-basics}
  Let \ph\ be a sentence over $U$ in a positive Heyting category \bS.
  \killspacingtrue
  \begin{enumerate}
  \item If $\ph'$ is an isomorph of \ph, then $U\ff \ph'$ if and only
    if $U\ff \ph$.\label{item:forcing-4}
  \item If $U\ff \ph$, then for any $V\too[p] U$ we have $V\ff
    p^*\ph$.\label{item:forcing-1}
  \item If $V\xepi{p} U$ is a regular epimorphism and $V\ff p^*\ph$, then
    $U\ff \ph$.\label{item:forcing-2}
  \item If $U$ is initial, then $U\ff\ph$ for any
    \ph.\label{item:forcing-0}
  \item If $U=V\cup W$ with $V\ff\ph$ and $W\ff\ph$, then $U\ff
    \ph$.\label{item:forcing-3}
  \end{enumerate}
  \killspacingfalse
\end{lem}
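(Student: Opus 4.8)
The plan is to establish all five clauses simultaneously by induction on the structure of the sentence $\ph$, reading off the cases from the recursive clauses of \autoref{defn:ff}. Clause~\ref{item:forcing-4} (invariance under isomorphs) is the forcing-relation analogue of \autoref{thm:isoinvar-truth}; I would dispatch it first by a routine induction, and then use it freely throughout to absorb the ambiguity in the notation $p^*\ph$, which is defined only up to isomorphism. Clause~\ref{item:forcing-0} ($U$ initial) is likewise a quick induction: the atomic, $\top$, $\meet$, $\bot$ cases are immediate; for $\join$ and $\im$ one takes the initial subobject, resp.\ the identity regular epimorphism, as witness; and for $\imp$, $\neg$, $\coim$ one uses that the initial object is \emph{strict} in a positive category, so that every $V\too[p]U$ with $U$ initial has $V$ initial as well, whence the inductive hypothesis applies vacuously.

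For clause~\ref{item:forcing-1} (stability under pullback along some $p\maps V\to U$), the ``universal'' connectives $\imp$, $\neg$, $\coim$ require no induction: e.g.\ if $U\ff\ph_1\imp\ph_2$ and $r\maps T\to V$ satisfies $T\ff r^*p^*\ph_1$, then $pr\maps T\to U$, so $T\ff(pr)^*\ph_1$ by~\ref{item:forcing-4} and hence $T\ff(pr)^*\ph_2$, i.e.\ $T\ff r^*p^*\ph_2$. The atomic, $\top$, $\meet$ cases are trivial, the $\bot$ case uses strictness of the initial object, and the ``existential'' cases $\join$ and $\im$ combine the inductive hypothesis with the pullback-stability of finite unions and of regular epimorphisms in a coherent category: a cover $U=V_1\cup V_2$ pulls back to $V=p^{-1}V_1\cup p^{-1}V_2$, and a regular epi $W\epi U$ pulls back to a regular epi $W\times_U V\epi V$.

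Clauses~\ref{item:forcing-2} and~\ref{item:forcing-3} (descent along a single regular epimorphism, resp.\ along a binary cover by subobjects) are the substantial parts, and I would run them as parallel inductions, each invoking~\ref{item:forcing-4} and~\ref{item:forcing-1} freely. In~\ref{item:forcing-2}: the atomic case needs that $p^*$ is faithful on parallel arrows, which holds because the pullback of the regular epi $p$ to the common domain is again regular epic, hence epic; the $\bot$ case needs that a quotient of a strict initial object is initial; and the $\im$ cases are easy, since one just composes the given cover with $p$. The $\imp$, $\neg$, $\coim$ cases go by a pullback-and-recover argument: given a test map $q\maps S\to U$ (with a test object where relevant), pull it back along $p$ to the regular epi $S\times_U V\epi S$, transport the relevant data there (using~\ref{item:forcing-1} and~\ref{item:forcing-4} for the pullback-of-a-formula bookkeeping), apply the hypothesis that $V$ forces the formula, and descend along $S\times_U V\epi S$ by the inductive hypothesis of~\ref{item:forcing-2}. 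The one genuinely structural case is $\join$: writing $\im_p\maps\Sub(V)\to\Sub(U)$ for the direct-image functor, which preserves finite joins since it is left adjoint to $p^*$, a cover $V=V_1\cup V_2$ whose pieces force the two disjuncts pushes forward to $U=\im_p V_1\cup\im_p V_2$ (here $\im_p(\top)=\top$ because $p$ is a regular epi), and each $V_i\epi\im_p V_i$ is regular epic, so the inductive hypothesis upgrades $V_i\ff(-)$ to $\im_p V_i\ff(-)$. Clause~\ref{item:forcing-3} is entirely analogous, with the same delicate cases: for $\join$ and $\im$ one uses extensivity to pass to a single cover --- $V+W$ maps regular-epically onto $U=V\cup W$, $\bS/(V+W)\eqv\bS/V\times\bS/W$, and a coproduct of regular epis is regular epic --- while for $\imp$, $\neg$, $\coim$ one pulls the test datum back separately along $V\mono U$ and along $W\mono U$ and recombines using the inductive hypothesis of~\ref{item:forcing-3}.

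The main obstacle is keeping the mutual dependence among the clauses straight so that the simultaneous induction goes through: the $\imp$/$\neg$/$\coim$ cases of~\ref{item:forcing-2} and~\ref{item:forcing-3} are where this is most delicate, since each must chain the already-established clauses~\ref{item:forcing-1} and~\ref{item:forcing-4} with the inductive hypothesis of the \emph{same} clause through a pullback square, and one has to verify that every ``pullback of a formula'' ambiguity is absorbed by~\ref{item:forcing-4}. The hypotheses on \bS\ beyond those of a regular category enter only in these descent clauses --- coherence (pullback-stable finite unions, hence existence of $\im_p$ and its preservation of joins) for the $\join$ case, and positivity/extensivity (strictness of the initial object; $\bS/(A+B)\eqv\bS/A\times\bS/B$; stability of regular epis under coproducts) for the $\bot$, $\join$ and $\im$ cases of~\ref{item:forcing-3} --- and no clause needs exactness of \bS.
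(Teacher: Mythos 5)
Your proposal is correct and is essentially the paper's own argument: a simultaneous induction on formulas using isomorphism-invariance, pullback-stability of unions and regular epis, strictness of the initial object, images for the $\join$ case of descent along a regular epi, composition of regular epis for $\im$, and the extensivity/disjoint-coproduct trick ($V'+W'\epi U$, $X+Y$) for the existential case of binary-union descent. Your slight variant for the $\join$/$\im$ cases of clause~\ref{item:forcing-3} (descending along $V+W\epi U$ via clause~\ref{item:forcing-2} for the same formula) is sound provided, as you note, that within each induction step clause~\ref{item:forcing-2} is established before clause~\ref{item:forcing-3}, which it is.
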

\begin{proof}
  These are all inductions on formulas.  \ref{item:forcing-4} is easy,
  just like \autoref{thm:isoinvar-truth}, and \ref{item:forcing-1}
  needs only the fact that regular epis and unions are stable under
  pullback and initial objects are strict (that is, any map $V\to 0$
  is an isomorphism).  Strictness of initial objects also immediately
  implies \ref{item:forcing-0}.

  For~\ref{item:forcing-2}, \top, \bot, and \meet\ are trivial.
  Atomic formulas, \imp, and \coim\ follow from pullback-stability of
  regular epimorphisms.  For \join, we take images, and for \im, we
  just compose regular epis.

  \ref{item:forcing-3} is only slightly more involved.  Once again,
  \top, \bot, and \meet\ are trivial, while atomic formulas, \imp, and
  \coim\ follow from pullback-stability of unions.  \join\ is also
  easy since unions distribute over themselves.  For $\im_0$, if we
  have $U=V\cup W$ with regular epis $V'\epi V$ and $W'\epi W$, and
  objects $X\in\bS/V'$ and $Y\in\bS/W'$ such that $V'\ff\ph(X)$ and
  $W'\ff\ph(Y)$, then because coproducts are disjoint and stable, $X+Y
  \in\bS/(V'+W')$ pulls back to $X$ and $Y$ over $V$ and $W$,
  respectively; thus by the inductive hypothesis, $V'+W'\ff\ph(X+Y)$.
  And $V'+W'\to U$ is regular epic, so $U\ff\exists X.\ph(X)$.  The
  case of $\im_1$ is analogous.  
\end{proof}

We refer to \autoref{thm:forcing-basics}\ref{item:forcing-2},
\ref{item:forcing-0}, and~\ref{item:forcing-3} as \textbf{descent} of
forcing.  Collectively, they say that ``forcing descends along finite
jointly effective-epimorphic families''.

\begin{lem}\label{thm:deduction}
  The usual rules of deduction for intuitionistic logic are sound for
  \ff.  In other words, if $\ph\pp\psi$ is provable and $U\ff\ph$,
  then also $U\ff\psi$.
\end{lem}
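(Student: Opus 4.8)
The statement is a soundness theorem for intuitionistic deduction with respect to the forcing relation $\ff$, so the natural approach is induction on the derivation of $\ph \pp \psi$. The key is that we have already assembled, in \autoref{thm:forcing-basics} and the recursive clauses of \autoref{defn:ff}, exactly the structural facts needed to verify each inference rule of a natural-deduction (or sequent) calculus for intuitionistic first-order logic: stability under pullback (\ref{item:forcing-1}), descent along regular epis (\ref{item:forcing-2}), descent along finite covers by subobjects (\ref{item:forcing-3}), strictness of the initial object (\ref{item:forcing-0}), and isomorph-invariance (\ref{item:forcing-4}). So the proof is really a matter of checking, rule by rule, that $\ff$ satisfies the expected introduction and elimination principles, all relative to the poset of subobjects of (and maps into) the forcing object $U$, which one should think of as a Heyting-algebra-valued / sheaf-semantics argument carried out syntactically.

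\textbf{Key steps, in order.} First I would fix a concrete proof system — say a sequent calculus with sequents $\Gamma \pp \ph$ where $\Gamma$ is a finite list of formulas over $U$ — and reformulate the claim as: if $\Gamma \pp \ph$ is derivable and $U \ff \bigwedge\Gamma$, then $U \ff \ph$; the case $\Gamma$ empty is the stated lemma. Then I would proceed by induction on the derivation. The structural rules (weakening, contraction, exchange, cut) are immediate from the definition of $\ff$ on $\meet$ together with \ref{item:forcing-1}. For the logical rules I would check: $\top$-introduction and $\bot$-elimination (the latter using \ref{item:forcing-0}); $\meet$ rules (trivial from the clause $(\ff\meet)$); $\imp$ rules — introduction unwinds directly from the clause $(\ff\imp)$, and elimination (modus ponens) is essentially the content of that clause applied to $p = 1_U$; $\neg$ rules reduce to the $\imp$ case since $\neg\ph := (\ph\imp\bot)$; $\join$-introduction is immediate (take one of $V,W$ to be $U$ and the other initial), while $\join$-elimination uses that $U = V\cup W$ with $V \ff i^*\ph$, $W \ff j^* \psi$, so that after pulling the side hypotheses back along $i$ and $j$ one applies the inductive hypothesis on each piece and then reassembles via descent \ref{item:forcing-3}. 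The quantifier rules are the subtle ones: $\coim$-introduction and $\coim$-elimination follow fairly directly from $(\ff\coim_0)$/$(\ff\coim_1)$ together with pullback-stability, being careful that the eigenvariable in $\coim$-introduction is genuinely fresh so that the side hypotheses pull back unchanged; $\im$-introduction is the ``witness'' direction of $(\ff\im_0)$/$(\ff\im_1)$ with $V = U$; and $\im$-elimination — from $\Gamma \pp \exists X.\ph(X)$ and $\Gamma,\ph(X) \pp \psi$ with $X$ not free in $\Gamma$ or $\psi$, conclude $\Gamma \pp \psi$ — is handled by: the first premise gives a regular epi $p\colon V \epi U$ and $A\in\bS/V$ with $V \ff p^*\ph(A)$; pull $\Gamma$ back along $p$ (it pulls back by \ref{item:forcing-1}); apply the inductive hypothesis for the second premise over $V$ with the object-parameter $A$ substituted for $X$, obtaining $V \ff p^*\psi$; then descend along the regular epi $p$ using \ref{item:forcing-2} to get $U \ff \psi$. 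Throughout, isomorph-invariance \ref{item:forcing-4} is what lets substitution of parameters be well-defined.

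\textbf{The main obstacle.} The genuinely delicate point is the $\exists$-elimination rule (and symmetrically the correct handling of eigenvariables in $\forall$-introduction): one must be sure that passing to the cover $V \epi U$ does not disturb the context $\Gamma$, which is exactly why the freshness condition on the bound variable is needed, and one must invoke descent \ref{item:forcing-2} in precisely the right place. A secondary bookkeeping nuisance is that the stack semantics is two-sorted (object-variables and arrow-variables), so every quantifier rule must be stated and checked in both sorts; but the arguments for arrows are word-for-word parallel to those for objects, using the $_1$-clauses in place of the $_0$-clauses, so I would prove the object case in detail and remark that the arrow case is identical. Finally, I would note that no clause uses exactness of \bS, only that \bS\ is a positive Heyting category, consistently with the hypothesis under which \autoref{defn:ff} and \autoref{thm:forcing-basics} were stated.
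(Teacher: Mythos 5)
Your proposal is correct and follows essentially the same route as the paper: a rule-by-rule verification of a standard intuitionistic calculus, with the only nontrivial cases being falsity (via \autoref{thm:forcing-basics}\ref{item:forcing-0}), disjunction elimination (via the decomposition $U=V\cup W$ and descent, \autoref{thm:forcing-basics}\ref{item:forcing-3}), and existential elimination (via the regular-epi cover and descent, \autoref{thm:forcing-basics}\ref{item:forcing-2}), with pullback-stability handling contexts and the remaining rules. The only difference is presentational — you fix a sequent calculus with contexts and induct on derivations, while the paper verifies the axioms of the system in \cite[D1.3.1]{ptj:elephant} — which does not change the mathematical content.
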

\begin{proof}
  Straightforward verification; see, for
  instance,~\cite[D1.3.1]{ptj:elephant} for the axioms we have to
  prove.  The only slightly nontrivial axioms are the rules for
  falsity, disjunction, and existential quantification.  The axiom
  \[\bot\imp\ph\]
  for falsity follows from
  \autoref{thm:forcing-basics}\ref{item:forcing-0}.  For the axiom
  \[(\ph\imp\chi)\impp((\psi\imp\chi)\imp((\ph\join\psi)\imp\chi))\]
  we must check that if $U\ff(\ph\imp\chi)$, $U\ff(\psi\imp\chi)$, and
  $U\ff(\ph\join\psi)$, then $U\ff\chi$.  But $U\ff(\ph\join\psi)$
  means $U=V\cup W$ with $V\ff\ph$ and $W\ff\psi$, while the other two
  hypotheses imply $V\ff\chi$ and $W\ff\chi$; so $U\ff\chi$ follows by
  descent of forcing
  (\autoref{thm:forcing-basics}\ref{item:forcing-3}).  Finally, for
  the axiom
  \[\forall X.(\ph(X)\imp\psi) \impp (\exists X.\ph(X)\imp\psi)\]
  we must check that if $U\ff\forall X.(\ph(X)\imp\psi)$ and
  $U\ff\exists X.\ph(X)$, then $U\ff\psi$.  But $U\ff\exists X.\ph(X)$
  means we have a regular epimorphism $V\xepi{p} U$ and $A\in \bS/V$ with
  $V\ff p^*\ph(A)$.  The other assumption then implies that $V\ff
  p^*\psi$, so $U\ff\psi$ again follows by descent of forcing
  (\autoref{thm:forcing-basics}\ref{item:forcing-2}).  Quantification
  over arrows is identical.
\end{proof}

Now we consider the relationship of the stack semantics to the usual
internal logic, by way of representing objects.

\begin{defn}\label{def:complete-logic}
  Let \bS\ be a positive Heyting category and \ph\ a sentence in \bS\
  over $U$.  We say that a subobject $\mm{\ph}\mono U$
  \textbf{represents} or \textbf{classifies} \ph\ if for any $V\too[p]
  U$,
  \[V\ff p^*\ph \quad\iff\quad p \text{ factors through } \mm{\ph}.
  \]
\end{defn}

Evidently \mm{\ph} is unique up to isomorphism, if it exists.  As
suggested earlier, we can then prove that the usual constructions used
in the internal logic do produce such classifying subobjects.

\begin{prop}\label{thm:ddo-classif}
  Every \ddo-sentence in a positive Heyting category is classified.
\end{prop}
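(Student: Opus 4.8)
The plan is to prove this by induction on the structure of the \ddo-sentence $\ph$ over $U$, constructing the classifying subobject $\mm{\ph}\mono U$ using exactly the standard Kripke-Joyal constructions, and verifying the representing property (Definition~\ref{def:complete-logic}) at each step using descent of forcing (\autoref{thm:forcing-basics}) and the stability properties of a positive Heyting category. Recall that by \autoref{thm:ddo-corresp} and \autoref{cnv:elements}, a \ddo-sentence over $U$ is the same data as a formula in the internal first-order logic of $\bS/U$, so in effect we are re-proving that Kripke-Joyal semantics is representable, but now phrased via the $\ff$ relation of Definition~\ref{defn:ff}. More precisely I would prove the apparently stronger statement: for any \ddo-\emph{formula} $\ph(x)$ over $U$ with one free \ddo-variable $x\colon 1\to A$ (where $A\in\bS/U$), there is a subobject $\mm{\ph}\mono A$ such that for all $V\too[p]U$ and all $a\colon p^*1\to p^*A$ in $\bS/V$, we have $V\ff p^*\ph(a)$ iff $a$ factors through $p^*\mm{\ph}$; the sentence case is the special case $A=1$ (and free variables are handled by the context/parameter mechanism).

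First I would handle the atomic case: a \ddo-atomic formula is an equality $f(x)=g(x)$ of parallel arrows, which is classified by the equalizer, whose formation is stable under pullback in a Heyting category. Then $\top$ is classified by the maximal subobject, $\bot$ by the initial object $0\mono A$ (using strictness of $0$), and $\meet$ by intersection of subobjects. For $\join$ I would use the union $\mm{\ph}\cup\mm{\psi}$, where the forcing clause (\ff\join) matches up with the fact that a map factors through a union iff it factors through one of the two pieces after passing to a suitable cover---this uses pullback-stability of unions together with descent (\autoref{thm:forcing-basics}\ref{item:forcing-3}). For $\imp$ I would use Heyting implication of subobjects, and for $\neg$ the special case with $\psi=\bot$; the key point is that $V\ff p^*(\ph\imp\psi)$ is defined by a universal quantification over all further pullbacks $W\to V$, which is exactly matched by the universal property of Heyting implication once one knows the inductive hypothesis holds for $\ph$ and $\psi$ over all objects. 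For the \ddo-quantifiers $\exists x\in B.\ph$ and $\forall x\in B.\ph$ (quantifiers over a \ddo-variable, i.e.\ over global sections of some $B\in\bS/U$), I would use the image and dual-image functors $\im_{\pi}$ and $\coim_{\pi}$ along the appropriate projection; here the $\exists$ case crucially uses that the forcing clause allows passage to a regular-epi cover, which matches the fact that $a$ factors through $\im_\pi(S)$ iff locally (on a cover) it lifts.

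**The main obstacle** I anticipate is bookkeeping the inductive hypothesis in the right generality: because the forcing clauses for $\imp$, $\neg$, and $\coim$ quantify over \emph{all} further maps $V\too[p]U$, the induction must carry the representing property simultaneously over all base objects, and the classifying subobjects must be shown to be compatible with pullback (i.e.\ $p^*\mm{\ph} = \mm{p^*\ph}$ up to canonical iso). This compatibility is itself an induction, relying on the fact that in a positive Heyting category all the relevant operations---equalizers, unions, intersections, Heyting implication, images, dual images---are stable under pullback; this is precisely what makes \bS\ a Heyting category in the first place. A secondary subtlety is the handling of \ddo-quantifiers over a general $B\in\bS/U$ rather than over $1$: one must be careful that "$\exists x\in B$" in the \ddo-sense ranges over the appropriate generalized elements so that the image/dual-image along $B\to U$ (or rather along the relevant projection in the context) is the correct construction, but this is exactly the content of the translation in \autoref{thm:ddo-corresp}. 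I would remark that this is, modulo the reformulation via $\ff$, the classical soundness-and-completeness proof for Kripke-Joyal semantics, e.g.\ as in~\cite[VI.6]{mm:shv-gl} or~\cite[D1.3]{ptj:elephant}, so I would keep the verification brief and refer there for the routine cases.
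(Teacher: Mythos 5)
Your proof is correct, but it takes a more self-contained route than the paper does. You re-prove representability directly: you strengthen the statement to \ddo-formulas with a free variable over an arbitrary base, build $\mm{\ph}$ by induction using equalizers, intersections, unions, Heyting implication, images and dual images, and verify the representing property clause-by-clause against Definition~\ref{defn:ff}, carrying pullback-compatibility of the classifiers through the induction. The paper instead reduces to the classical theorem in one step: it translates a \ddo-sentence over $U$ into a \ddo-sentence in $\bS$ with a single free variable $u\in U$ (reading parameters $X\to U$ in $\bS/U$ as objects of $\bS$ and relativizing each bounded quantifier $\exists x\in X$, $\forall x\in X$ by inserting the condition $fx=u$), pushes this through \autoref{thm:ddo-corresp} to get a formula $\check{\ph}(u)$ in the usual internal logic of $\bS$, observes by comparing the forcing clauses that $V\ff p^*\ph$ in the stack semantics coincides with $V\ff\check{\ph}(p)$ in the ordinary Kripke--Joyal semantics, and then cites the standard representing subobject of $\check{\ph}$. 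What your approach buys is independence from the relativization trick and from the precise statement of the classical representability theorem, at the cost of redoing the soundness induction (your identified obstacles---carrying the claim over all bases and checking $p^*\mm{\ph}\cong\mm{p^*\ph}$, which rests on pullback-stability of all the Heyting-category operations---are exactly where the work lies, and your treatment of the $\join$ and $\exists$ clauses via descent along unions and regular epis is the right mechanism). What the paper's route buys is brevity and an explicit articulation of the slogan that the usual internal logic is precisely the \ddo-fragment of the stack semantics, which it then records as the corollary immediately following the proposition; since you end by deferring the routine cases to the standard references anyway, the two arguments ultimately rest on the same classical content.
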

\begin{proof}
  First note that a \ddo-sentence over $U$ can be translated into a
  \ddo-sentence in \bS\ with one free variable $u\in U$.  We interpret
  parameters in $\bS/U$ as parameters in \bS\ by ignoring their
  structure maps to $U$, while for any $(X\too[f] U) \in \bS/U$ we
  modify the \ddo-quantifiers $\forall x\in X.\ph$ and $\exists x\in
  X.\ph$ to read instead $\forall x\in X.((f x = u) \imp \ph)$ and
  $\exists x\in X.((f x = u) \meet \ph)$.  Now we further translate
  this formula along the equivalence of \autoref{thm:ddo-corresp} to
  obtain a formula $\check{\ph}(u)$ in the usual internal logic of
  \bS, with one free variable $u:U$.  Comparing definitions shows that
  for any $p\colon V\to U$, we have $V\ff p^*\ph$ in the stack
  semantics exactly when $V\ff\check{\ph}(p)$ in the usual
  Kripke-Joyal semantics.  It then follows, by the usual
  characterization of Kripke-Joyal semantics, that the standard
  representing subobject of $\check{\ph}$ also classifies \ph\ in the
  above sense.
\end{proof}

In particular, the usual internal logic of a positive Heyting category
is exactly the \ddo-fragment of the stack semantics.  More precisely:

\begin{cor}
  A formula in the internal logic of \bS\ is valid if and only if its
  translate under the correspondence of \autoref{thm:ddo-corresp} is
  valid in the stack semantics.
\end{cor}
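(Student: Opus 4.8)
The plan is to read this off from \autoref{thm:ddo-classif} in the special case $U = 1$. First I would reduce to sentences: both notions of validity extend from sentences to arbitrary formulas by taking universal closures, and under the correspondence of \autoref{thm:ddo-corresp} (together with its translation of $\forall$) the translate of the universal closure of $\ph$ is the universal closure of the translate of $\ph$. So it suffices to treat a sentence $\ph$ in the internal logic of \bS, which corresponds under \autoref{thm:ddo-corresp} to a \ddo-sentence $\ph$ over $1$ — equivalently, a \ddo-sentence in \bS\ itself, whose validity in the stack semantics means exactly $1\ff\ph$.

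Now, by definition $\ph$ is valid in the internal logic of \bS\ exactly when the subobject $\mm{\ph}\mono 1$ produced by the standard inductive interpretation is an isomorphism. On the other hand, \autoref{thm:ddo-classif} asserts that this very subobject $\mm{\ph}$ classifies $\ph$ in the sense of \autoref{def:complete-logic}: for every $p\colon V\to 1$ we have $V\ff p^*\ph$ if and only if $p$ factors through $\mm{\ph}\mono 1$. Instantiating this with $V = 1$ and $p = 1_1$, and observing that $1_1\colon 1\to 1$ factors through a subobject $\mm{\ph}\mono 1$ precisely when that subobject is an isomorphism, we obtain that $1\ff\ph$ if and only if $\mm{\ph}\cong 1$. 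Chaining the two equivalences shows that $\ph$ is valid in the stack semantics if and only if it is valid in the internal logic, as required.

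There is no real obstacle here; the content is entirely in \autoref{thm:ddo-classif}, and the only care needed is the bookkeeping point that the classifying subobject furnished by its proof is literally the same subobject used to define validity in the internal logic — which is immediate from how that proof is set up (it identifies $V\ff p^*\ph$ with $V\ff\check\ph(p)$ in the Kripke--Joyal semantics and then invokes the usual characterization of the latter). If one preferred to avoid the reduction to sentences, one could argue directly with a free variable $x\colon 1\to X$, using that \autoref{thm:ddo-classif} classifies the corresponding \ddo-formula by a subobject of $X$ which is all of $X$ exactly when the internal-logic formula is valid.
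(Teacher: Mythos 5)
Your proof is correct and follows exactly the route the paper intends: the corollary is stated as an immediate consequence of \autoref{thm:ddo-classif}, whose proof identifies the stack-semantics forcing of a \ddo-sentence with the Kripke--Joyal semantics of its internal-logic counterpart, so that the classifying subobject is the standard interpretation $\mm{\ph}$ and validity on both sides amounts to $\mm{\ph}$ being the maximal subobject. Your reduction to sentences (or the direct argument with a subobject of $X$) is just the bookkeeping the paper leaves implicit.
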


If \bS\ is a \Pi-pretopos, then classifiers for all arrow-quantifiers
can be constructed from those for \ddo-quantifiers, since morphisms
$1\to Y^X$ are the same as morphisms $X\to Y$.  Likewise, if \bS\ is a
topos, we can classify quantifiers over subobjects.  However, there is
no hope to classify arbitrary object-quantifiers in this way, and in
general not all sentences will be classified.  This suggests the
following definition.

\begin{defn}
  A positive Heyting category is \textbf{autological} if all sentences
  over all objects of \bS\ are classified.
\end{defn}

In \autoref{thm:clog-sep} we will show that \bS\ is autological
precisely when its stack semantics validates the structural axiom of
separation from \S\ref{sec:more-sst}.  But first, we need to
investigate more generally how validity in the stack semantics
relates to ``external'' properties of \bS\ itself.

Like the ordinary internal logic which it generalizes, validity in the
stack semantics can be interpreted as ``local'' truth.  In particular,
for ``statements which are their own localizations,'' validity in the
stack semantics is equivalent to external truth.
Chief among ``statements which are their own localizations'' are
assertions of \emph{pullback-stable universal properties}.  More
generally, by interpreting universal properties in the stack semantics
we transform them into their ``localizations.''  In order to make this
precise, we will reuse the notions of context and instantiation from
the (closely related) discussion of replacement axioms in
\S\ref{sec:strong-ax}.

\begin{thm}\label{thm:prestack-univ}
  Let \bS\ be a positive Heyting category, let $(\Gm,\De)$ be a
  context in \bS\ such that $\De$ consists only of arrow-variables,
  and let $\ph(\Gm)$ and $\psi(\Gm,\De)$ be formulas.  Then the
  following are equivalent.
  \killspacingtrue
  \begin{enumerate}
  \item $1\ff$ \qq{for any $\vec{A}:\Gm$ such that $\ph(\vec{A})$,
      there exists a unique $\vec{f}:\De$ extending $\vec{A}$ such
      that $\psi(\vec{A},\vec{f})$.}\label{item:psu1}
  \item For any $U$ and any $\vec{A}:\Gm$ in $\bS/U$ such that $U\ff
    \ph(\vec{A})$, there exists a unique $\vec{f} : \De$ in $\bS/U$
    extending $\vec{A}$ such that $U\ff
    \psi(\vec{A},\vec{f})$.\label{item:psu2}
  \end{enumerate}
  \killspacingfalse
\end{thm}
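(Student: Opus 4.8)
The plan is to prove the equivalence by a bidirectional argument, with the implication \ref{item:psu2}$\Rightarrow$\ref{item:psu1} being essentially trivial and the converse being the substantive direction. For \ref{item:psu2}$\Rightarrow$\ref{item:psu1}, one simply takes $U=1$ and unwinds the stack-semantics clauses \rm(\ff$\coim_0$), \rm(\ff$\coim_1$), \rm(\ff$\im_1$), \rm(\ff$\meet$), and \rm(\ff$\imp$) for the universally-quantified sentence in \ref{item:psu1}: its validity at $1$ says precisely that for every $V\to 1$ (i.e., every $V$) and every $\vec A:\Gm$ over $V$ with $V\ff\ph(\vec A)$, there is a unique $\vec f:\De$ over $V$ extending $\vec A$ with $V\ff\psi(\vec A,\vec f)$, which is \ref{item:psu2}.

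For the substantive direction \ref{item:psu1}$\Rightarrow$\ref{item:psu2}, I would fix $U$ and $\vec A:\Gm$ in $\bS/U$ with $U\ff\ph(\vec A)$. Reading off the meaning of validity at $1$ of the sentence in \ref{item:psu1} and pulling back along the unique map $U\to 1$ (using \autoref{thm:forcing-basics}\ref{item:forcing-1}), I immediately obtain the \emph{existence} of a unique $\vec f:\De$ over $U$ extending $\vec A$ with $U\ff\psi(\vec A,\vec f)$ --- but wait, this is not automatic, because clause \rm(\ff$\coim_0$)/\rm(\ff$\coim_1$) only tells us that for each such $U$ and $\vec A$ there \emph{exists} $\vec f$, whereas the uniqueness asserted \emph{inside} the stack semantics is uniqueness \emph{in the stack-semantics sense}, not obviously uniqueness on the nose. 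So the real content is: (a) deduce from \ref{item:psu1} that for every $U$ with $U\ff\ph(\vec A)$ there is at least one $\vec f$ over $U$ with $U\ff\psi(\vec A,\vec f)$ (this is a direct unwinding of the $\forall$/$\exists$ clauses, since $\De$ consists only of arrow-variables so only \rm(\ff$\im_1$) is used), and (b) show this $\vec f$ is genuinely unique as an actual instantiation in $\bS/U$. For (b), suppose $\vec f$ and $\vec g$ both extend $\vec A$ over $U$ with $U\ff\psi(\vec A,\vec f)$ and $U\ff\psi(\vec A,\vec g)$; the hypothesis \ref{item:psu1}, unwound, gives that there is a regular epimorphism (in fact we can take $U$ itself, since the uniqueness clause inside \ref{item:psu1} is a $\forall$ over maps into the relevant object) forcing $\vec f=\vec g$, and since each component is an arrow and \rm(\ff\fa) says $U\ff(f=g)$ iff $f=g$ literally, applying \autoref{thm:forcing-basics}\ref{item:forcing-2} (descent along the regular epi) yields $\vec f=\vec g$ in $\bS/U$. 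Here I would use crucially that $\De$ contains only arrow-variables, so the internal equality being forced is genuine equality of arrows, and that regular epis are effective so descent of equalities is valid.

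The main obstacle I anticipate is keeping the bookkeeping straight between ``uniqueness asserted inside the stack semantics'' and ``uniqueness as literal instantiations'': one must carefully expand the clause $U\ff$\qq{there exists a unique $\vec f:\De$ extending $\vec A$ such that $\psi(\vec A,\vec f)$} into its conjunction of an existence part ($U\ff\exists\vec f.(\psi)$) and a uniqueness part ($U\ff\forall\vec f.\forall\vec g.((\psi(\vec f)\meet\psi(\vec g))\imp \vec f=\vec g)$), then pull back both parts along $U\to 1$, then invoke descent (\autoref{thm:forcing-basics}\ref{item:forcing-2} and \ref{item:forcing-1}) to trade stack-semantics truth over covers for literal truth over $U$. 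The restriction that $\De$ consists only of arrow-variables is exactly what makes the existence step use only \rm(\ff$\im_1$) (which produces a cover $V\epi U$) in a way that combines cleanly with the no-cover-needed nature of uniqueness; for object-variables one would face the genuine subtlety that objects are only determined up to isomorphism and covers cannot in general be eliminated, which is precisely why that case is excluded from this theorem and treated separately via the context machinery. Once the unwinding is done correctly, everything else is a routine appeal to pullback-stability and descent from \autoref{thm:forcing-basics}.
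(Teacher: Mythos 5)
Your step (a) — the existence half of \ref{item:psu1}$\Rightarrow$\ref{item:psu2} — is where the proposal breaks down, and it is precisely the step that carries the real content of the theorem. Unwinding the internal existential via clause (\ff$\im_1$) does \emph{not} hand you an $\vec{f}:\De$ over $U$ itself: it only gives a regular epimorphism $V\xepi{p} U$ and an $\vec{f}$ over $V$ with $V\ff\psi(p^*\vec{A},\vec{f})$. You acknowledge in your closing remarks that the clause "produces a cover $V\epi U$" and that this must "combine cleanly with the no-cover-needed nature of uniqueness," but you never say how, and the tools you invoke cannot do it: \autoref{thm:forcing-basics} gives descent of the \emph{forcing relation} (truth of sentences descends along covers), not descent of \emph{morphisms}; it cannot manufacture an arrow over $U$ out of an arrow over $V$. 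The missing argument is the one the paper gives: the internal uniqueness clause, instantiated over the kernel pair $V\times_U V\toto V$ of $p$, shows that the two pullbacks of $\vec{f}$ to $V\times_U V$ coincide (here is where equality of arrows being on-the-nose, via (\ff\fa), matters), and then one appeals to the fact that the self-indexing of any Heyting (indeed regular) category is a \emph{prestack} for its coherent/regular topology, so that $\vec{f}$ glues to an instantiation $\vec{g}$ of $\De$ over $U$; finally descent of forcing gives $U\ff\psi(\vec{A},\vec{g})$. Without the kernel-pair computation and the prestack property, existence over $U$ is simply not established.

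Your part (b) (uniqueness over $U$) and the direction \ref{item:psu2}$\Rightarrow$\ref{item:psu1} are essentially fine, though stated a bit loosely: for uniqueness you do not even need a cover, since instantiating the internal $\coim$ and $\imp$ clauses at $1_U$ already yields $U\ff(\vec{f}=\vec{g})$, which is literal equality by (\ff\fa); and the converse direction, while easy, does require the small observation that the cover demanded by (\ff$\im_1$) can be taken to be $1_U$ and that the internal uniqueness over arbitrary $W\to U$ follows from applying \ref{item:psu2} at $W$ together with pullback-stability of forcing.
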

\begin{proof}
  By the forcing interpretations of $\coim$ and $\im$,
  statement~\ref{item:psu1} is equivalent to saying that for any $U$
  and any $\vec{A}:\Gm$ in $\bS/U$ such that $U\ff \ph(\vec{A})$,
  there is a regular epi $V\xepi{p} U$ and an $\vec{f}:\De$ in
  $\bS/V$ extending $p^*\vec{A}$ such that $V\ff
  \ph(p^*\vec{A},\vec{f})$, and moreover for any $W\xto{q} V$ and
  $\vec{h}: \De$ in $\bS/W$ extending $q^*p^*\vec{A}$ such that $W\ff
  \ph(q^*p^*\vec{A},\vec{h})$, we have $\vec{f}=\vec{h}$.

  In particular, $\vec{f}$ is unique in $\bS/V$ such that $V\ff
  \ph(p^*\vec{A},\vec{f})$, and therefore all the arrows that make it
  up will satisfy the descent conditions over the kernel pair of $p$.
  Since any Heyting category is a prestack for its coherent topology,
  $\vec{f}$ must descend to an instantiation $\vec{g}$ of \De\
  extending $\vec{A}$ over $U$, and descent of forcing implies that
  $U\ff \ph(\vec{A},\vec{f})$; this proves~\ref{item:psu2}.  The
  converse is easy.
\end{proof}

The following example will hopefully help clarify in what sense this
theorem is about pullback-stable universal properties.

\begin{eg}\label{eg:prestack-coeq}
  Let $\xymatrix{ R \ar@<1mm>[r]^r \ar@<-1mm>[r]_s & P \ar[r]^q & Q}$
  be a diagram in \bS\ such that $q r = q s$.  Let \Gm\ be the context
  $(X, f\colon P\to X)$, let $\ph(X,f)$ assert that $f r = f s$, let
  \De\ be the context $g\colon Q\to X$ extending \Gm, and let
  $\psi(X,f,g)$ assert that $g q = f$.  (Note that $R$, $P$, $Q$, $r$,
  $s$, and $q$ are \emph{parameters} of these contexts.)  Then
  \autoref{thm:prestack-univ}\ref{item:psu1} becomes $1\ff$ \qq{$q$ is
    a coequalizer of $r$ and $s$}, while~\ref{item:psu2} becomes the
  assertion that $q$ is a pullback-stable coequalizer of $r$ and $s$.
\end{eg}

Generalizing from this example in the evident way, we can interpret
\autoref{thm:prestack-univ} as saying that $1\ff$ \qq{$\vec{B}$ has
  some universal property} if and only if $U^*\vec{B}$ has the given
universal property in $\bS/U$ for any $U\in\bS$.  Similarly, for
asserting the \emph{existence} of objects with universal properties,
we have the following.  Note that here we require exactness, just as
we did for \autoref{thm:repl-ctxt}.

\begin{thm}\label{thm:stack-univ}
  Let \bS\ be a Heyting pretopos, let $(\Gm,\De)$ be a context in \bS,
  and let $\ph(\Gm)$ and $\psi(\Gm,\De)$ be formulas.  Then the
  following are equivalent.
  \killspacingtrue
  \begin{enumerate}
  \item $1\ff$ \qq{for any $\vec{A}:\Gm$ such that
      $\ph(\vec{A})$, there exists a $\vec{B}:\De$
      extending $\vec{A}$ such that $\psi(\vec{A},\vec{B})$, and any
      two such $\vec{B}$ are isomorphic by a unique isomorphism
      extending $1_{\vec{A}}$.}\label{item:su1}
  \item For any $U$ and any $\vec{A}:\Gm$ in $\bS/U$ such that $U\ff
    \ph(\vec{A})$, there exists a $\vec{B}:\De$ in $\bS/U$ extending
    $\vec{A}$ such that $U\ff \psi(\vec{A},\vec{B})$, and any two such
    $\vec{B}$ are isomorphic by a unique isomorphism extending
    $1_{\vec{A}}$.\label{item:su2}
  \end{enumerate}
  \killspacingfalse
\end{thm}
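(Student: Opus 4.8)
The plan is to mimic the proof of \autoref{thm:prestack-univ}, but now accommodating the fact that $\De$ may contain object-variables as well as arrow-variables, and that we are asserting existence only up to (unique) isomorphism rather than literal uniqueness. The key new ingredient, as the remark before the statement warns, is exactness of \bS: we will need the fact that a Heyting pretopos is a genuine \emph{stack} (not merely a prestack) for its coherent topology, exactly as in the proof of \autoref{thm:repl-ctxt}.

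First I would unwind statement~\ref{item:su1} using the forcing clauses for $\coim$, $\im$, and $\imp$, together with \autoref{thm:forcing-basics}. As in the proof of \autoref{thm:prestack-univ}, \ref{item:su1} says: for any $U$ and any $\vec{A}:\Gm$ in $\bS/U$ with $U\ff\ph(\vec A)$, there is a regular epi $p\colon V\epi U$ and a $\vec B:\De$ over $V$ extending $p^*\vec A$ with $V\ff\psi(p^*\vec A,\vec B)$, and moreover over the kernel pair $(r,s)\colon V\times_U V\toto V$ the two pulled-back instantiations $r^*\vec B$ and $s^*\vec B$ become isomorphic by a unique isomorphism extending $1_{r^*p^*\vec A}=1_{s^*p^*\vec A}$. (Here I use that $r^*p^*\vec A\cong s^*p^*\vec A$ canonically since $pr=ps$.) The descent-of-forcing lemma \autoref{thm:forcing-basics} is what lets me translate between the ``$V\ff$'' statements and the ``$U\ff$'' ones, and is also what handles the $\imp$ clause hidden in ``any two such are isomorphic.''

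Next, the heart of the argument: I want to glue $\vec B$ down to an instantiation over $U$. The uniqueness clause gives a unique isomorphism $\theta\colon r^*\vec B\toiso s^*\vec B$ over $V\times_U V$ extending the identity on the $\Gm$-part; by uniqueness again, $\theta$ satisfies the cocycle condition over $V\times_U V\times_U V$ (the two composites agree because both extend the identity and are unique with that property, over the appropriate triple fiber product). Since \bS\ is an exact category, its self-indexing $\bS/(-)$ is a stack for the regular/coherent topology, so the descent datum $(\vec B,\theta)$ glues to some $\vec B':\De$ over $U$ extending $\vec A$ with $p^*\vec B'\cong\vec B$ compatibly. Then descent of forcing (\autoref{thm:forcing-basics}\ref{item:forcing-2}) applied to the regular epi $p$ gives $U\ff\psi(\vec A,\vec B')$. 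For the uniqueness-up-to-unique-iso clause of~\ref{item:su2}: given another $\vec C:\De$ over $U$ with $U\ff\psi(\vec A,\vec C)$, pull both back to $V$, apply the forced isomorphism there, check it descends by the same stack argument (uniqueness forcing the cocycle/compatibility conditions), and conclude there is a unique isomorphism $\vec B'\cong\vec C$ over $U$ extending $1_{\vec A}$. The converse direction \ref{item:su2}$\Rightarrow$\ref{item:su1} is immediate by taking $U=1$, unwinding the forcing clauses, and using that a regular epi can always be taken to be $1_V$ when existence is already witnessed over the base.

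The step I expect to be the main obstacle is bookkeeping the descent of the object-variables in $\De$ as a genuine stack-descent problem, rather than the prestack-descent problem of \autoref{thm:prestack-univ}: one must carefully package $(\vec B,\theta)$ as a descent datum for the self-indexing, verify the cocycle condition really does follow from the ``unique isomorphism'' hypothesis (this is where uniqueness is essential — without it the $\theta$'s need not satisfy the cocycle identity), and then invoke exactness. The arrow-variable parts descend more cheaply (prestack suffices, as in \autoref{thm:prestack-univ}), so one could even separate $\De$ into its object-part and arrow-part and handle them in two passes, exactly as \autoref{thm:coll-ctxt} and \autoref{thm:repl-ctxt} did; I would probably do it that way for cleanliness, remarking that finitely many applications of stack descent suffice since $\De$ is a finite context.
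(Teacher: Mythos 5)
Your proposal is correct and follows essentially the same route as the paper, whose proof is exactly the one you describe: repeat the argument of \autoref{thm:prestack-univ}, using that uniqueness of the isomorphisms forces the cocycle condition and that a pretopos is a stack (not merely a prestack) for its coherent topology, so the descent datum over the kernel pair glues, with descent of forcing finishing the argument. Your extra bookkeeping (splitting $\De$ into object- and arrow-parts as in \autoref{thm:repl-ctxt}) is a fine way to organize the details the paper leaves implicit.
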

\begin{proof}
  Just like the proof of \autoref{thm:prestack-univ}, but using the
  facts that any pretopos is a stack for its coherent topology, and
  that uniqueness of isomorphisms implies that they satisfy the
  cocycle condition.
\end{proof}

Following on from \autoref{eg:prestack-coeq}, we have the following.

\begin{eg}\label{eg:stack-coeq}
  Let \Gm\ be the context $(R,P,r\colon R\to P, s\colon R\to P)$, let
  $\ph = \top$, let \De\ be the context $(Q,q\colon P\to Q)$ extending
  \Gm, and let $\psi$ be the formula \qq{$q$ is a coequalizer of $r$
    and $s$} considered in \autoref{eg:prestack-coeq}.  Then
  \autoref{thm:stack-univ}\ref{item:su1} becomes $1\ff$ \qq{every
    parallel pair has a coequalizer}, while~\ref{item:su2} asserts
  that every slice category of \bS\ has pullback-stable coequalizers
  (for which it suffices that \bS\ itself has them).
\end{eg}

Thus, we can interpret \autoref{thm:stack-univ} as saying that $1\ff$
\qq{for all $\vec{A}$, there exists $\vec{B}$ with some universal
  property} if and only if for any $U$ and any $\vec{A}$ in $\bS/U$,
there exists $\vec{B}$ in $\bS/U$ such that $p^*(\vec{A},\vec{B})$ has
the specified universal property in $\bS/V$ for any $p\colon V\to U$.
In the future, we will apply these theorems to all sorts of universal
properties without further comment.

% \begin{rmk}
%   We have stated these theorems in terms of validity only
%   (i.e. forcing over $1$), but it is easy to see that if \ph\ is a
%   sentence over $U$, then $U\ff\ph$ if and only if \ph\ is valid in
%   the stack semantics of $\bS/U$.  Thus, for instance, we have $U\ff$
%   \qq{exponentials exist} iff $\bS/U$ is locally cartesian closed.
% \end{rmk}

For statements \ph\ that do not express simple universal properties,
however, valid\-ity in the stack semantics (expressed by $1\ff\ph$) and
external truth (expressed by $\bS\ss\ph$) can have quite different
meanings.  For example, if \ph\ is \qq{every regular epi splits}, then
$\bS\ss\ph$ iff \bS\ satisfies the external axiom of choice (AC),
while $1\ff \ph$ iff \bS\ satisfies the internal axiom of choice
(IAC).  The most important example of the divergence of \ss\ and \ff,
however, is the following.

\begin{lem}\label{thm:int-wpt}\label{thm:int-coll}
  For any positive Heyting category \bS, we have $1\ff$ \qq{\bS\ is
    constructively well-pointed and satisfies collection}.
\end{lem}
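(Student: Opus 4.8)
The plan is to unwind \autoref{defn:ff} and verify each of the four clauses of constructive well-pointedness (\autoref{def:wpt}) and each of the three collection axioms of \S\ref{sec:strong-ax} separately; since $1\ff(\ph\meet\psi)$ iff $1\ff\ph$ and $1\ff\psi$, this will suffice. By the clauses for the object- and arrow-quantifiers, to force a universally quantified axiom it is enough to fix an arbitrary object $U$ of \bS\ and show that the axiom's hypothesis, read over $U$ (so that its quantified objects and arrows range over $\bS/U$), forces the conclusion over $U$; I will argue throughout in this form. Two preliminaries will do most of the work. First, for $X\in\bS/U$ with structure map $x\colon X\to U$ and any $p\colon V\to U$, a ``global element'' $1\to X$ over $V$ --- i.e.\ an arrow $1\to p^*X$ in $\bS/V$ --- is the same as a map $V\to X$ over $U$ lifting $p$; in particular $X$ carries over itself a canonical \emph{generic element}, namely the diagonal $X\to X\times_U X$, whose pullback along any $v\colon V\to X$ is the element named by $v$. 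This is precisely the universal property of instantiations recalled just before \autoref{thm:coll-ctxt}, and together with pullback-stability of forcing (\autoref{thm:forcing-basics}\ref{item:forcing-1}) it means that, to force a statement $\forall v\in X.\,\Psi(v)$ over $U$, it suffices to force $\Psi$ at the generic element of $X$, working over $X$. Second, a routine induction using the descent properties of \autoref{thm:forcing-basics} (and standard facts about regular categories) shows that for a morphism $h$, resp.\ object $Z$, of $\bS/U$ one has $U\ff$ \qq{$h$ is monic / a regular epi / an isomorphism}, resp.\ $U\ff$ \qq{$Z$ is initial}, exactly when $h$, resp.\ $Z$, has the stated property in $\bS/U$.

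Granting these, the well-pointedness clauses are short. For \autoref{def:wpt}\ref{item:wpt-strgen}: given a mono $m\colon A\mono X$ over $U$ with $U\ff$ \qq{every $1\to X$ factors through $m$}, apply the hypothesis to the generic element of $X$; this produces a regular epi $e\colon Z\epi X$ together with a factorization of $e$ through $m$, and since $\mathrm{im}(e)=X$ while $\mathrm{im}(e)\le A$ as subobjects of $X$, $m$ is an isomorphism --- hence $U\ff$ \qq{$m$ is an isomorphism}. For \ref{item:wpt-proj}: if $U\ff$ \qq{the structure map $X\to 1$ is a regular epi} then that map $x\colon X\to U$ is a regular epi in \bS, and we may use it as the required cover of $U$, with section the diagonal $X\to X\times_U X$. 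For \ref{item:wpt-indec}: if $A,B\mono U$ with $U\ff$ \qq{$A\cup B=1$}, i.e.\ $A\cup B=U$ as subobjects, then $U=A\cup B$ is the required covering, and over $A$ (resp.\ $B$) the pullback $A\times_U A\to A$ (resp.\ $B\times_U B\to B$) is an isomorphism. For \ref{item:wpt-nondeg}: if some $V\to U$ satisfies $V\ff$ \qq{there is a map $1\to 0$}, then there are a regular epi $W\epi V$ and a map $1\to 0$ in $\bS/W$, so strictness of the initial object forces $W$, hence $V$, to be initial.

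The substantive part is collection of sets and of functions, which proceed identically; I describe the former. Given $D\in\bS/U$ with $U\ff$ \qq{for every $d\in D$ there exists $X$ with $\ph(d,X)$}, apply the hypothesis to the generic element of $D$; this yields a regular epi $r\colon P\epi D$ and an object $A\in\bS/P$ with $P\ff\ph(\rbar,A)$, where $\rbar$ denotes the element of $D$ over $P$ named by $r$. I claim that $V:=P$, together with $p:=r$ and $A$, witnesses the conclusion over $U$: indeed $r$ is a regular epi, and for \qq{for every $v\in P$, $\ph(rv,v^*A)$} it suffices, by the first preliminary, to check it at the generic element of $P$, where it reduces exactly to $P\ff\ph(\rbar,A)$. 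Collection of functions is the same with the existentially quantified object replaced by an arrow $r^*A'\to r^*B'$ over $P$. Collection of equalities is easiest: applying the hypothesis \qq{$d^*f=d^*g$ for all $d\in D$} to the generic element of $D$ gives $f=g$ in $\bS/D$, so $V:=D$ with $p$ the identity works; alternatively it is \autoref{thm:collmor}\ref{thm:colleq} carried through soundness of deduction (\autoref{thm:deduction}), once the preceding together with \autoref{thm:prestack-univ} shows that the stack semantics validates \qq{\bS\ is a constructively well-pointed Heyting category}.

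I expect the one genuine difficulty to be bookkeeping rather than anything conceptual: one must check carefully that ``apply the hypothesis to the generic element, then pull back'' really does deliver the conclusion at an arbitrary instantiation --- concretely, that the pullback of the formula $\ph(\rbar,A)$ over $P$ along $v\colon W\to P$ is $\ph$ evaluated at $rv$ and at $v^*A$ over $W$ --- so that the universal property of the generic element (the discussion before \autoref{thm:coll-ctxt}) and \autoref{thm:forcing-basics}\ref{item:forcing-1} can be applied without slippage. Verifying the second preliminary (the $\ff$-versus-$\bS/U$ dictionary for monic/regular epi/isomorphism/initial) is similarly routine, but requires the standard descent facts for regular categories to be made explicit.
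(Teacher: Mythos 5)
Your proof is correct and takes essentially the same route as the paper's: instantiate the internally quantified hypotheses at the generic (diagonal) element, use pullback-stability and descent of forcing together with strictness of the initial object, the self-section of a cover, and the decomposition $U=A\cup B$ for the well-pointedness clauses, and read off the collection witnesses directly from the forcing clauses for the object- and arrow-existentials. The additional scaffolding you supply (the two preliminaries and the explicit treatment of collection of equalities, which the paper delegates to its redundancy result) is just a more explicit rendering of steps the paper's proof leaves implicit.
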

\begin{proof}
  We first show that $1\ff$ \qq{\bS\ is constructively well-pointed}.  If $X\too[f] U$ is
  in $\bS/U$, $m\maps A\mono X$ is monic, and $U\ff$ \qq{every
    $1\too[x] X$ factors through $A$}, then in particular the generic
  element $\Delta_X\maps X\to f^*X$ in $\bS/X$ factors through $m$;
  hence $m$ is split epic and thus an isomorphism.  Thus $1\ff$
  \qq{$1$ is a strong generator}.  Now since the initial object is
  strict, the existence of a map $1\to 0$ is the same as saying that
  $1$ is an initial object.  Then if $U\ff$ \qq{$1_U$ is initial}, $U$
  must be initial; hence $1\ff$ \qq{$1$ is not initial}.  Similarly,
  if $U\ff$ \qq{$V\to 1_U$ is regular epic}, then $V\to U$ is in fact
  regular epic, and thus (since $V$ has a section over itself) $U\ff
  \exists v\colon 1\to V.\top$.  Finally, if $U\ff$ \qq{$U=V\cup W$},
  then $U=V\cup W$, and so since $V$ and $W$ have sections over
  themselves, we have $U\ff \exists v\colon 1\to V.\top \join \exists
  w\colon 1\to W.\top$.

  We now show $1\ff$ \qq{\bS\ satisfies collection}.  Suppose that
  $1\ff \forall u\in U.\exists X. \ph(u,X)$.  Then $U\ff\exists
  X.\ph(\Delta_U,X)$, so we have a regular epi $V\xepi{p} U$ and an
  $A\in\bS/V$ with $V\ff \ph(p,A)$.  Thus, for any $W$ and any
  $W\too[v] V$, we have $W \ff \ph(p v, v^*A)$, which is the desired
  conclusion.  Collection of functions is analogous.
\end{proof}

This validates our assertion that \emph{the stack semantics of any
  Heyting pretopos models a structural set theory}.  Therefore, from
now on we avail ourselves of \autoref{cnv:elements} when speaking in
the stack semantics.

\begin{rmk}
  Under the second approach to stack semantics, \autoref{thm:int-wpt}
  says that if \bS\ is small, any strictification of its self-indexing
  is well-pointed and satisfies collection as an internal category in
  the topos $\bSh(\bS)$ of sheaves for the coherent topology of \bS.
  Since $\bSh(\bS)$ is a coherent topos, it has enough
  points, so this is equivalent to saying that each stalk has these
  properties.  This version of \autoref{thm:int-wpt} was proven
  in~\cite{awodey:thesis}, and we will prove the analogous fact for the
  2-category of stacks on \bS\ in~\cite{shulman:2cofc}; see also \S\ref{sec:cofc}.
\end{rmk}

\begin{rmk}
  \autoref{thm:stack-univ} implies that $1\ff$ \qq{the self-indexing
    of \bS\ is a stack} is true in any Heyting pretopos.  Thus, by
  Propositions \ref{thm:replacement} and \ref{thm:repl-ctxt},
  replacement of contexts is also always valid in the stack semantics of a
  Heyting pretopos.
\end{rmk}

Moreover, any additional axiom of structural set theory that is
expressible as a pullback-stable universal property will be faithfully
represented in the stack semantics.  This includes all the structure
mentioned in \autoref{thm:iz-topos}, except for AC and full classical
logic.  In particular, we can say:
\begin{blist}
\item If \bS\ is a \Pi-pretopos with a \nno\ satisfying the ``internal
  presentation axiom'' (there are enough internal projectives), then
  its stack semantics models CETCS.
\item If \bS\ is any topos with a \nno, then its stack semantics
  models IETCS.
\end{blist}

Of course, \autoref{thm:int-coll} also says that the stack semantics
of a Heyting pretopos is not just any structural set theory, but also
satisfies collection.  In particular, from any constructively well-pointed Heyting
pretopos, we can construct another one which satisfies
collection---albeit in an exotic logic (namely, the stack semantics of
the original category).  This implies the well-known fact that
intuitionistically, the addition of collection does not change the
consistency strength of a theory.

\begin{rmk}
  Since by \autoref{thm:collsep}, adding collection to full classical
  logic \emph{does} change the consistency strength, it follows that
  full classical logic is not in general preserved by passage to the
  stack semantics.  It is not hard to show that if the stack
  semantics of \bS\ satisfies full classical logic, then \bS\ is
  necessarily autological.
\end{rmk}

In fact, well-pointedness and collection are ``precisely'' the
characteristic properties of the stack semantics, in the following
sense.

\begin{thm}\label{thm:collpin}
  Let \bS\ be a positive Heyting category; the following are equivalent.
  \killspacingtrue
  \begin{enumerate}
  \item \bS\ is constructively well-pointed and satisfies collection.\label{item:collpin1}
  \item For any $U\in\bS$ and any sentence \ph\ over $U$, we have
    $U\ff\ph$ if and only if $\bS\ss u^*\ph$ for all $1\too[u]
    U$.\label{item:collpin2}
  \end{enumerate}
  \killspacingfalse
\end{thm}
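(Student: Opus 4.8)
I would prove the two implications by quite different means. For $\ref{item:collpin2}\Rightarrow\ref{item:collpin1}$ it suffices to instantiate the hypothesis at $U=1$: the unique endomorphism of $1$ is the identity, so $1\ff\ph$ iff $\bS\ss\ph$ for every sentence \ph\ in \bS. By \autoref{thm:int-wpt}, $1\ff\chi$ holds for each of the first-order sentences expressing that $1$ is a strong generator, projective, indecomposable, and nonempty, and for each instance of collection of sets and of functions (whence also collection of equalities, by \autoref{thm:collmor}\ref{thm:colleq}); hence $\bS\ss\chi$ for all of these, which is exactly to say that \bS\ is constructively well-pointed and satisfies collection.

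The substantive implication is $\ref{item:collpin1}\Rightarrow\ref{item:collpin2}$. Assuming \bS\ is constructively well-pointed and satisfies collection, I would prove by induction on the structure of \ph\ --- simultaneously for all base objects $U$, since the recursion changes the base --- that $U\ff\ph$ iff $\bS\ss u^*\ph$ for all $1\too[u] U$. It is useful to note at the outset that a sentence over $U$ is nothing but a formula of category theory over \bS\ with a single free \ddo-variable $u\colon 1\to U$, its value at $u_0$ being $u_0^*\ph$; together with the inductive hypothesis this allows the forcing conditions arising below to be rewritten as ordinary satisfaction statements, so that the collection schema can be applied to them. The atomic case $(f=g)$ is precisely \emph{replacement of equalities} (\autoref{thm:replacement}\ref{item:repl-eq}) in its nontrivial direction; $\top$, $\bot$ (use that $U$ is initial iff it has no global element, \autoref{thm:pin-props}\ref{item:pp3}), and $\meet$ are immediate; and $\imp$ (hence $\neg$) together with the universal quantifiers $\coim_0,\coim_1$ need only the inductive hypothesis, monotonicity of forcing (\autoref{thm:forcing-basics}\ref{item:forcing-1}), and the fact that a global element of a slice $\bS/V$ maps down to one of $U$. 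No appeal to collection is needed in any of these cases.

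The crux --- which I expect to be the main obstacle --- is the three ``positive'' cases $\join$, $\im_0$, and $\im_1$, whose forcing clauses demand that we actually \emph{produce} a regular-epi cover of $U$ carrying a witness, and this is where the collection hypothesis is essential. In the case $\im_0$, the hypothesis ``for every $1\too[u] U$ there is an object of \bS\ satisfying $u^*\ph$'' becomes, via the inductive hypothesis, of the shape to which collection of sets applies; it produces a regular epi $V\xepi{p} U$ and an $A\in\bS/V$ with $1\ff v^*\bigl((p^*\ph)(A)\bigr)$ for every $1\too[v] V$. The inductive hypothesis over $V$ then yields $V\ff(p^*\ph)(A)$, hence $V\ff p^*(\exists X.\ph(X))$, and descent of forcing (\autoref{thm:forcing-basics}\ref{item:forcing-2}) gives $U\ff\exists X.\ph(X)$; the case $\im_1$ is identical with collection of functions in place of collection of sets. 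For $\join$ one applies collection of functions to the constant families over $U$ with fibres $1$ and $1+1$, obtaining over a regular epi $V\xepi{p} U$ a ``decision'' map $c\colon V\to 1+1$; disjointness of coproducts then splits $V$ as the sum $V_0+V_1$ of the two preimages under $c$, the conclusion of collection makes every global element of $V_0$ force $p^*\ph$ and every one of $V_1$ force $p^*\psi$, so $V_0\ff p^*\ph$ and $V_1\ff p^*\psi$ by the inductive hypothesis, whence $V\ff p^*(\ph\join\psi)$ and descent finishes as before. The forward (``only if'') halves of these three cases are routine: for $\im_0$ and $\im_1$, every $1\too[u] U$ lifts along the cover because $1$ is projective and a strong generator (\autoref{thm:pin-props}\ref{item:pp1}), while for $\join$ it factors through one of the two subobjects by indecomposability of $1$; pushing the forcing fact down along such a lift or factorization and invoking the inductive hypothesis gives $\bS\ss u^*\ph$.
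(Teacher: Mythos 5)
Your proposal is correct and follows essentially the same route as the paper's: (ii)$\Rightarrow$(i) by specializing to $U=1$ and invoking \autoref{thm:int-wpt}, and (i)$\Rightarrow$(ii) by structural induction on \ph\ over all bases simultaneously, with collection of sets/functions producing the covers in the $\im$ and $\join$ cases (including the same $U$, $U+U$ device for $\join$) and projectivity, indecomposability, nonemptiness, and strong generation of $1$ handling the remaining directions. The only cosmetic difference is that you settle the atomic case by replacement of equalities (\autoref{thm:replacement}) where the paper observes that $(f=g)$ is a classified \ddo-formula --- both are just the strong-generator property of $1$ in disguise.
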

\begin{proof}
  First assume~\ref{item:collpin2}.  In particular, this means that
  for any sentence \ph\ in \bS\ we have $1\ff\ph$ if and only if
  $\bS\ss\ph$.  Since the stack semantics is always constructively
  well-pointed and satisfies collection, it follows
  from~\ref{item:collpin2} that \bS\ is so,
  proving~\ref{item:collpin1}.

  Now assume~\ref{item:collpin1}; we prove~\ref{item:collpin2} by
  structural induction on \ph.  For an atomic formula $(f=g)$ where
  $f,g\colon X\to Y$ in $\bS/U$, we can verify that $(f=g)$ is
  equivalent to $\forall x\in X. (fx=gx)$, which is a \ddo-formula and
  hence classified by \autoref{thm:ddo-classif}.  Thus,
  strong-generation of $1$ implies the result for atomic formulas.

  The cases of \top\ and \meet\ are obvious, as usual.
  If $U\ff\bot$, then $U$ is initial, and thus (since $1$ is nonempty)
  it has no global elements $1\to U$; thus $\bS\ss\bot$ for every such
  global element.  Conversely, if $U$ has no global elements, then the
  map $0\to U$ is a bijection on global elements, hence an isomorphism
  since $1$ is a strong generator.

  If $U\ff(\ph\join\psi)$, then $U=V\cup W$ with $V\ff\ph$ and
  $W\ff\psi$; hence by the inductive hypotheses, $\bS\ss v^*\ph$ for
  all $1\too[v] V$ and $\bS\ss w^*\psi$ for all $1\too[w] W$.  But
  since $1$ is indecomposable, every $1\too[u] U$ factors through
  either $V$ or $W$, hence either $\bS \ss u^*\ph$ or $\bS \ss
  u^*\psi$, so $\bS \ss u^*(\ph\join \psi)$.  Conversely, suppose $\bS
  \ss u^*(\ph\join \psi)$ for each $1\too[u] U$, and consider the
  objects $U$ and $U+U$ over $U$, which pull back to $1$ and $1+1$
  along any $1\too[u] U$.  Applying collection of functions to the
  statement \qq{either $u^*\ph$ and $1\too[f] 1+1$ is the first
    injection, or $u^*\psi$ and $1\too[f] 1+1$ is the second
    injection}, we obtain a regular epimorphism $V\xepi{p} U$ and a
  suitable map $V \to V+V$ over $V$.  This map decomposes $V$ as a
  coproduct $W+X$, and by the inductive hypothesis we have $W\ff
  p^*\ph$ and $X\ff p^*\psi$; hence $U\ff (\ph\join\psi)$.

  If $U\ff (\ph\imp\psi)$, then for any $1\too[u] U$, if $\bS\ss
  u^*\ph$, then by the inductive hypothesis $1\ff u^*\ph$ and so $1\ff
  u^*\psi$, hence $\bS \ss u^*\psi$ by the inductive hypothesis.  Thus
  $\bS \ss u^*(\ph\imp\psi)$.  Conversely, if $\bS \ss
  u^*(\ph\imp\psi)$ for all $1\too[u] U$, then for any $V\too[p] U$
  with $V\ff p^*\ph$, for any $1\too[v] V$ we have $1 \ff (p v)^*\ph$,
  hence (by the inductive hypothesis) $\bS \ss (p v)^*\ph$, so $\bS \ss
  (p v)^*\psi$, and thus $1\ff (p v)^*\psi$.  By the inductive
  hypothesis, we have $V\ff p^*\psi$; hence $U\ff (\ph\imp\psi)$.

  If $U\ff \exists X. \ph(X)$, then we have a regular epimorphism $V
  \xepi{p} U$ and an $A\in \bS/V$ with $V\ff \ph(A)$.  But for any
  $1\too[u] U$, by projectivity of $1$ there is a $1\too[v] V$ with $p
  v = u$, and $1 \ff v^*p^*\ph(v^* A)$, hence $\bS \ss (p
  v)^*\ph(v^*A)$, and thus $\bS \ss \exists X. (p v)^*\ph(X)$.
  Conversely, if $\bS \ss \exists X. u^*\ph(X)$ for each $1\too[u] U$,
  then by collection of sets, there exists a regular epimorphism
  $V\xepi{p} U$ and an $A\in \bS/V$ such that $\bS \ss (p v)^*
  \ph(v^*A)$ for all $1\too[v] V$, whence $V\ff p^*\ph(A)$ by the
  inductive hypothesis, and so $U\ff \exists X.\ph(X)$.  Existential
  quantification of morphisms is analogous.

  Finally, if $U\ff \forall X.\ph(X)$, then for any $1\too[u] U$ and
  $A\in \bS$ we have $1\ff u^*\ph(A)$, hence $\bS \ss u^*\ph(A)$ by
  the inductive hypothesis, and so $\bS \ss \forall X. u^*\ph(X)$.
  Conversely, if $\bS \ss \forall X. u^*\ph(X)$ for all $1\too[u] U$,
  then for any $V\too[p] U$ and $B\in \bS/V$, we have $\bS \ss (p v)^*
  \ph(v^*B)$ for any $1\too[v] V$, hence $V\ff p^*\ph(B)$ by the
  inductive hypothesis; hence $U\ff \forall X.\ph(X)$.
\end{proof}

% TODO: Should we worry about the lack of equality, in applying
% traditional completeness theorems here?

\begin{cor}
  The theory of a constructively well-pointed Heyting pretopos satisfying collection is
  complete for stack semantics over Heyting pretoposes.  More
  precisely, if a statement is valid in the stack semantics of any
  Heyting pretopos (in any intuitionistic metatheory), then it can be
  proved from the axioms of a constructively well-pointed Heyting pretopos satisfying
  collection.
\end{cor}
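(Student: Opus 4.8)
The plan is to obtain this essentially for free from \autoref{thm:collpin}. Write $T$ for the theory, in the language of categories with terminal object, whose axioms are those of a constructively well-pointed Heyting pretopos together with the three collection axioms of \S\ref{sec:strong-ax}. The first step is to observe that for any \emph{model} $\bS$ of $T$ and any sentence $\ph$ in the language (with no parameters), \autoref{thm:collpin}\ref{item:collpin2} --- applied at $U=1$, where the only map $u\maps 1\to 1$ is the identity, so that $u^*\ph=\ph$ --- gives
\[1\ff_\bS\ph \quad\iff\quad \bS\ss\ph.\]
The second step: suppose $\ph$ is valid in the stack semantics of \emph{every} Heyting pretopos. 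Since a model of $T$ is in particular a Heyting pretopos, the hypothesis gives $1\ff_\bS\ph$, hence $\bS\ss\ph$, for every model $\bS$ of $T$; that is, $\ph$ holds in all models of $T$. The third step is to invoke a completeness theorem for first-order logic to conclude $T\vdash\ph$, which is exactly the assertion.

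For the full ``complete for stack semantics'' claim one also wants the converse, soundness: everything $T$-provable is valid in the stack semantics of every Heyting pretopos. This is the easy direction, following from \autoref{thm:deduction} once one checks that each axiom of $T$ is valid --- which is \autoref{thm:int-coll} for constructive well-pointedness and collection, and an instance of \autoref{thm:prestack-univ} or \autoref{thm:stack-univ} for each of the remaining, pullback-stable, axioms of a Heyting pretopos.

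The only genuinely delicate point is the third step, and it is exactly what the parenthetical ``in any intuitionistic metatheory'' in the statement is warning about. Over a classical metatheory, ``$\ph$ true in all models of $T$ $\Rightarrow$ $T\vdash\ph$'' is just G\"odel's completeness theorem, and there is nothing more to say. Over an unspecified intuitionistic metatheory one must instead use a constructively available completeness theorem: either Kripke completeness for intuitionistic first-order logic, replaying the argument above with Kripke models of $T$ in place of ordinary structures, or a direct term-model construction --- adjoin Henkin witnesses for the object- and arrow-existentials of $T$, pass to a prime extension, form the term model of closed terms modulo provable equality, check it is a model of $T$, and descend along conservativity of the Henkin extension to recover $T$-provability. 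Either route is routine but fiddly, and I expect this completeness bookkeeping to be the main obstacle; all of the category-theoretic content of the corollary is carried by \autoref{thm:collpin}.
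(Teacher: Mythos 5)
Your proposal is correct and follows essentially the same route as the paper: the paper likewise appeals to standard completeness theorems for intuitionistic first-order logic to reduce the claim to showing that validity in the stack semantics implies truth in every constructively well-pointed Heyting pretopos satisfying collection, which is exactly the $U=1$ case of \autoref{thm:collpin}. Your extra remarks on soundness and on which constructive completeness theorem to use are fine but not needed beyond what the paper's one-line citation already covers.
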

\begin{proof}
  By standard completeness theorems for intuitionistic logic, it
  suffices to show that if $1\ff_\bS \ph$ for any Heyting pretopos
  \bS\ (in any intuitionistic metatheory), then $\Set\ss\ph$ for any
  constructively well-pointed Heyting pretopos \Set\ that satisfies collection.  However,
  for any such \Set, \autoref{thm:collpin} implies that $\Set \ss \ph$
  if and only if $1\ff_{\Set} \ph$, and the latter is true by
  assumption.
\end{proof}

We now turn to our promised characterization of the axiom of separation in the
stack sem\-antics.  First we need to know that the stack semantics is
``local'' and ``idempotent''.

\begin{lem}\label{thm:ss-loc}
  For any $V\too[p] U$ and any sentence \ph\ over $V$ in \bS, we have
  $V\ff_{\bS}\ph$ if and only if $(V\xto{p} U) \ff_{\bS/U} \ph$.
\end{lem}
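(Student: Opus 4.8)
The plan is to prove the equivalence by a straightforward structural induction on the sentence $\ph$, organised around the canonical equivalence of categories $(\bS/U)/(V\xto{p}U)\eqv \bS/V$, under which a sentence over $(V\to U)$ in $\bS/U$ is simply a sentence over $V$ in $\bS$. (Recall that $\bS/U$ is again a positive Heyting category, so $\ff_{\bS/U}$ is defined.) First I would record the elementary facts about slicing that drive the induction. \textbf{(a)} A morphism in $\bS/U$ with codomain $(V\xto{p}U)$ is exactly a morphism $W\to V$ in $\bS$, with $W$ regarded as the object $(W\to U)$ of $\bS/U$ via the composite $W\to V\to U$; hence the quantification ``for any $V'\to V$ in $\bS$'' and the quantification ``for any morphism into $(V\to U)$ in $\bS/U$'' range over the same data. \textbf{(b)} For such a $q\colon W\to V$, the forgetful functor $\bS/U\to\bS$ creates connected limits, so pullback along $q$ computed in $\bS$ agrees, under the slice equivalences, with pullback along $q$ computed in $\bS/U$; in particular $p^*\ph$ denotes the same formula on either side, the residual ambiguity being harmless by \autoref{thm:forcing-basics}\ref{item:forcing-4}. \textbf{(c)} The forgetful functor $\bS/U\to\bS$ creates colimits, hence preserves and reflects regular epimorphisms, and the induced isomorphism $\Sub_{\bS/U}(V\to U)\iso\Sub_{\bS}(V)$ preserves finite unions and the bottom element. \textbf{(d)} $(V\to U)$ is initial in $\bS/U$ if and only if $V$ is initial in $\bS$.

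With (a)--(d) in place the induction is essentially mechanical, and I would run through the clauses of \autoref{defn:ff}. The atomic, $\top$, and $\meet$ cases are immediate (the last from the inductive hypothesis); $\bot$ is (d); for $\join$ one uses (c) to match the decompositions $V=V_1\cup V_2$ on the two sides and then applies the inductive hypothesis on each $V_i$, using that slicing $\bS/U$ further by $(V_i\to U)$ recovers $\bS/V_i$; for $\imp$, and hence $\neg$, one uses (a) and (b) to see that the two ``for any $W\to V$ with $W\ff p^*\ph$'' conditions quantify over the same maps, and then invokes the inductive hypothesis at each such $W$; and the four quantifier clauses (over objects and over arrows, existential and universal) use (a)--(c) to identify the regular epimorphisms $W\epi V$ and the objects of $\bS/W$ (respectively arrows in $\bS/W$) appearing in the $\bS$-side definition with the corresponding data in $(\bS/U)/(W\to U)$, again closing the step by the inductive hypothesis at $W$. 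In every case all the auxiliary constructions invoked in the forcing clauses---pullbacks, regular-epi/mono factorizations, the disjoint stable coproducts used for descent of $\join$ and of existential quantifiers---take place in slices, and slicing $\bS/U$ by $(W\to U)$ returns $\bS/W$, so nothing genuinely new is introduced by passing to $\bS/U$.

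There is no substantive obstacle; the one point requiring care is bookkeeping: to fix the equivalences $(\bS/U)/(W\to U)\eqv\bS/W$ coherently and check their compatibility with every pullback functor occurring in the recursion, so that a formula and its designated pullbacks name the same formula (up to the ambiguity controlled by \autoref{thm:forcing-basics}\ref{item:forcing-4}) whether read in $\bS$ or in $\bS/U$. Once this is in hand the induction closes and yields $V\ff_{\bS}\ph$ if and only if $(V\xto{p}U)\ff_{\bS/U}\ph$ for every sentence $\ph$ over $V$.
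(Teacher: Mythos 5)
Your proposal is correct and follows essentially the same route as the paper, which simply observes that the claim is straightforward from the definition because unions and regular epimorphisms (and the other relevant structure) in $\bS/U$ are created in $\bS$; your induction with the identifications $(\bS/U)/(W\to U)\eqv\bS/W$ is just that observation written out in full.
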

\begin{proof}
  Straightforward from the definition, since unions and regular epis
  in $\bS/U$ are created in \bS.
\end{proof}

\begin{lem}\label{thm:int-idem}
  For any sentence \ph\ over $V$ and any $V\too[p] U$, we have $U\ff$
  \qq{$V\ff\ph$} if and only if $V\ff\ph$.
\end{lem}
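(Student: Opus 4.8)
The plan is to prove, by structural induction on $\ph$, the following uniform statement: for every positive Heyting category $\bS$, every morphism $V\too[p]U$ in $\bS$, and every sentence $\ph$ over $V$, we have $U\ff$ \qq{$V\ff\ph$} if and only if $V\ff\ph$. Here the sentence ``$V\ff\ph$'' over $U$ is the formula of category theory obtained by unfolding \autoref{defn:ff} literally; its object- and arrow-quantifiers range over the internal category ``$\Set$'', which at any stage $W\too U$ is the slice $\bS/W$, so that \autoref{thm:ss-loc} lets me move freely between $\bS$ and its slices. Since unfolding \autoref{defn:ff} for a fixed $\ph$ recurses only into pullbacks of \emph{proper} subformulas of $\ph$, and pullback does not increase structural complexity, the unfolding is finite and the induction is well-founded. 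I would keep $U$ arbitrary rather than reducing to $U=1$, because the clause for $\imp$ reintroduces arbitrary slices.

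The propositional cases should be routine. An atomic ``$V\ff(f=g)$'' is by definition just the assertion $f=g$, which clause $(\ff\fa)$ returns unchanged when forced at $U$; the cases of $\top$ and $\meet$ are immediate from $(\ff\top)$, $(\ff\meet)$ and the inductive hypothesis. For $\bot$, ``$V\ff\bot$'' is ``$V$ is initial'', and since being a strict initial object is a pullback-stable universal property, \autoref{thm:prestack-univ} (applied in $\bS/U$) gives $U\ff$ \qq{$V$ is initial} iff $V$ is initial. For $\ph_1\imp\ph_2$, unfolding $(\ff\imp)$ rewrites ``$V\ff(\ph_1\imp\ph_2)$'' as the statement that for all $V'$ and all $q\maps V'\to V$, if $V'\ff q^*\ph_1$ then $V'\ff q^*\ph_2$; forcing this at $U$ through $(\ff\coim_0)$, $(\ff\coim_1)$, $(\ff\imp)$ and then applying the inductive hypothesis to the subformulas $q^*\ph_1$, $q^*\ph_2$ (which have the structure of $\ph_1$, $\ph_2$) reproduces exactly that same statement, i.e.\ $V\ff(\ph_1\imp\ph_2)$; negation is the case $\ph_2=\bot$. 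For $\ph_1\join\ph_2$, unfolding $(\ff\join)$ produces an existential over subobjects $V_1,V_2\mono V$ with $V_1\cup V_2=V$ and $V_i\ff\ph_i$; forcing at $U$ yields a regular epi $W\epi U$, subobjects $\tilde V_i\mono V\times_U W$ with $\tilde V_1\cup\tilde V_2=V\times_U W$, and $\tilde V_i\ff\ph_i$ (the ``subobject'' and ``union'' clauses now genuine by pullback-stability, the ``$\ff$'' clauses genuine by induction). Conversely, letting $V_i$ be the image of $\tilde V_i\to V\times_U W\to V$, one gets $V_1\cup V_2=V$ (as the left adjoint to pullback along the regular epi $V\times_U W\epi V$ preserves unions and equals the identity after $q^*$), and $V_i\ff\ph_i$ by descent of forcing along $\tilde V_i\epi V_i$ (\autoref{thm:forcing-basics}\ref{item:forcing-2}); so the forced statement is equivalent to $V\ff(\ph_1\join\ph_2)$.

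The quantifier cases go the same way. Unfolding $(\ff\im_0)$, ``$V\ff\exists X.\ph_1(X)$'' becomes an existential asserting a regular epi $q\maps V'\epi V$ and an object $A\in\bS/V'$ with $V'\ff q^*\ph_1(A)$; forcing at $U$ gives a regular epi $W\epi U$, a regular epi $\tilde q\maps\tilde V'\epi V\times_U W$ (genuine, since being a regular epi is a pullback-stable universal property, by \autoref{thm:prestack-univ}) and $\tilde A\in\bS/\tilde V'$ with $\tilde V'\ff\tilde q^*\ph_1(\tilde A)$ (genuine by induction); the composite $\tilde V'\epi V\times_U W\epi V$ is then a regular epi, so $(\tilde V',\tilde A)$ already witnesses $V\ff\exists X.\ph_1(X)$, and the converse is the case $W=U$. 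The clause $\forall X.\ph_1(X)$ is handled dually via $(\ff\coim_0)$, using that an object over a stage $W\too U$ equipped with a map to $V\times_U W$ is the same datum as an arrow $V'\to V$ in some slice of $\bS$; and the arrow-quantifier clauses $\exists f$, $\forall f$ are identical, with objects replaced by arrows over the evident bases.

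The hard part will be the bookkeeping in the two existential clauses ($\im$ and $\join$), where one must reconcile a cover $W\epi U$ of the \emph{base} carrying data only over $W$ with the data over $V$ (or $U$) demanded by the unforced statement. For $\im$ this is harmless --- one simply composes regular epimorphisms --- but for $\join$, and already for the behaviour of ``subobject'', ``union'' and ``initial'' under forcing, one genuinely needs that the relevant structure is pullback-stable and satisfies descent; this is exactly where \autoref{thm:prestack-univ}, descent of forcing (\autoref{thm:forcing-basics}), and image factorization in a regular category enter. Since none of this uses exactness of $\bS$, the lemma holds for an arbitrary positive Heyting category, as stated.
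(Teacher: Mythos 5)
Your proposal is correct, but it takes a genuinely different route from the paper's. The paper first reduces to $U=1$ via \autoref{thm:ss-loc}, then uses the fact that the stack semantics is itself constructively well-pointed and satisfies collection (\autoref{thm:int-wpt}), so that the internalization of \autoref{thm:collpin} is valid: internally, \qq{$V\ff\ph$} is equivalent to \qq{$v^*\ph$ holds for all $v\in V$}, and the lemma follows by unwinding the forcing of this biconditional at the two stages $1$ and $V$. You instead argue directly by structural induction on \ph, unfolding the internal forcing clause by clause and matching it against \autoref{defn:ff}, transferring the auxiliary predicates (initial, monic, union, regular epi) by \autoref{thm:prestack-univ}, and using descent of forcing (\autoref{thm:forcing-basics}) together with image factorization in the $\join$ and $\im$ clauses; your clause verifications are right, and the induction is well-founded since the unfolding only recurses into pullbacks of proper subformulas. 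What your route buys is self-containedness: it needs neither \autoref{thm:collpin} nor \autoref{thm:int-wpt}, nor the implicit appeal to soundness (\autoref{thm:deduction}) needed to internalize them, so the lemma could be proved right after \autoref{thm:prestack-univ}; the cost is that you essentially re-run an induction of the same shape as the proof of \autoref{thm:collpin}, with heavier bookkeeping---in particular the formalization of \qq{$V\ff\ph$} as a formula of category theory, with pullbacks along bound arrow-variables handled by auxiliary quantified variables and isomorphism-invariance, is glossed, though only at the same level of informality as the paper itself. Two cosmetic points: your ``Conversely'' in the disjunction case actually continues the same (internal-to-external) direction, and the easy external-to-internal half of the $\join$ case should be recorded explicitly (trivial cover plus the inductive hypothesis), just as you do for $\im$. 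Both arguments work for an arbitrary positive Heyting category, as you note.
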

\begin{proof}
  By \autoref{thm:ss-loc}, it suffices to assume that $U=1$.  And
  since the stack semantics of \bS\ satisfies collection, by
  \autoref{thm:collpin} we have
  \begin{equation}
    1\ff \tqq{$V\ff\ph$ if and only if $v^*\ph$ holds for all $1\too[v] V$}\label{eq:ii}
  \end{equation}
  In the ``only if'' direction,~\eqref{eq:ii} says that if $W$ is such that for
  any $Z\too[q] W$ and $Z\too[v] V$ we have $Z\ff v^*\ph$, then $W\ff
  W^*($\qq{$V\ff\ph$}$)$, i.e.\ $W\ff$ \qq{$W^* V \ff W^*\ph$}.  In
  particular, for $W=1$, this says that if for any $Z\too[v] V$ we
  have $Z\ff v^*\ph$, then $1\ff$ \qq{$V\ff\ph$}.  But this hypothesis
  is satisfied as soon as $V\ff\ph$, so $V\ff\ph$ implies $1\ff$
  \qq{$V\ff\ph$}.

  In the ``if'' direction,~\eqref{eq:ii} says that for any $W\too[v]
  V$, if $W\ff$ \qq{$W^*V\ff W^*\ph$}, then $W\ff v^*\ph$.  In
  particular, for $W=V$ and $v=1_V$, it says that if $V\ff$
  \qq{$V^*V\ff V^*\ph$}, then $V\ff \ph$.  But if $1\ff$
  \qq{$V\ff\ph$}, then also $V\ff$ \qq{$V^*V\ff V^*\ph$} by pullback,
  so $1\ff$ \qq{$V\ff\ph$} implies $V\ff \ph$ as desired.
\end{proof}

Recall that we define \bS\ to be \emph{autological} if all sentences
over all objects of \bS\ are classified in the sense of
\autoref{def:complete-logic}.

\begin{thm}\label{thm:clog-sep}
  Let \bS\ be a Heyting pretopos; the following are equivalent.
  \killspacingtrue
  \begin{enumerate}
  \item \bS\ is autological.\label{item:clogsep1}
  \item $1\ff$ \qq{\bS\ is autological}\label{item:clogsep2}
  \item $1\ff$ \qq{\bS\ satisfies the axiom of separation}.\label{item:clogsep3}
  \item (If \bS\ is constructively well-pointed) \bS\ satisfies separation and collection.\label{item:clogsep4}
  \end{enumerate}
  \killspacingfalse
\end{thm}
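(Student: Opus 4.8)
The plan is to prove (i)$\Leftrightarrow$(iii) as the core equivalence, deduce (i)$\Leftrightarrow$(ii) from idempotence of forcing, and handle (iii)$\Leftrightarrow$(iv) via \autoref{thm:collpin}. For (i)$\Rightarrow$(iii): given a formula $\ph(x)$ with free variable $x\colon 1\to X$ over some $U$ (so $X\in\bS/U$), I would regard $\ph$ as a sentence over $X$ by instantiating $x$ with the generic element $\Delta_X\colon X\to X^*X$. Autology then supplies a classifying subobject $\mm{\ph}\mono X$, and I claim it is the subset required by the separation axiom: reading off the clause for $\forall$, the assertion $U\ff$ \qq{for every $x\in X$, $x\in\mm{\ph}$ iff $\ph(x)$} says exactly that for every $p\colon W\to X$ over $U$, the map $p$ factors through $\mm{\ph}$ precisely when $W\ff p^*\ph$, which is the defining property of a classifier (and transports between \bS\ and $\bS/U$ by \autoref{thm:ss-loc}). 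Since $U$ and $X$ were arbitrary, $1\ff$ \qq{\bS\ satisfies separation}.

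For (iii)$\Rightarrow$(i): by \autoref{thm:ss-loc} it is enough to classify sentences over $1$, i.e.\ sentences $\ph$ in \bS\ itself, producing $\mm{\ph}\mono 1$. I would apply the forced separation axiom to $\ph$ and the terminal set; this yields, possibly after passing to a regular epimorphism $p\colon V\epi 1$, a subobject $S\mono V$ of the terminal object of $\bS/V$ such that a map $W\to V$ factors through $S$ exactly when $W\ff\ph$. Since a subobject of a terminal object is determined by which maps factor through it, the two pullbacks of $S$ along the kernel pair of $p$ coincide; because \bS\ is exact its self-indexing is a stack, so $S$ descends to a subobject $\mm{\ph}\mono 1$, and descent of forcing (\autoref{thm:forcing-basics}) shows $\mm{\ph}$ classifies $\ph$. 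Hence \bS\ is autological.

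For (i)$\Leftrightarrow$(ii): the notion of being classified is expressed through the relation $W\ff p^*\ph$, and by idempotence (\autoref{thm:int-idem}) this relation is unaffected by being re-evaluated inside the stack semantics; since a classifying subobject is unique up to isomorphism (so its ``existence'' requires no passage to a cover) and by locality (\autoref{thm:ss-loc}), unwinding $1\ff$ \qq{\bS\ is autological} returns precisely the external assertion that every sentence over every object of \bS\ is classified, which is (i).

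For (iii)$\Leftrightarrow$(iv), with \bS\ constructively well-pointed: (iv)$\Rightarrow$(iii) is immediate from \autoref{thm:collpin}, since a constructively well-pointed category satisfying collection has ``forcing equals truth'', so external separation forces separation. For (iii)$\Rightarrow$(iv) I would re-run the structural induction in the proof of \autoref{thm:collpin}, showing $U\ff\ph$ iff $\bS\ss u^*\ph$ for all $1\too[u] U$; the only cases of that induction invoking the collection axiom are $\join$ and $\exists$, and in those cases I would instead use the classifying subobjects furnished by autology (equivalently by (i)) — for instance, in the $\exists$ case the classifier of \qq{$\exists X.\ph(\Delta_U,X)$} contains every global element of $U$ by the inductive hypothesis, hence equals $U$ because $1$ is a strong generator, which gives $U\ff\exists X.\ph$. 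Once forcing equals truth, $1\ff$ separation gives external separation and \autoref{thm:collpin} gives external collection, establishing (iv). I expect this last step — recognizing that autology provides exactly enough internal representability to substitute for the collection axiom in that induction — to be the main obstacle.
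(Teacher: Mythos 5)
Your proposal is correct, and all of the essential ingredients coincide with the paper's: the substitution of the classifying subobjects for the collection axiom in the $\join$ and $\im$ cases of the induction from \autoref{thm:collpin}, the use of \autoref{thm:int-idem} to make ``being classified'' a pullback-stable property, and the identification of the classifier of the generic instance $U^*\ph(\Delta_X)$ with the subobject demanded by separation. What differs is the decomposition. The paper proves \ref{item:clogsep1}$\Iff$\ref{item:clogsep2} by citing \autoref{thm:stack-univ}, proves \ref{item:clogsep1}$\Iff$\ref{item:clogsep4} externally (using constructive well-pointedness), and then obtains \ref{item:clogsep2}$\Iff$\ref{item:clogsep3} by ``internalizing the same argument,'' on the grounds that the stack semantics is always constructively well-pointed and satisfies collection. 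You instead prove \ref{item:clogsep1}$\Iff$\ref{item:clogsep3} directly, which amounts to carrying out that internalization by hand: your descent argument in \ref{item:clogsep3}$\imp$\ref{item:clogsep1} (pass to the cover furnished by the forced $\exists S$, note the two pullbacks of $S$ along the kernel pair agree, descend, and use descent of forcing) is exactly what the paper's phrase elides, so your version is more self-contained at that point. One small tightening: the reason the two pullbacks of $S$ coincide is not merely that a subobject is determined by the maps factoring through it, but that both pullbacks classify the same sentence (namely the common pullback of $p^*\ph$, by \autoref{thm:forcing-basics}\ref{item:forcing-1}) and classifiers are unique as subobjects; also, for descent of a \emph{subobject} along a regular epi, regularity already suffices, so exactness is not really needed there. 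Your routes for \ref{item:clogsep1}$\Iff$\ref{item:clogsep2} and for \ref{item:clogsep3}$\Iff$\ref{item:clogsep4} (the latter via \autoref{thm:collpin} in one direction and the autology-for-collection substitution in the other) reproduce the paper's reasoning, so the ``main obstacle'' you flag at the end is indeed the crux, and you have resolved it correctly.
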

\begin{proof}
  \autoref{thm:int-idem} implies that autology is a pullback-stable
  universal property, so \autoref{thm:stack-univ} implies
  that~\ref{item:clogsep1}$\Iff$\ref{item:clogsep2}.  And by
  \autoref{thm:collpin}, the stack semantics is always constructively well-pointed and
  satisfies collection, so if we show
  that~\ref{item:clogsep1}$\Iff$\ref{item:clogsep4}, then
  internalizing the same argument will
  show~\ref{item:clogsep2}$\Iff$\ref{item:clogsep3}.

  Suppose \bS\ is constructively well-pointed and autological; we show it satisfies
  \autoref{thm:collpin}\ref{item:collpin2}.  Note that the inductive
  proof of
  \ref{thm:collpin}\ref{item:collpin1}$\imp$\ref{item:collpin2} only
  used collection in two cases, $\join$ and $\im$, and only in the
  $(\ss)\imp(\ff)$ direction.  Thus it will suffice to prove
  \ref{thm:collpin}\ref{item:collpin2} for formulas of the form
  $(\ph\vee\psi)$, $\exists X.\ph(X)$, or $\exists f\colon X\to
  Y.\ph(f)$, in each case assuming inductively that
  \ref{thm:collpin}\ref{item:collpin2} holds for the constituent
  formulas.

  Note first that all of these statements are obvious if $U=1$.  In
  the case of $\vee$, if $\bS \ss (\ph\vee\psi)$, then $\bS\ss\ph$ or
  $\bS\ss\psi$, whence by induction $1\ff\ph$ or $1\ff\psi$, and thus
  $1\ff (\ph\vee\psi)$ via one of the decompositions $1= 1\cup 0$ or
  $1=0\cup 1$.  Similarly, in the second case, if $\bS\ss \exists
  X.\ph(X)$, then there is some $A\in\bS$ with $\bS\ss\ph(A)$, whence
  by induction $1\ff\ph(A)$, and so $1\ff \exists X.\ph(X)$ via the
  cover $1\epi 1$.  Quantification over arrows is identical.

  Now if \chi\ is one of the formulas in question over a general $U$,
  by autology we can form $\mm{\chi}\mono U$.  By hypothesis, for any
  $1\too[u] U$ we have $\bS \ss u^*\chi$, so the above remark shows
  that $1\ff u^*\chi$, whence $u$ factors through $\mm{\chi}$.  But
  $1$ is a strong generator, so $\mm{\chi}$ is all of $U$ and thus
  $U\ff\chi$, as desired.

  Thus, by \autoref{thm:collpin}, \bS\ satisfies collection, so it
  remains to prove separation.  Suppose given $U$ and $\ph(u)$, and
  let $\psi \equiv U^*\ph(\Delta_U)$.  Then \psi\ is a sentence over
  $U$ such that $u^*\psi$ is equivalent to $\ph(u)$ for any $1\too[u]
  U$.  Now consider $\mm{\psi}$.  Then for any $1\too[u] U$, by
  \autoref{thm:collpin}, $\bS\ss\ph(u)$ is equivalent to $1\ff
  \ph(u)$, and thus to $1\ff u^*\psi$, which is true iff $u$ factors
  through $\mm{\psi}$; thus $\mm{\psi}$ satisfies the conclusion of
  the separation axiom.  Thus we
  have~\ref{item:clogsep1}$\imp$\ref{item:clogsep4}.

  Now assume \bS\ is well-pointed and satisfies separation and collection.
  For any sentence \ph\ over $U$, we apply separation to find a
  subobject $S\mono U$ such that $1\too[u] U$ factors through $S$ if
  and only if $\bS\ss u^*\ph$.  By \autoref{thm:collpin}, $S\ff \ph$.
  And if $V\too[p] U$ is such that $V\ff p^*\ph$, then for any
  $1\too[v] V$, $pv$ factors through $S$, which (since \bS\ is well-pointed)
  implies that $V$ factors through $S$.  Thus, $S=\mm{\ph}$, so \bS\
  is autological.
\end{proof}

In particular, the implication
\ref{item:clogsep4}$\imp$\ref{item:clogsep1} of \autoref{thm:clog-sep}
shows that if \bV\ satisfies IZF, then $\bbSet(\bV)$ is autological,
giving us our first examples of autological topoi.  We now present a
quick proof that any complete topos over IZF (including any
Grothendieck topos) is autological.

Fix a model \bV\ of IZF.  As usual in material set theory, by a
\emph{large category} we mean a category whose objects and arrows form
proper classes in the sense of \bV, i.e.\ consist of the sets in \bV\
satisfying some first-order formulas.  Recall that a topos is
well-powered if and only if it is locally small, and that if it is
complete it is also cocomplete.  Note that any cocomplete topos also
has a \nno, namely the countable copower of $1$.  The examples of most
interest are of course Grothendieck toposes, i.e.\ categories of
sheaves on a small site.

\begin{lem}\label{thm:forcing-5}
  If \bS\ is a complete and well-powered topos over a model of IZF and
  $U = \bigcup_{i\in I} V_i$ in \bS\ with $V_i\ff \ph$ for all $i\in
  I$, then $U\ff\ph$.
\end{lem}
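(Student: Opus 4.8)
The plan is to prove this by induction on the structure of \ph, running exactly parallel to the finite descent statement \autoref{thm:forcing-basics}\ref{item:forcing-3}, with completeness and cocompleteness supplying the infinitary colimits that the finite version did not need.  Before starting the induction I would make one harmless reduction and collect the tools.  Since \bS\ is well-powered, the subobjects $V_i$ range over a \emph{set} (the image of the assignment $i\mapsto [V_i]$ in $\Sub(U)$), so I may assume the index class $I$ is itself small; and since a complete, well-powered topos is cocomplete, all coproducts and unions appearing below exist.  The facts I intend to use are standard: arbitrary unions of subobjects exist and are stable under pullback; the image of $\coprod_i X_i\to U$ is the union of the images of the $X_i\to U$; every epimorphism in a topos is regular; and small coproducts in a cocomplete topos are disjoint and universal, so that $\bS/(\coprod_i W_i)\simeq\prod_i\bS/W_i$ (infinitary extensivity).

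With those in hand, the cases $(f=g)$, \top, \bot, and \meet\ are essentially immediate.  For an atomic formula, $f$ and $g$ agreeing after pullback to every $V_i$ forces $f=g$, because the subobjects $X\times_U V_i\mono X$ have union $X\times_U(\bigcup_i V_i)=X$; \top\ and \meet\ are formal; and if each $V_i$ is initial then $\coprod_i V_i$ is initial, so $\bigcup_i V_i\mono U$ is the zero subobject and $U$ must itself be initial, i.e.\ $U\ff\bot$.  The cases \join, \imp\ (hence \neg), and the universal-quantifier clauses are handled by ``pulling the cover back''.  For \join\ I would take $A:=\bigcup_i A_i$ and $B:=\bigcup_i B_i$, where $V_i=A_i\cup B_i$ witnesses $V_i\ff(\ph\join\psi)$; then $A\cup B=\bigcup_i V_i=U$, and applying the inductive hypothesis to the covers $\{A_i\to A\}$ and $\{B_i\to B\}$ gives $A\ff\ph$ and $B\ff\psi$.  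For \imp, and for universal quantification, given a test map $p\colon W\to U$ (together, in the universal case, with an object or arrow over $W$), I would set $W_i:=p^{-1}(V_i)$; stability of unions under pullback gives $\bigcup_i W_i=W$, each composite $W_i\to V_i$ lets me invoke the hypothesis at $V_i$ to obtain $W_i\ff p^*\psi$ (resp.\ $W_i\ff p^*\ph$ of the restricted witness), and the inductive hypothesis on the cover $\{W_i\to W\}$ closes the case.

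The one place that genuinely goes beyond the finite descent lemma is the existential-quantifier clauses (object quantifiers, with arrow quantifiers entirely analogous), since there a witness must be \emph{constructed}.  From $V_i\ff\exists X.\ph(X)$ I obtain a regular epi $q_i\colon W_i\epi V_i$ and an object $A_i\in\bS/W_i$ with $W_i\ff\ph(A_i)$ (pulling \ph\ back along $W_i\to U$).  I would then put $W:=\coprod_i W_i$ with induced map $p\colon W\to U$; since the image of $p$ is $\bigcup_i V_i=U$, the map $p$ is a regular epi.  Infinitary extensivity, $\bS/(\coprod_i W_i)\simeq\prod_i\bS/W_i$, then lets me glue the $A_i$ into a single $A\in\bS/W$ restricting to $A_i$ along each coproduct injection $\iota_i\colon W_i\mono W$; since $\iota_i^*(p^*\ph(A))\cong p_i^*\ph(A_i)$ with $p_i=p\iota_i$, we get $W_i\ff p^*\ph(A)$, and as the $\iota_i(W_i)$ cover $W$ the inductive hypothesis yields $W\ff p^*\ph(A)$, hence $U\ff\exists X.\ph(X)$.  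The main obstacle, such as it is, is precisely this gluing step: it requires the ambient topos to be cocomplete with disjoint, universal small coproducts so that witnesses defined over the pieces $W_i$ can be assembled over their coproduct — which is exactly why completeness and well-poweredness are hypothesized (and why the result is really a statement about Grothendieck-type topoi).
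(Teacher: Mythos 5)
Your overall architecture is the same as the paper's: induction on \ph\ parallel to \autoref{thm:forcing-basics}\ref{item:forcing-3}, with the existential case handled by forming a coproduct of local witnesses, noting the induced map to $U$ is (regular) epi, and gluing the witness objects over the coproduct exactly as the paper does. But there is a genuine gap at the one step the paper flags as the actual content of the lemma: the metatheory is IZF, so there is no axiom of choice, and your sentence ``From $V_i\ff\exists X.\ph(X)$ I obtain a regular epi $q_i\colon W_i\epi V_i$ and an object $A_i\in\bS/W_i$'' silently chooses an $I$-indexed \emph{family} of witnesses. For a fixed $i$ the witnesses $(W,q,A)$ form a proper class with no canonical member, so neither well-poweredness nor your preliminary shrinking of $I$ helps; without choice you cannot form the family $(W_i,q_i,A_i)_{i\in I}$, hence cannot form $\coprod_i W_i$ or glue the $A_i$. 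The paper repairs exactly this by invoking collection in \bV: collection produces a \emph{set} \cQ\ of quadruples $(i,W,p,A)$ such that every $i\in I$ is covered by at least one (possibly several) of them, and the coproducts $Z=\coprod_{\cQ} W$ and $B=\coprod_{\cQ} A$ are taken over \cQ\ rather than over $I$; the induced map $Z\to U$ is still epi and the rest of your argument then goes through essentially verbatim.

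The same issue touches your disjunction case, where you pick a decomposition $V_i=A_i\cup B_i$ for each $i$; there it can be dodged without collection (e.g.\ take $A$ to be the union of \emph{all} subobjects of $U$ forcing \ph, which exists by well-poweredness, separation in \bV, and completeness, as in the proof of \autoref{thm:cplt-aut}), but in the existential case the witnesses do not live in any fixed set, so collection is genuinely needed. In short: your proof is correct over a ZFC metatheory, but as stated over IZF it is incomplete, and the missing ingredient --- collection in \bV\ --- is precisely what the paper's proof is designed to exhibit; you attribute the role of the hypotheses only to the gluing step and to completeness/well-poweredness, missing this metatheoretic point.
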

\begin{proof}
  This is, of course, an extension of
  \autoref{thm:forcing-basics}\ref{item:forcing-3} to the infinitary
  case, but the proof requires the use of collection in \bV.  The
  cases of atomic formulas, \top, \bot, and \meet\ are again easy,
  while \join\ follows from distributivity of unions over themselves,
  and \imp\ and \coim\ follow from pullback-stability of unions.

  For $\im$, if we have $V_i\ff \exists X.\ph(X)$ for all $i\in I$,
  then for each $i$ there is an epimorphism $W\epi V$ and an $A \in
  \bS/W$ such that $W\ff \ph(A)$.  By collection in \bV, we have a set
  \cQ\ such that for every $i\in I$ there is a quadruple $(i,W,p,A)\in
  \cQ$ such that $p\colon W\epi V_i$ is an epimorphism, $A\in\bS/W$,
  and $W\ff\ph(A)$.  Let $B = \coprod_{(i,W,p,A)\in\cQ} A$ and $Z =
  \coprod_{(i,W,p,A)\in\cQ} W$, which exist since \bS\ is cocomplete.
  We then have an induced epimorphism $q\colon Z\epi U$, and since $Z
  = \bigcup_{(i,W,p,A)\in\cQ} W$ and $B$ pulls back over each $W$ to
  the corresponding $A$, the inductive hypothesis implies that
  $Z\ff\ph(B)$.  Therefore, $U\ff\exists X.\ph(X)$.  Existential
  quantification over arrows is analogous.
\end{proof}

\begin{thm}\label{thm:cplt-aut}
  Any complete and well-powered topos over a model of IZF is auto\-logical.
\end{thm}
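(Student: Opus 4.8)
The plan is to verify the definition of \emph{autological} directly: for every object $U$ of \bS\ and every sentence \ph\ over $U$, I must produce a subobject of $U$ that classifies \ph\ in the sense of \autoref{def:complete-logic}. The candidate is simply the union of all subobjects of $U$ that force \ph.

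Concretely, for a mono $m\colon S\mono U$ form the pulled-back formula $m^*\ph$ over $S$, and let $\mathcal{S}\subseteq\Sub(U)$ be the collection of those $S$ for which $S\ff m^*\ph$. Since \bS\ is well-powered, $\mathcal{S}$ is a (small) set; this is the only place well-poweredness is used. Since \bS\ is complete, hence cocomplete, the set-indexed union $R:=\bigcup_{S\in\mathcal{S}}S$ exists as a subobject of $U$; write $\mu\colon R\mono U$ for its inclusion. Now \autoref{thm:forcing-5} --- the infinitary form of descent of forcing, which is exactly where completeness and the hypothesis that the base models IZF enter --- yields $R\ff\mu^*\ph$, so $R$ is itself the top element of $\mathcal{S}$.

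It remains to check that $R$ classifies \ph. If $p\colon V\to U$ factors through $\mu$, then $V\ff p^*\ph$ by pullback-stability of forcing (\autoref{thm:forcing-basics}\ref{item:forcing-1}). Conversely, suppose $V\ff p^*\ph$, and factor $p$ through its image as $V\xepi{e}I\xmono{i}U$. Then $p^*\ph$ is an isomorph of $e^*(i^*\ph)$, so $V\ff e^*(i^*\ph)$ by \autoref{thm:forcing-basics}\ref{item:forcing-4}; since $e$ is a regular epimorphism, descent of forcing (\autoref{thm:forcing-basics}\ref{item:forcing-2}) gives $I\ff i^*\ph$, i.e.\ $I\in\mathcal{S}$. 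Hence $i$ factors through $\mu$, and therefore so does $p$. Thus $R=\mm{\ph}$ classifies \ph, and since $U$ and \ph\ were arbitrary, \bS\ is autological.

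I do not expect a genuine obstacle here: the substantive content has already been established in \autoref{thm:forcing-5}. The one point that must be handled with care is that $\mathcal{S}$ genuinely be a set, so that its union can be formed inside the cocomplete topos \bS\ --- and this is precisely what well-poweredness provides. (Equivalently, one could observe that the sub-presheaf of $\bS(-,U)$ consisting of those maps which force \ph\ is a sub-sheaf for the canonical topology, by finite and infinitary descent of forcing, and hence is representable; but the union-of-subobjects description above is the most economical.)
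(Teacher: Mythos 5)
Your proposal is correct and follows essentially the same route as the paper: form the (set-indexed, by well-poweredness plus separation in the IZF metatheory) union of all subobjects of $U$ forcing \ph, apply \autoref{thm:forcing-5} to see the union forces \ph, and use image factorization plus descent of forcing for the classifying property. The only cosmetic difference is that the paper phrases the collection of subobjects as a set of representatives of isomorphism classes, which is what your use of $\Sub(U)$ already encodes.
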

\begin{proof}
  Let \bS\ be such a topos and \ph\ a sentence over $U$ in \bS.  Using
  separation in \bV\ and well-poweredness of \bS, let $K$ be a set of
  subobjects of $U$ representing each isomorphism class $V\mono U$
  such that $V\ff \ph$.  Since \bS\ is complete, $K$ has a union; call
  it $S$.  By \autoref{thm:forcing-5}, $S\ff\ph$.  And given any
  $W\too[p] U$ such that $W\ff\ph$, if $R$ is the image of $p$, then
  $R\ff\ph$, so $R\iso V$ for some $V\in K$; hence $R\subseteq S$ and
  thus $p$ factors through $S$.  Thus, $S$ classifies \ph, so \bS\ is
  autological.
\end{proof}

\begin{rmk}
  This implies that in a Grothendieck topos of sheaves on some site,
  the definition of the stack semantics can be rephrased in terms of a
  forcing relation defined only over the site, just as for the usual
  internal logic.  We omit the proof, which is very much like those of
  Theorems \ref{thm:collpin} and \ref{thm:cofc-logic}.
\end{rmk}

We will see another way of proving autology in \S\ref{sec:cofc}: the
category of small objects in any category of classes which satisfies
separation is autological.  In~\cite{shulman:uqsa} we will prove
directly that autology is preserved by many other topos-theoretic
constructions.

\begin{rmk}
  We have not mentioned the interpretation of fullness, induction, or
  well-founded induction in the stack semantics.  Since these do not
  express simple universal properties, their stack-semantics versions
  are not generally equivalent to their ``external'' versions.  One
  can, of course, write out explicitly in terms of \bS\ what each of
  these axioms means in the stack semantics, and the result will be the
  appropriate version of that axiom for a non-well-pointed category.  For
  example, the assertion that $1\ff$ \qq{\bS\ satisfies fullness} is
  equivalent to the categorical version of fullness given
  in~\cite{vdbdb:nonwellfounded,vdbm:pred-i-exact} (minus their
  `smallness' conditions).
\end{rmk}

\section{Material set theories in the stack semantics}
\label{sec:mater-set-theor}

We can now combine \S\S\ref{sec:constr-mat} and
\ref{sec:internal-logic} in the expected way: from any Heyting
pretopos \bS, we obtain an interpretation of material set theory by
interpreting the theory of \S\ref{sec:constr-mat} in the stack
semantics of \bS.  In particular, we can construct interpretations of
material set theory by starting with a model \bV, passing to the
category $\bbSet(\bV)$, performing some category-theoretic
construction on $\bbSet(\bV)$ (which generally destroys
well-pointedness), then constructing the model of
\S\ref{sec:constr-mat} in the stack semantics of the resulting
category.

There are several important questions to ask about this construction.
The first is, which of the relevant category-theoretic properties of
$\bbSet(\bV)$ are preserved by the constructions in question, and
which others can be ``forced'' to hold or fail by a clever choice of
such a construction?  To study this in generality is not our aim in
this paper, but we remark briefly on what is known.
\killspacingtrue
\begin{enumerate}
\item The most-studied case is that of elementary topoi, which are
  well-known to be preserved by all sorts of constructions, such as
  sheaves (on internal sites), coalgebras for left-exact comonads,
  Artin gluing, realizability, and filterquotients.  Natural numbers
  objects are also usually preserved.
\item Realizability and sheaf constructions on \Pi-pretoposes (often
  also satisfying some additional axioms, such as fullness) are
  studied
  in~\cite{mp:wftrees,mp:ttcst,vdb:ind-exact,aglw:shvs-ast,vdb:pred-sheaves,vdbm:pred-i-exact,vdbm:pred-ii-realiz,vdbm:pred-iii-shvs},
  among other places (see also~\cite{ast-website}).
\item Of course, Booleanness and the axiom of choice are not usually
  preserved by any of these constructions.
\item On the other hand, we have seen that collection is always
  satisfied in the stack semantics.
\item We will prove in~\cite{shulman:uqsa} that the axiom of
  separation, in its stack-semantics form (i.e.\ \emph{autology} as
  defined in \S\ref{sec:internal-logic}) is also preserved by the
  constructions of sheaves, coalgebras, gluing, realizability, and
  filterquotients.
\item We are not aware of any results regarding the preservation of
  full induction or well-founded induction.
\end{enumerate}
\killspacingfalse

The second important question to ask is, how does this approach
compare to existing category-theoretic constructions of
interpretations of material set theory?  As mentioned in the
introduction, such interpretations fall into two groups: the older
approach of Fourman~\cite{fourman:sheaf-setthy} and
Hayashi~\cite{hayashi:setthy-topos}, and the newer approach of
algebraic set theory begun by Joyal and Moerdijk~\cite{jm:ast}.  The
next two sections are devoted to comparing our models with these;
under general hypotheses they turn out to be equivalent.

The third important question to ask is, how does this approach compare
with well-known constructions of models that remain entirely within
the world of material set theory?  We will not study this question
here, but a partial answer to it can be extracted from our answers to
the second question.

For instance, in~\cite{bs:freyd-ac,bs:cplt-rep-setth} it is shown,
using a construction of Freyd~\cite{freyd:all-localic}, that the
Fourman-Hayashi interpretation in any Boolean Grothendieck topos
(defined over a model of ZFC) can be identified with a certain
symmetric submodel of a Boolean-valued model of ZF.
(Fourman~\cite{fourman:sheaf-setthy} already observed this in
particular cases.)  It then follows that the stack-semantics model in
any such Grothendieck topos can also be so identified.
Hayashi~\cite{hayashi:setthy-topos} also asserted that his
interpretation in a topos of sheaves on a complete Heyting algebra
agrees with the corresponding Heyting-valued model of IZF, as
in~\cite{grayson:heyt-ist}.  Since Freyd's construction also applies
to non-Boolean topoi, it seems likely that the Fourman-Hayashi
interpretation in an arbitrary Grothendieck topos can similarly be
compared to symmetric submodels of Heyting-valued models as studied
in~\cite{tt:heytval-ist}, but to my knowledge this has not been
written down.  Likewise, in~\cite{kgvo:ast-eff,vdbm:pred-ii-realiz} it
is shown that the model of material set theory constructed via
algebraic set theory from a realizability topos agrees with McCarty's
realizability model of IZF~\cite{m:realiz-recurs}, and thus so does
the stack-semantics model in a realizability topos.  In principle, it
should be possible to give a direct comparison between stack-semantics
models and material-set-theoretic models, but we will not do so here.

\section{The cumulative hierarchy in a complete topos}
\label{sec:fourman}

In this section we will show that in good cases, our interpretation of
material set theory in the stack semantics is equivalent to the
interpretations of Fourman~\cite{fourman:sheaf-setthy} and
Hayashi~\cite{hayashi:setthy-topos}.  In fact, Hayashi defines his
interpretation by way of a Kripke-Joyal semantics, which is thus quite
evidently the same as the stack-semantics approach.  Fourman uses
instead representing subobjects, which he can only construct in a
complete and well-powered topos.  Additionally, both focus on a
version of the ``von Neumann hierarchy'' constructed internally to the
topos, which our approach shows to be unnecessary.

Let us begin by reformulating the stack-semantics interpretation of
material set theory in terms of elements rather than slice categories.
Let \bS\ be a topos with a \nno, let $A$ be a well-founded extensional
graph in \bS, and let $x\colon U\to A$ be a morphism.  Let $\xbar$
denote the subobject of $U^* A$ such that in the internal logic of
$\bS/U$, \xbar\ contains precisely those $y\in A$ which admit a path
to $x$.

\begin{lem}\label{thm:hayashi-apg}
  Let $x$ and \xbar\ be as above, and similarly $y\colon U\to A$
  and \ybar.
  \killspacingtrue
  \begin{enumerate}
  \item $U\ff$ \qq{\xbar\ is a well-founded extensional \apg}.\label{item:fa1}
  \item $U \ff (\xbar\cong\ybar)$ if and only if $x=y$.\label{item:fa2}
  \item $U \ff (\xbar \iin\ybar)$ if and only if $x\prec y$.\label{item:fa3}
  \item If $f\colon A \into B$ is a simulation, then $U \ff (\xbar
    \cong \overline{f x})$.\label{item:fa4}
  \end{enumerate}
  \killspacingfalse
  Furthermore, every well-founded extensional \apg\ in $\bS/U$ is
  isomorphic to one of the form \xbar, for some well-founded
  extensional graph $A$ in \bS\ and some $x\colon U\to A$.
\end{lem}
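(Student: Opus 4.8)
The plan is to take the given well-founded extensional \apg\ $Z$ in $\bS/U$ --- with structure map $p\colon Z\to U$, relation $\prec_Z\mono Z\times_U Z$ and root $\star\colon U\to Z$ --- to regard $(Z,\prec_Z)$ as a graph in \bS\ (via $Z\times_U Z\mono Z\times Z$), and to let $A$ be \emph{its extensional quotient in} \bS, with $x\colon U\to A$ the composite of $\star$ with the quotient map. The reason one cannot simply take $A=Z$ and $x=\star$ is that $(Z,\prec_Z)$, while well-founded in \bS, is \emph{not} extensional there: distinct leaf nodes lying over distinct points of $U$ have the same (empty) child-set without being equal. Passing to the extensional quotient repairs this, and the identifications it makes turn out to be exactly what is needed.

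So the first point to establish is that $(Z,\prec_Z)$ really is a well-founded graph in \bS. This is a transfer statement: because $\prec_Z$ factors through $Z\times_U Z$, a generalized element $z'\prec_Z z$ forces $z'$ and $z$ to lie over the same element of $U$, so a short Kripke--Joyal computation shows that a subobject $S\mono Z$ is $\prec_Z$-inductive as tested in \bS\ if and only if it is $\prec_Z$-inductive as tested in $\bS/U$; together with the canonical isomorphism $\Sub_\bS(Z)\cong\Sub_{\bS/U}(Z)$ this identifies well-foundedness of $(Z,\prec_Z)$ in \bS\ with well-foundedness of $Z$ in $\bS/U$, which is assumed. (For a relation not internal to the fibers this transfer fails.) Being a well-founded graph in the topos \bS, $(Z,\prec_Z)$ then admits an extensional quotient (the discussion following \autoref{thm:bisim-quot}, with \autoref{thm:bisim-wf}): a surjective simulation $q\colon Z\epi A$ with $A$ a well-founded extensional graph in \bS. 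Put $x:=q\circ\star\colon U\to A$ and let \xbar\ be as in the statement; by part~\ref{item:fa1}, $U\ff$ \qq{\xbar\ is a well-founded extensional \apg}, which, these being bounded properties, may equally be read --- by \autoref{thm:ddo-classif} and \autoref{thm:higher-ddo-sep} --- as a statement in the ordinary internal logic of the topos $\bS/U$; likewise our hypothesis on $Z$.

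It remains to exhibit an isomorphism $Z\cong\xbar$ of \apgs\ over $U$. Working in the internal logic of $\bS/U$, let $R$ be the graph of the function $z\mapsto(pz,qz)$; it lands in \xbar\ because, by accessibility, every $z$ admits a path to $\star(pz)$ in $Z$, whence --- $q$ being a simulation --- $qz$ admits a path to $q(\star(pz))=x(pz)$ in $A$. One checks directly that $R$ is a bisimulation, using that $q$ preserves $\prec$ and has the path-lifting property of a simulation, and that the $U$-coordinates match automatically since $\prec_Z$ and $\prec_{\xbar}$ are internal to the fibers. Moreover $R$ is bi-entire: $R\to Z$ is an isomorphism (being the graph of a morphism of graphs), and $R\to\xbar$ is epic because a node $(u,a)$ of \xbar\ comes with a path of some length $n\in N$ from $a$ to $x(u)=q(\star(u))$ in $A$, which one lifts step by step through the simulation $q$ starting at $\star(u)$ (an induction on $n$ using the \nno), producing $z\in Z$ with $pz=u$ and $qz=a$. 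Since $Z$ and \xbar\ are both well-founded extensional graphs in $\bS/U$, \autoref{thm:bisim-iso} --- whose proof uses only bounded, intuitionistically valid reasoning, hence is available in the internal logic of the (not necessarily well-pointed) topos $\bS/U$ --- shows that $R$ is an isomorphism; since $R$ relates the root of $Z$ to that of \xbar, it is an isomorphism of \apgs, as required.

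The main obstacle, as indicated, is the conceptual one of seeing that the extensional quotient of the total-space graph is the correct construction; after that, the work is the bookkeeping of the well-foundedness transfer and of verifying that the bisimulation lemmas (\autoref{thm:bisim-quot}, \autoref{thm:bisim-wf}, \autoref{thm:bisim-iso}) and the forcing facts of part~\ref{item:fa1} apply in the form needed inside $\bS/U$.
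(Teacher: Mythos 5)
Your proposal only addresses the final clause of the lemma.  Parts (i)--(iv) are never proved: you explicitly invoke (i) (``$U\ff$ \qq{\xbar\ is a well-founded extensional \apg}'') as an input to your construction, and (ii)--(iv) do not appear at all.  These are not contentless: (i) needs the observation that $U^*$ is logical and that well-foundedness and extensionality are \ddo-properties, hence preserved by pullback; the ``only if'' directions of (ii) and (iii) require a genuine argument by well-founded induction (in effect the rigidity statements of \autoref{thm:wf-rigid} and \autoref{thm:wf-rigid-global} carried out internally); and (iv) rests on \autoref{thm:sim-inj}.  So as written this is a proof of the ``furthermore'' statement \emph{conditional on (i)}, not a proof of the lemma.

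For the clause you do prove, the argument is correct and runs close to the paper's.  The paper regards the given \apg\ as a graph in \bS, adjoins one new root whose children are all the fiberwise roots, and applies \autoref{thm:ext-quotient} with $n=1$, arguing in the stack semantics; you instead quotient the unrooted total-space graph by its largest bisimulation, which is legitimate here precisely because \bS\ is a topos in this section (power objects are what make the largest bisimulation available --- this route would not survive weakening \bS\ to a $\Pi$-pretopos, while the paper's use of \autoref{thm:ext-quotient} would), and you then verify $Z\cong\xbar$ by exhibiting a bi-entire bisimulation in the internal logic of $\bS/U$ and citing \autoref{thm:bisim-iso}, whose proof is indeed bounded and intuitionistic.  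Your transfer of well-foundedness from $\bS/U$ to \bS\ using that $\prec$ is fiberwise, and the path-lifting argument (by internal induction on path length) for surjectivity onto \xbar, are both fine; the paper leaves the corresponding verification as ``easy to verify,'' so on this clause your write-up is if anything more detailed.  The missing items (i)--(iv) are what keep this from being a complete proof.
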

\begin{proof}
  Since $U^*\colon \bS\to\bS/U$ is a logical functor, and
  well-foundedness and extensionality are \ddo-properties in a topos,
  $U\ff$ \qq{$U^* A$ is a well-founded extensional graph}.  The
  proof of~\ref{item:fa1} is then obvious by arguing internally,
  while~\ref{item:fa4} follows from \autoref{thm:sim-inj}.  The ``if''
  directions of~\ref{item:fa2} and~\ref{item:fa3} are easy, while the
  ``only if'' directions can be shown by well-founded induction.

  Finally, suppose $X$ is a well-founded extensional \apg\ in
  $\bS/U$.  Arguing in the stack semantics of \bS, consider $X$ as a
  graph in \bS\ itself and form $X+1$, with $\prec$ inherited from $X$
  and with $\star_u \prec \star$, where for each $u\in U$, $\star_u$
  denotes the root of the fiber $u^*X$.  Then $X+1$ satisfies the
  hypotheses of \autoref{thm:ext-quotient} with $n=1$; let $A$ be its
  extensional quotient.  The section $U\to X$ (which equips $X$ with
  its root, as an \apg\ in $\bS/U$) induces a map $x\colon U\to A$ and
  it is easy to verify that $X\cong \xbar$.
\end{proof}

\begin{rmk}
  Since isomorphisms between well-founded extensional graphs are
  unique when they exist, $U\ff (\xbar\cong\ybar)$ if and only if in
  fact $\xbar\cong\ybar$ in $\bS/U$, and similarly $U \ff (\xbar
  \iin\ybar)$ if and only if there exists a $z\colon U\to \ybar$ such
  that $\xbar \cong \ybar/z$ in $\bS/U$.
\end{rmk}

\begin{rmk}
  Combining \autoref{thm:hayashi-apg}\ref{item:fa2}
  with~\ref{item:fa4}, we see that for $x\colon U\to A$ and $y\colon
  U\to B$, we have $\xbar\cong\ybar$ iff $f x = g y$ for some (hence
  any) well-founded extensional graph $C$ with simulations $f\colon
  A\to C$ and $g\colon B\to C$, and similarly for membership.  Note
  that for any any such pair $A,B$ there exists such a $C$ with $f$
  and $g$, such as the extensional quotient of $A+B+1$.
\end{rmk}

\begin{thm}\label{thm:hayashi-stack}
  The stack semantics interpretation of material set theory in a
  topos can equivalently be described as follows.  Instead of
  well-founded extensional \apgs\ in $\bS/U$, the parameters at
  stage $U$ are morphisms $x\colon U\to A$, where $A$ is any
  well-founded extensional graph in \bS.  The forcing semantics of
  formulas \ph\ is defined inductively by:
  \begin{blist}
  \item For $x\colon U\to A$ and $y\colon U\to B$, we say $U\ff (x=y)$
    if $f x = g y$ for some (hence any) simulations $f\colon A\to C$
    and $g\colon B\to C$.
  \item Likewise, $U\ff (x\in y)$ if $f x \prec g y$ for $f$ and $g$ as above.
  \item The connectives are interpreted in the usual Kripke-Joyal way.
  \item We say $U\ff \exists x.\ph(x)$ if there is an epimorphism
    $V\epi U$ and an $x\colon V\to A$, for some well-founded
    extensional graph $A$, such that $V\ff \ph(x)$.
  \item Likewise, $U\ff \forall x.\ph(x)$ if $V\ff \ph(x)$ for any
    $V\to U$, any well-founded extensional graph $A$, and any map
    $x\colon V\to A$.
  \end{blist}
\end{thm}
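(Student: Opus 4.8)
The plan is to prove, by induction on $\ph$, that the inductive definition displayed in the statement computes exactly $U\ff(\ph_{\ordiin})$, where $\ph_{\ordiin}$ is the translation of \S\ref{sec:constr-mat} (replacing $=$ by $\iso$ of \apgs\ and $\in$ by $\iin$) evaluated in the stack semantics of \bS, and where a parameter $x\colon U\to A$ is read as naming the well-founded extensional \apg\ $\xbar$ in $\bS/U$. Everything is essentially a corollary of \autoref{thm:hayashi-apg} once two routine facts about the assignment $x\mapsto\xbar$ are in place. First, it is pullback-stable: for $V\too[p]U$ one has $p^*\xbar\iso\overline{xp}$ in $\bS/V$, because ``$y$ admits a path to $x$'' is a \ddo-formula and hence its interpretation is preserved by the logical functor $p^*$. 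Second, it is essentially surjective onto well-founded extensional \apgs\ in every slice; this is precisely the final sentence of \autoref{thm:hayashi-apg}, together with part~\ref{item:fa1} which certifies that each $\xbar$ really is such an \apg.

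For the atomic formulas, let $x\colon U\to A$ and $y\colon U\to B$. By \autoref{thm:hayashi-apg}\ref{item:fa2} (and the first following remark) $U\ff(\xbar\iso\ybar)$ holds iff $\xbar\iso\ybar$ in $\bS/U$, and by \autoref{thm:hayashi-apg}\ref{item:fa4} together with the second following remark this is equivalent to $fx=gy$ for some --- hence, since simulations between well-founded extensional graphs are unique, any --- simulations $f\colon A\to C$, $g\colon B\to C$ into a common well-founded extensional graph $C$, one of which always exists (e.g.\ the extensional quotient of $A+B+1$ via \autoref{thm:ext-quotient} with $n=1$). This is exactly the stated clause for $U\ff(x=y)$; the clause for $U\ff(x\in y)$ is obtained in the same way, using \autoref{thm:hayashi-apg}\ref{item:fa3} in place of~\ref{item:fa2}. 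The inductive step for $\top$, $\bot$, $\meet$, $\join$, $\imp$ and $\neg$ is immediate, since both $\ph_{\ordiin}$ in the stack semantics and the description in the statement handle the connectives by the identical Kripke--Joyal clauses of \autoref{defn:ff}; the pullback-stability recorded above is what keeps the two recursions aligned as the stage $U$ is replaced by covers and pullbacks.

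For the quantifiers, $U\ff\exists x.\ph(x)$ in the stack-semantics reading unwinds to: there is a regular epi $V\xepi{p}U$ and a well-founded extensional \apg\ $X$ in $\bS/V$ with $V\ff\ph_{\ordiin}(X)$. By essential surjectivity we may replace $X$ by $\xbar$ for a suitable $x\colon V\to A$, and by isomorphism-invariance of forcing (\autoref{thm:forcing-basics}\ref{item:forcing-4}) and the inductive hypothesis $V\ff\ph_{\ordiin}(\xbar)$ is equivalent to $V\ff\ph(x)$ in the sense of the statement; this yields exactly the stated clause, and the universal case is identical (every such $X$ over $V$ being, up to isomorphism in $\bS/V$, of the form $\xbar$). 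The one point that needs care --- and the nearest thing to an obstacle --- is this bookkeeping in the quantifier clauses: one must be sure that letting $x$ range over all morphisms $V\to A$ with $A$ an arbitrary well-founded extensional graph of \bS\ reproduces, up to isomorphism in $\bS/V$, precisely the well-founded extensional \apgs\ of $\bS/V$, neither fewer nor more. The ``fewer'' direction is the last sentence of \autoref{thm:hayashi-apg} and the ``more'' direction is part~\ref{item:fa1}, so with those in hand the induction goes through.
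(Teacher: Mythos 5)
Your proof is correct and follows essentially the same route as the paper, whose own proof of this theorem is simply ``straightforward induction on formulas, using \autoref{thm:hayashi-apg}''; you have merely made explicit the details (pullback-stability of $x\mapsto\xbar$, the atomic cases via parts~\ref{item:fa2}--\ref{item:fa4} and the remarks, and the quantifier cases via part~\ref{item:fa1} and the essential-surjectivity clause) that the paper leaves to the reader.
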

\begin{proof}
  Straightforward induction on formulas, using \autoref{thm:hayashi-apg}.
\end{proof}

This is exactly the interpretation of
Hayashi~\cite{hayashi:setthy-topos}, except that he considers only
morphisms $x\colon U\to A$ where $A$ belongs to some specified class
\sU\ of well-founded extensional graphs.  Of course, some closure
conditions on \sU\ are then necessary to ensure that the axioms of
material set theory are satisfied.  Hayashi assumes that:
\begin{blist}
\item For any $A,B\in\sU$ there is a $C\in \sU$ with simulations $A\into
  C$ and $B\into C$, and
\item For any $A\in \sU$ there is a $B\in\sU$ with a simulation $P A
  \into B$.
\end{blist}
Here $P A$ is the power-object of $A$, equipped with the well-founded
extensional relation $\prec$ defined internally as follows: for
subsets $H,K\subseteq A$, we have $H\prec K$ iff there exists $x\in K$
such that $H = \setof{y | y\prec x}$.  Hayashi calls a \sU\ satisfying
these conditions a \textbf{pre-universe}, and proves that the above
semantics valued in any pre-universe validates the core axioms
together with power sets, foundation, and transitive closures, plus
infinity if there is a \nno, and \ddo-classical logic if the topos is
Boolean.  We thus record:

\begin{thm}\label{thm:hayashi-interp}
  Let \bS\ be a topos and let \sU\ be the maximal pre-universe
  consisting of all well-founded extensional graphs.  Then the
  interpretation of material set theory in the stack semantics of \bS\
  is identical to Hayashi's interpretation valued in \sU.\qed
\end{thm}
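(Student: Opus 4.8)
The plan is to deduce the statement directly from \autoref{thm:hayashi-stack}. That theorem already recasts the stack-semantics interpretation of material set theory in $\bS$ as a forcing relation whose parameters at a stage $U$ are the morphisms $x\colon U\to A$ with $A$ an arbitrary well-founded extensional graph in $\bS$, and whose clauses for equality, membership, connectives, and quantifiers are \emph{verbatim} those of Hayashi's recipe. The one and only discrepancy is that Hayashi restricts the graphs $A$ which may occur --- as parameters, as witnesses for $\exists x.\ph(x)$, and as the range of $\forall x.\ph(x)$ --- to lie in a chosen pre-universe, whereas \autoref{thm:hayashi-stack} imposes no such restriction. Hence, provided the class $\sU$ of \emph{all} well-founded extensional graphs in $\bS$ really is a pre-universe, Hayashi's interpretation valued in that $\sU$ has, stage by stage, exactly the same parameters and exactly the same forcing clauses as the interpretation of \autoref{thm:hayashi-stack}, and the two interpretations coincide. (For the equality and membership clauses one invokes, as in \autoref{thm:hayashi-stack}, that the common codomain $C$ of the two simulations may be chosen freely without changing the value, and that a $C\in\sU$ exists by the first pre-universe axiom; when $\sU$ is the maximal class both points are automatic.)

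So the remaining task is to check Hayashi's two closure axioms for the class $\sU$ of all well-founded extensional graphs. For the first, given $A,B\in\sU$, the \apg\ $A+B+1$ --- obtained by adjoining a new root above all nodes of $A$ and $B$ --- is well-founded and satisfies the hypothesis of \autoref{thm:ext-quotient} with $n=1$, since $A$ and $B$ are extensional; let $C$ be its extensional quotient, so $C\in\sU$. The composite of the initial-segment inclusion $A\into A+B+1$ with the quotient simulation $A+B+1\to C$ of \autoref{thm:bisim-quot} is a simulation out of a well-founded extensional graph, hence by \autoref{thm:sim-inj} an initial-segment inclusion $A\into C$; likewise for $B$. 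For the second, given $A\in\sU$, equip the power object $PA$ with the relation $H\prec K$ iff $\exists x\in K.\ H=\setof{y | y\prec x}$; then $PA$ is itself a well-founded extensional graph, so the identity $PA\into PA$ is the required simulation. Extensionality holds because $x\mapsto\setof{y | y\prec x}$ is an injection $A\to PA$ (using extensionality of $A$), whence two subsets of $A$ with the same $\prec$-predecessors in $PA$ must be equal. Well-foundedness is an internal induction in $\bS$: if $S\subseteq PA$ is inductive, one first shows $\setof{y | y\prec x}\in S$ for every $x\in A$ by well-founded induction on $A$ --- the $\prec$-predecessors of $\setof{y | y\prec x}$ in $PA$ are precisely the sets $\setof{y | y\prec z}$ for $z\prec x$, which lie in $S$ by the inductive hypothesis, so $\setof{y | y\prec x}\in S$ since $S$ is inductive --- and then an arbitrary $K\in PA$ lies in $S$ because all of its $\prec$-predecessors have this form; hence $S=PA$.

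The content of the proof is thus essentially the recognition that, once \autoref{thm:hayashi-stack} is in hand, Hayashi's recipe valued in the maximal pre-universe \emph{is} that reformulation of the stack semantics, rule for rule. There is no deep obstacle; the only step involving genuine (if brief) work is the verification that $PA$, under the power-membership relation, remains well-founded --- the short $\prec$-induction sketched above --- together with the (routine) observation that this induction is legitimate in the possibly intuitionistic internal logic of $\bS$.
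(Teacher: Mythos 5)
Your proposal is correct and follows the paper's own route: the theorem is recorded as an immediate consequence of \autoref{thm:hayashi-stack}, since with the maximal class of well-founded extensional graphs Hayashi's clauses coincide verbatim with that reformulation of the stack semantics. Your added verifications that this maximal class is a pre-universe (the common extension via the extensional quotient of $A+B+1$, and the well-foundedness and extensionality of $PA$ under the power-membership relation) are exactly the routine facts the paper asserts without proof in the surrounding text, and your internal induction argument for $PA$ is sound.
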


Hayashi also defines a \textbf{universe} to be a pre-universe such
that
\begin{blist}
\item For any (small) set $S\subseteq \sU$, there is a $B\in \sU$ such
  that for all $A\in S$ there exists a simulation $A\to B$.
\end{blist}
Of course, this is no longer an elementary property; it only makes
sense when working in some ambient set theory.  Note that the class of
\emph{all} well-founded extensional graphs need not, in general, be a
universe.  It is a universe, however, if \bS\ is \emph{cocomplete}
relative to the external set theory, since then $B$ can be obtained as
the extensional quotient of $\coprod_{A\in S} A + 1$.

Hayashi proves that the axiom of collection is satisfied if \bS\ is
well-powered and \sU\ is a universe.  Our approach via the stack
semantics shows that in fact, these hypotheses are unnecessary: the
axiom of collection is \emph{always} satisfied, at least for the
maximal pre-universe.  Hayashi also claims that the axiom of
separation is valid as soon as the subobject lattices of \bS\ are
complete Heyting algebras, even if \sU\ is only a pre-universe, but I
don't think this can be correct, since it would imply the relative
consistency of collection assuming separation.

The one remaining difference between Hayashi's approach and ours is
that rather than the ``maximal'' pre-universe consisting of all
well-founded extensional graphs, he is more interested in the
``minimal'' ones obtained by transfinitely iterating the power set
operation, mimicking the construction of the von Neumann hierarchy in
material set theory.  This is also the approach taken by Fourman.

% Recall from the introduction that the main problem with contructing
% interpretations of material set theory from non-well-pointed categories is
% that the usual internal logic only deals with \ddo-quantifiers.
% Fourman~\cite{fourman:sheaf-setthy} and
% Hayashi~\cite{hayashi:setthy-topos} realized that in a topos which is
% complete and well-powered relative to an external set theory, one can
% interpret some kinds of unbounded quantifiers as intersections and
% unions in the (complete) subobject lattices.  In particular, by
% mimicking the construction of the von Neumann hierarchy internal to
% the topos (but indexed by \emph{external} ordinals), one can construct
% an interpretation of material set theory with unbounded quantifiers.

In order to make sense of this transfinite iteration, we of course
need an external set theory containing ordinals along which we can
perform induction.  For convenience and familiarity, we take this to
be a material set theory.  Thus, for the rest of this section, let
\bV\ be a model of IZF, with $\Set = \bbSet(\bV)$ its category of
sets.  (Fourman and Hayashi assumed the metatheory to be ZFC, but this
is unnecessary.)

Let \bS\ be a complete and well-powered topos over \bV.  We will
describe the cumulative hierarchy in \bS, as constructed by Fourman
and Hayashi.  Recall that a (von Neumann) \textbf{ordinal} in \bV\ is
a transitive set on which $\in$ is a transitive relation (it is
automatically well-founded, by the axiom of foundation).  Since
set-induction is valid in IZF, we can construct objects of \bS\ by
transfinite recursion on ordinals.  Thus we can inductively define a
well-founded extensional graph $V_\al$ for each ordinal $\al$ of $\bV$
such that $V_\al$ is the extensional quotient of $\coprod_{\be<\al} P
V_\be$.  (The well-founded relation on $P V_\be$ is defined as above,
by setting $H\prec K$ if there exists an $x\in K$ such that $H =
\setof{y | y\prec x}$.)

\begin{rmk}
  Fourman and Hayashi defined the hierarchy $V_\al$ in a more
  traditional way, by conditioning on whether \al\ is zero, a
  successor, or a limit:
  \[V_0 = 0 \qquad V_{\al+1} = P V_\al \qquad V_{\al} = \colim_{\be<\al}
  V_\be \quad\text{(\al\ a limit)}
  \]
  Intuitionistically, this classification of ordinals is no longer
  valid.  We could still try to define $V_\al = \colim_{\be<\al} P
  V_\be$, but since intuitionistic ordinals are not necessarily
  linearly ordered, in general this colimit need not be extensional.
  However, in the classical case, all three definitions are
  equivalent.
\end{rmk}

Evidently, the class $\{V_\al\}$ is a universe in \bS, in Hayashi's
sense.  Thus, it induces a class of well-founded extensional \apgs\ in
all slices of \bS, namely those of the form \xbar\ for some $x\colon
U\to V_\al$.  It is natural to ask whether this includes \emph{all}
well-founded extensional \apgs, and thus whether Hayashi's
interpretation valued in $\sU=\{V_\al\}$ is the same as the
stack-semantics interpretation.  The answer is yes.

\begin{lem}\label{thm:fourman-apg2}
  In any complete and well-powered topos \bS, every (internally)
  well-founded extensional \apg\ in $\bS/U$ is of the form \xbar\ for
  some ordinal $\al$ and some $x\colon U\to V_\al$.
\end{lem}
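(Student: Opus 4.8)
The plan is to reduce, via \autoref{thm:hayashi-apg}, to a statement purely about graphs in \bS, and then to run the classical ``rank'' argument, transfinitely iterating the power-object operation along the \bV-ordinals.

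First I would invoke \autoref{thm:hayashi-apg}: every internally well-founded extensional \apg\ in $\bS/U$ is isomorphic to one of the form \xbar\ for some well-founded extensional graph $A$ in \bS\ and some $x\maps U\to A$. Moreover, if $f\maps A\to V_\al$ is any simulation, then part~\ref{item:fa4} of that lemma together with the remark following it gives $\xbar\iso\overline{fx}$ in $\bS/U$, and $\overline{fx}$ is visibly of the desired form, with parameter $fx\maps U\to V_\al$. So it suffices to prove
\[(\star)\quad\text{every well-founded extensional graph }A\text{ in }\bS\text{ admits a simulation into some }V_\al.\]

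To prove $(\star)$ I would use that \bS\ is complete and well-powered over \bV, so that $\Sub_\bS(A)$ is a complete lattice which is moreover a \emph{set} in \bV. Let $\Phi\maps\Sub_\bS(A)\to\Sub_\bS(A)$ be the monotone operator carrying $S$ to the subobject classifying the \ddo-formula $\forall a'.\,(a'\prec a)\imp(a'\in S)$ in the internal logic of \bS; thus $A$ is well-founded precisely when $A$ is the only pre-fixed point of $\Phi$. I would then define, by transfinite recursion on ordinals \al\ of \bV, subobjects $A_\al\mono A$ and morphisms $f_\al\maps A_\al\to V_\al$: take $A_0 = 0$; take $A_{\al+1} = \Phi(A_\al)$, with $f_{\al+1}$ sending $a\in A_{\al+1}$ to the image in $V_{\al+1}$, along the canonical simulation $PV_\al\epi V_{\al+1}$, of the subobject $\setof{f_\al(a')|a'\prec a}\subseteq V_\al$ regarded as a node of $PV_\al$; and set $A_\la=\bigcup_{\be<\la}A_\be$, $f_\la = \bigcup_{\be<\la}f_\be$ at limits. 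A transfinite induction then shows that the $A_\al$ increase, that the $f_\al$ are mutually compatible along the canonical simulations $V_\al\to V_{\al+1}$ (which are inclusions of initial segments by \autoref{thm:sim-inj}), and---the one point needing genuine checking---that each $f_\al$ is a simulation: at a node $a\in A_{\al+1}$ this uses the explicit description of $\prec$ on $PV_\al$ (the children of a node $K$ are the subobjects $\setof{y|y\prec x}$ for $x\in K$), which, since $f_\al$ is a simulation by the inductive hypothesis, forces the children of $f_{\al+1}(a)$ in $V_{\al+1}$ to be exactly $\setof{f_{\al+1}(a')|a'\prec a}$; the limit case then follows because an initial segment inherits children.

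Finally, $(A_\al)$ is the transfinite iteration from $0$ of the monotone operator $\Phi$ on the set-sized complete lattice $\Sub_\bS(A)$, bounded above by its least fixed point $\bigwedge\setof{S|\Phi(S)\le S}$ (which exists by completeness); I would appeal to the fact---standard, though requiring some care in an intuitionistic metatheory, and this is where full separation and set-induction in \bV\ are used---that such an iteration stabilizes, so that $\Phi(A_{\al^*}) = A_{\al^*}$ for some ordinal $\al^*$. Then $A_{\al^*}$ is an inductive subobject of $A$, hence all of $A$ by well-foundedness, and $f_{\al^*}\maps A = A_{\al^*}\to V_{\al^*}$ is the required simulation. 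The main obstacle I anticipate is twofold: verifying carefully that the constructed $f_\al$ really are simulations (the bookkeeping with power-objects and initial segments) and, more delicately, justifying the stabilization step constructively; the remainder is routine transfinite induction.
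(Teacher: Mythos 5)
Your overall architecture is sound and is in fact very close to the paper's own proof: the reduction via \autoref{thm:hayashi-apg} to showing that every internally well-founded extensional graph $A$ of \bS\ admits a simulation into some $V_\al$, and the transfinite construction of partial simulations $f_\al$ into the $V_\al$, are exactly the paper's ``$\al$-attempts'' in slightly different clothing. One repairable slip first: your recursion splits into zero/successor/limit cases, but for the ordinals of an IZF metatheory this classification is not valid (the paper makes precisely this point in the remark following its definition of the $V_\al$); the recursion must be stated uniformly, e.g.\ $A_\al=\bigvee_{\be<\al}\Phi(A_\be)$, with $f_\al$ obtained as the union of the one-step extensions of the $f_\be$ for $\be<\al$.

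The genuine gap is the stabilization step, which you black-box as a ``standard fact'': in this metatheory it is not quotable, and it is the mathematical heart of the lemma. The classical justification (a strictly increasing chain in the set $\Sub_\bS(A)$ must halt, by Hartogs/replacement) decides equality of successive iterates and concludes by contradiction, so it is unavailable intuitionistically; moreover the axioms you point to, full separation and set-induction, are not what makes it work---the essential ingredient is \emph{collection} in \bV, which you never invoke. The repair is exactly the paper's argument: using well-poweredness and full separation, form the set $C\subseteq\Sub_\bS(A)$ of subobjects occurring as iterates (in the paper: as domains of attempts); by collection obtain a set $B$ of ordinals such that every member of $C$ equals $A_\al$ for some $\al\in B$; put $\be=\bigcup B$. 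Then $A_{\be+1}\in C$, so $A_{\be+1}=A_\al$ for some $\al\in B$, and $\al\subseteq\be$ gives $A_\al\le A_\be$ by monotonicity in the ordinal, while $\Phi(A_\be)\le A_{\be+1}$; hence $A_\be$ is an inductive subobject, so $A_\be=A$ by internal well-foundedness and $f_\be$ is the desired total simulation. (In the paper's formulation with largest attempts, the corresponding inductivity claim needs a further internal argument via $P f_\be\colon P X\to P V_\be\cong V_{\be^+}$; in your formulation inductivity is cheaper, but the collection-bounding is identical.) As written, then, the decisive constructive content is missing; once you supply the collection argument, your proof closes and essentially coincides with the paper's.
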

\begin{proof}
  Since every slice topos of a complete and well-powered topos is also
  such, and pullback functors $U^*$ preserve the construction of the
  $V_\al$ hierarchy, we may assume that $U=1$.  Thus, let $X$ be a
  well-founded extensional \apg\ in \bS; the proof is a modified
  version of the standard recursion theorem for well-founded
  relations.

  For a fixed ordinal \al, define an \emph{\al-attempt} to be a
  partial function $X\rightharpoonup V_\al$ which is a simulation and
  whose domain is an initial segment of $X$.  Using extensionality of
  $V_\al$, we can prove by induction on $X$ in the internal logic of
  \bS\ that any two \al-attempts agree on their common domain, and
  that the union of any family of \al-attempts is an \al-attempt.
  Therefore, there is a unique largest \al-attempt $f_\al\colon
  X\rightharpoonup V_\al$.  Furthermore, for any $\be\ge\al$ there is
  a unique simulation $i_\al^\be\colon V_\al \into V_\be$, and we must
  have $i_\al^\be f_\al = f_\be$.

  We now show that there exists an \al\ such that $f_\al$ is defined
  on all of $X$.  Consider the set $D$ of subobjects $S\mono X$ such
  that $S$ is the domain of $f_\al$ for some \al; this exists since
  \bS\ is well-powered and \bV\ satisfies separation.  By collection
  in \bV, there is a set $A$ of ordinals such that for any $S\in D$,
  $S = \mathrm{dom}(f_\al)$ for some $\al\in A$.

  Let $\be = \bigcup_{\al \in A} \al$ and $\gm = \be^+$; we claim that
  $f_\gm$ is a total function.  Let $S_\gm$ be its domain.  First,
  note that by the construction of $A$, there exists an $\al\in A$
  such that $S_\gm$ is the domain of $f_\al$.  Since the domains of
  $f_\al$ are increasing in \al, it follows that $S_\gm$ is the domain
  of $f_\be$ as well.

  We will prove that $S_\gm$ is an inductive subset of $X$, and hence
  must be all of $X$.  Applying the Kripke-Joyal semantics, suppose
  given some $U$ and some $q\colon U\to X$ such that for any $p\colon
  U'\to U$ and any $q'\colon U'\to X$ such that $q'\prec q p$, $q'$
  factors through $S_\gm$; we must show that $q$ factors through
  $S_\gm$.

  Let $T \subseteq U^*X$ be the subset defined in the internal logic
  by $\setof{ x\in X | x\prec q}$.  Then by assumption, $T$ factors
  through $U^*S_\gm$ and therefore the domain of $U^* f_\be$ contains
  $T$.  Then $T$ is classified by a map $t\colon U\to P X$.  Now the
  partial function $f_\be$ determines a map $P f_\be \colon P X \to P
  V_\be$ by taking images, so by composition we have a map $P f_\be
  \circ t\colon U\to P V_\be$.  But $P V_\be \cong V_\gm$, so we have
  a map $r\colon U\to V_\gm$.

  Let $\Ubar \subseteq X$ be the image of $q\colon U\to X$.  We can
  prove in the internal logic that if $q(u) = q(u')$ for some $u,u'\in
  U$, then $r(u)=r(u')$; hence $r$ descends to $\rbar\colon \Ubar\to
  V_\gm$.  We can also show, again internally, that if $x\in S_\gm$
  and $x = q(u)$, then $\rbar(x) = f_\gm(x)$, since both are equal to
  $\setof{ f_\be(y) | y \prec x}$.  Therefore, if we let $\Sbar =
  S_\gm \cup \Ubar$, then $f_\gm$ and $\rbar$ induce a map $g\colon
  \Sbar \to V_\gm$.  Finally, we can show that $g$ is a simulation,
  hence its domain is a subset of $S_\gm$; thus $\Ubar\subseteq
  S_\gm$, i.e.\ $q$ factors through $S_\gm$.  Hence $S_\gm$ is
  inductive, and thus all of $X$, so $f_\gm$ is a total function.
\end{proof}

Therefore, we have:

\begin{thm}\label{thm:hayashi-interp2}
  Let \bS\ be a complete and well-powered topos, and let \sU\ be the
  above von Neumann universe in \bS.  Then the interpretation of
  material set theory in the stack semantics of \bS\ is identical to
  Hayashi's interpretation valued in \sU.\qed
\end{thm}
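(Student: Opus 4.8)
The plan is to derive this essentially as a corollary of \autoref{thm:hayashi-stack} and \autoref{thm:fourman-apg2}. Recall that a complete and well-powered topos is in particular cocomplete, hence has an \nno\ (the countable copower of $1$), so the reformulated description of the stack-semantics interpretation given in \autoref{thm:hayashi-stack} applies: at each stage $U$ its parameters are arbitrary morphisms $x\colon U\to A$ with $A$ a well-founded extensional graph in \bS, and the forcing clauses are exactly the Kripke--Joyal ones listed there. Hayashi's interpretation valued in the von Neumann universe $\{V_\al\}$ is given by the \emph{same} clauses, except that the range of the parameters, and of the object-quantifiers $\exists x$ and $\forall x$, is restricted to those $A$ of the form $V_\al$. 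So the only thing to prove is that this restriction does not change the forcing relation.

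First I would note that, by \autoref{thm:fourman-apg2} (applied in each slice $\bS/U$, which is again complete and well-powered), every internally well-founded extensional \apg\ in $\bS/U$ is isomorphic to $\xbar$ for some ordinal $\al$ and some $x\colon U\to V_\al$; and by the final clause of \autoref{thm:hayashi-apg}, every such \apg\ equally arises as $\xbar$ for some $x\colon U\to A$ with $A$ an arbitrary well-founded extensional graph. Thus the two interpretations have, up to isomorphism of \apgs, the \emph{same} parameters at each stage. Moreover \autoref{thm:hayashi-apg}\ref{item:fa2}--\ref{item:fa4} show that whether $x\colon U\to A$ and $y\colon U\to B$ force an atomic formula $(x=y)$ or $(x\in y)$ depends only on the isomorphism classes of $\xbar$ and $\ybar$, being read off from any common target via a simulation --- exactly as in Hayashi's clauses. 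In particular, passing between a parameter $x\colon U\to V_\al$ and any $x'\colon U\to A$ with $\overline{x'}\cong\xbar$ preserves the truth of atomic formulas, and hence, by an easy induction using the rules of deduction (\autoref{thm:deduction}), of all formulas.

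Then I would finish by an induction on the structure of a formula \ph\ of material set theory, comparing the two forcing relations clause by clause as in the proof of \autoref{thm:hayashi-stack}. Atomic formulas are handled by the previous paragraph, and the clauses for the connectives are literally identical. For $\exists x.\ph(x)$: in the unrestricted stack semantics $U\ff\exists x.\ph(x)$ holds iff there is an epimorphism $V\epi U$ and a morphism $x\colon V\to A$, $A$ a well-founded extensional graph, with $V\ff\ph(x)$; by \autoref{thm:fourman-apg2} there is an ordinal $\al$ and a morphism $x'\colon V\to V_\al$ with $\overline{x'}\cong\xbar$, and by isomorphism-invariance of truth together with the inductive hypothesis, $V\ff\ph(x')$ --- which is exactly the $\{V_\al\}$-valued clause. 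The converse direction and the case $\forall x.\ph(x)$ are dual, again using that every \apg\ in a slice of \bS\ arises from some $V_\al$. Hence the two interpretations coincide.

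The only real content here is \autoref{thm:fourman-apg2}, which has already been established; granting it, the present theorem is bookkeeping, and I anticipate no genuine difficulty. The one place to stay alert is the role of completeness and well-poweredness: they are precisely what is needed for \autoref{thm:fourman-apg2} (and for the existence of an \nno), and without them the von Neumann universe may fail to exhaust all well-founded extensional \apgs, so the restricted interpretation could be strictly weaker than the full stack semantics.
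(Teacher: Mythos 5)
Your proposal is correct and takes essentially the same route as the paper: the paper also deduces the theorem immediately from \autoref{thm:fourman-apg2} together with the identification of the stack-semantics interpretation with Hayashi's interpretation in the maximal pre-universe (\autoref{thm:hayashi-stack}/\autoref{thm:hayashi-interp}), the point being that every well-founded extensional \apg\ over any stage already arises from some $V_\al$, so restricting parameters and quantifier ranges to the von Neumann universe changes nothing. Your clause-by-clause induction just spells out this bookkeeping explicitly.
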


Finally, we relate this to Fourman's
approach~\cite{fourman:sheaf-setthy}.  Instead of a forcing semantics,
he works with representing subobjects.  Thus, we still consider the
parameters to be morphisms $x\colon U\to A$ for some well-founded
extensional graph $A$ in a pre-universe \sU, but now instead of a
forcing relation we define, for each formula \ph\ at stage $U$, a
subobject $\mm{\ph}\mono U$, as follows.
\begin{blist}
\item If $x\colon U\to A$ and $y\colon U\to B$, then $\mm{x=y}$ is the
  equalizer of $f x$ and $g y$ where $f\colon A\to C$ and $g\colon
  B\to C$ are a pair of simulations.
\item Similarly, $\mm{x\in y}$ is the pullback of $C_{\prec}$ along
  $(fx ,gy)$.
\item The connectives \bot, \top, \meet, \join, \imp, and \neg\ are
  interpreted using the Heyting algebra connectives in $\Sub(U)$, as
  usual.
\item Given a formula $\ph(x)$ at stage $U$ with one free variable
  $x$, for each well-founded extensional graph $A$ consider the image
  of
  \[ \mm{\ph(\pi_2)} \mono U\times A \too[\pi_1] U. \]
  This is a subobject of $U$, call it $\mm{\exists x\in A.\ph(x)}$.
  Since \bS\ is well-powered, we can apply separation in \bV\ to
  obtain the set of all subobjects of $U$ which are of the form
  $\mm{\exists x\in A.\ph(x)}$ for some $A\in\sU$.  Since \bS\ is
  cocomplete, we can then take the union of this set; call it
  $\mm{\exists x.\ph(x)}$.
\item Universal quantification is similar, using the dual image
  $\coim_{\pi_1} \mm{\ph(\pi_2)}$ instead of the image $\im_{\pi_1}
  \mm{\ph(\pi_2)}$, and an intersection instead of a union.
\end{blist}
These subobjects are essentially the same as those defined by Fourman,
although he also considered only the von Neumann universe.  Thus it
remains only to check that they do, in fact, represent the forcing
relation defined by Hayashi.

\begin{lem}\label{thm:fourman=hayashi}
  Let \bS\ be a complete and well-powered topos and \sU\ a
  pre-universe.  Then for any formula $\ph$ at stage $U$ and any map
  $W\too[p] U$, we have $W\ff \ph$ in Hayashi's sense if and only if
  $p$ factors through $\mm{\ph}$ in Fourman's sense.
\end{lem}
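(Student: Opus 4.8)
The plan is to argue by structural induction on $\ph$, proving the slightly stronger and more uniform statement that for every formula $\ph$ at stage $U$ and every $W\too[p]U$ we have $W\ff p^*\ph$ (in Hayashi's sense) exactly when $p$ factors through $\mm{\ph}\mono U$. Taking $p=1_U$ recovers the assertion of the lemma, while the version with an arbitrary $p$ keeps the induction self-feeding: it implies immediately that Fourman's operator is pullback-stable, $\mm{p^*\ph}\iso p^*\mm{\ph}$, which is exactly what is needed in order to invoke the inductive hypothesis at smaller stages. I will use freely that Hayashi's forcing relation, exactly like the stack semantics, is monotone under pullback and descends along regular epimorphisms and finite unions --- these are proved verbatim as in \autoref{thm:forcing-basics} --- and that \autoref{thm:hayashi-apg} and \autoref{thm:hayashi-stack} let me pass without comment between the two presentations of parameters.

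For the atomic cases, take $\ph=(x=y)$ with $x\colon U\to A$, $y\colon U\to B$. By definition $\mm{x=y}$ is the equalizer of $fx$ and $gy$ for chosen simulations $f\colon A\to C$, $g\colon B\to C$, so $p$ factors through $\mm{x=y}$ iff $(fx)p=(gy)p$, iff $f(xp)=g(yp)$, iff $W\ff(xp=yp)$ by Hayashi's clause; the case $\ph=(x\in y)$ is identical, with ``$\mm{x\in y}$ = pullback of $C_{\prec}$ along $(fx,gy)$'' on one side and ``$f(xp)\prec g(yp)$'' on the other. Independence of the auxiliary data $C,f,g$ is where one invokes uniqueness of simulations between well-founded extensional graphs (\autoref{thm:sim-inj} and \autoref{thm:wfext-bisid}), comparing two choices inside a common target. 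The propositional connectives are then routine, since Fourman interprets $\top,\bot,\meet,\join,\imp,\neg$ by the pullback-stable Heyting operations on $\Sub(U)$: $\top$ and $\meet$ are immediate, $\bot$ uses strictness of $0$, $\join$ uses pullback-stability of unions together with finitary descent of forcing (\autoref{thm:forcing-basics}\ref{item:forcing-3}), and $\imp$ (hence $\neg$) reduces, via the inductive hypothesis, to the observation that ``every $q$ factoring through $p^*\mm{\ph}$ factors through $p^*\mm{\psi}$'' is equivalent to $p^*\mm{\ph}\le p^*\mm{\psi}$, i.e.\ to $p$ factoring through $\mm{\ph}\Rightarrow\mm{\psi}$.

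The universal quantifier takes a little work but poses no obstacle. Fourman's $\mm{\forall x.\ph}$ is the intersection, over all $A\in\sU$, of $\coim_{\pi_1}\mm{\ph(\pi_2)}$ for $\pi_1,\pi_2$ the projections of $U\times A$; this is a genuine subobject of $U$ because $\bS$ is well-powered and complete and separation in $\bV$ makes the indexing a set. Using the inductive hypothesis at the map $p\times 1_A\colon W\times A\to U\times A$ and the fact that $p$ factors through $\coim_{\pi_1}(S)$ iff $p\times 1_A$ factors through $S$, one finds that $p$ factors through $\mm{\forall x.\ph}$ iff $W\times A\ff\ph(\pi_2)$ for all $A\in\sU$, iff $V\ff\ph(x)$ for every $q\colon V\to W$, every $A\in\sU$ and every $x\colon V\to A$ --- which is precisely Hayashi's clause for $\forall$.

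The existential case carries the real content and is the step I expect to be the main obstacle. Here $\mm{\exists x.\ph}$ is the union over $A\in\sU$ of $\mm{\exists x\in A.\ph}:=\im\bigl(\mm{\ph(\pi_2)}\mono U\times A\too[\pi_1]U\bigr)$, legitimate by well-poweredness, separation in $\bV$, and cocompleteness of $\bS$. One direction is quick: given a Hayashi witness --- a regular epimorphism $q\colon V\to W$, an $A\in\sU$, and $x\colon V\to A$ with $V\ff\ph(x)$ --- the inductive hypothesis says that $(pq,x)\colon V\to U\times A$ factors through $\mm{\ph(\pi_2)}$, hence $pq$ factors through $\mm{\exists x\in A.\ph}$, and since $q$ is a regular epimorphism and $\mm{\exists x.\ph}\mono U$ is monic, $p$ itself factors through $\mm{\exists x.\ph}$. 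The reverse direction is where the hypotheses of the lemma are genuinely needed: if $p$ factors through the union, then pulling back along $p$ --- images and unions being pullback-stable in a topos --- exhibits $W$ as a set-indexed union $\bigcup_i W_i$ of subobjects each of which, by the inductive hypothesis and pullback-stable image factorization, forces $\exists x.\ph(x)$; to conclude $W\ff p^*\exists x.\ph(x)$ one then needs the \emph{infinitary} descent of forcing, namely \autoref{thm:forcing-5}. This is exactly the point at which completeness, well-poweredness, and the ambient model of IZF are indispensable, and it is what pins down the hypotheses under which the comparison holds. Existential quantification over arrows is handled identically.
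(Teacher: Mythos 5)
Your proof is correct and follows essentially the same route as the paper: an induction on formulas in which the atomic and propositional cases are routine and the real content is at the quantifiers, with the existential case handled exactly as in the paper via the infinitary descent lemma (\autoref{thm:forcing-5}), which is where completeness and well-poweredness enter. The only cosmetic difference is that the paper first invokes autology (\autoref{thm:cplt-aut}) to know Hayashi's forcing is representable and then identifies Fourman's subobjects with those classifiers, whereas you verify the representing property of Fourman's subobjects directly; the substance of the verification is the same.
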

\begin{proof}
  Since \bS\ is autological by \autoref{thm:cplt-aut}, the forcing
  relation of Hayashi is representable.  Thus, it remains only to
  check that the subobjects defined by Fourman are the same as those
  produced by autology.  This is evident for atomic formulas and
  connectives, so we consider only the quantifiers.

  In Fourman's construction of $\mm{\exists x.\ph(x)}$, it is clear
  that $\mm{\exists x\in A.\ph(x)} \ff \exists x.\ph(x)$ for each $A$,
  so by \autoref{thm:forcing-5} we also have $\mm{\exists x.\ph(x)}
  \ff \exists x.\ph(x)$.  On the other hand, given $W\to U$ such that
  $W\ff \exists x.\ph(x)$, we have an epimorphism $Z\epi W$ and a map
  $x\colon Z\to A$, for some $A\in\sU$, such that $Z\ff \ph(x)$.
  Thus, $Z\to U\times A$ factors through $\mm{\ph(\pi_2)}$, so $W\to
  U$ factors through $\mm{\exists x\in A.\ph(x)}$, and hence also
  through $\mm{\exists x.\ph(x)}$.  Thus, Fourman's subobject
  $\mm{\exists x.\ph(x)}$ is in fact a representing object for
  $\exists x.\ph(x)$.

  Universal quantification is even easier.  If $W\ff \forall
  x.\ph(x)$, then for any $A\in\sU$, $p\colon W\to U$ must factor
  through $\mm{\forall x\in A.\ph(x)}$, whence it factors through
  their intersection $\mm{\forall x.\ph(x)}$.  Conversely, it suffices
  to show that $\mm{\forall x.\ph(x)} \ff \forall x.\ph(x)$, but this
  follows since by definition, we have $\mm{\forall x.\ph(x)} \ff
  \forall x\in A.\ph(x)$ for any $A\in\sU$.
\end{proof}

\begin{thm}\label{thm:fourman-interp}
  Let \bS\ be a complete and well-powered topos, and let \sU\ be the
  above von Neumann universe.  Then the interpretation of material set
  theory in the stack semantics of \bS\ is identical to Fourman's
  interpretation valued in \sU.\qed
\end{thm}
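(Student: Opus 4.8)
The plan is to deduce the statement immediately from \autoref{thm:hayashi-interp2} and \autoref{thm:fourman=hayashi}, so the argument is essentially bookkeeping that assembles results already proven. First I would note that the von Neumann universe $\sU=\{V_\al\}$ is in particular a pre-universe in Hayashi's sense (indeed a universe, as remarked before \autoref{thm:hayashi-interp2}), so \autoref{thm:fourman=hayashi} applies to it: for any formula \ph\ at stage $U$ and any $W\too[p] U$, one has $W\ff\ph$ in Hayashi's sense if and only if $p$ factors through Fourman's subobject $\mm{\ph}\mono U$.

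Next I would observe that Fourman's interpretation valued in \sU\ and Hayashi's interpretation valued in \sU\ use literally the same parameters at each stage, namely morphisms $x\colon U\to V_\al$ for ordinals \al. Thus the assertion that the two interpretations agree amounts exactly to saying that the recursively defined family of subobjects $(\mm{\ph})_\ph$ produced by Fourman's prescription coincides with the representing subobjects of Hayashi's forcing relation---which is the content of \autoref{thm:fourman=hayashi}. Combining this with \autoref{thm:hayashi-interp2}, which identifies Hayashi's interpretation over \sU\ with the stack-semantics interpretation, gives that Fourman's interpretation over \sU\ is the stack-semantics interpretation. At stage $1$ this says concretely that a sentence \ph\ of material set theory is satisfied by the Fourman model iff $\mm{\ph}=1$ iff $1\ff\ph$, and membership and equality of material-sets match up via the atomic clauses, so the two models are the same as structures.

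I do not expect a genuine obstacle: all the substantive work is already done in \autoref{thm:fourman-apg2} (every internally well-founded extensional \apg\ in a slice arises as \xbar\ for some $x\colon U\to V_\al$), \autoref{thm:cplt-aut} (\bS\ is autological, so Hayashi's forcing relation is representable in the first place), and \autoref{thm:fourman=hayashi} (the comparison of Fourman's and Hayashi's representations). The only point needing a word of care is to confirm that Fourman's recursively defined subobjects $\mm{\cdot}$ really do obey the clauses quoted just before the theorem and hence feed correctly into \autoref{thm:fourman=hayashi}; this is a routine unwinding of the definition, and it was already sketched inside the proof of that lemma.
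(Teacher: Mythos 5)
Your proposal is correct and is exactly the paper's (implicit) argument: the theorem is stated with a \qed precisely because it follows by combining \autoref{thm:hayashi-interp2} (stack semantics $=$ Hayashi's interpretation valued in the von Neumann universe) with \autoref{thm:fourman=hayashi} (Fourman's subobjects represent Hayashi's forcing relation), noting that both interpretations use the same parameters $x\colon U\to V_\al$. Nothing further is needed.
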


As remarked previously, it follows from known facts about the
Fourman-Hayashi interpretation that the stack-semantics interpretation
in Grothendieck toposes, such as sheaves on a locale or continuous
group actions, can also be identified with the logic of standard
material-set-theoretic constructions, such as Boolean- and
Heyting-valued, permutation, and symmetric models.
See~\cite{fourman:sheaf-setthy,bs:freyd-ac,bs:cplt-rep-setth}.

\section{Categories of classes}
\label{sec:cofc}

We now turn to a comparison between the stack semantics and the
methods of algebraic set theory, a field which was begun
by~\cite{jm:ast} and has been carried forward by many others;
see~\cite{ast-website} for a number of references.  The idea of
algebraic set theory is to consider, rather than a category whose
objects represent sets, a larger category whose objects represent
\emph{classes}, equipped with a ``notion of smallness'' specifying
which objects (and more generally which families) should be regarded
as sets.  In particular, we require that there is a ``class of all
sets.''  We can then interpret ``unbounded'' quantifiers over all sets
as bounded quantifiers in the internal logic of the category of
classes.

There is not yet a universally accepted set of axioms for a category
of classes, so we will at first restrict ourselves to a small core set
of axioms.

\begin{defn}
  A \textbf{(representable) notion of smallness} on a positive Heyting
  category \bC\ consists of a distinguished map $E\too[\pi] S$.  Given
  \pi, a morphism $X\to U$ in \bC\ is called \textbf{small} if there
  exist two pullback squares
  \[\xymatrix{X \ar[d] &
    X' \ar[l] \ar[r]\ar[d]
    \save*!/dl-1.4pc/dl:(-1,1)@^{|-}\restore
    \save*!/dr-1.4pc/dr:(-1,1)@^{|-}\restore
    &
    E \ar[d]^\pi\\
    U & U'\ar@{->>}[l]^p\ar[r]_{\th_X} & S.}\]
  in which $U'\epi U$ is a regular epimorphism.  An object $X$ is
  called \textbf{small} if $X\to 1$ is a small map.
\end{defn}

% The intuition is that $S$ is the ``class of all sets,'' while $E$ is
% the ``class of all pointed sets,'' and a map is small if each of its
% fibers is (isomorphic to) a set.

Observe that a morphism $X\to U$ is small if and only if $U\ff$
\qq{there exists an $x\in S$ such that $X\cong x^*E$} in the stack
semantics of \bC.  We thus abbreviate this statement as \qq{$X$ is
  small}.  Note also that a map $f\colon X\to Y$ in $\bC/U$ is small
if and only if $U\ff$ \qq{for every $y\in Y$, the fiber $y^*X$ is
  small}, which is precisely the intended intuition.  Thus we
abbreviate this statement as \qq{$f$ is small}.  The possibility of
this sensible ``logical'' interpretation depends, of course, on
allowing passage to a cover in the definition of small maps, since we
cannot expect classifying maps of small objects to be uniquely defined.

In the literature, it is more common to consider \bC\ as equipped with
a class of maps called ``small'' and then assert the existence of a
representing map $\pi$ as an axiom.  As observed in~\cite{mp:ttcst},
however, defining the small maps from $\pi$ as above has the advantage
that it immediately implies a few of the other common axioms for small
maps, including:
\begin{blist}
\item Small maps are closed under pullback.
\item Small maps descend along regular epis, i.e.\ if the pullback of
  $f\colon X\to U$ along a regular epi $V\epi U$ is small, so is $f$.
\item If $X\to U$ and $Y\to V$ are small maps, then so is $X+Y\to U+V$.
\end{blist}

Let \bS\ be the full subcategory of \bC\ determined by the small
objects; we intend to define a translation from the stack semantics of
\bS\ to the internal logic of \bC.  However, for \bS\ to have a
well-behaved stack semantics, it must be itself a positive Heyting
category.  The following definition is closely related to those
commonly assumed in the literature, but weaker (and thus more general)
than most.
% such as~\cite{jm:ast,mp:ttcst,vdbm:pred-i-exact}.

\begin{defn}\label{defn:cofc}
  A \textbf{category of classes} is a positive Heyting category
  equipped with a notion of smallness which additionally satisfies the
  following axioms.
  \killspacingtrue
  \begin{enumerate}
  \item $1\ff_\bC$ \qq{if $X$ is small, then a map $f\colon Y\to X$ is 
      small if and only if $Y$ is small}.\label{item:cofc1}
  \item $1\ff_\bC$ \qq{the full subcategory of small objects is closed
      under finite limits, finite coproducts, images, and dual images
      (and is therefore itself a positive Heyting category)}.\label{item:cofc2}
  \item Every small map is exponentiable.\label{item:cofc3}
  \end{enumerate}
  \killspacingfalse
\end{defn}

We have stated the first two conditions in terms of the stack
semantics, since we believe it makes their real meaning clearer.  Of
course, if we work out what they mean explicitly in terms of \bC, we
see that they reduce to some well-known axioms for categories of
classes.  Specifically, the first is equivalent to:
\begin{blist}
\item if $g$ is small, then $g f$ is small if and only if $f$ is small.
\end{blist}
Assuming this (and the representable definition of small maps as
above), the second is equivalent to:
\begin{blist}
\item For any $U$, the full subcategory of $\bC/U$ determined by the
  small maps is a positive Heyting category, and the inclusion functor
  preserves all the structure.
\end{blist}
More explicitly, this is equivalent to the following axioms.
\begin{blist}
\item All isomorphisms are small.
\item $0$ and $1+1$ are small objects.
\item If $g\colon B\to C$ and $g f\colon A\to C$ are small, then so is
  the composite $\mathrm{im}(f) \mono B \to C$.  (In particular, this
  follows if small maps are closed under quotients.)
\item If $m\colon A\mono C$ is monic and small, and $f\colon C\to D$
  is small, then $\coim_f(A)\mono D$ is small.
\end{blist}
Finally, if \bC\ is exact, then the third condition (exponentiability
of small maps) is equivalent to
\begin{blist}
\item $1\ff$ \qq{every small object is exponentiable}
\end{blist}
but if \bC\ is not exact, then \autoref{thm:stack-univ} does not apply,
and so this latter property does not imply the actual existence of
exponentials.

% \begin{defn}
%   A category of classes has a \textbf{good subcategory of small
%     objects} if its small objects are closed under finite limits,
%   images, dual images, and coproducts, and every map between small
%   objects is a small map.
% \end{defn}

% In particular, this implies that $\bS\subseteq \bC$ is itself a positive
% Heyting category.  These properties follow from all the standard sets
% of axioms, and usually \bS\ turns out to be exact as well.

\begin{rmk}
  \autoref{defn:cofc}\ref{item:cofc1} is sometimes called the ``axiom
  of replacement,'' but it actually has no connection to our axiom of
  replacement from \S\ref{sec:strong-ax}.  We prefer to think of it as
  saying that the notions of ``small family'' in \bS\ and \bC\ agree.
\end{rmk}

Now the exponentiability of the representing map $\pi\colon E\to S$
implies that it determines an \emph{internal full subcategory} of \bC,
i.e.\ an internal category whose object of objects is $S_0 = S$ and
whose object of morphisms is the exponential $S_1 = (p_1^*E)^{p_2^*E}$
in $S_0\times S_0$.  (See~\cite[B2.3.5]{ptj:elephant} and~\cite[\S
I.5]{jm:ast}.)  Regarding this as ``the category of sets,'' we can
then reason about it in the ordinary internal logic of \bC.  That is,
for any $U\in\bS$, if \ph\ is a formula in the language of categories
with object-parameters given by maps $U\to S_0$ and arrow-parameters
given by maps $U\to S_1$, then \ph\ has a meaning in the internal
first-order logic of \bC\ and is classified by a subobject
$\mm{\ph}\mono U$.  Of course, in general $\mm{\ph}$ may not be small.

Now suppose that \ph\ is a formula over $U$ in \bS\ in the sense of
\S\ref{sec:internal-logic}.  Since each object-parameter of \ph\ is a
small map $X\to U$, there is a regular epi $U'\epi U$ over which $X$
is a pullback of $\pi$, and similarly for arrow-parameters.  Since
\ph\ has only finitely many parameters, we can find a single cover
$U'\epi U$ over which \emph{all} parameters of \ph\ are represented by
maps into $S_0$ or $S_1$, and thereby translate \ph\ into a formula
\phhat\ at stage $U'$ of the internal language of \bC\ about the
internal category $S_1\toto S_0$.

However, without further axioms, there is little we can say about the
relationship of \ph\ to \phhat.  Basically, the problem is that \bS\
only knows about the small objects over $1$, but \phhat\ knows about
small maps over all objects of \bC.  So we need to assume that \bC\ is
``generated by small objects'' in a suitable sense.

\begin{defn}
  A category of classes \bC\ is \textbf{well-generated by small
    objects} if the following hold.
  \killspacingtrue
  \begin{enumerate}
  \item \emph{The small objects are a strong generator:} if
    $A\xmono{f} B$ is a monomorphism in \bC\ and every map $U\to B$
    factors through $A$ when $U$ is small, then $f$ is an
    isomorphism.\label{item:sp1}
  \item \emph{The small objects are relatively projective:} if $U$ is
    small and $A\xepi{p} U$ is regular epi, there is a small $V$
    and a regular epi $V\xepi{q} U$ which factors through
    $p$.\label{item:sp2}
  \item \emph{The small objects are relatively indecomposable:} if $U$
    is small and $U=A\cup B$, there are small objects $V$ and $W$
    such that $U=V\cup W$, $V\subseteq A$, and $W\subseteq
    B$.\label{item:sp3}
  \item \emph{The small objects are collectively nonempty:} if $X$ is
    small and there is a map $X\to 0$, then $X\cong 0$.\label{item:sp4}
  \end{enumerate}
  \killspacingfalse
\end{defn}

\begin{rmk}
  Two of these conditions are well-known:~\ref{item:sp1} is called the
  axiom of \emph{small generators} and~\ref{item:sp2} is called the
  axiom of \emph{small covers} (see~\cite{abss:rfosttcoc}).  Note
  that~\ref{item:sp1} is always true if interpreted in the stack
  semantics, since all identities are small.  By contrast,
  when~\ref{item:sp2} is interpreted in the stack semantics of \bC, it
  becomes precisely the usual ``collection axiom'' of algebraic set
  theory (see~\cite{jm:ast}).

  Condition~\ref{item:sp3} is rarely stated explicitly, since it
  follows from~\ref{item:sp2} if all complemented monics are small,
  which in turn follows from the common assumption that all diagonals
  are small.  And of course,~\ref{item:sp4} is always true.  However,
  we have chosen to state all four axioms explicitly in order to
  complete the analogy with (constructive) well-pointedness.
\end{rmk}

\begin{rmk}\label{thm:nonelem-smpin}
  If \bS\ is small in some external set theory, then \bC\ has a
  restricted Yoneda embedding $\bC \into \bSh(\bS)$ into the category
  of sheaves for the coherent topology on \bS.  In this case, \bC\ is
  well-generated by small objects if and only if this functor is
  coherent and conservative.  This should be compared with
  \autoref{rmk:nonelem-wpt}, and likewise the proof of
  \autoref{thm:cofc-logic} should be compared with that of
  \autoref{thm:collpin}.
\end{rmk}

\autoref{thm:nonelem-smpin} suggests that $\bSh(\bS)$ itself should be
a ``canonical'' choice of a category of classes containing \bS, and in
fact this is the case.

\begin{prop}
  For any small Heyting pretopos \bS, the category $\bSh(\bS)$ of
  sheaves for the coherent topology on \bS\ is a category of classes
  in which \bS\ is a well-generating category of small objects.
\end{prop}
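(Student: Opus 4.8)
The plan is to verify, in turn, each of the three conditions in \autoref{defn:cofc} and each of the four conditions in the definition of ``well-generated by small objects,'' using the standard facts about $\bSh(\bS)$ as a coherent topos together with the fact that the coherent topology on a pretopos is subcanonical. First I would fix the notion of smallness: since \bS\ is a full subcategory of $\bSh(\bS)$ via the Yoneda embedding $y$ (which is full and faithful and preserves finite limits, coproducts, and images because the coherent topology is subcanonical), I would take the representing map $\pi\colon E\to S$ to be obtained from the generic family of objects of \bS. Concretely, $S$ is (a sheafification of) the presheaf of isomorphism classes, or more simply one may work with the fibration $\bS^{\to}\to\bS$ of arrows and observe that every object of \bS\ is a ``small object'' in the sense that $yX\to 1$ arises as a pullback of $\pi$ after passing to a cover. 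The key point to check here is that a map $X\to U$ in $\bSh(\bS)$ is small (in the sense of the definition) if and only if, locally on a cover of $U$ by representables, it is the pullback of a representable map $yA\to yB$ — i.e.\ the small maps are exactly those that are ``locally representable by arrows of \bS.''

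Next I would verify the three category-of-classes axioms. Axiom \ref{item:cofc1} (if $g$ is small then $gf$ small iff $f$ small) follows because \bS\ is closed under pullback and because an arrow of \bS\ over an object of \bS\ that is a composite landing in a representable is itself representable — this uses that \bS\ has pullbacks and that in a pretopos a map whose composite with another is ``fiberwise an object of \bS'' is again such. Axiom \ref{item:cofc2} (small objects closed under finite limits, finite coproducts, images, dual images) is essentially the statement that \bS, being a Heyting pretopos, has all this structure and that $y$ preserves it; one checks it in the stack semantics of \bC\ by unwinding \autoref{thm:stack-univ}, or directly by the subcanonicity of the topology. Axiom \ref{item:cofc3} (small maps are exponentiable in $\bSh(\bS)$) follows because $\bSh(\bS)$ is a topos, hence locally cartesian closed, so \emph{every} map is exponentiable.

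Then I would check the four ``well-generated'' conditions. For \ref{item:sp1} (small generators): the representable sheaves $yA$ form a generating family for $\bSh(\bS)$ in the strong sense — if $A\xmono{f}B$ in $\bSh(\bS)$ and every $yU\to B$ factors through $A$ for all $U\in\bS$, then $f$ is an isomorphism, since a sheaf is determined by its sections on representables and $f$ being a mono that is bijective on all such sections forces it to be iso. For \ref{item:sp2} (small covers): given $U\in\bS$ and a regular epi $A\xepi{p}yU$ in $\bSh(\bS)$, one uses that the coherent topology is generated by finite jointly-epimorphic families in \bS\ together with the fact that epimorphisms of sheaves are locally surjective; a cover of $yU$ by a representable $yV$ factoring through $p$ exists precisely because \bS\ is a pretopos and $p$ admits a local section over some cover, which can be taken representable. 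For \ref{item:sp3} (small indecomposables): if $yU=A\cup B$ in $\bSh(\bS)$, pulling back the cover to \bS\ and using that unions of subobjects in a pretopos are computed as images of coproducts, one finds small $V,W$ as required — this is parallel to the proof that $1$ is indecomposable, lifted to the generic object. Condition \ref{item:sp4} (collective nonemptiness) is immediate, since the initial object is strict in any topos and $yX\to 0$ forces $yX\cong 0$, hence $X\cong 0$ by faithfulness of $y$.

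\textbf{The main obstacle} I expect is the bookkeeping around the definition of the representing map $\pi\colon E\to S$ itself: \bS\ need not have a literal object classifier, so $S$ must be built as a suitable sheaf (e.g.\ the sheafification of the presheaf $U\mapsto \mathrm{ob}(\bS/U)$, or equivalently using the self-indexing), and one must check both that the resulting small maps are exactly the ``fiberwise-in-\bS'' maps and that every $yX$ is genuinely small with respect to it. This is where the choice between working with strict self-indexings versus the fibration of arrows matters, and where the earlier remarks in the paper about strictification become relevant; but since we only ever need the notion of smallness \emph{up to the cover appearing in the definition}, the ambiguity is harmless, and the verification goes through. Once $\pi$ is correctly set up, every one of the seven conditions reduces to a standard fact about coherent toposes and the subcanonicity of the coherent topology on a pretopos.
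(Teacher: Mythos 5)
Your proposal is correct and takes essentially the same route as the paper's own (sketched) proof: define the notion of smallness from a universal family arising from a strictification of the self-indexing, observe that the small maps are exactly those whose pullbacks along maps out of representables are representable, and verify the category-of-classes and well-generation axioms from subcanonicity of the coherent topology and standard coherent-topos facts. The only caveat is that your first suggestion for $S$ — the sheafification of the presheaf of isomorphism classes — does not literally work, since no universal family $E\to S$ can be defined over isomorphism classes; your fallback (a strictified self-indexing, with object-presheaf $U\mapsto\mathrm{ob}(\bS/U)$, which is already a sheaf because the self-indexing of a pretopos is a stack) is precisely what the paper uses, and as you note the resulting class of small maps is independent of the strictification chosen.
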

\begin{proof}[Sketch of proof]
  The self-indexing of \bS\ is a stack for its coherent topology, so
  if we rectify it to a strict functor $\bS\op\to\Cat$, it will become
  an internal category in $\bSh(\bS)$ by the results of~\cite[\S
  V.1]{awodey:thesis}.  With $S$ the object of objects of this
  internal category and a suitable definition of $E$, this defines a
  representable notion of smallness.  We then show that a map $Y\to
  X$ is small iff for any $U\in\bS$ and any map $y(U)\to X$, where
  $y\colon \bS\into \bSh(\bS)$ is the Yoneda embedding, the pullback
  $y(U)\times_X Y$ is representable.  In particular, \bS\ is the
  category of small objects.  It is then straightforward to verify the
  axioms.
\end{proof}

With this example in mind, the following theorem realizes our claim in
the introduction that when \bS\ is small, the stack semantics of \bS\
can equivalently be defined as a fragment of the internal logic of
$\bSh(\bS)$, which is ``canonical'' modulo the chosen strictification
of the self-indexing.  Recall the definition of a formula \phhat\ in
the internal logic of \bC\ from any formula \ph\ in \bS\ in the
language of a category.

\begin{thm}\label{thm:cofc-logic}
  Let \bC\ be a category of classes which is well-generated by small
  objects, and \bS\ the full subcategory of small objects.  Then for
  any $U\in\bS$, any formula \ph\ over $U$ in \bS, and any $p\colon
  V\to U$ in \bS, we have
  \[ V \ff_{\bS} \ph \quad\iff\quad V'\ff_{\bC}\phhat. \]
  Here $V' = V\times_U U'$, where $U'\epi U$ is a cover chosen as in
  the definition of \phhat.
\end{thm}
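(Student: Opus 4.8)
The plan is to prove this by structural induction on the formula $\ph$, exactly paralleling the inductive proof of \autoref{thm:collpin}, but now with the role of ``external truth in a constructively well-pointed category $\Set$'' played by ``validity $\ff_\bC$ in the internal logic of $\bC$ about the internal full subcategory $S_1\toto S_0$,'' and the role of ``the generic element $1\to U$ and the properties of $1$'' played by ``small objects and the four conditions of well-generation.'' First I would make the setup precise: reduce to the case $p = 1_U$ (handling a general $p\colon V\to U$ by replacing $U$ by $V$ and pulling back all parameters, using \autoref{thm:forcing-basics}\ref{item:forcing-1} and \ref{item:forcing-2} on the $\ff_\bS$ side and \autoref{thm:ss-loc} together with the analogous pullback-stability of $\ff_\bC$ on the $\bC$ side), and observe that for the base case of atomic formulas, an equality $(f=g)$ between small maps $X\to Y$ over $U$ is equivalent to the \ddo-formula $\forall x\in X.(fx=gx)$; since \bS\ is a positive Heyting category, this \ddo-formula is classified by \autoref{thm:ddo-classif}, and the matching \ddo-statement about $S_1\toto S_0$ is classified in the ordinary internal logic of $\bC$ by the corresponding representing subobject, so the condition that the small objects form a strong generator (well-generation~\ref{item:sp1}) gives the equivalence.

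Next I would run through the connectives and quantifiers. The cases $\top$, $\meet$ are immediate. For $\bot$: $U\ff_\bS\bot$ iff $U$ is initial, iff (by collective nonemptiness of small objects, well-generation~\ref{item:sp4}) $U$ has no small subobjects other than $0$, which on the $\bC$ side corresponds to $\widehat{\bot}$ being classified by $0\mono U$; a short argument using that small objects form a strong generator finishes this. For $\join$: $U\ff_\bS(\ph\join\psi)$ means $U = V\cup W$ with $V\ff_\bS\ph$ and $W\ff_\bS\psi$; using relative indecomposability of small objects (well-generation~\ref{item:sp3}) to cover $U$ by small pieces lying inside the two halves, combined with the inductive hypothesis and descent of forcing for $\ff_\bC$, gives $U\ff_\bC\widehat{(\ph\join\psi)}$; the converse uses that $\mm{\ph\join\psi}$ in $\bC$ decomposes as a union and pulls this back along small maps into $U$. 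The cases $\imp$ and $\coim$ follow from the fact that small objects are a strong generator, exactly as in \autoref{thm:collpin}: to check $U\ff_\bS(\ph\imp\psi)$ it suffices to test against small $V\to U$, and dually the representing subobject $\mm{\ph\imp\psi}$ in $\bC$ is detected by small maps. For the existential quantifiers $\im_0,\im_1$: here the essential input is relative projectivity of small objects (well-generation~\ref{item:sp2}), which lets a small cover $V\epi U$ witnessing $V\ff_\bS\exists X.\ph(X)$ be refined to one coming from a small object, translating into a witness $U\to S_0$ after passing to a cover; conversely, a witness $x\colon U'\to S_0$ with $U'\ff_\bC\phhat$ unpacks to a small object $A = x^*E$ over $U'$ with $U'\ff_\bS\ph(A)$, and $U'\epi U$ is a (small) regular epi, so $U\ff_\bS\exists X.\ph(X)$. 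For $\coim_0,\coim_1$ one uses that small objects and small maps form a strong generator to reduce universal quantification over all slices to universal quantification tested on small objects.

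The main obstacle I anticipate is the $\im$-case, specifically the bookkeeping required to match ``passage to a regular-epi cover $V\epi U$'' in the definition of $\ff_\bS$ with ``passage to a cover over which a parameter becomes a pullback of $\pi$'' in the construction of $\phhat$. One must be careful that the cover chosen in defining $\phhat$ (over which \emph{all} parameters of $\ph$ are represented by maps into $S_0$ or $S_1$) interacts correctly with the further covers introduced inside the induction, and that the small objects appearing as witnesses genuinely classify, up to isomorphism over the relevant stage, as pullbacks of $\pi$ — this is exactly what \autoref{defn:cofc}\ref{item:cofc1} (``small families in \bS\ and \bC\ agree'') and the representable definition of smallness buy us. Once the existential case is set up correctly, the universal case is its dual and the propositional cases are routine. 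I would also remark, as the paper does in \autoref{thm:nonelem-smpin}, that when \bS\ is small this is precisely the statement that the restricted Yoneda embedding $\bC\into\bSh(\bS)$ is coherent and conservative, which gives a useful sanity check on the whole argument; but the elementary proof proceeds purely by the induction sketched above, with no appeal to sheaves.
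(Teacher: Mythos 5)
Your overall strategy is exactly the paper's: induct on $\ph$, transporting the proof of \autoref{thm:collpin} with validity in the internal logic of \bC, detected by maps out of small objects, playing the role of external truth detected by global elements $1\to U$. The propositional and universal cases are essentially right (though strong generation and collective nonemptiness are overkill for atomic formulas and $\bot$, which are immediate). The problem is that in the two cases that carry the real content of the theorem you have attached the well-generation hypotheses to the wrong direction of the biconditional, and in the direction where they are actually needed your argument is either vague or would fail as stated. For disjunction, the direction $V\ffs(\ph\join\psi)\Rightarrow V'\ffc\widehat{(\ph\join\psi)}$ needs no indecomposability: the decomposition $V=W_1\cup W_2$ furnished by the stack semantics already consists of objects of \bS, and the inclusion $\bS\into\bC$ preserves unions. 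It is the converse that is delicate: forcing in \bC\ only gives $V'=A_1'\cup A_2'$ with the $A_i'$ arbitrary (not necessarily small) subobjects of $V'$; one must pass to their images $A_i$ in $V$ and invoke relative indecomposability to obtain small $W_1\subseteq A_1$, $W_2\subseteq A_2$ with $V=W_1\cup W_2$ before the inductive hypothesis can be applied. ``Pulling $\mm{\phhat}\cup\mm{\psihat}$ back along small maps into $U$'' does not by itself produce such a decomposition in \bS.

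The existential case has the same inversion, and there it is a genuine error rather than just vagueness. In the direction $\ffs\Rightarrow\ffc$ no projectivity is needed: the witnessing cover $W\epi V$ supplied by the stack semantics is already small, and all one needs is a further cover over which the new parameter is classified by a map to $S_0$, i.e.\ the representable definition of smallness. In the converse direction, $V'\ffc\exists x.\phhat(x)$ does \emph{not} supply a witness $V'\to S_0$ as you assume; it supplies only a regular epi $A'\epi V'$ in \bC, in general not small, together with a map $A'\to S_0$. This is precisely where relative projectivity (the small covers condition) is indispensable: one must refine $A'\epi V'\epi V$ to a small regular epi $W\epi V$ factoring through $A'$, and only over such a small $W$ can the inductive hypothesis be invoked to get $W\ffs\ph(Y)$ with $Y$ the pullback of $E$, whence $V\ffs\exists X.\ph(X)$. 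As written, your converse skips this refinement entirely (and the same step is needed for existential quantification over arrows), so the key implication from \bC-validity back to the stack semantics is not established.
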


Note that on the left, $\ff$ refers to the stack semantics notion of
forcing, while on the right it refers to the Kripke-Joyal semantics
for the usual internal logic of \bC\ (which is, of course, the same as
the \ddo-fragment of the stack semantics of \bC, so there is no real
ambiguity in the notation).

\begin{proof}
  The proof is by induction on \ph.  The cases of atomic formulas,
  \top, \bot, and \meet\ are evident, so it remains to deal with
  \join, \imp, \im, and \coim.  We adopt the convention that adding a
  prime symbol denotes passing to a regular-epi cover over which all
  parameters are represented by morphisms to $S_0$ or $S_1$.

  \textit{Disjunction:} if $V\ffs(\ph\join\psi)$, then we have
  $V=W_1\cup W_2$ in \bS\ with $W_1\ffs\ph$ and $W_2\ffs\psi$.  By the
  inductive hypothesis, $W_1'\ffc\phhat$ and $W_2'\ffc\psihat$; thus
  $V'\ffc (\phhat\join\psihat) = \widehat{(\ph\join\psi)}$.

  Conversely, if $V'\ffc(\phhat\join\psihat)$, then $V'=A_1'\cup A_2'$
  with $A_1'\ffc \phhat$ and $A_2'\ffc\psihat$.  Let $A_i$ be the
  image of $A_i'$ in $V$, for $i=1,2$, so that $V=A_1\cup A_2$.  Since
  small objects are relatively indecomposable, we have $V= W_1\cup
  W_2$ in \bS\ with $W_1\subseteq A_1$ and $W_2\subseteq A_2$.  Therefore,
  $W_1'\ffc \phhat$ and $W_2'\ffc\psihat$, and so by the inductive
  hypothesis, $W_1\ffs\ph$ and $W_2\ffs\psi$; hence
  $V\ffs(\ph\join\psi)$.

  \emph{Implication:} suppose first $V\ffs(\ph\imp\psi)$ and let
  $A'\to V'$ be a map in \bC\ such that $A'\ffc \phhat$.  Form the
  classifying subobject $\mm{\psihat}\mono A'$ in \bC.  Then for any
  $W\to A'$ where $W\in\bS$, we have $W\ffs(\ph\imp\psi)$ and $W'\ffc
  \phhat$, hence by the inductive hypothesis $W\ffs\ph$; thus
  $W\ffs\psi$ and so (by the inductive hypothesis again)
  $W'\ffc\psihat$.  Therefore, $W'$ factors through $\mm{\psihat}$,
  and so $W$ factors through $\mm{\psihat}$.  Since the small objects
  are a strong generator, this implies $\mm{\psihat}\iso A'$, and thus
  $A'\ffc \psihat$; hence $V'\ffc (\phhat\imp\psihat)$.

  Conversely, suppose $V'\ffc (\phhat\imp\psihat)$ and let $W\to V$ be
  such that $W\ffs\ph$.  Then, by the inductive hypothesis, $W'\ffc
  \phhat$, and so $W'\ffc \psihat$, hence $W\ffs\psi$.  Thus,
  $V\ffs(\ph\imp\psi)$.

  \emph{Existential quantification:} suppose first that $V\ffs \exists
  X.\ph(X)$.  Then we have $W\in \bS$, a regular epi $W\epi V$, and an
  object $Y\in\bS/W$ such that $W\ffs\ph(Y)$.  Passing to a cover
  $W'\epi W$ which represents $Y$ (by a map $y\colon W'\to S_0$, say)
  as well as all other parameters of \ph, the inductive hypothesis
  implies that $W'\ffc \phhat(y)$.  Since $W'\to V'$ is regular epi,
  this implies $V'\ffc \exists y.  \phhat(y)$.

  Conversely, suppose $V'\ffc \exists \th. \phhat(\th)$.  Then we have
  a cover $A'\epi V'$ and a map $y\maps A'\to S_0$ with
  $A'\ffc\phhat(y)$.  Since small objects are relatively projective,
  there is a regular epi $W\epi V$ in \bS\ which factors through $A'$.
  Hence $W \ffc \phhat(y)$, so by the inductive hypothesis, $W\ffs
  \ph(Y)$ where $Y=y^*E$.  Hence, $V\ffs \exists X. \ph(X)$.
  Existential quantification over arrows is analogous.

  \emph{Universal quantification:} suppose first that $V'\ffc \forall
  x.\phhat(x)$.  Then for any $W\to V$ in \bS\ and $Y\in\bS/W$, we can
  pass to a further cover $W'' \epi W' = W\times_V V'$ over which $Y$
  is represented by some $y\colon W'\to S_0$.  Since $W''\ffc \forall
  y.\phhat(y)$, we have $W''\ffc \phhat(y)$, and hence by the
  inductive hypothesis $W\ffs \ph(Y)$.  Thus, $V\ffs \forall
  X. \ph(X)$.

  Conversely, suppose $V\ffs \forall X. \ph(X)$, let $A'\to V'$ be a
  map, take any $y\maps A'\to S_0$, and form the classifying subobject
  $\mm{\phhat(y)}\mono A'$.  Then for any map $W\too[q] A'$ with $W$
  in \bS, we have $W\ffs \forall X.\ph(X)$, and thus, by the inductive
  hypothesis, $W\ffc \phhat(y)$.  Hence $W$ factors through
  $\mm{\phhat(y)}$, so since the small objects are a strong generator,
  we have $\mm{\phhat(y)}\iso A'$, and so $A'\ffc \phhat(y)$.
  Therefore, $V'\ffc \forall x.\phhat(x)$.  Universal quantification
  over arrows is analogous.
\end{proof}

In particular, we have another way of producing autological
(pre)topoi.

\begin{defn}\label{defn:cofc-sep}
  A category of classes satisfies the \textbf{separation axiom} if every
  monomorphism is small.
\end{defn}

\begin{cor}\label{thm:sep-aut}
  If \bC\ is a category of classes which is well-generated by small
  objects and which satisfies the separation axiom, then its category
  \bS\ of small objects is autological.
\end{cor}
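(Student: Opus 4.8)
The plan is to transport the stack semantics of $\bS$ into the ordinary internal logic of $\bC$ via \autoref{thm:cofc-logic}, where it is representable, produce a classifying subobject there, and then \emph{descend} it back along a small cover. Fix a sentence $\ph$ over an object $U\in\bS$. Since each parameter of $\ph$ is a small map into $U$, i.e.\ $U\ff_\bC$ \qq{$\exists x\in S.\ X\cong x^*E$}, there is a regular epi in $\bC$ covering $U$ over which all (finitely many) parameters pull back from $\pi$; refining it using the ``small covers'' clause of well-generation, we may take this cover $\epsilon\colon U'\epi U$ with $U'$ \emph{small}, so that $U'\in\bS$ and $\epsilon$ is a regular epi of $\bC$ between small objects, and we obtain the translated formula $\phhat$ at stage $U'$ in the internal logic of $\bC$ about the internal full subcategory $S_1\toto S_0$. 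As $\bC$ is a Heyting category, its internal logic is representable, so $\phhat$ has a classifying subobject $\mm{\phhat}\mono U'$. By the separation axiom this mono is small, and since $U'$ is small, \autoref{defn:cofc}\ref{item:cofc1} makes $\mm{\phhat}$ itself a small object.

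The heart of the argument is descending $\mm{\phhat}$ along $\epsilon$. Let $r,s\colon R\toto U'$ be the kernel pair of $\epsilon$, so $R=U'\times_U U'$ is small and $\pi:=\epsilon r=\epsilon s\colon R\to U$ is a morphism of $\bS$. Applying \autoref{thm:cofc-logic} to the formula $\pi^*\ph$ over $R$, taking the \emph{identity} $R\to R$ as the cover used to form $\widehat{\pi^*\ph}$ --- legitimate since each parameter $\pi^*X$ is classified by $\th_X\circ r$, and also by $\th_X\circ s$ --- shows that for every $q\colon W\to R$ in $\bS$ one has $W\ff_\bS q^*\pi^*\ph$ iff $q$ factors through $r^*\mm{\phhat}$ (using $\widehat{\pi^*\ph}=r^*\phhat$ for the first choice) and also iff $q$ factors through $s^*\mm{\phhat}$ (using the second choice); the two forcing conditions coincide because $\pi=\epsilon r=\epsilon s$ does not depend on which of $r,s$ we factor it through. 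Since the small objects are a strong generator in $\bC$, this gives $r^*\mm{\phhat}=s^*\mm{\phhat}$, i.e.\ $\mm{\phhat}$ satisfies the descent condition along the regular epi $\epsilon$; and since $\bC$ is regular, subobjects descend, so there is $S\mono U$ in $\bC$ with $\epsilon^*S\cong\mm{\phhat}$. By the separation axiom and \autoref{defn:cofc}\ref{item:cofc1} again, $S$ is small, hence $S\mono U$ is a monomorphism in $\bS$.

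It remains to check that $S$ classifies $\ph$ in the sense of \autoref{def:complete-logic}. For $p\colon V\to U$ in $\bS$, put $V'=V\times_U U'$ with the two projections $\mathrm{pr}_V\colon V'\epi V$ (a regular epi, being a pullback of $\epsilon$) and $p'\colon V'\to U'$; then $\epsilon p'=p\,\mathrm{pr}_V$ and, as above, $\widehat{p^*\ph}=(p')^*\phhat$. Hence $V\ff_\bS p^*\ph$ iff $V'\ff_\bC(p')^*\phhat$ (by \autoref{thm:cofc-logic}), iff $p'$ factors through $\mm{\phhat}=\epsilon^*S$, iff $p\,\mathrm{pr}_V=\epsilon p'$ factors through $S\mono U$ (universal property of the pullback of a mono), iff $p$ factors through $S$ (since $\mathrm{pr}_V$ is a regular epi and $S\mono U$ a mono, the factorization descends). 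Thus $S=\mm{\ph}$; as $\ph$ and $U$ were arbitrary, $\bS$ is autological. (One could alternatively phrase this via \autoref{thm:clog-sep}, deriving autology from $1\ff_\bS$ \qq{$\bS$ satisfies separation}, which follows since $\bC$ literally satisfies separation; but the direct construction above seems cleaner.)

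The main obstacle is precisely this descent step: the classifier delivered by the internal logic of $\bC$ naturally lives over the auxiliary cover $U'$, not over $U$, and it is \emph{not} enough to take its image $\mathrm{im}(\mm{\phhat}\to U)$ --- one genuinely needs $\epsilon^*S=\mm{\phhat}$ on the nose, equivalently $r^*\mm{\phhat}=s^*\mm{\phhat}$. Establishing that equality is the one point requiring care; routing it through \autoref{thm:cofc-logic} applied over $R$ (as above) avoids having to set up an isomorphism-invariance principle for the internal category $S_1\toto S_0$, which would also suffice but is more cumbersome. Everything else --- the reduction to a small cover, the appeal to representability of $\bC$'s internal logic, the use of the separation axiom, and the final factorization bookkeeping --- is routine.
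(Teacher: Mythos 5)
Your proof is correct, and its overall architecture is the same as the paper's: invoke \autoref{thm:cofc-logic}, use relative projectivity (small covers) to take the auxiliary cover $U'\epi U$ small, classify $\phhat$ in the first-order internal logic of \bC, use the separation axiom plus \autoref{defn:cofc}\ref{item:cofc1} for smallness, and end up with a small subobject of $U$ classifying \ph. The one genuine divergence is the last step: the paper takes the image of $\mm{\phhat}\to U'\to U$ and asserts via \autoref{thm:cofc-logic} that it classifies \ph, whereas you first prove the kernel-pair agreement $r^*\mm{\phhat}=s^*\mm{\phhat}$ (by re-applying \autoref{thm:cofc-logic} over $R$ with the identity cover and using strong generation) and then descend, getting $S$ with $\epsilon^*S\cong\mm{\phhat}$ on the nose, which indeed makes the final factorization bookkeeping very clean. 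However, your parenthetical claim that taking the image ``is not enough'' is overstated: the image coincides with your $S$ (once $r^*\mm{\phhat}=s^*\mm{\phhat}$ is known, $\epsilon^*\exists_\epsilon\mm{\phhat}=\exists_s r^*\mm{\phhat}=\mm{\phhat}$), and one can also verify directly that the image classifies \ph\ without the descent step, by lifting any $p\colon V\to U$ landing in the image to a map $g\colon W\to\mm{\phhat}$ from a small cover $W\epi V$ (relative projectivity) and then applying exactly your identity-cover observation to $g^*\phhat$, together with descent of forcing in \bS. So both routes work and require essentially the same ingredients; yours simply makes explicit what the paper's ``directly implies'' leaves to the reader.
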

\begin{proof}
  By \autoref{thm:cofc-logic}, any sentence \ph\ over $U\in\bS$
  induces a sentence \phhat\ in the internal logic of \bC\ at stage
  $U'$ for some regular epi $U'\epi U$.  Since small objects are
  relatively projective, we may as well assume that $U'$ is small.
  By the separation axiom, it follows that the usual classifying
  object $\mm{\phhat}$ in the internal logic of \bC\ is small, and so
  by closure under images, the image of the composite $\mm{\phhat}\to
  U' \to U$ is a small subobject of $U$.  The correspondence of
  \autoref{thm:cofc-logic} then directly implies that this image
  classifies \ph\ in the stack semantics of \bS, so \bS\ is autological.
\end{proof}

% \begin{rmk}
%   If we use a language of categories with an equality predicate for
%   objects, however, then different formulas of \bS\ may be valid in
%   $\bSh(\bS)$ depending on the particular strictification of the
%   self-indexing we choose as our $S$.  Thus, the stack semantics by
%   itself is more canonical than the internal logic of $\bSh(\bS)$.
%   Moreover, it is also purely intrinsic, whereas defining $\bSh(\bS)$
%   requires an external set theory.
% \end{rmk}

% Of course, the standard example of a category of classes is the following.

% \begin{eg}
%   If \bV\ is a model of a material set theory including the
%   collection axiom, then the category of definable subclasses of
%   \bV\ is a category of classes which is well-generated by small
%   objects.  Therefore, the internal logic of this category includes
%   the stack semantics of $\bbSet(\bV)$---which is of course just the
%   logic of $\bbSet(\bV)$ itself, since it is well-pointed and satisfies
%   collection.
% \end{eg}

% The literature of algebraic set theory provides many other examples
% and ways of constructing categories of classes.  See, for
% instance,~\cite{jm:ast,abss:rfosttcoc,afw:ast-ideals}, as well as the
% literature on sheaf and realizability
% constructions~\cite{mp:wftrees,mp:ttcst,vdb:ind-exact,vdb:pred-sheaves,vdbm:pred-i-exact,vdbm:pred-ii-realiz,vdbm:pred-iii-shvs}
% and more at~\cite{ast-website}.

There remains the question of how a given category \bS\ can be
embedded as the category of small objects in some category of classes
\bC.  We have seen that there is a ``canonical'' answer, namely
$\bSh(\bS)$, but this category has some problems.  In particular, it
fails to satisfy some desirable axioms of algebraic set theory, such
as that any quotient of a small object is small, or that all diagonals
are small.  We didn't need these axioms to prove
\autoref{thm:cofc-logic}, but we will need some of them below to
construct and compare models of material set theory.  This problem can
be solved by restricting to the subcategory of sheaves whose diagonals
\emph{are} small; these are the \emph{ideals}
of~\cite{af:ast-classes,afw:ast-ideals}.

%\begin{rmk}\label{thm:ideals-no-sep}

Another problem, however, which afflicts both sheaves and ideals, is
that not all axioms satisfied by \bS\ extend to corresponding
properties of these categories of classes.  Chief among the
problematic axioms is separation; i.e.\ we have no converse to
\autoref{thm:sep-aut}.  The problem is that not all sheaves or ideals
are ``first-order definable'' from \bS, so the first-order axiom
schema of autology for \bS\ cannot be extended to the separation axiom
for sheaves or ideals.

One solution to this problem is proposed
in~\cite{abss:foss-et,abss:rfosttcoc}: if \bS\ can be equipped with an
extra non-elementary structure called a \emph{superdirected structural
  system of inclusions (sdssi)}, then it can be embedded in a category
of \emph{super\-ideals} which does satisfy separation.  (The word
``structural'' in the term sdssi is backwards from our usage---an
sdssi is a very \emph{non-structural} notion, in that it distinguishes
between isomorphic objects.)  This includes both complete toposes and
realizability toposes, providing a unified reason why such toposes can
model IZF.

We prefer to view \emph{autology} as the property characterizing those
toposes which can model IZF, since unlike an sdssi, it is an
elementary first-order property and can be phrased in a purely
structural (i.e.\ category-theoretic) way.  (By \autoref{thm:sep-aut},
any topos admitting an sdssi must be autological.)  However, this
would be most satisfying if any autological topos could be embedded in
a category of classes satisfying the axiom of separation.  One wants
to consider the category of ``first-order definable classes'' over
\bS, but it is nearly impossible to give this meaning in a purely
structural way.  However, we will show in~\cite{shulman:2cofc} that
there is a canonical answer if we generalize to a \emph{2-category} of
classes---namely, we can consider the 2-category of \emph{stacks} for
the coherent coverage on on \bS\ which are ``definable'' from the
stack semantics in a precise sense.  This modification is quite
reasonable from the point of view of structural set theory, since
there a ``proper class'' is something quite different from a set: at
best it is a groupoid, having only a notion of isomorphism between its
objects, rather than a notion of equality.

\bigskip

We now turn to a comparison between models of material set theory in
the stack semantics and in a category of classes.  Since
\autoref{thm:cofc-logic} shows that the underlying logics agree, it
remains to compare the ``material sets'' in the two models.  This is
quite similar to the situation in \S\ref{sec:fourman}: the main
difference is that in a category of classes, the ``material sets'' are
represented by morphisms into a single object $\sV$ (which is not, of
course, small), rather than by morphisms into a family of objects
making up a ``pre-universe.''

(In fact, there is a formal relationship: a pre-universe in a topos
\bS, in Hayashi's sense, naturally forms a single object of the
category of ideals, in which we can apply the methods of this section.
This is the jumping-off point of~\cite{abss:foss-et,abss:rfosttcoc},
which not only constructs a category of ideals, but a corresponding
forcing semantics that generalizes Hayashi's.)

As remarked above, in order to define the object $\sV$ and perform
this comparison, we need a little more structure on our category of
classes.  By an \emph{$X$-indexed family of small subobjects of $A$}
we mean a monic $R\mono A\times X$ such that the projection $R\to X$
is a small map.  A \textbf{powerclass object} of $A$ is an object $P_s
A$ equipped with a universal $P_s A$-indexed family of small
subobjects of $A$.  Note that in general, $P_s A$ need not be small
even if $A$ is.  It is shown in~\cite{jm:ast} that powerclass objects
can be constructed when \bC\ is exact.  Authors who do not assume that
\bC\ is exact often assume directly that powerclass objects exist.

The following \emph{ad hoc} definition summarizes what we need for our
comparison.

\begin{defn}
  A category of classes \bC\ has a \textbf{good material model} if it
  satisfies the following.
  \killspacingtrue
  \begin{enumerate}
  \item \bC\ has a small \nno.\label{item:gcc0}
  \item If $f$ and $g$ are small, then so is $\Pi_f g$.\label{item:gcc1}
  \item \bC\ has powerclass objects.
%  \item All diagonals $A\to A\times A$ are small.\label{item:gcc3}
  \item If $p$ is regular epi and $f p$ is small, then $f$ is small
    (equivalently, $1\ff_\bC$ \qq{if $Y\to X$ is regular epi and $Y$
      is small, then $X$ is small}).\label{item:gcc4}
  \item The endofunctor $P_s$ has an indexed initial algebra \sV,
    i.e.\ a $P_s$-algebra \sV\ such that $U^*\sV$ is an initial
    algebra for all $U\in\bC$.
  \end{enumerate}
  \killspacingfalse
\end{defn}

Assumptions~\ref{item:gcc0} and~\ref{item:gcc1} ensure that \bS\ is a
\Pi-pretopos with a \nno, so that the theory of \S\ref{sec:constr-mat}
can be applied in its stack semantics.  The other three axioms are
about describing the universe-object \sV.  Note that~\ref{item:gcc4}
is necessary in order to make $P_s$ into a covariant functor in the
usual way.  (In fact, $P_s$ can be made into a monad,
using~\ref{item:gcc4} again to obtain the multiplication---but we
emphasize that the relevant notion of ``algebra'' refers only to an
algebra for $P_s$ as an endofunctor, not as a monad.)  The
indexed initial $P_s$-algebra \sV\ is constructed in~\cite{jm:ast}
under the assumption of exactness and a subobject classifier, and
in~\cite{vdbm:pred-i-exact} under the assumption of W-types and
``bounded exactness;'' we will simply assume it to exist.

As for any initial algebra, the structure map $P_s \sV \xto{v} \sV$ is
an isomorphism.  Thus, by the universal property of $P_s$, the inverse
of $v$ classifies a binary relation $\prec$ on \sV\ which is
``set-based,'' i.e.\ the second projection $\sV_{\prec} \to \sV$ is
small.  It is not hard to show that $\prec$ is well-founded and
extensional.  An interpretation of material set theory in \bC\ is then
obtained by considering \sV\ with $\prec$ as a model for material sets
with $\in$ in the internal logic of \bC.  It satisfies many of the
axioms automatically, while others are inherited from the
corresponding categorical axioms for \bC;
see~\cite{jm:ast,mp:ttcst,vdbm:pred-i-exact} for more details.

\begin{rmk}
  There is a slightly different thread in algebraic set theory,
  starting with~\cite{simpson:elem-cofc} and continuing
  with~\cite{af:ast-classes,afw:ast-ideals}, in which material set
  theory is not modeled in the initial $P_s$-algebra but in an
  arbitrary object \sU\ equipped with a monomorphism $P_s\sU\into
  \sU$.  The elements of \sU\ not in the image of $P_s \sU$ are then
  regarded as ``atoms,'' and the axiom of foundation is not
  necessarily satisfied.  We will focus on the situation described
  above, which seems better suited to a comparison with the stack
  semantics.
\end{rmk}

Now, from any morphism $x\colon U\to \sV$ we can construct a
well-founded extensional \apg\ \xbar\ over $U$ as follows.  First note
that all our axioms are stable under pullback, so we may as well
assume that $U=1$.  Now let $\check{x}\colon N\to P_s\sV$ be the
unique map such that
\[\vcenter{\xymatrix{1 \ar[r]^o \ar[dr]_{\eta x} & N \ar[r]^s
    \ar[d]^{\check{x}} & N \ar[d]^{\check{x}}\\
  & P_s \sV \ar[r]_{c} & P_s \sV}}
\]
commutes.  Here $\eta\colon \sV\to P_s\sV$ is the unit of the monad
$P_s$, while $c$ is the composite:
\[ P_s \sV \xto{P_s (v\inv)} P_s P_s \sV \xto{\mu} P_s \sV
\]
(where \mu\ is the multiplication of the monad $P_s$).  Now
$\check{x}$ classifies an $N$-indexed family of small subobjects of
$\sV$, i.e.\ a map $\xbar \to N\times \sV$ such that $\xbar\to N$ is
small.  Since $N$ is also small, $\xbar$ is a small object.

Working in the internal logic of \bC, well-foundedness of \sV\ now
implies that the projection $\xbar\to\sV$ is monic.  Since it is also
a simulation, it follows that \xbar\ is well-founded and extensional.
It is rooted by $x$ and accessible by definition, so it is a
well-founded extensional \apg, as desired.  And just as in
\S\ref{sec:fourman}, we can verify that
\begin{enumerate}
\item $U \ff (\xbar\cong\ybar)$ if and only if $x=y$, and
\item $U \ff (\xbar \iin\ybar)$ if and only if $x\prec y$.
\end{enumerate}
Thus it remains only to verify the analogue of
\autoref{thm:fourman-apg2}.

\begin{lem}
  Let $U\in\bS$, and let $X$ be a well-founded extensional \apg\ in
  $\bS/U$ which remains well-founded in $\bC/U$.  Then there exists a
  unique $x\colon U\to \sV$ such that $X\cong \xbar$.
\end{lem}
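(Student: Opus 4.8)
The plan is to realize $x$ as a ``Mostowski collapse'' of $X$ into \sV, reading it off as the image of the root of $X$. Since every axiom of a good material model is stable under slicing, and the indexed initial $P_s$-algebra satisfies $\sV_{\bC/U}\cong U^*\sV$, we may work in $\bC/U$ and so assume $U=1$. Thus $X$ is a small well-founded extensional \apg\ which, by hypothesis, remains well-founded when regarded as a graph in \bC.

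The first and crucial step is to produce, by well-founded recursion over $X$ in the internal logic of \bC, a morphism $m\colon X\to\sV$ with $m(a)=v\bigl(\lceil\{\,m(b)\mid b\prec a\,\}\rceil\bigr)$, where $v\colon P_s\sV\xto{\sim}\sV$ is the structure map of the initial $P_s$-algebra and $\lceil-\rceil$ denotes the classifying element in $P_s\sV$ of a small subobject of \sV. For the recursion operator to be well-defined one checks that it factors through $P_s\sV$: the relation $X_\prec\mono X\times X$ is a small object (being a subobject, in \bS, of the small object $X\times X$), so by \autoref{defn:cofc}\ref{item:cofc1} the first projection $X_\prec\to X$ is a small map; taking the image of $(\pi_1,\,m\circ\pi_2)\colon X_\prec\to X\times\sV$ and using \ref{item:gcc4} (images along regular epis of small maps are small) gives a small $X$-indexed family of subobjects of \sV, hence a map $X\to P_s\sV$, and composing with $v$ yields the operator. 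The recursion theorem for well-founded relations then produces a unique such $m$, as in the proof of \autoref{thm:fourman-apg2}, but now without any transfinite hierarchy of partial attempts since \sV\ is fixed.

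Next I would check that $m$ is a simulation $X\to\sV$ for the well-founded extensional relation $\prec$ on \sV: that $b\prec a$ implies $m(b)\prec m(a)$ is immediate from the defining equation, and conversely any $w\prec m(a)$ must equal $m(b)$ for some $b\prec a$, by decoding $m(a)=v(\lceil\{m(b)\mid b\prec a\}\rceil)$ and using that $v$ is an isomorphism. Since $X$ is well-founded and extensional, the argument of \autoref{thm:sim-inj} shows $m$ is injective and identifies $X$ with an initial segment of \sV. Setting $x=m(\star)\colon 1\to\sV$, a straightforward induction on path length shows that the image of $m$ is exactly the subobject of nodes of \sV\ admitting a \prec-path to $x$, with its induced structure --- that is, the \apg\ $\xbar$ constructed above the statement. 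Hence $X\cong\xbar$. Uniqueness is then immediate: if also $X\cong\ybar$, then $\xbar\cong\ybar$, and (as recorded just above the statement) $U\ff(\xbar\cong\ybar)$ holds precisely when $x=y$.

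The step I expect to be the real obstacle is the internal well-founded recursion defining $m$: one must make the recursion theorem precise in a category of classes that is only assumed to carry a good material model --- in particular, need not be exact --- while keeping the smallness bookkeeping straight, so that the family of ``collapsed children'' remains small and the recursion operator genuinely factors through $P_s\sV$. Once $m$ exists, all remaining verifications are routine and parallel those of \S\ref{sec:fourman}.
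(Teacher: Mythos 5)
Your overall strategy coincides with the paper's: reduce to $U=1$ via the indexed initial algebra, collapse $X$ into \sV\ by a simulation, take $x$ to be the image of the root, and deduce uniqueness from extensionality of \sV. But the one step you defer --- producing $m\colon X\to\sV$ by well-founded recursion in the internal logic of \bC\ --- is the actual content of the lemma, and you do not prove it; you even flag it yourself as ``the real obstacle.'' Well-founded \emph{induction} (which is what the hypothesis that $X$ remains well-founded in $\bC/U$ buys you) does not by itself yield well-founded \emph{recursion}: the function must be constructed, and in a category of classes that is not assumed exact and need not satisfy separation this requires an argument. The smallness bookkeeping you do carry out (smallness of $X_\prec\to X$, closure of small maps under images along regular epis) only shows that the recursion equation is well-typed once $m$ exists; it does not produce $m$.

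The paper fills exactly this gap with an ``attempts'' argument, and your parenthetical that one can dispense ``without any transfinite hierarchy of partial attempts since \sV\ is fixed'' is where the proposal goes astray: what disappears compared with \autoref{thm:fourman-apg2} is the ordinal-indexed hierarchy of the $V_\al$, not the partial attempts themselves. Concretely, define an \emph{attempt} to be a partial function $X\rightharpoonup\sV$ which is a simulation and whose domain is a \emph{small} initial segment of $X$; the class of all attempts is carved out as a subobject of $P_s(X\times\sV)$ (this is where the powerclass is genuinely used), and one then proves by induction in \bC\ --- precisely where the strengthened well-foundedness hypothesis enters --- that any two attempts agree on their common domain, that unions of attempts are attempts, and that the union of all attempts is total. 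That total simulation is your $m$; the remaining steps of your argument (injectivity via \autoref{thm:sim-inj}, identification of the image with \xbar, and uniqueness of $x$ from extensionality of \sV) then go through as you describe.
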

\begin{proof}
  This is essentially the standard recursion theorem for well-founded
  relations.  Since \sV\ is an indexed initial algebra, it suffices to
  assume that $U=1$.  Define an \emph{attempt} to be a partial
  function $X\rightharpoonup \sV$ which is a simulation and whose
  domain is a \emph{small} initial segment of $X$.  We can then use
  the internal logic of \bC\ to define ``the set of all attempts'' as
  a subobject of $P_s(X\times \sV)$.  Since $X$ is well-founded, we
  can then prove by induction, as usual, that any two attempts agree
  on their common domain, that the union of attempts is an attempt,
  and that the union of all attempts has domain all of $X$.  The
  resulting simulation $X\too[f] \sV$ then exhibits $X$ as isomorphic
  to $\xbar$, where $x$ is the composite $1\too[\star] X \too[f] \sV$,
  and it is unique since $\sV$ is extensional.
\end{proof}

However, without further axioms, there is no reason why every
well-founded extensional \apg\ in $\bS/U$ should also be well-founded
in $\bC/U$.  This will be true if \bC\ satisfies the axiom of
separation, but in general there might be new subobjects of $X$ in
$\bC/U$ that are not in $\bS/U$, so well-foundedness in \bC\ is a
stronger statement than well-foundedness in \bS.  If we do have this
stronger property, though, then we can derive the desired equivalence.

\begin{thm}\label{thm:cofc-interp}
  Let \bC\ be a category of classes which is well-generated by small
  objects and has a good material model, with \bS\ its full
  subcategory of small objects, and assume that for all $U\in\bS$,
  every well-founded extensional graph in $\bS/U$ is also well-founded
  in \bC.  Then the interpretation of material set theory in the stack
  semantics of \bS\ is identical to its interpretation relative to
  \sV\ in the internal logic of \bC.
\end{thm}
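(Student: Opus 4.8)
The plan is to match the two interpretations by induction on the structure of material formulas, following closely the scheme of \autoref{thm:cofc-logic} but carrying along the dictionary between ``material sets at stage $U$'' on the two sides. Recall that the stack-semantics interpretation reads a material formula $\ph$ (with well-founded extensional \apgs\ as parameters) as $1\ffs\ph_{\ordiin}$, using the translation $\ph_{\ordiin}$ of \S\ref{sec:constr-mat}: material-set quantifiers range over well-founded extensional \apgs\ in the slices, and the atoms $=$ and $\in$ become $\cong$ and $\iin$. The $\sV$-interpretation reads $\ph$ in the ordinary internal logic of \bC\ against $(\sV,\prec)$, with material-set variables of type $\sV$ and $=$, $\in$ becoming $=$, $\prec$; write $\ph^{\sV}$ for the resulting formula, so that this interpretation is $1\ffc\ph^{\sV}$. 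By the construction and the two observations preceding this theorem, each $x\colon U\to\sV$ yields a well-founded extensional \apg\ $\xbar$ over $U$ with $U\ff(\xbar\cong\ybar)\iff x=y$ and $U\ff(\xbar\iin\ybar)\iff x\prec y$; and the standing hypothesis --- that every well-founded extensional graph in $\bS/U$ remains well-founded in $\bC/U$ --- is exactly what lets the recursion lemma above apply to \emph{every} well-founded extensional \apg\ in $\bS/U$, making $x\mapsto\xbar$ a bijection, up to isomorphism, between elements $U\to\sV$ and such \apgs. Identifying parameters across the two interpretations through this bijection, it suffices to prove, for all $U\in\bS$ and all $x_1,\dots,x_k\colon U\to\sV$,
\[ U\ffs\ph_{\ordiin}(\overline{x_1},\dots,\overline{x_k}) \iff U\ffc\ph^{\sV}(x_1,\dots,x_k). \]

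For the atomic formulas $(X_i=X_j)$ and $(X_i\in X_j)$ this is immediate from the two displayed equivalences together with the Kripke--Joyal clauses for equality of, and for the relation $\prec$ between, maps into $\sV$. The connective cases run exactly as in the proof of \autoref{thm:cofc-logic}: $\top$ and $\meet$ are trivial, $\bot$ uses that the inclusion $\bS\hookrightarrow\bC$ preserves and reflects initial objects among small objects, $\join$ uses relative indecomposability of the small objects, and $\imp$ and $\neg$ use that the small objects form a strong generator, via the classifying subobject $\mm{\psi^{\sV}}$ in \bC. Throughout one uses that $\bS\hookrightarrow\bC$ preserves finite limits, finite coproducts, and images, hence preserves and reflects monomorphisms, unions, and regular epimorphisms between small objects, so covers and subobjects may be transported between \bS\ and \bC.

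The quantifier cases are where the hypothesis is essential. For $\exists X.\psi(X)$: if $U\ffs\exists X.\psi_{\ordiin}(X)$ there are a regular epi $V\epi U$ in \bS\ and a well-founded extensional \apg\ $Y$ over $V$ with $V\ffs\psi_{\ordiin}(Y)$; since $V$ is small the hypothesis lets the recursion lemma write $Y\cong\ybar$ for a unique $y\colon V\to\sV$, whence $V\ffc\psi^{\sV}(y)$ by induction and $U\ffc\exists x.\psi^{\sV}(x)$ since $V\epi U$ is also regular epi in \bC; conversely a \bC-witness $A\epi U$, $y\colon A\to\sV$ is refined, using relative projectivity of the small objects, to a small cover $V\epi U$ factoring through $A$, and the inductive hypothesis together with the bijection supply the required \apg\ over $V$. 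The case $\forall X.\psi(X)$ is dual: given $A\to U$ and $y\colon A\to\sV$ in \bC\ one forms $\mm{\psi^{\sV}(y)}\mono A$ and shows it is all of $A$, because every small $W\to A$ factors through it --- by the inductive hypothesis applied over $W$ to the \apg\ corresponding to the restriction of $y$, which exhausts the \apgs\ over $W$ --- and the small objects are a strong generator; the reverse implication is the easy direction. There are no arrow-quantifiers in material formulas, so taking $U=1$ yields the theorem.

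I expect the main difficulty to lie not in any individual step but in the bookkeeping that ties the stack-semantics quantifiers over \apgs\ in slices of \bS\ to the $\sV$-quantifiers over elements of $\sV$ in slices of \bC; making this precise is precisely where the standing hypothesis cannot be dispensed with, since without it a stack-semantics existential could be witnessed by an \apg\ in $\bS/V$ that is no longer well-founded in $\bC/V$ and hence corresponds to no element of $\sV$. One could instead avoid re-running the induction by applying \autoref{thm:cofc-logic} directly to the structural formula $\ph_{\ordiin}$ and then checking that $\widehat{\ph_{\ordiin}}$ is, in the internal logic of \bC, the interpretation of $\ph$ against the model of material set theory built as in \S\ref{sec:constr-mat} from the internal category of sets --- a model which $x\mapsto\xbar$ identifies with $(\sV,\prec)$ --- but this reorganization rests on the same hypothesis and the same dictionary.
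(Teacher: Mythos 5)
Your proposal is correct and follows essentially the same route as the paper: identify isomorphism classes of well-founded extensional \apgs\ over $U\in\bS$ with maps $U\to\sV$ via the preceding recursion lemma (which is exactly where the standing well-foundedness hypothesis enters), check that the atomic formulas correspond, and then rerun the induction of \autoref{thm:cofc-logic}. You simply spell out in detail the inductive steps that the paper's proof leaves implicit by citing that earlier argument.
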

\begin{proof}
  We have seen that under the given hypotheses, isomorphism classes of
  well-founded extensional \apgs\ in $\bS/U$ can be identified with
  maps $U\to \sV$, in a way which preserves the atomic formulas.  The
  same inductive argument from \autoref{thm:cofc-logic} then implies
  that the interpretations agree on all formulas.
\end{proof}

\begin{cor}
  If \bC\ is a category of classes which is well-generated by small
  objects and has a good material model, and moreover satisfies the
  separation axiom, then the interpretation of material set theory in
  the stack semantics of its category of small objects is identical to
  its interpretation relative to \sV\ in the internal logic of
  \bC.\qed
\end{cor}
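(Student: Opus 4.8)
The plan is to deduce this corollary from \autoref{thm:cofc-interp}. Both hypotheses appearing there and not in the corollary's statement—well-generation by small objects and the existence of a good material model—are assumed here too, so the only thing left to verify is the remaining hypothesis of that theorem: that for every $U\in\bS$, every well-founded extensional graph in $\bS/U$ is still well-founded when regarded in $\bC/U$. I would obtain this directly from the separation axiom of \autoref{defn:cofc-sep}.

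First I would reduce to $U=1$, using that the separation axiom passes to every slice: a monomorphism in $\bC/U$ is a monomorphism in $\bC$, and a map is small in $\bC/U$ exactly when it is small in $\bC$, so ``every monomorphism is small'' is inherited by $\bC/U$. (Equivalently, one may simply carry $U$ along throughout.) The heart of the argument is then the observation that the separation axiom forces the subobject posets to agree: for any small object $X$, if $m\colon A\mono X$ is a monomorphism in $\bC$, then $m$ is small by \autoref{defn:cofc-sep}, hence $A$ is small by \autoref{defn:cofc}\ref{item:cofc1} (the composite $A\mono X\to 1$ being a composite of small maps), hence $A$ lies in $\bS$; conversely every subobject in $\bS$ is one in $\bC$ since $\bS$ is full. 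Thus $\Sub_\bC(X)=\Sub_\bS(X)$ whenever $X$ is small. Since moreover the inclusion $\bS\into\bC$ preserves finite limits and dual images (\autoref{defn:cofc}\ref{item:cofc2}), the constructions entering the internal notion ``$S$ is an inductive subobject of $X$''—pulling $\prec$ back along the projections $X\times X\to X$ and forming a dual image—are computed identically in $\bS$ and in $\bC$. Consequently a subobject of a small graph $X$ is inductive in $\bC$ iff it is inductive in $\bS$; and since the subobjects themselves coincide, $X$ is well-founded in $\bC$ iff it is well-founded in $\bS$. (The same bookkeeping, via \autoref{thm:wfext-bisid}, shows extensionality also transfers, so no well-founded extensional \apg\ is lost or gained.) This supplies the missing hypothesis of \autoref{thm:cofc-interp}, which then yields the conclusion.

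The only step I expect to require genuine care is the claim that ``being an inductive subobject'' (and, for the transfer of extensionality, ``being a bisimulation'') is literally the \emph{same} assertion interpreted in $\bS$ and in $\bC$: this is precisely where one needs $\bS$ to be closed in $\bC$ under finite limits and dual images, not merely to be a full subcategory, since otherwise the dual-image construction could a priori differ between the two categories and the equivalence of the two well-foundedness conditions could fail. Everything else—descending the separation axiom to slices and chasing the definition of a small map—is routine.
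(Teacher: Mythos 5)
Your proposal is correct and matches the paper's (implicit) argument: the corollary is stated with only a \qed{} because, exactly as you argue, the separation axiom together with \autoref{defn:cofc}\ref{item:cofc1} forces every subobject of a small object to be small, so the subobject lattices and the interpretation of the inductivity condition agree in $\bS/U$ and $\bC/U$, supplying the extra hypothesis of \autoref{thm:cofc-interp}. Your extra care about closure under finite limits and dual images, and the aside on extensionality, are fine but not strictly needed, since the theorem only requires well-foundedness to transfer.
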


Of course, asking that well-foundedness in \bS\ extends to \bC\ is a
version of the axiom of ``full well-founded induction'' relative to
the containing category of classes \bC.  But even if \bS\ satisfies
well-founded induction (in its stack semantics), this property will
generally fail to be inherited by categories of sheaves or ideals.
This is the same problem we observed after
\autoref{thm:sep-aut},%in \autoref{thm:ideals-no-sep},
and it has the same solution.  Namely, we will show
in~\cite{shulman:2cofc} that the 2-category of definable stacks on
\bS\ does inherit full well-founded induction from \bS, and that the
stack semantics of \bS\ can be identified with the internal language
of this 2-category.  Thus, if we are willing to generalize from
categories of classes to ``2-categories of classes,'' stack semantics
and algebraic set theory can be seen as two faces of the same coin.

\appendix
\section{Material and structural foundations}
\label{sec:mat-vs-struc}

As mentioned in the introduction, we prefer to view the correspondence
between set theories and (pre)toposes as a relationship between two
different kinds of set theory, which we call \emph{material} and
\emph{structural}.  In material set theories such as ZFC, the elements
of a set $X$ have an independent reality and identity, apart from
their being collected together as the elements of $X$; frequently they
are also sets themselves.  The name ``material'' is a suggestion of
Steve Awodey~\cite{awodey:struct-mathlog}; they are also called
``membership-based'' set theories.  By contrast, in structural set
theories such as ETCS, the elements of a set $X$ have no existence or
identity independent of $X$, and in particular are not sets
themselves; they are merely abstract ``elements'' with which we build
mathematical structures.  We call these theories ``structural''
because they are closely aligned with the mathematical philosophy of
``structuralism'' (see e.g.~\cite{mclarty:numbers,awodey:struct});
they are also called ``categorical'' set theories.

\begin{rmk}
  In structural set theory, it is generally not meaningful to ask
  whether two elements of two different sets are equal,
% an element of one set is equal to an element of another set,
  only whether two elements of a given ambient set are equal.  In this
  way, structural set theory is very similar to type theory, and a
  case can be made that they are really different names for
  the same thing.  There are undeniable differences in
  presentation and emphasis, but there is certainly a large space of
  theories which it is difficult to classify as one and not the other.
  In particular, as we remarked in footnote~\ref{fn:2} on
  page~\pageref{fn:2}, the stack semantics could equally well be
  phrased using a type theory with quantification over types.
\end{rmk}

Material set theory, of course, has a long history as a foundation
for mathematics, while structural set theory has not gained as broad a
following, although Lawvere first wrote down the axioms of ETCS nearly
50 years ago.  According to its proponents, structural set theory
provides a foundation for mathematics which is just as adequate as the
foundation provided by material set theory, and which is moreover
closer to mathematical practice and free of superfluous data.

In support of this thesis, we observe that in most of mathematics,
sets serve merely as carriers of structure: group structure, ring
structure, topological structure, etc.  This view of mathematics was
formally enshrined in Bourbaki's ``general theory of
structure''~\cite{bourbaki:sets}.  Moreover, these structures, and
hence the sets from which they are built, are generally only ever
considered up to isomorphism.  It never matters to the working
mathematician whether the natural numbers are defined in von Neumann's
way:
\[ 0 = \emptyset,\; 1 = \{0\},\; 2 = \{0,1\},\; 3 = \{0,1,2\},\; \dots
\]
or in some other way, such as:
\[ 0 = \emptyset,\; 1 = \{0\},\; 2 = \{1\},\; 3 = \{2\},\; \dots
\]
since the two resulting versions of $\mathbb{N}$ are canonically
isomorphic.  In particular, the intricate global membership structure
of material set theory is irrelevant outside of set theory proper.
Indeed, Bourbaki had to carefully specify exactly how this superfluous
data is to be forgotten, in order that all of their structures would
be invariant under isomorphism.  In structural set theory this process
of careful forgetting is unnecessary, since there is no global $\in$
in the first place.  (The complexity of the Cole-Mitchell-Osius
construction summarized in \S\ref{sec:constr-mat} testifies to the
amount of data that must be forgotten.)

From this point of view, the value of an equivalence between material
and structural set theory is that it implies that in principle, either
could serve equally well as a foundation for mathematics.  Of course,
what really matters is the naturalness of the encoding---structural
set theory would not be a very attractive foundation if mathematics
could only be encoded in it by way of material set theory!  However,
the observations above about mathematical structure provide good
empirical evidence that in fact, the situation is reversed: material
set theory provides a foundation for most mathematics only by way of
structural set theory.

On the other hand, the material point of view is not without
advantages, especially in set theory proper.  Although the entire
study of ZF could be reinterpreted structurally in terms of
well-founded extensional relations, the result would be unnecessarily
complicated.  In particular, some of the powerful methods of material set theory seem
difficult to duplicate structurally without significant
circumlocution.  This seems to be essentially the point expressed by
Mathias~\cite{mathias:str-maclane}:
\begin{quotation}
  % \dots I hold set theory to be the study of well-foundedness and thus
  % to be concerned with recursive constructions in the widest abstract
  % sense.  The language is rich and therefore admits translations into
  % it of huge amounts of mathematics; though often by a formal
  % interpretation which fails to translate the underlying
  % intuition.\dots
  If one wants to do geometry, why bother with von Neumann ordinals?
  What do they do that is not achieved by arbitrary well-orderings?
  Perhaps nothing\dots

  But if, sated with geometry, one wants to do transfinite recursion
  theory, why make life hard by avoiding von Neumann ordinals?
\end{quotation}

Thus, we believe that there should be no real conflict between the two
kinds of set theory.  Rather, as frequently happens in mathematics, we
simply have two different viewpoints on the same underlying reality.
And as always in such a situation, recognition of this means that we
can switch between the two theories freely, using whichever is most
convenient for any purpose.  From this perspective, our goal in this
paper has been to extend the language and tools of structural set
theory so as to deal with unbounded quantifiers directly, facilitating
more direct comparisons with the corresponding language and tools of
material set theory.  The extension we have arrived at (the stack
semantics) also of course has potential applications in topos theory
by itself, which we hope to explore in later work.

\bibliographystyle{halpha}
\bibliography{all,shulman,extra}

\end{document}